\DeclareMathAlphabet{\mathpzc}{OT1}{pzc}{m}{it}
\DeclareMathOperator*{\esssup}{ess\,sup}
\def\l@subsection{\@tocline{2}{0pt}{4pc}{6pc}{}}
\def\l@subsubsection{\@tocline{3}{0pt}{8pc}{8pc}{}}
\def\l@section{\@tocline{1}{12pt}{0pt}{}{\bfseries}}
\newtheorem{theorem}{Theorem}[section]
\newtheorem{lemma}[theorem]{Lemma}
\newtheorem{proposition}[theorem]{Proposition}
\newtheorem{definition}[theorem]{Definition}
\newtheorem{remark}[theorem]{Remark}
\newtheorem{hypothesis}[theorem]{Hypothesis}
\let\originalleft\left
\let\originalright\right
\renewcommand{\left}{\mathopen{}\mathclose\bgroup\originalleft}
\renewcommand{\right}{\aftergroup\egroup\originalright}
\newcommand{\Tr}{\mathop{\mathrm{Tr}}}
\renewcommand{\d}{\/\mathrm{d}\/}
\def\w{\textbf{W}^{\varepsilon}_{{\theta}^{\varepsilon}}}
\def\L{\mathbb{L}}
\def\A{\mathrm{A}}
\def\I{\mathrm{I}}
\def\C{\mathrm{C}}
\def\f{\boldsymbol{f}}
\def\B{\mathrm{B}}
\def\D{\mathrm{D}}
\def\y{\mathbf{y}}
\def\q{\boldsymbol{q}}
\def\Y{\mathbf{Y}}
\def\Z{\mathbf{Z}}
\def\E{\mathbb{E}}
\def\X{\mathbf{X}}
\def\x{\boldsymbol{x}}
\def\g{\boldsymbol{g}}
\def\p{\boldsymbol{p}}
\def\y{\boldsymbol{y}}
\def\z{\boldsymbol{z}}
\def\v{\boldsymbol{v}}
\def\w{\boldsymbol{w}}
\def\W{\mathrm{W}}
\def\Q{\mathrm{Q}}
\def\N{\mathbb{N}}
\def\V{\mathbb{V}}
\def\R{\mathbb{R}}
\def\wi{\widetilde}
\def\Q{\mathrm{Q}}
\def\U{\mathbb{U}}
\def\P{\mathbb{P}}
\def\u{\boldsymbol{u}}
\def\a{\boldsymbol{a}}
\def\H{\mathbb{H}}
\newcommand{\eps}{\varepsilon}
\renewcommand{\d}{\/\mathrm{d}\/}
\newcommand{\Addresses}{{
		\footnote{
			
		\noindent \textsuperscript{1,2}Department of Mathematics, Indian Institute of Technology Roorkee-IIT Roorkee,
		Haridwar Highway, Roorkee, Uttarakhand 247667, INDIA.\par\nopagebreak
		\noindent  \textit{e-mail:} \texttt{Manil T. Mohan: maniltmohan@ma.iitr.ac.in, maniltmohan@gmail.com}
		
		\textit{e-mail:} \texttt{Sagar Gautam: sagar\_g@ma.iitr.ac.in, sagargautamkm@gmail.com}
		
		\noindent \textsuperscript{*}Corresponding author.

			\textit{Key words:} Convective Brinkman-Forchheimer equations, damped Navier-Stokes equations, viscosity solutions, Hamilton-Jacobi-Bellman equations, dynamic programming principle.
			
			Mathematics Subject Classification (2020): Primary 93E20, 35D40, 49L25; Secondary 76B75, 76D05

}}}
\begin{document}
	
	
	\title[Hamilton-Jacobi-Bellman equations for 2D and 3D SCBF equations]{ Hamilton-Jacobi-Bellman equation and Viscosity solutions for an optimal control problem for stochastic convective Brinkman-Forchheimer equations
		\Addresses}
		\author[S. Gautam and M. T. Mohan]
	{Sagar Gautam\textsuperscript{1} and Manil T. Mohan\textsuperscript{2*}}

	\maketitle
	
	\begin{abstract}
		In this work, we consider the following two- and three-dimensional stochastic convective Brinkman-Forchheimer (SCBF) equations  in torus $\mathbb{T}^d,\ d\in\{2,3\}$: 
	\begin{align*}
		\mathrm{d}\boldsymbol{u}+\left[-\mu \Delta\boldsymbol{u}+(\boldsymbol{u}\cdot\nabla)\boldsymbol{u}+\alpha\boldsymbol{u}+\beta|\boldsymbol{u}|^{r-1}\boldsymbol{u}+\nabla p\right]\mathrm{d}t=\d\mathrm{W}, \ \nabla\cdot\boldsymbol{u}=0, 
	\end{align*}
	where $\mu,\alpha,\beta>0$, $r\in[1,\infty)$ and $\W$ is a Hilbert space valued $\mathrm{Q}-$Wiener process. The above system can be considered as damped stochastic Navier-Stokes equations.   Using the dynamic programming approach, we study the infinite-dimensional second-order Hamilton-Jacobi equation  associated with an optimal control problem for SCBF equations. For the supercritical case, that is, $r\in(3,\infty)$ for $d=2$ and $r\in(3,5)$ for $d=3$ ($2\beta\mu\geq 1$ for $r=3$ in $d\in\{2,3\}$), we first prove the existence of a viscosity solution for the infinite-dimensional HJB equation, which we identify with the value function of the associated control problem. By establishing  a comparison principle for $r\in(3,\infty)$ and $r=3$ with $2\beta\mu\geq1$ in $d\in\{2,3\}$, we prove that the value function is the unique viscosity solution and hence we resolve the global unique solvability of the HJB equation in both two and three dimensions.
	\end{abstract}
	
	\tableofcontents
	
	\section{Introduction}\setcounter{equation}{0}
	This article studies the existence and uniqueness of viscosity solutions for the Hamilton-Jacobi-Bellman (HJB) equation associated with an optimal control problem of 2D and 3D stochastic convective Brinkman-Forchheimer equations via dynamic programming approach.

		\subsection{The model}
	The convective Brinkman-Forchheimer (CBF) equations describe incompressible fluid flow motion in a saturated porous medium.  These equations are applicable when the fluid flow rate is sufficiently high and the porosity is not too small (see \cite{MTT} for detailed model physics). 
 Let us first provide a mathematical formulation of stochastic convective Brinkman-Forchheimer (SCBF) equations. 
 	For $\mathrm{L}>0$, we consider a $d$-dimensional torus $\mathbb{T}^d=\left(\frac{\R}{\mathrm{L}\mathbb{Z}}\right)^d$, $d\in\{2,3\}$. Let $t\in[0,\infty)$ be the initial time and $T\in(t,\infty)$ be the terminal time. Let  $\big(\Omega,\mathscr{F},\{\mathscr{F}_s^t\}_{s\geq t},\P\big)$ be a complete filtered probability space satisfying the usual conditions (see Section \ref{abscon}). Let us denote by $\X(\cdot):[t,T]\times\mathbb{T}^d\times\Omega\to\R^d,$ the velocity vector filed and by $p:[t,T]\times\mathbb{T}^d\times\Omega\to\R,$ the pressure. Then for incompressible fluid flow, the SCBF equations read as:
	\begin{equation}\label{a}
		\left\{
		\begin{aligned}
			\d\X(s)+[-\mu\Delta\X(s)+ (\X(s)\cdot\nabla)&\X(s)+\alpha\X(s)+\beta|\X(s)|^{r-1}\X(s)\\+\nabla p(s)]\d s&=\boldsymbol{g}(s,\a(s))\d s+\d\mathbf{W}(s), \ \text{ in } \ (t,T)\times\mathbb{T}^d, \\ \nabla\cdot\X(s)&=0, \ \text{ in } \ [t,T]\times\mathbb{T}^d, \\
			\X(t)&=\x_0 \ \text{ in } \ \mathbb{T}^d,
		\end{aligned}
		\right.
	\end{equation}
	where $\{\mathbf{W}(s)\}_{s\geq t}$ is Hilbert space valued Wiener process  with trace class covaraince defined on the filtered probability space  $\big(\Omega,\mathscr{F},\{\mathscr{F}_s^t\}_{s\geq t},\P\big)$. Here, $\boldsymbol{g}$ is an external forcing, and $\a(\cdot)$ denotes some control strategy, which will be described precisely in Section \ref{abscon}. 
	For the uniqueness of pressure, one can impose the condition $
	\int_{\mathbb{T}^d}p(s,\xi)\d\xi=0, s\in  [t,T].$ 
	The constant $\mu>0$ denotes the Brinkman coefficient (effective viscosity). The constants $\alpha,\beta>0$ are due to Darcy- Forchheimer law and we call $\alpha$ and $\beta$ as Darcy (permeability of the porous medium) and Forchheimer (proportional to the porosity of the material) coefficients, respectively. It can be easily seen that, for $\alpha=\beta=0$, the system \eqref{a} reduces to the classical stochastic Navier-Stokes equations (SNSE). Therefore, the system \eqref{a} can be considered as a damped SNSE.  The case of $r=3$ is referred in the literature as critical case (cf. \cite{KWH}).  Furthermore, we refer the case $r<3$ as \emph{subcritical} and $r>3$ as \emph{supercritical} (or fast growing nonlinearities).

	Let us mention that, the well-posedness results (existence and uniqueness of strong solutions) for the problem \eqref{a} is known in the literature for $r\in(3,\infty)$ in three dimensions also.
	\begin{table}[ht]
	\begin{tabular}{|c|c|c|c|c|}
		\hline
		\textbf{Dimension} &$ r$& Conditions on 
		$\mu$ \& $\beta$ \\
		\hline
		\textbf{d=2,3} &$r\in(3,\infty)$&  for any  
		$\mu>0$ and $\beta>0$  \\
		\hline
		\textbf{d=2,3} &$r=3$&for $\mu>0$ and
		$\beta>0$ with $2\beta\mu\geq1$ \\
		\hline
	\end{tabular}
	\vskip 0.1 cm
	\caption{Values of $\mu,\beta$ and $r$ for the comparison principle.}
	\label{Table1}
\end{table}
However, due to some technical difficulties (see Subsection \ref{sub-tech} below), we restrict the values of $r$ as mentioned in tables, to achieve the main goals of this work, that is,  \emph{the comparison principle} (Table \ref{Table1}) and the \emph{existence of viscosity solutions} (Table \ref{Table2}).
	\begin{table}[ht]
	\begin{tabular}{|c|c|c|c|c|}
		\hline
		\textbf{Dimension} &$ r$& Conditions on 
		$\mu$ \& $\beta$ \\
		\hline
		\textbf{d=2} &$r\in(3,\infty)$&  for any  
		$\mu>0$ and $\beta>0$  \\
		\hline
		\textbf{d=3} &$r\in(3,5)$&  for any  
		$\mu>0$ and $\beta>0$  \\
		\hline
		\textbf{d=2,3} &$r=3$&for $\mu>0$ and
		$\beta>0$ with $2\beta\mu\geq1$ \\
		\hline
	\end{tabular}
	\vskip 0.1 cm
	\caption{Values of $\mu,\beta$ and $r$ for the existence of viscosity solution.}
	\label{Table2}
\end{table}

	\subsubsection{Advantages of fast growing nonlinearities}\label{advnonabs}
	
    CBF equations are also known as NSE with damping (cf. \cite{MTT}). The damping arises from the resistance to the motion of the flow or by friction effects. The uniqueness of Leray-Hopf weak solutions and the global well-posedness of the strong solutions of 3D NSE are well-known open problems in Mathematical 
	Physics. The main difficulty lies in estimating the convective term under suitable norms. As an advantage, the fast growing nonlinearities (that is, $|\X|^{r-1}\X$ term with $r \geq 3$), as well as the diffusion term (that is, $-\Delta\X$) dominate the convective term and the convective term can be handled with the help of the H\"older and Young inequalities under suitable norms (see Remark \ref{BLrem}). Therefore, we are able to deal with the three dimensional case also. 

      \subsection{Deterministic and stochastic HJB equation}\label{detstchjb}
   In this work, we are mainly interested in the optimal control problem of SCBF equations \eqref{a} via \emph{dynamic programming method}. 
     The main idea of this method is to consider the family of optimal control problems with different initial times and states, and then establish a relationship among these problems via the so-called \emph{HJB equation}. These equations are of first order for the deterministic case and second order for the stochastic case. Below, we explain these equations in the context of our model, CBF equations. For more study on these equations in the finite-dimensional case, one can refer \cite{JYXYZ} and for infinite-dimensional case, see \cite{GFAS}.
    \subsubsection{Deterministic case (\cite{FGSSA1,SSS})} We consider  the functional framework $$\V\cap\widetilde{\L}^{r+1}\hookrightarrow\V\hookrightarrow\H\cong\H^*\hookrightarrow\V^*\hookrightarrow\V^*+\widetilde{\L}^{\frac{r+1}{r}},$$ as discussed  in Section \ref{Sec-2}. Let us first visit the deterministic case and consider the following CBF equations in $\H$ satisfied by the velocity vector filed $\Y:[t,T]\times\mathbb{T}^d\to\R^d$ (state variable):
    \begin{equation}\label{detHJB}
    	\left\{
    	\begin{aligned}
    		\frac{\d\Y(s)}{\d s}+\mu\A\Y(s)+\mathcal{B}(\Y(s))+\alpha\Y(s)+ \beta\mathcal{C}(\Y(s))&=\f(s,\a(s)) \  \text{ for a.e. } \ s\in[t,T],\\
    		\Y(t)&=\y\in\V,
    	\end{aligned}
    	\right.
    \end{equation}
    where, $\f:[0,T]\times\Theta\to\V$ is an external forcing, $\Theta$ is a complete metric space (a control set) and $\a(\cdot):[0,T]\to\Theta$ is a measurable function which plays the role of some control strategy. 
    Let us denote the set of all such control strategies by $\mathscr{U}$.  
    The optimal control problem consists of minimizing the following cost functional associated with the state equation \eqref{detHJB}:
    \begin{align}\label{detcost}
    	\mathcal{J}(t,\y;\a):=\int_t^T \ell(s,\Y(s),\a(s))\d s+g(\Y(T)),
    \end{align}
    for some measurable functions $\ell:[t,T]\times\V\times\Theta\to\R$ and $g:\H\to\R$ satisfying Hypothesis \ref{valueH}  below. For any given $\f\in\mathrm{L}^2(0,T;\H)$ and $\y\in\V,$ the controlled CBF equations \eqref{detHJB} has a unique strong solution 
    \begin{align*}
   \Y\in \C([0,T];\V)\cap\mathrm{L}^2(0,T;\D(\A))\cap\mathrm{L}^{r+1}(0,T;\wi{\L}^{p(r+1)}),
    \end{align*}  
    where $p\in[2,\infty)$ for $d=2$ and $p=3$ for $d=3$, continuously depending on the data (see \cite[Theorem 4.2]{SMTM}). Therefore, the cost functional defined in \eqref{detcost} is well-defined. The minimization of the cost functional \eqref{detcost} over $\mathscr{U}$ formally leads to the following HJB equation in $u:(0,T]\times\V\to\mathbb{R}$  for  $(t,\y)\in(0,T)\times\V:$
    \begin{equation}\label{detHJB1}
    	\left\{
    	\begin{aligned}
    		&u_t-(\mu\A\y+\mathcal{B}(\y)+\alpha\y+
    		\beta\mathcal{C}(\y),\D u)+F(t,\y,\D u)=0,\\
    		&u(T,\y)=g(\y), 
    	\end{aligned}
    	\right.
    \end{equation}
    where $$F(t,\y,\p)=\inf\limits_{\a\in\U} \left\{(\f(t,\a),\p)+\ell(t,\y,\a)\right\}$$ is the Hamiltonian function. 	Note that \eqref{detHJB1} is a  first order nonlinear partial differential equation (PDE). The above equation should be satisfied by the value function 
    \begin{align*}
    	\mathit{V}(t,\y):=\inf\limits_{\a(\cdot)\in\mathscr{U}} \mathcal{J}(t,\y,\a)
    \end{align*}
    in an appropriate sense. 	Let us make some comment about the PDE \eqref{detHJB1}. First note that, as the state  $\Y(\cdot)$ in \eqref{detHJB} evolves over the infinite-dimensional space $\V$, the above PDE \eqref{detHJB1} is called \emph{infinite-dimensional} or a \emph{PDE in infinitely many variables}. Solving such an infinite-dimensional PDE is the most difficult task. In general, this PDE not necessarily has a solution in the classical sense. In fact, the value function $\mathit{V}$ satisfies \eqref{detHJB1} in a generalized sense or what is known in the literature as the \emph{viscosity solution.}

    \subsubsection{Stochastic case}\label{stcoexm}
    The stochastic formulation of the problem is well-explained in Sections \ref{abscon} and \ref{valueSC}. In this case, our aim is to minimize the cost functional of the form
    \begin{align}\label{stochcost}
    	J(t,\y;\a(\cdot))=
    	\E\left\{\int_t^T \ell(s,\Y(s),\a(s))\d s+g(\Y(T))\right\},
    \end{align}
    over all admissible class $\mathscr{U}_t^\nu$ of controls strategies (see Subsection \ref{adcLa}). This leads to the following infinite-dimensional second order nonlinear PDE  for $ (t,\y)\in(0,T)\times\V$:
    \begin{equation}\label{eqn-HJBE}
    	\left\{
    	\begin{aligned}
    		&u_t+\frac12\mathrm{Tr}(\Q\D^2u)-(\mu\A\y+\mathcal{B}(\y)+\alpha\y+
    		\beta\mathcal{C}(\y),\D u)+F(t,\y,\D u)=0, \\
    		&u(T,\y)=g(\y).
    	\end{aligned}
    	\right.
    \end{equation}
    \emph{The main difference with the deterministic case is that the HJB equation \eqref{detHJB} is of first order, while  the extra second order term $\frac12\mathrm{Tr}(\Q\D^2u)$ is appearing in \eqref{eqn-HJBE}  due to the presence of the additive noise in the stochastic system \eqref{stap}.}

     Let us now briefly discuss the applicability of the control problem \eqref{stochcost}-\eqref{eqn-HJBE}. 
     Consider the running cost, terminal cost and the forcing as follows:
    \begin{equation*}
    	\left\{
    	\begin{aligned}
    		\ell(t,\y,\a)&=\|\nabla\times\y\|_{\H}^2+\frac12\|\a\|_{\H}^2,\\
    		g(\y)&=\|\y\|_{\H}^2 \ \text{ and } \ \\
    		\f(t,\a)&=\mathcal{K}\a\in\V.
    	\end{aligned}
    	\right.
    \end{equation*}
    Here $\mathcal{K}\in\mathscr{L}(\H;\V)$ is the control operator which is linear and continuous and, representing the spatial localization of the control. Further, the control $\a\in\U$, where $\U=\mathrm{B}_{\H}(0,R)\subset\H$ is the closed unit ball of radius $R$ in $\H$. For electrically conducting fluids such as liquid metals (also known as mercury or liquid sodium) and salt water (also known as electrolytes) such controllers can be realized through the action of the Lorentz force (cf. \cite[Chapter IV]{Schan}, \cite[Chapter 10]{HKM}). Since the Lorentz force is given by 
    $j\times B$, where $j$ is the current and $B$ is the magnetic field, spatial localization of such distributed control forces can be realized by choosing a suitable shape and location for the magnets. It is implemented in a boundary-layer-stabilization experiment in \cite{CHJS}.
    
    The Hamiltonian function in this case is 
    \begin{align*}
    	F(t,\y,\mathbf{p})=\|\nabla\times\y\|_{\H}^2+\mathit{h}
    	(\mathcal{K}^*\mathbf{p}),
    \end{align*}
    where $\mathit{h}(\cdot):\H\to\R$ is given by 
    \begin{align*}
    	\mathit{h}(\z):=\inf\limits_{\a\in\U}\left\{(\a,\z)+\frac12\|\a\|_{\H}^2\right\},
    \end{align*}
    and $\mathcal{K}^*$ is the adjoint of $\mathcal{K}$ considered as an operator from $\H$ to $\H$. In fact, one can  explicitly obtain $\mathit{h}(\cdot)$ as 
    \begin{align*}
    	\mathit{h}(\z)=\left\{\begin{array}{cc}-\frac{1}{2}\|\z\|_{\H}^2, &\text{ if } \|\z\|_{\H}\leq R,\\
    		-R\|\z\|_{\H}+\frac{R^2}{2}, &\text{ if } \|\z\|_{\H}>R.\end{array}\right.
    \end{align*}
    Moreover, we remark that the optimal feedback control is given \emph{formally} as 
    \begin{align}\label{optfeed}
    	\wi\a(t)=\mathfrak{M}\big(\mathcal{K}^*\D u(t,\y)\big),
    \end{align}
    where the function $\mathfrak{M}(\cdot)$ is given by
    \begin{align*}
    	\mathfrak{M}(\z):=\D\mathit{h}(\z)=
    	\begin{cases}
    		-\z, \ &\text{ if } \ \ \|\z\|_{\H}\leq R,\\
    		-\z\frac{R}{\|\z\|_{\H}}, \ &\text{ if } \ \ \|\z\|_{\H}> R.
    	\end{cases}
    \end{align*}
   Note that in the feedback formula \eqref{optfeed}, the term $\D u(\cdot,\cdot)$ needs to be justified. We cannot make sense of this term since we are working with non-smooth solutions (a viscosity solution). That is why, in general, we cannot obtain the feedback formula for HJB equation associated with stochastic CBF equations.
    
    Now, our primary focus is to discuss the solvability of the HJB equation \eqref{eqn-HJBE} for the stochastic CBF equations in detail like 
    \begin{itemize}
    	\item comparison principle, 
    	\item existence and uniqueness of viscosity solutions,
    \end{itemize}
     which is much more challenging and complicated  as compared to the deterministic one (see Sections \ref{valueSC}-\ref{extunqvisc1}). The deterministic case of  HJB equation associated with the optimal control problems for CBF equations can be explored similar to the case of stochastic CBF equations. {However, in the case of deterministic CBF equations, we can also show the existence of a viscosity solution for $r=5$ in $d=3$ with additional assumptions on the initial data, which we are not able to obtain in the case of stochastic CBF equations. The deterministic work is under preparation.} 
    
   \subsection{Literature survey}
   Let us now discuss the literature available for the stochastic CBF equations and viscosity solution of the HJB equation associated with fluid dynamic models. 
   \subsubsection{Stochastic CBF equations}
    Let us first discuss the literature available for stochastic CBF equations. For $\y\in\H$, by exploiting a monotonicity property of the linear and nonlinear operators as well as a stochastic generalization of the Minty-Browder technique, the author in \cite{KKMTM,MTM8} established the existence and uniqueness of a global strong solution $$\Y\in\C([0,T];\H)\cap\mathrm{L}^2(0,T;\V)\cap\mathrm{L}^{r+1}(0,T;\wi\L^{r+1}),\ \mathbb{P}\text{-a.s.},$$  satisfying the energy equality (It\^o's formula) for SCBF equations (in bounded domain and torus) driven by multiplicative Gaussian noise. 
	Under suitable assumptions on the initial data ($\y\in\V$) and noise coefficients, the author has also showed the following regularity result: 
	\begin{align*}
		\Y\in\C([0,T];\V)\cap\mathrm{L}^2(0,T;\D(\A))\cap\mathrm{L}^{r+1}(0,T;\wi{\L}^{p(r+1)}),\ \mathbb{P}\text{-a.s.},
	\end{align*}  
	where $p\in[2,\infty)$ for $d=2$ and $p=3$ for $d=3$. 
	The author in \cite{MT5} established, using the contraction mapping principle, the existence and uniqueness of local and global pathwise mild solutions for stochastic CBF equations perturbed by an additive L\'evy noise in $\R^d$, for $d\in\{2,3\}$. 
	For 2D and 3D  stochastic CBF equations perturbed by a multiplicative L\'evy noise, the existence of a weak martingale solution is proved in \cite{MTM9}. This is achieved by using a version of the Skorokhod embedding theorem for non-metric spaces (for $d\in\{2,3\}$ and $r\in[1,\infty)$), a compactness method, and the classical Faedo-Galerkin   approximation.
	\subsubsection{HJB equation and viscosity solution related to fluid dynamic problems}
One of the fundamental aspects of fluid mechanics is to understand the behaviour of turbulent flows. 
An interconnected problem is the question of controlling turbulence in the fluid flow. By this, we mean determining an optimal action that minimizes turbulence (\cite{F.T.}).
	 In this work, we study the \emph{dynamic programming approach for the control of turbulence in the incompressible fluid flows}. An emerging area in this field is the feedback control theory of fluid dynamic models, which has numerous industrial and engineering applications \cite{MDG,SSS}. The feedback synthesis of an optimal control problem for the stochastic Navier-Stokes equations and related models using the infinite-dimensional HJB equation represents one of the outstanding and exceptionally challenging problems in this direction. However, in the analysis of this equation, one encounters difficulties like the non-differentiability of the solutions or the non-uniqueness of the almost everywhere solutions. Therefore, a notion of a weak solution to such equations is needed. The appropriate notion of a weak solution is the notion of the viscosity solutions which resolve this problem.

The notion of viscosity solution was first introduced by the Crandall and Lions in \cite{MGL} (also see \cite{MGEL}) for the first order HJB equation in finite dimension. Later, Ishii \cite{Hsh1} proved the uniqueness of the viscosity solution (possibly unbounded) by establishing a comparison principle. 
	However, the second-order HJB is more complicated to solve than the first-order HJB, posing additional difficulty due to the presence of a higher order term. The notion of viscosity solution was extended to the second order case by Lions \cite{PL1,PL2,PL3} in the context of optimal control problem of diffusion processes. Interested readers can refer the survey article by the authors in \cite{MGHL} and the monographs \cite{WHF,JYXYZ} for a detailed study of viscosity solutions and complete treatment of finite-dimensional HJB equation. There is an increasing interest in, and a growing literature on, HJB equation in infinite dimensions. These equations were first studied by Barbu and Da Prato (for instance, see \cite{VBGDP}), setting the problem in classes of convex functions, using semigroup and perturbation methods. Further, the theory of viscosity solutions in the infinite-dimensional space was studied in a series of articles \cite{MGL1, MGL2, MGL3, MGL4} by Crandall and Lions. The author in \cite{PL4} extended the notion of viscosity solution for the unbounded case of second order HJB equation in the context of optimal control of Zakai equation (see also \cite{Hsh,Kshi}).  A comparison of our work with the works available for HJB equations associated with fluid dynamics models has been given in Subsection \ref{sub-comp} below. 
	
     Our objective is to examine the infinite-dimensional HJB equation that arise as an optimal control problem of stochastic CBF equations (or stochastic damped NSE) in two and three dimensions. \emph{The motivation behind such HJB equation comes from the problem of minimizing the enstrophy, that is,} $\int_0^T\|\nabla\times\Y(t)\|_{\H}^2\d t$ (see Subsection \ref{detstchjb}). 
     The HJB equation associated to  stochastic CBF equations (see \eqref{eqn-HJBE}) is a second order PDE whose unknown depends on time $t$ and the initial data $\y\in\V$. The linear part of the HJB equation  corresponds to the Kolmogorov equation associated to the uncontrolled stochastic CBF equations and its solution is given by the transition semigroup (see \cite{SGMM1}). In the context of the stochastic NSE it was studied by the authors \cite{FFFG,GDPAD} in two and three dimensions.
     The nonlinear part of the HJB equation involves the derivative of the unknown, and this term is more difficult to handle. 
     
	\subsection{Comparison with the works \cite{FGSSA,FGSSA1} and our contribution}\label{sub-comp}
	In the context of stochastic NSE, the associated HJB equation consists of the unbounded term (corresponding to the Stokes operator) and the nonlinear term (corresponding to convective term), which makes this problem more challenging to solve. 
	It was mentioned in \cite{SSS1} that global unique solvability of the infinite-dimensional HJB equation associated with  NSE is an open problem. It was first resolved in \cite{FGSSA1} for deterministic NSE and later in \cite{FGSSA} for the stochastic NSE in two dimensions. 
	
Meanwhile, in the present work, we are working with  CBF equations (or damped NSE), which are characterized by an additional highly nonlinear term, also known as an absorption term (as outlined in \eqref{a}). This extra term introduces significant complexity to the system, making the associated HJB equation much more complicated to solve than those associated with the stochastic NSE, which does not include this term. The nonlinear absorption term complicates the analytical approach and increases the computational difficulty, demanding more advanced techniques to solve the HJB equation associated with stochastic CBF equations.
	
The solvability of HJB equation in the previous works \cite{FGSSA, FGSSA1} is limited to two dimensions. This limitation arises because the global well-posedness of the strong solution for 3D-NSE is not yet known. Fortunately, our approach benefits from the unique mathematical properties of the absorption term $|\X|^{r-1}\X$, which has a mathematical advantage over the convective term $(\X\cdot\nabla)\X$ (see Subsection \eqref{advnonabs}). Specifically, for $r>3$ in dimension $d\in\{2,3\}$ and $r=3$ in $d=3$ with $2\beta\mu\geq1$, the absorption term along with the diffusion term (that is, $-\Delta\X$) dominate the convective term (see Remark \ref{BLrem}). This dominance leads to the global solvability of  CBF equations (or damped NSE), and thus, we extend the result from work \cite{FGSSA, FGSSA1} to 3D-CBF equations.
	 
	 The HJB equation \eqref{eqn-HJBE} considered in this work is more general than those considered in the literature (cf. \cite{gPaD}). The authors in \cite{gPaD} investigated semilinear equations associated with an optimal control problem from the mild solutions point of view. However, the viscosity solution approach is more general because we can handle  more complicated cost functionals. Following the works of \cite{FGSSA}, we apply our approach to a stochastic optimal control problem with the associated HJB equation, which is fully nonlinear in the gradient variable (see \eqref{eqn-HJBE}). Let us comment on the randomness (or noise) considered in the system \eqref{a}.  In \cite{gPaD}, the additive noise (or Wiener process) is supposed to be non-degenerate, and it must satisfy some additional assumptions that allow the corresponding Ornstein-Uhlenback (or transition semigroup) to have some smoothing properties. However, the covariance operator $\Q$  can also degenerate in the present work. But then, we must assume  Hypothesis \ref{trQ1} to avoid the non-degenerate case as in \cite{gPaD}. Moreover, since we are dealing with non-smooth (viscosity solution) solutions, we cannot derive the formula for optimal feedback control as done in \cite{gPaD}.
	 
%

\subsection{Difficulties, strategies and approaches}\label{sub-tech}
Note that, unlike the whole space and periodic domain,  the major difficulty in working with bounded domains is that $\mathcal{P}(|\Y|^{r-1}\Y)$  need not be zero on the boundary, and $\mathcal{P}$ and $-\Delta$ are not necessarily commuting (see \cite{JCR}). Therefore, the equality 
\begin{align}\label{toreq}
	&\int_{\mathcal{O}}(-\Delta\Y(\xi))\cdot|\Y(\xi)|^{r-1}\Y(\xi)\d \xi \nonumber\\&=\int_{\mathcal{O}}|\nabla\Y(\xi)|^2|\Y(\xi)|^{r-1}\d \xi+ \frac{r-1}{4}\int_{\mathcal{O}}|\Y(\xi)|^{r-3}|\nabla|\Y(\xi)|^2|^2\d \xi,
\end{align}
may not be useful in the context of bounded domains. In a $d$-dimensional torus $\mathbb{T}^d$, the Helmholtz-Hodge projection $\mathcal{P}$ and $-\Delta$ commutes (cf. \cite[Theorem 2.22]{JCR}). So, the equality \eqref{toreq} is quite useful in obtaining regularity results. 
When deriving the energy estimates for the case of strong solutions, a significant difficulty arises in handling the term $(\mathcal{B}(\Y),\A\Y)$ (see \eqref{vse7.1} of Propositions \ref{weLLp}). The authors in \cite{FGSSA} overcame this difficulty by utilizing the fact $(\mathcal{B}(\Y),\A\Y)=0$, which is valid only in two dimensions. However, in 3D, this identity does not hold, and this makes the treatment of the term $(\mathcal{B}(\Y),\A\Y)$ considerably more difficult. The same difficulty arises when we prove the comparison principle (see \textbf{Step-IV} of Proposition \ref{comparison}). Providentially in our work, the presence of the absorption term $|\Y|^{r-1}\Y$ conquer this difficulty by means of the equality \eqref{toreq} in both 2D and 3D (see Remark \ref{BLrem}). And therefore the identity $(\mathcal{B}(\Y),\A\Y)=0$ in $\mathbb{T}^2$ is no longer needed in our work. 
The well-posedness of  stochastic CBF equations for any $\y\in\V$ has been established in \cite[Theorem 3.7]{MTM8} (also see \cite[Theorem 3.7]{KKMTM}) for $r>3$ in $d\in\{2,3\}$ and for $r=3$ in $d\in\{2,3\}$ with $2\beta\mu\geq1$.

The significant difficulty of this work is in proving the comparison principle (Theorem \ref{comparison}) and the existence and uniqueness of the viscosity solutions (Section \ref{extunqvisc1}) associated with the optimal control problem of SCBF equations \eqref{a}. We follow similar  approach  performed in \cite{FGSSA, GFAS}.
However, the definition of test function (Definition \ref{testD}) considered here plays an important role in the analysis of our work. The authors in \cite{FGSSA}, consider the $\|\cdot\|_{\V}-$norm in the definition of test function which is equivalent to the $\|\nabla\cdot\|_{\H}-$norm, because of the Poincar\'e inequality. However in the present work, we are working on a torus where zero-average condition does not hold (see Subsection \ref{zerofunc} on functional setting) and hence in our definition of test function we have to deal with the full $\V-$norm, that is, $\|\y\|_{\V}^2=\|\y\|_{\H}^2+\|\nabla\y\|_{\H}^2$. Therefore, it is essential to note that, while applying the It\^o formula to the function $\delta(\cdot)(1+\|\Y_\eps(\cdot)\|_{\V}^2)^m$ (see \eqref{vdp3}-\eqref{vdp3.1}) in the proof of the existence of viscosity solutions (Theorem \ref{extunqvisc}), the (Fr\'echet) derivative of $\|\cdot\|_{\V}^2-$norm is not the same as in the case of NSE (for which zero-average condition holds, cf. \cite{FGSSA, GFAS}).
In particular, we have following:
\begin{align}
	\D(\|\y\|_{\V}^2)&=2\A\y, \ \text{ in the presence of zero-average condition (for NSE),}
	\label{frA}\\
    \D(\|\y\|_{\V}^2)&=2(\A+\I)\y, \ \text{ in the absence of zero-average condition (for CBF)}.\label{frAA}
\end{align}
Let us now mention how this fact influences the analysis of our work. 
First, since zero average condition does not hold for the SCBF system \eqref{a}, Stoke's operator $\A=-\mathcal{P}\Delta$ is not invertible as $0$ is an eigenvalue. However, $\A+\I$ (which appears in \eqref{frAA}) is invertible. This fact we are using to get
the weak convergence \eqref{wknm2} (see \eqref{wknmdiff}), which is the most important step in the proof of Theorem \ref{extunqvisc}.
Moreover, the term $(\A+\I)\y$, for $\y\in\D(\A+\I)$, helps us to avoid the restriction on $r$ (that is, for any $r>3$) in the proof of comparison theorem (see the calculations \eqref{viscdef1}-\eqref{viscdef3} in Step-IV of Theorem \ref{comparison}). 
The linear damping term $\alpha\y$ plays a crucial role to derive the energy estimates (see Proposition \ref{weLLp}) and to obtain the bound of the term involving $\|(\A+\I)\y\|_{\H}^2$ in the proof Theorem \ref{extunqvisc}. 

The additional difficulty arising in our work as compared to the works of \cite{FGSSA, GFAS} is to handle the following terms while proving Theorem \ref{extunqvisc} (existence of viscosity solution):
\begin{align}\label{extunqdff}
	\underbrace{\frac{1}{\eps}\E\int_{t_0}^{t_0+\eps}\delta(s)(\mathcal{B}(\Y_\eps(s)),(\A+\I)\Y_\eps(s))\d s}_{\text{integral with bilinear operator}} \ \text{ and } \ 
	\underbrace{\frac{1}{\eps}\E\int_{t_0}^{t_0+\eps}\delta(s)(\mathcal{C}(\Y_\eps(s)),(\A+\I)\Y_\eps(s))\d s}_{\text{integral with nonlinear operator}}.
\end{align}
In the works \cite{FGSSA, GFAS}, such terms do not appear since they considered the problem (in the context of NSE) on $\mathbb{T}^2$ where the condition $(\mathcal{B}(\y),\A\y)=0$ holds. However, in our case, since we are working in 3D also, we need to address and include these terms also. Apart from that, we have also to justify the  integral term with the nonlinear operator, as mentioned in \eqref{extunqdff}, due to the presence of absorption term in our model \eqref{a} (damped-NSE). We can justify the first term in \eqref{extunqdff} as `$\eps\to0$' with the help of uniform energy estimates given in \eqref{ssee1} and continuous dependence estimates \eqref{ctsdep0.1}-\eqref{ctsdep0.2} (see Appendix \ref{wknBA1}) in both two and three-dimensions. Meanwhile, the second term in \eqref{extunqdff} is hard to justify as `$\eps\to0$' for all absorption exponent $r>3$. This difficulty arises when applying H\"older's inequality to the expectation and then to the time integral. Specifically, the exponent $\frac{4(r+1)}{5-r}$ appears by performing this calculation, which forces us to restrict $3\leq r<5$ in three dimension. We cannot include the case $r=5$ also (see Appendix \ref{wknCA1}). Such restriction is not occurring in two dimensions due to the Sobolev embedding $\V\hookrightarrow\wi\L^{r+1}$ for any $r>3$.
Similar difficulty also occurs in the proof of comparison principle (Theorem \ref{comparison}), but there, the terms occur without integral and expectation (that is, stationary estimates; for instance, see  \eqref{viscdef3}), and therefore no restriction we are getting on the absorption exponent $r$ and hence comparison principle is valid for all $r>3$ (see \eqref{Cconv4}-\eqref{Cconv1} of Appendix \ref{useca}).

\subsection{Future scopes in this direction} 
The viscosity solution theory is not restricted only to the HJB equation and stochastic optimal control problems. It has huge applications in stochastic PDEs. One of the very interesting problem in this area is the Large deviation principle (LDP). The authors in \cite{JFTGK} proposed a new framework for large deviations based on nonlinear semigroup techniques and viscosity solutions in abstract spaces for metric space-valued Markov processes. In a nutshell, for the large deviation principle, one has to prove the exponential tightness and the existence of the so-called Laplace limit. The key idea is to identify the Lapalce limit (at a single time) as the convergence of viscosity solutions of singularly perturbed HJB equation (the skeleton equation)  and then  one can use the whole machinery of the viscosity solutions. The author in \cite{AS2} utilizes this technique for Hilbert space-valued diffusions to obtain a large deviation result for 2D stochastic NSE. Later, the authors in \cite{AS3Z} established a Laplace limit as a consequence of the convergence of the viscosity solutions of certain integro-PDE in a Hilbert space to the viscosity solution of the limiting first-order HJB equation.  It is a challenging problem in this area and in future we plan to work on this problem, that is,  the large deviation principle via the viscosity solution approach (as mentioned above), for 2D and 3D  stochastic CBF equations.

Another crucial and challenging problem in this direction is the viscosity solutions of the HJB equation for the optimal control problem for jump diffusions (or L\'evy noise). This problem has not been explored in the context of stochastic NSE or related models. It is because of the lack of exponential moments in the case of stochastic NSE. However, the authors in \cite{AS5Z, AS4Z} established the existence and uniqueness of viscosity solutions for HJB equation associated with certain integro-PDE. We are also planning to work on this challenging problem for stochastic CBF equations.

\subsection{Organization of the paper} 
The remaining sections are arranged as follows: In the upcoming section, we provide the necessary function spaces needed for the analysis of SCBF equations and the associated HJB equation. In Section \ref{abscon}, we provide the abstract formulation of the stochastic CBF equations \eqref{a} and the necessary stochastic functional framework, which are used throughout the work. In Section \ref{engdef}, we define  variational and strong solutions in the probabilistic sense (Definition \ref{def-var-strong}) of the stochastic CBF equations \eqref{stap}. We further prove the energy estimates (Proposition \ref{weLLp}) for the $p^{\mathrm{th}}$-moment of the stochastic process $\Y(\cdot)$ and also prove the estimates for the continuous dependence of the solution (Proposition \ref{cts-dep-soln}) of the system \eqref{stap}. Section \ref{valueSC} is the core of this article. We first formulate the stochastic optimal control (Subsection \ref{stoptcon}) and state the continuous dependence result for the value function and dynamic programming principle (Proposition \ref{valueP}). We then define viscosity solution for the HJB equation \eqref{HJBE} (Subsection \ref{compPR}) and prove one of the most important results of the this theory, the \emph{comparison principle} (Theorem \ref{comparison}). Finally, we prove the existence and uniqueness of the viscosity solution for the HJB equation \eqref{HJBE2} in  Section \ref{extunqvisc1} (Theorem \ref{extunqvisc}).

	\section{Functional Framework}\label{Sec-2}\setcounter{equation}{0}
	In this section, we define some necessary function spaces which are frequently used in the rest of the paper, and linear and nonlinear operators which help us to obtain the abstract formulation of the SCBF system \eqref{a}. For our analysis, we adapt the functional framework from the work \cite{JCR1}.

	\subsection{Function spaces}\label{zerofunc}
	Let $\C_{\mathrm{p}}^{\infty}(\mathbb{T}^d;\R^d)$ denote the space of all infinitely differentiable  functions $\u$ satisfying periodic boundary conditions $\u(x+\mathrm{L}e_{i},\cdot) = \u(x,\cdot)$, for $x\in \R^d$. \emph{We are not assuming the zero mean condition for the velocity field unlike the case of NSE, since the absorption term $\beta|\u|^{r-1}\u$ does not preserve this property (see \cite{MTT}). Therefore, we cannot use the well-known Poincar\'e inequality and we have to deal with the  full $\H^1$-norm.} The Sobolev space  $\H_{\mathrm{p}}^s(\mathbb{T}^d):=\mathrm{H}_{\mathrm{p}}^s(\mathbb{T}^d;\mathbb{R}^d)$ is the completion of $\C_{\mathrm{p}}^{\infty}(\mathbb{T}^d;\R^d)$  with respect to the $\H^s$-norm and the norm on the space $\H_{\mathrm{p}}^s(\mathbb{T}^d)$ is given by $$\|\u\|_{{\H}^s_{\mathrm{p}}}:=\left(\sum_{0\leq|\boldsymbol\alpha|
		\leq s} \|\D^{\boldsymbol\alpha}\u\|_{\mathbb{L}^2(\mathbb{T}^d)}^2\right)^{1/2}.$$ 	
	It is known from \cite[Proposition 5.39]{JCR1} that the Sobolev space of periodic functions $\H_{\mathrm{p}}^s(\mathbb{T}^d)$, for $s\geq0$ is the same as 
	$$\left\{\u:\u=\sum_{k\in\mathbb{Z}^d}\y_{k}\mathrm{e}^{2\pi i k\cdot x /  \mathrm{L}},\ \overline{\u}_{k}=\u_{-k}, \  \|\u\|_{{\H}^s_f}:=\left(\sum_{k\in\mathbb{Z}^d}(1+|k|^{2s})|\u_{k}|^2\right)^{1/2}<\infty\right\}.$$ We infer from \cite[Propositions 5.38]{JCR1} that the norms $\|\cdot\|_{{\H}^s_{\mathrm{p}}}$ and $\|\cdot\|_{{\H}^s_f}$ are equivalent. Let us define 
	\begin{align*} 
		\mathscr{V}:=\{\u\in\C_{\mathrm{p}}^{\infty}(\mathbb{T}^d;\R^d):\nabla\cdot\u=0\}.
	\end{align*}
	We define the spaces $\H$ and $\widetilde{\L}^{p}$ as the closure of $\mathscr{V}$ in the Lebesgue spaces $\L^2(\mathbb{T}^d):=\mathrm{L}^2(\mathbb{T}^d;\R^d)$ and $\L^p(\mathbb{T}^d):=\mathrm{L}^p(\mathbb{T}^d;\R^d)$ for $p\in(2,\infty]$, respectively. We endow the space $\H$ with the inner product and norm of $\L^2(\mathbb{T}^d),$ and are denoted by 
	\begin{align*}
		&(\u,\v):=(\u,\v)_{\L^2(\mathbb{T}^d)}=\int_{\mathbb{T}^d}\u(x)\cdot\v(x)\d x\\ \text{ and } \  &\|\u\|_{\H}^2:=\|\u\|_{\L^2(\mathbb{T}^d)}^2=\int_{\mathbb{T}^d}|\u(x)|^2\d x, \ \text{ for } \ \u,\v\in\H.
	\end{align*}
	For $p\in(2,\infty)$, the space $\widetilde{\L}^{p}$ is endowed with the norm of $\L^p(\mathbb{T}^d)$, which is defined by 
	\begin{align*}
		\|\u\|_{\widetilde{\L}^p}^p:=\|\u\|_{\L^p(\mathbb{T}^d)}^p
		=\int_{\mathbb{T}^d}|\u(x)|^p\d x \  \text{ for } \ \u\in\wi\L^p.
	\end{align*}
	For $p=\infty$, the space $\widetilde{\L}^{\infty}$ is endowed with the norm of $\L^{\infty}(\mathbb{T}^d)$, which is given by 
		\begin{align*}
		\|\u\|_{\widetilde{\L}^{\infty}}:=\|\u\|_{\L^{\infty}(\mathbb{T}^d)}
		=\esssup_{x\in\mathbb{T}^d}|\u(x)| \  \text{ for } \ \u\in\wi\L^{\infty}.
	\end{align*}
	We also define the space $\V$ as the closure of $\mathscr{V}$ in the Sobolev space $\H^1_{\mathrm{p}}(\mathbb{T}^d)$. We equip the space $\V$ with the inner product
	\begin{align*}
		(\u,\v)_{\V}&:=(\u,\v)_{\L^2(\mathbb{T}^d)}+(\nabla\u,\nabla\v)_{\L^2(\mathbb{T}^d)}\nonumber\\&=
		\int_{\mathbb{T}^d}\u(x)\cdot\v(x)\d x+\int_{\mathbb{T}^d}\nabla\u(x)\cdot\nabla\v(x)\d x\ \text{ for } \ \u,\v\in\V,
     \end{align*}
     and the norm
     \begin{align*}
		\|\u\|_{\V}^2:=\|\u\|_{\L^2(\mathbb{T}^d)}^2+\|\nabla\u\|_{\L^2(\mathbb{T}^d)}^2=\int_{\mathbb{T}^d}|\u(x)|^2\d x+\int_{\mathbb{T}^d}|\nabla\u(x)|^2\d x \ \text{ for } \ \u,\v\in\V.
	\end{align*}
%
	Let $\langle \cdot,\cdot\rangle $ represent the induced duality between the spaces $\V$  and its dual $\V^*$ as well as $\widetilde{\L}^p$ and its dual $\widetilde{\L}^{p'}$, where $\frac{1}{p}+\frac{1}{p'}=1$. Note that $\H$ can be identified with its own dual $\H^*$. From \cite[Subsection 2.1]{FKS}, we have that the sum space $\V^*+\widetilde{\L}^{p'}$ is well defined and  is a Banach space with respect to the norm 
	\begin{align*}
		\|\u\|_{\V^*+\widetilde{\L}^{p'}}&:=\inf\{\|\u_1\|_{\V^*}+\|\u_2\|_{\wi\L^{p'}}:\u=\u_1+\u_2, \u_1\in\V^* \ \text{and} \ \u_2\in\wi\L^{p'}\}\nonumber\\&=
		\sup\left\{\frac{|\langle\u_1+\u_2,\boldsymbol{\xi}\rangle|}{\|\boldsymbol{\xi}\|_{\V\cap\widetilde{\L}^p}}:\boldsymbol{0}\neq\boldsymbol{\xi}\in\V\cap\widetilde{\L}^p\right\},
	\end{align*}
	where $\|\cdot\|_{\V\cap\widetilde{\L}^p}:=\max\{\|\cdot\|_{\V}, \|\cdot\|_{\wi\L^p}\}$ is a norm on the Banach space $\V\cap\widetilde{\L}^p$. Also the norm $\max\{\|\cdot\|_{\V}, \|\cdot\|_{\wi\L^p}\}$ is equivalent to the norms  $\|\cdot\|_{\V}+\|\cdot\|_{\widetilde{\L}^{p}}$ and $\sqrt{\|\cdot\|_{\V}^2+\|\cdot\|_{\widetilde{\L}^{p}}^2}$ on the space $\V\cap\widetilde{\L}^p$. Furthermore, we have
	$$
	(\V^*+\widetilde{\L}^{p'})^*\cong	\V\cap\widetilde{\L}^p \  \text{and} \ (\V\cap\widetilde{\L}^p)^*\cong\V^*+\widetilde{\L}^{p'}.
	$$
	Moreover, we have the continuous embeddings $\V\cap\widetilde{\L}^p\hookrightarrow\V\hookrightarrow\H\cong\H^*\hookrightarrow\V^*\hookrightarrow\V^*+\widetilde{\L}^{p'},$ where the embedding $\V\hookrightarrow\H$ is compact. 
	
	Apart from these functional settings, we use the following function spaces while proving the comparison principle and showing the existence and uniqueness of a viscosity solution:

	We denote by $\mathrm{C}^2(\H)$ (and $\mathrm{C}^2(\V)$), the space of all functions which are continuous on $\H$ (and $\V$) together with all their Fr\'echet derivatives up to order $2$. For a given $0\leq t<T$, we denote by $\mathrm{C}^{1,2}((t,T)\times\H)$, the space of all functions $\varphi:(t,T)\times\H\to\R$ for which $\varphi_t$ and $\D\varphi, \D^2\varphi$ (the Fr\'echet derivatives of $\varphi$ with respect to $\x\in\H$) exist and are uniformly continuous on closed and bounded subsets of $(t,T)\times\H$.

	\subsection{Linear operator}\label{linope}
	Let $\mathcal{P}_p: \L^p(\mathbb{T}^d) \to\wi\L^p,$ $p\in[1,\infty)$ be the Helmholtz-Hodge (or Leray) projection operator (cf.  \cite{DFHM}, etc.).	Note that $\mathcal{P}_p$ is a bounded linear operator and for $p=2$,  $\mathcal{P}:=\mathcal{P}_2$ is an orthogonal projection (see \cite[Section 2.1]{JCR}). We define the Stokes operator 
	\begin{equation*}
		\left\{
		\begin{aligned}
			\A\u&:=-\mathcal{P}\Delta\u=-\Delta\u,\;\u\in\D(\A),\\
			\D(\A)&:=\V\cap{\H}^{2}_\mathrm{p}(\mathbb{T}^d). 
		\end{aligned}
		\right.
	\end{equation*}
	For the Fourier expansion $\u=\sum\limits_{k\in\mathbb{Z}^d} e^{2\pi i k\cdot x} \u_{k} ,$ we calculate by using Parseval's identity
	\begin{align*}
		\|\u\|_{\H}^2=\sum\limits_{k\in\mathbb{Z}^d} |\u_{k}|^2 \  \text{and} \ \|\A\u\|_{\H}^2=(2\pi)^4\sum_{k\in\mathbb{Z}^d}|k|^{4}|\u_{k}|^2.
	\end{align*}
	Therefore, we have 
	\begin{align*}
		\|\u\|_{\H^2_\mathrm{p}}^2=\sum_{k\in\mathbb{Z}^d}|\u_{k}|^2+\sum_{k\in\mathbb{Z}^d}|k|^{4}|\u_{k}|^2= \|\u\|_{\H}^2+\frac{1}{(2\pi)^4}\|\A\u\|_{\H}^2\leq\|\u\|_{\H}^2+\|\A\u\|_{\H}^2.
	\end{align*}
	Moreover, by the definition of $\|\cdot\|_{\H^2_\mathrm{p}}$, we have $	\|\u\|_{\H^2_\mathrm{p}}^2\geq\|\u\|_{\H}^2+\|\A\u\|_{\H}^2$ and hence it is immediate that both the norms are equivalent and  $\D(\A+\I)=\H^2_\mathrm{p}(\mathbb{T}^d)=:\V_2$.

	\begin{remark}\label{rg3L3r}
   1.) For $d=2$, by using the Sobolev embedding, $\H_{\mathrm{p}}^1(\mathbb{T}^d)\hookrightarrow\L^p(\mathbb{T}^d)$, for all $p\in[2,\infty)$, we find 
		\begin{align*}
			\|\u\|^{r+1}_{\L^{p(r+1)}(\mathbb{T}^d)}&=\||\u|^{\frac{r+1}{2}}\|_{\L^{2p}(\mathbb{T}^d)}^2\leq C\||\u|^{\frac{r+1}{2}}\|_{\H_{\mathrm{p}}^1(\mathbb{T}^d)}^2\nonumber\\&\leq C\bigg(\int_{\mathbb{T}^d}|\nabla|\u(x)|^{\frac{r+1}{2}}|^2\d x+
			\int_{\mathbb{T}^d}||\u(x)|^{\frac{r+1}{2}}|^2\d x\bigg).
		\end{align*}
		By using elementary calculus identity $\partial_k(|\u|^{r+1})=(r+1)u_k(\partial u_k)|\u|^{r-1}$, where $\partial_k$ denotes the $k$-th partial derivative, we infer that $|\nabla|\u|^{\frac{r+1}{2}}|^2\leq C_r|\u|^{r-1}|\nabla\u|^2$ (see the proof of \cite[Lemma 1]{JCRW}). Thus, we further have 
		\begin{align}\label{3a71}
			\|\u\|^{r+1}_{\L^{p(r+1)}(\mathbb{T}^d)}\leq C\bigg(\int_{\mathbb{T}^d}|\nabla\u(x)|^2|\u(x)|^{r-1}\d x+ \int_{\mathbb{T}^d}|\u(x)|^{r+1}\d x\bigg),
		\end{align}
		for all $\u\in\V_2$ and for any $p\in[2,\infty)$. 
		
		3.) Similarly, for $d=3$, by the Sobolev embedding $\H_{\mathrm{p}}^1(\mathbb{T}^d)\hookrightarrow\L^6(\mathbb{T}^d)$, we find
		\begin{align}\label{371}
			\|\u\|^{r+1}_{\L^{3(r+1)}(\mathbb{T}^d)}&=\||\u|^{\frac{r+1}{2}}\|_{\L^{6}(\mathbb{T}^d)}^2\leq C\||\u|^{\frac{r+1}{2}}\|_{\H_{\mathrm{p}}^1(\mathbb{T}^d)}^2
			\nonumber\\&\leq C\bigg(\int_{\mathbb{T}^d}|\nabla\u(x)|^2|\u(x)|^{r-1}\d x+ \int_{\mathbb{T}^d}|\u(x)|^{r+1}\d x\bigg), 
		\end{align}
		for all $\u\in\V_2$. 
	\end{remark}

	\subsection{Bilinear operator}
		Let $b(\cdot,\cdot,\cdot):\V\times\V\times\V\to\R$ be a continuous trilinear form defined by
		\begin{align*}
			b(\u,\v,\w)=\int_{\mathbb{T}^d}(\u(x)\cdot\nabla)\v(x)\cdot\w(x)\d x.
		\end{align*} 
		Using weak incompressibility condition, the trilinear form satisfies $b(\u,\v,\v)=0$ for all $\u,\v\in\V$. By the Riesz representation theorem, there exists a continuous bilinear map $\mathcal{B}(\cdot,\cdot):\V\times\V\to\R$ such that $\langle\mathcal{B}(\u,\v),\w\rangle=b(\u,\v,\w)$ for all $\u,\v,\w\in\V$, which also satisfies (see \cite {Te})
		\begin{align}\label{syymB}
			\langle\mathcal{B}(\u,\v),\w\rangle=-\langle\mathcal{B}(\u,\w),\v\rangle \ \text{ and } \
			\langle\mathcal{B}(\u,\v),\v\rangle=0,
		\end{align}
		for any $\u,\v,\w\in\V$. Without danger of ambiguity, we also denote $\mathcal{B}(\u) = \mathcal{B}(\u, \u)$. Note that if $\u,\v\in\H$ are such that $(\u\cdot\nabla)\v=\sum\limits_{j=1}^d u_j\frac{\partial v_j}{\partial x_j}\in\L^2(\mathbb{T}^d)$, then $\mathcal{B}(\u,\v)=\mathcal{P}[(\u\cdot\nabla)\v]$.

		\begin{remark}\label{BLrem}\cite[Theorem 2.5]{SMTM}
			1.) In view of \eqref{syymB}, along with H\"older's and Young's inequalities, we calculate
			\begin{align}\label{syymB1}
				|\langle\mathcal{B}(\u)-\mathcal{B}(\v),\u-\v\rangle|\leq
				\frac{\mu }{2}\|\nabla(\u-\v)\|_{\H}^2+\frac{1}{2\mu }\||\v|(\u-\v)\|_{\H}^2.
			\end{align} 
			Using H\"older's and Young's inequalities, we estimate the term $\||\v|(\u-\v)\|_{\H}^2$ as
			\begin{align}\label{syymB2}
				\int_{\mathbb{T}^d}|\v(x)|^2|\u(x)-\v(x)|^2\d x &=\int_{\mathbb{T}^d}|\v(x)|^2|\u(x)-\v(x)|^{\frac{4}{r-1}}|\u(x)-\v(x)|^{\frac{2(r-3)}{r-1}}\d x\nonumber\\&\leq\frac{\beta\mu }{2}\||\v|^{\frac{r-1}{2}}(\u-\v)\|_{\H}^2+\frac{r-3}{r-1}\left[\frac{4}{\beta\mu (r-1)}\right]^{\frac{2}{r-3}}\|\u-\v\|_{\H}^2,
			\end{align}
			for $r>3$. Using \eqref{syymB2} in \eqref{syymB1}, we find 
			\begin{align}\label{3.4}
				|\langle\mathcal{B}(\u)-\mathcal{B}(\v),\u-\v\rangle|\leq
				\frac{\mu }{2}\|\nabla(\u-\v)\|_{\H}^2 +\frac{\beta}{4}\||\v|^{\frac{r-1}{2}}(\u-\v)\|_{\H}^2 +\varrho\|\u-\v\|_{\H}^2,
			\end{align}
			where \begin{align}\label{eqn-varrho}
				\varrho:=\frac{r-3}{2\mu(r-1)}\left[\frac{4}{\beta\mu (r-1)}\right]^{\frac{2}{r-3}}.
				\end{align}
			
			2.) In a similar way, one can establish the following inequality:
			\begin{align}\label{syymB3}
				|(\B(\u),\A\u)|\leq\frac{\mu}{2}\|\A\u\|_{\H}^2+\frac{\beta}{4} \||\u|^{\frac{r-1}{2}}\nabla\u\|_{\H}^2+\varrho\|\nabla\u\|_{\H}^2.
			\end{align}
		\end{remark}

	 \subsection{Nonlinear operator}
		Let us define the operator $$\mathcal{C}(\u):=\mathcal{P}(|\u|^{r-1}\u)\ \text{ for }\ \u\in\V\cap\L^{r+1}.$$   Since the projection operator $\mathcal{P}$ is bounded from $\H^1$ into itself (cf. \cite[Remark 1.6]{Te}), the operator $\mathcal{C}(\cdot):\V\cap\widetilde{\L}^{r+1}\to\V^*+\widetilde{\L}^{\frac{r+1}{r}}$ is well-defined and we have $\langle\mathcal{C}(\u),\u\rangle =\|\u\|_{\widetilde{\L}^{r+1}}^{r+1}.$ Moreover, for all $\u\in\V\cap\L^{r+1}$, the map $\mathcal{C}(\cdot):\V\cap\widetilde{\L}^{r+1}\to\V^*+\widetilde{\L}^{\frac{r+1}{r}}$ is Gateaux differentiable with Gateaux derivative given by 
		\begin{align}\label{C}
			\mathcal{C}'(\u)\v&=\left\{\begin{array}{cl}\mathcal{P}(\v),&\text{ for }r=1,\\ \left\{\begin{array}{cc}\mathcal{P}(|\u|^{r-1}\v)+(r-1)\mathcal{P}\left(\frac{\u}{|\u|^{3-r}}(\u\cdot\v)\right),&\text{ if }\u\neq \mathbf{0},\\\mathbf{0},&\text{ if }\u=\mathbf{0},\end{array}\right.&\text{ for } 1<r<3,\\ \mathcal{P}(|\u|^{r-1}\v)+(r-1)\mathcal{P}(\u|\u|^{r-3}(\u\cdot\v)), &\text{ for }r\geq 3,\end{array}\right.
		\end{align}
		for all $\v\in\V\cap\widetilde{\L}^{r+1}$. 
		The following lemma pertains to some monotonicity estimates of the nonlinear operator $\mathcal{C}(\cdot)$, which we use frequently throughout this work.
		\begin{lemma}[{\cite[Subsection 2.4]{SMTM}}]
			For every $r\geq1$ and for all $\u,\v,\w\in\widetilde{\L}^{r+1}$, the nonlinear operator $\mathcal{C}(\cdot)$ satisfies following estimates:
			\begin{align}\label{monoC1}
				\langle\mathcal{C}(\u)-\mathcal{C}(\v),\w\rangle\leq
				r\left(\|\u\|_{\widetilde{\L}^{r+1}}+\|\v\|_{\widetilde{\L}^{r+1}}\right)^{r-1}\|\u-\v\|_{\widetilde{\L}^{r+1}}\|\w\|_{\widetilde{\L}^{r+1}}
			\end{align}
			and 
			\begin{align}\label{monoC2}
				\langle\mathcal{C}(\u)-\mathcal{C}(\v),\u-\v\rangle\geq \frac{1}{2}\||\u|^{\frac{r-1}{2}}(\u-\v)\|_{\H}^2+\frac{1}{2}\||\v|^{\frac{r-1}{2}}(\u-\v)\|_{\H}^2\geq\frac{1}{2^{r-1}}\|\u-\v\|_{\wi\L^{r+1}}^{r+1}.
			\end{align}
	\end{lemma}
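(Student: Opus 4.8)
The plan is to reduce both inequalities to pointwise estimates in $\R^d$ for the nonlinear vector field $a\mapsto|a|^{r-1}a$ and then integrate over $\mathbb{T}^d$. First observe that, since $\mathcal{P}$ is self-adjoint on $\L^2(\mathbb{T}^d)$ and every element of $\wi\L^{r+1}$ is weakly divergence free, one has
\[
	\langle\mathcal{C}(\u)-\mathcal{C}(\v),\w\rangle=\int_{\mathbb{T}^d}\big(|\u(x)|^{r-1}\u(x)-|\v(x)|^{r-1}\v(x)\big)\cdot\w(x)\d x,
\]
and likewise with $\w$ replaced by $\u-\v$; hence it is enough to control the scalar integrands. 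The case $r=1$ is immediate since then $\mathcal{C}$ is linear, so I would assume $r>1$ throughout.

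For \eqref{monoC1}, the key ingredient is the vector-valued bound $\big||a|^{r-1}a-|b|^{r-1}b\big|\le r(|a|+|b|)^{r-1}|a-b|$ for all $a,b\in\R^d$. I would obtain it by writing $|a|^{r-1}a-|b|^{r-1}b=\int_0^1\frac{\d}{\d t}\big(|\xi(t)|^{r-1}\xi(t)\big)\d t$ along the segment $\xi(t)=(1-t)b+ta$, computing $\frac{\d}{\d t}\big(|\xi|^{r-1}\xi\big)=|\xi|^{r-1}(a-b)+(r-1)|\xi|^{r-3}\big(\xi\cdot(a-b)\big)\xi$, bounding its Euclidean norm by $r|\xi(t)|^{r-1}|a-b|$, and using $|\xi(t)|\le\max\{|a|,|b|\}\le|a|+|b|$. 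Substituting $a=\u(x)$, $b=\v(x)$, multiplying by $|\w(x)|$ and integrating, I would then apply H\"older's inequality with exponents $\tfrac{r+1}{r-1},\,r+1,\,r+1$ to the three factors $(|\u|+|\v|)^{r-1},\,|\u-\v|,\,|\w|$, and finally estimate $\big\||\u|+|\v|\big\|_{\wi\L^{r+1}}^{r-1}\le\big(\|\u\|_{\wi\L^{r+1}}+\|\v\|_{\wi\L^{r+1}}\big)^{r-1}$ by Minkowski's inequality; this produces exactly the right-hand side of \eqref{monoC1} with constant $r$.

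For \eqref{monoC2}, the heart of the matter is the pointwise identity, valid for all $a,b\in\R^d$,
\[
	\big(|a|^{r-1}a-|b|^{r-1}b\big)\cdot(a-b)-\tfrac12\big(|a|^{r-1}+|b|^{r-1}\big)|a-b|^2=\tfrac12\big(|a|^2-|b|^2\big)\big(|a|^{r-1}-|b|^{r-1}\big),
\]
which I would verify simply by expanding both sides and observing that all the terms containing $a\cdot b$ cancel. Since $t\mapsto t^{r-1}$ is non-decreasing on $[0,\infty)$, the right-hand side is non-negative, so integrating the resulting inequality over $\mathbb{T}^d$ gives
\[
	\langle\mathcal{C}(\u)-\mathcal{C}(\v),\u-\v\rangle\ge\tfrac12\int_{\mathbb{T}^d}\big(|\u|^{r-1}+|\v|^{r-1}\big)|\u-\v|^2\d x=\tfrac12\||\u|^{\frac{r-1}{2}}(\u-\v)\|_{\H}^2+\tfrac12\||\v|^{\frac{r-1}{2}}(\u-\v)\|_{\H}^2,
\]
which is the first lower bound. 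For the second, I would combine $|a-b|\le|a|+|b|$ with the convexity (power-mean) inequality for $t\mapsto t^{r-1}$ to get $|a-b|^{r+1}=|a-b|^{r-1}|a-b|^2\le 2^{r-1}\cdot\tfrac12\big(|a|^{r-1}+|b|^{r-1}\big)|a-b|^2$, and integrate over $\mathbb{T}^d$.

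These steps amount to elementary calculus, so there is no genuine analytic obstacle here; the only places calling for a bit of care are pinning down the exact constant $r$ in the vector-valued Lipschitz bound (hence the explicit choice of the segment $\xi(t)$ and the chain rule for $|\xi|^{r-1}\xi$, together with the harmless limiting behaviour of the derivative as $\xi(t)\to\mathbf{0}$ when $1<r<3$), and verifying the exact cancellation of the $a\cdot b$ cross terms in the monotonicity identity, which is precisely what yields the clean lower bound in terms of the two weighted $\H$-norms rather than a weaker one. For the complete elementary details I would refer to \cite[Subsection 2.4]{SMTM}.
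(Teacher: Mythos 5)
The paper itself gives no proof of this lemma (it is quoted verbatim from \cite[Subsection 2.4]{SMTM}), so there is no in-paper argument to compare against; your pointwise approach is in substance the standard one behind the cited result. Your verification is sound where it matters: dropping $\mathcal{P}$ against divergence-free test fields is legitimate, the mean-value bound $\big||a|^{r-1}a-|b|^{r-1}b\big|\le r(|a|+|b|)^{r-1}|a-b|$ plus H\"older with exponents $\tfrac{r+1}{r-1},r+1,r+1$ gives \eqref{monoC1} exactly, and your algebraic identity (the cancellation of the $a\cdot b$ terms, leaving $\tfrac12(|a|^2-|b|^2)(|a|^{r-1}-|b|^{r-1})\ge0$) correctly yields the first inequality in \eqref{monoC2} for every $r\ge1$.

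One caveat concerns your last step. The power-mean bound $|a-b|^{r-1}\le(|a|+|b|)^{r-1}\le 2^{r-2}\big(|a|^{r-1}+|b|^{r-1}\big)$ is only valid for $r\ge2$; for $1<r<2$ the map $t\mapsto t^{r-1}$ is concave and the inequality fails (take $b=\boldsymbol{0}$). In fact the second inequality of \eqref{monoC2} as displayed is itself false in that range: with $\v=\boldsymbol{0}$ the middle term equals $\tfrac12\|\u\|_{\wi\L^{r+1}}^{r+1}$ while the right-hand side is $2^{1-r}\|\u\|_{\wi\L^{r+1}}^{r+1}>\tfrac12\|\u\|_{\wi\L^{r+1}}^{r+1}$ when $1<r<2$ (only the outer inequality $\langle\mathcal{C}(\u)-\mathcal{C}(\v),\u-\v\rangle\ge 2^{1-r}\|\u-\v\|_{\wi\L^{r+1}}^{r+1}$ survives for all $r\ge1$). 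So you should state explicitly that your derivation of the last bound uses $r\ge2$; this is harmless for the present paper, which only invokes the lemma for $r\ge3$, but it means the blanket ``for every $r\ge1$'' cannot be proved for the middle-to-right comparison, since it is not true there.
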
 
	
	\begin{remark}[{\cite[Lemma 2.1]{KWH}}]
		The following equality on a torus $\mathbb{T}^d$ is used frequently in the paper:
		\begin{align}\label{torusequ}
		(\mathcal{C}(\u),\A\u)=\||\u|^{\frac{r-1}{2}}\nabla\u\|_{\H}^{2} +4\left[\frac{r-1}{(r+1)^2}\right]\|\nabla|\u|^{\frac{r+1}{2}}\|_{\H}^{2}.
		\end{align}
	\end{remark}

	\subsection{Some useful functional inequalities and definitions}
	\label{defunfmod}
	The following inequalities and definitions are frequently used throughout the paper: 
	
	1.) \emph{Interpolation inequality:} Let $0\leq s_1\leq s\leq s_2\leq\infty$ and $0\leq\theta\leq1$ be such that $\frac{1}{s}=\frac{\theta}{s_1}+\frac{1-\theta}{s_2}$. Then for $\u\in\L^{s_2}(\mathbb{T}^d)$, we have
	\begin{align*}
		\|\u\|_{\L^s}\leq\|\u\|_{\L^{s_1}}^{\theta}\|\u\|_{\L^{s_2}}^{1-\theta}.
	\end{align*} 
	
	2.) \emph{Agmon's inequality:} For all $\u\in\H^2_{\mathrm{p}}(\mathbb{T}^d)$, $d=\{2,3\}$, we have 
	\begin{align}\label{agmon}
		\|\u\|_{\L^{\infty}(\mathbb{T}^d)}\leq C\|\u\|_{\H}^{1-\frac{d}{4}}
		\|\u\|_{\H^2_{\mathrm{p}}(\mathbb{T}^d)}^{\frac{d}{4}}
		=\|\u\|_{\H}^{1-\frac{d}{4}}
		\|(\I+\A)\u\|_{\H}^{\frac{d}{4}}.
	\end{align}

  3.) \emph{Modulus of continuity (cf. \cite[Appendix D]{GFAS}):} A function $\upomega(\cdot):[0,+\infty)\to[0,+\infty)$ is called a \emph{modulus of continuity} if $\upomega$ is continuous, non-decreasing, subadditive and $\upomega(0)=0$. The subadditivity property of $\upomega$ also implies that for all $\epsilon>0$ there exists a constant $C_\epsilon>0$ such that 
  \begin{align*}
  	\upomega(r)\leq\epsilon+C_{\epsilon}r \  \text{ for every} \ r\geq0. 
  \end{align*}
  
   The following result shows that we can also assume the modulus of continuity to be concave.
  
  \begin{lemma}\cite[Lemma 11]{SiB}\label{lem-modulus}
  	Let $\upomega$ be a modulus of continuity. Then there exists a concave modulus of continuity $\upomega'$ with $\upomega'\geq \upomega$.
  \end{lemma}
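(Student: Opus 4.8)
~\cite[Lemma 11]{SiB}).

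\medskip

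\noindent\textbf{Proof proposal.} The statement to prove is Lemma~\ref{lem-modulus}: given a modulus of continuity $\upomega$, there exists a \emph{concave} modulus of continuity $\upomega'$ with $\upomega'\geq\upomega$. The plan is to build $\upomega'$ as the (upper) concave envelope of $\upomega$, i.e.\ the smallest concave function lying above $\upomega$, and then check that this envelope retains all the defining properties of a modulus of continuity (continuity, monotonicity, subadditivity, vanishing at $0$). Concretely, I would set
\begin{align*}
  \upomega'(r):=\inf\bigl\{\,a r+b \ :\ a\geq 0,\ b\geq 0,\ a s+b\geq\upomega(s)\ \text{for all}\ s\geq 0\,\bigr\},
\end{align*}
the infimum of all affine (non-negative-slope, non-negative-intercept) majorants of $\upomega$; equivalently one can use the concave-hull description $\upomega'(r)=\sup\{\lambda\upomega(s_1)+(1-\lambda)\upomega(s_2): \lambda\in[0,1],\ \lambda s_1+(1-\lambda)s_2=r\}$. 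Either description makes $\upomega'\geq\upomega$ immediate (for the affine description, use that the subadditivity bound $\upomega(s)\leq\epsilon+C_\epsilon s$ recalled just above guarantees the majorant family is nonempty, so the inf is over a nonempty set and is finite).

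\medskip

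\noindent The key steps, in order, are: (i) \emph{well-definedness and finiteness}: the family of affine majorants is nonempty by the linear growth bound $\upomega(s)\le \epsilon+C_\epsilon s$, and each member is $\ge 0$, so $\upomega'(r)\in[\upomega(r),\infty)$. (ii) \emph{concavity}: an infimum of affine functions is concave, so $\upomega'$ is concave on $[0,\infty)$. (iii) \emph{$\upomega'(0)=0$}: clearly $\upomega'(0)\ge\upomega(0)=0$; conversely, for any $\epsilon>0$ the affine function $s\mapsto\epsilon+C_\epsilon s$ majorizes $\upomega$, so $\upomega'(0)\le\epsilon$, and letting $\epsilon\downarrow 0$ gives $\upomega'(0)=0$. (iv) \emph{monotonicity}: each affine majorant has slope $a\ge0$ hence is non-decreasing, and the pointwise inf of non-decreasing functions is non-decreasing. (v) \emph{continuity}: a finite concave function on an interval is automatically continuous on the interior $(0,\infty)$; continuity at $0$ follows from $0=\upomega'(0)\le\upomega'(r)$ together with $\upomega'(r)\le\epsilon+C_\epsilon r\to\epsilon$ as $r\downarrow 0$, again letting $\epsilon\downarrow0$. (vi) \emph{subadditivity}: this is the one property that does not come for free from concavity alone, but a concave function $f$ on $[0,\infty)$ with $f(0)\ge 0$ is automatically subadditive — for $r,s>0$ write $r=\frac{r}{r+s}(r+s)+\frac{s}{r+s}\cdot 0$ and similarly for $s$, apply concavity to get $f(r)\ge\frac{r}{r+s}f(r+s)+\frac{s}{r+s}f(0)$ and $f(s)\ge\frac{s}{r+s}f(r+s)+\frac{r}{r+s}f(0)$, then add and use $f(0)\ge0$ to conclude $f(r)+f(s)\ge f(r+s)$. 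Applying this to $f=\upomega'$ finishes the proof.

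\medskip

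\noindent The only genuinely delicate point — the one I would flag as the main obstacle — is verifying that $\upomega'$ is not identically $+\infty$ and, relatedly, that it is \emph{finite and continuous at $0$}: both hinge on exhibiting at least one affine majorant of $\upomega$, which is exactly what the subadditivity-derived estimate $\upomega(r)\le\epsilon+C_\epsilon r$ (stated in the paragraph preceding the lemma) provides. Once that estimate is in hand, every remaining step is a short, standard convexity argument. An alternative, slightly more hands-on route avoiding envelopes would be to define $\upomega'(r):=\inf_{s>0}\bigl(\tfrac{\upomega(s)}{s}\,r\ \vee\ \upomega(s)\bigr)$ or to use the classical device $\upomega'(r):=\sup_{0\le s\le r}\upomega(s)+$ a linear correction, but the concave-envelope construction is cleanest and makes concavity transparent, so that is the approach I would present.
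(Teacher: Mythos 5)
Your proof is correct and complete: the least-concave-majorant construction, made finite via the linear bound $\upomega(r)\leq\epsilon+C_\epsilon r$ coming from subadditivity, together with the observation that a nonnegative concave function vanishing at $0$ is automatically non-decreasing-compatible and subadditive, settles every required property, and your continuity-at-$0$ argument is the right way to close the one delicate point. Note that the paper itself gives no proof of this lemma — it is quoted verbatim from \cite[Lemma 11]{SiB} — and your concave-envelope argument is the standard route used there, so there is nothing to reconcile; one could even shorten step (ii)--(iv) by remarking that every affine majorant of a nonnegative function with $\upomega(0)=0$ automatically has $a\geq0$ and $b\geq0$, so your restricted family coincides with the full family of affine majorants.
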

  
  4.) \emph{Local modulus:} A function $\upomega(\cdot,\cdot):[0,+\infty)\times[0,+\infty)\to[0,+\infty)$ is called a local modulus of continuity if it is continuous and non-decreasing in both variables, subadditive in first variable, and $\upomega(0,r)=0$ for every $r\geq0$.
  
  5.) \emph{Modulus of continuity for uniformly continuous functions
  	(cf. \cite[Appendix D]{GFAS}):}
        Let $\upphi:\mathbb{X}_1\to\mathbb{X}_2$ be a uniformly continuous function, where $\mathbb{X}_1$ and $\mathbb{X}_2$ are two Banach spaces. We define its modulus of continuity, denoted by $\upomega_{\upphi}(\cdot)$, as follows:
        \begin{align*}
        \upomega_{\upphi}(\epsilon):=\sup\limits_{x,y\in\mathbb{X}_1}
        \{\|\upphi(x)-\upphi(y)\|_{\mathbb{X}_2}:\|x-y\|_{\mathbb{X}_1}
        \leq\epsilon\},  \  \text{ for } \ \epsilon\geq0.
        \end{align*}
  The above definition also implies that $\|\upphi(x)-\upphi(y)\|_{\mathbb{X}_2}\leq\upomega_{\upphi}(\|x-y\|_{\mathbb{X}_1})$ for every $x,y\in\mathbb{X}_1$. Note that for uniformly continuous functions, modulus of continuity always exist and so there exist two constants $c_0,c_1$ such that 
  \begin{align*}
  	\|\upphi(x)\|_{\mathbb{X}_1}\leq c_0+c_1\|x\|_{\mathbb{X}_1},  \ \ 
  	\text{ for every } \ x\in\mathbb{X}_1. 
  \end{align*}

	\section{Controlled SCBF equations}\label{abscon} \setcounter{equation}{0}
	In this section, we provide  the abstract formulation of the model \eqref{a} and provide some necessary hypothesis and assumptions that are used through out the article.

 Let $(\Omega,\mathscr{F},\P)$ be a complete probability space equipped with the filtration $\{\mathscr{F}_s^t\}_{s\geq t}$ such that 
	\begin{itemize}
		\item $\{\mathscr{F}_s^t\}_{s\geq t}$ is \emph{right-continuous}, that is, for all $s\geq t$, we have $\mathscr{F}_{s+}^t:=\bigcup\limits_{r>s}\mathscr{F}_r^t=\mathscr{F}_s^t$.
		\item $\mathscr{F}_t^t$ contains all $\P$-null sets of $\mathscr{F}$.
	\end{itemize}
	
	Let $\Q$ be a bounded linear operator on $\H$ which is non-negative, self-adjoint and such that $\Tr(\Q)<\infty$. Let $\mathbf{W}$ be an $\H$-valued $\Q$-Wiener process with $\mathbf{W}(t)=0,$ $\P-$a.s.
	\emph{As discussed in the introduction of this article, the operator $\Q$ can be totally degenerate also, which is contrary to the case of \cite{gPaD}.} We will require the following hypothesis throughout the article:
	\begin{hypothesis}\label{trQ1}
	 $\A^{\frac12}\Q^{\frac12}$ is a Hilbert-Schimdt operator.
	\end{hypothesis}
	Hypothesis \ref{trQ1} also says that $\Q_1:=\A^{\frac12}\Q\A^{\frac12}$ is densely defined and it can be extended to a bounded linear operator, still denoted by $\Q_1$ which is of trace-class. 
	
A 5-tuple $\nu:=(\Omega,\mathscr{F},\{\mathscr{F}_s^t\}_{s\geq t},\P,\mathbf{W})$, described above is called a \emph{generalized reference probability space} (see \cite[Definition 1.100, Chapter 1, pp. 35]{GFAS}). 
We denote by $M^2_\nu(t,T;\H)$ (a subset of $\mathrm{L}^2((t,T)\times\Omega;\H)$) the space of all $\H$-valued progressively measurable processes $\Y(\cdot)$ such that
	\begin{align*}
		\|\Y(\cdot)\|_{M^2_\nu(t,T;\H)}:=\left(\E\left[\int_t^T \|\Y(s)\|_{\H}^2\d s\right]\right)^{\frac12}<+\infty.
	\end{align*}
	The notation $M^2_\nu(t,T;\H)$ emphasizes the dependence on the generalized reference probability space $\nu$. Processes in $M^2_\nu(t,T;\H)$ are identified if they are equal $\P\otimes\d t$-a.e.
    \subsection{Admissible class}\label{adcLa}
	Let $\U$ be a complete separable metric space. Let $\a(\cdot):[0,T]\times\Omega\to\U$ be a stochastic process which indicate some control strategy. We say $\a(\cdot)$ is an \emph{admissible control} on $[t,T]$ if $\a(\cdot)$ is an $\mathscr{F}_s^t-$progressively measurable. Given the initial time $t$, we denote the set of all such control strategies, called the \emph{admissible class of controls}, by $\mathscr{U}_t^\nu$ and it is given by
	\begin{align*}
		\mathscr{U}_t^\nu:=\{\a(\cdot):[t,T]\times\Omega\to\U\big|\a(\cdot) \ \text{ is } \ \mathscr{F}_s^t-\text{progressively measurable}\}. 
	\end{align*} 
	The notation $\mathscr{U}_t^\nu$ emphasizes the dependence on the generalized reference probability space $\nu$. Note that as mentioned in \cite[Remark 2.5, Chapter 2, pp. 96]{GFAS}, the controls in $\mathscr{U}_t^\nu$ can be measurable and adapted also, since every adapted process has a progressively measurable modification (see \cite[Lemma 1.72, Chapter 1, pp. 24]{GFAS}). In view of \cite[Lemma 1.99, Chapter 1, pp. 34]{GFAS}, one could also work with predictable controls.
	
	\subsection{Abstract formulation} Let us set $\Y(\cdot):=\mathcal{P}\X(\cdot)$, $\mathcal{P}\x_0:=\y$,  $\mathcal{P}\g=\f$ and $\mathbf{W}(\cdot):=\mathcal{P}\mathbf{W}(\cdot)$. On projecting the first equation in \eqref{a}, we obtain, for $\a(\cdot)\in\mathscr{U}_t^\nu$, the abstract stochastic controlled convective Brinkman-Forchheimer equations describing the evolution of the velocity vector field $\mathbf{Y}(\cdot):[t,T]\times\mathbb{T}^d\times\Omega\to\R^d$ and satisfying the following stochastic evolution system:
	\begin{equation}\label{stap}
		\left\{
		\begin{aligned}
			\d\Y(s)&=[-\mu\A\Y(s)-\mathcal{B}(\Y(s))-\beta\mathcal{C}(\Y(s))+
			\f(s,\a(s))]\d s+\d\mathbf{W}(s) ,  \  \text{ in } \ (t,T]\times\H, \\
			\Y(t)&=\y\in\H,
		\end{aligned}
		\right.
	\end{equation}
	where $\f:[0,T]\times\U\to\V$. The following assumption on $\f$ is taken in this article:
	\begin{hypothesis}\label{fhyp}
	    The function $\f:[0,T]\times\U\to\V$ is continuous and there is $R\geq0$ such that 
		\begin{align*}
			\|\f(t,\a)\|_{\V}\leq R, \  \text{ for all } \ t\in[0,T], \ \a\in\U.
		\end{align*}
	\end{hypothesis} 
	
	\subsection{Stochastic optimal control problem}\label{stgform}
	Let us define a cost functional associated with the system \eqref{stap} as
	\begin{align}\label{costF}
		J(t,\y;\a(\cdot))=
		\E\left\{\int_t^T \ell(s,\Y(s;t,\y,\a(\cdot)),\a(s))\d s+ g(\Y(T;t,\y,\a(\cdot)))\right\},
	\end{align}
	where $\ell:[t,T]\times\H\times\U\to\R$ and $g:\H\to\R$ are given measurable functions. The function $\ell$ is the so-called \emph{running cost} and $g$ is the \emph{terminal cost} (see Subsection \ref{stcoexm} for the explicit example of $\ell$ and $g$). Since the system \eqref{stap} has a unique variational solution $\Y(\cdot)$ for any $\a(\cdot)\in\mathscr{U}_t^\nu$ (see Theorem \ref{weLLp}), the cost functional \eqref{costF} is well-defined. 
	The optimal control problem consists of minimizing the cost functional \eqref{costF} over all admissible controls $\a(\cdot)\in\mathscr{U}_t^\nu$. 
	The stochastic optimal control problem \eqref{costF} is referred as the \emph{strong formulation} in the sense that the generalized reference probability space is fixed (see \cite[Chapter 2]{GFAS}).

	\section{Energy estimates and continuous dependence results}\label{engdef}\setcounter{equation}{0}
	In this section, we provide some uniform energy estimates and prove continuous dependence results. We first define the following notions of solutions for the controlled SCBF equations \eqref{stap} (cf. \cite{FGSSA,GFAS}):
	\begin{definition}\label{def-var-strong} Assume that the Hypothesis \ref{trQ1}-\ref{fhyp} be satisfied. Let $\xi$ be a $\mathscr{F}_t^t$-measurable $\H$-valued random variable such that $\E\|\xi\|_{\H}^2<+\infty$, and let $\a(\cdot)\in\mathscr{U}_t^\nu$.
		
		(i) A process $\Y(\cdot)\in M^2_\nu(t,T;\H)$ is called a \emph{variational solution} of \eqref{stap} with initial condition $\Y(t)=\xi$ if 
		$$\E\bigg[\sup_{s\in[t,T]}\|\Y(s)\|_{\H}^2+\int_t^T \|\Y(s)\|_{\V}^2\d s+\int_t^T\|\Y(s)\|_{\wi\L^{r+1}}^{r+1}\d s \bigg]<+\infty,$$ the process $\Y$ having a modification with paths in $\mathrm{C}([0,T];\H)\cap\mathrm{L}^2(0,T;\V)\cap\mathrm{L}^{r+1}(0,T;\wi\L^{r+1})$, $\mathbb{P}-$a.s., 
		and for every $\phi\in\V\cap\wi\L^{r+1}$ and every $s\in[t,T]$, $\P-\text{a.s.}$, we have
		\begin{align*}
			\langle\Y(s),\phi\rangle&=\langle\xi,\phi\rangle+\int_t^s  \langle-\mu\A\Y(\tau)-\mathcal{B}(\Y(\tau))-\beta\mathcal{C}(\Y(\tau))+\f(\tau,\a(\tau)),\phi\rangle\d\tau\nonumber\\&\quad+
			\int_t^s \langle\d\mathbf{W}(r),\phi\rangle.
		\end{align*} 
		 
		(ii) A process $\Y(\cdot)\in M^2_\nu(t,T;\H)$ is called a \emph{strong solution} of \eqref{stap} with initial condition $\Y(t)=\xi$ if 
		$$\E\bigg[\sup_{s\in[t,T]}\|\Y(s)\|_{\V}^2+\int_t^T \|\A\Y(s)\|_{\H}^2\d s+\int_0^T\|\Y(s)\|_{\wi\L^{p(r+1)}}^{r+1}\d s\bigg]<+\infty,$$ where $p\in[2,\infty)$ for $d=2$ and $p=3$ for $d=3$, the process $\Y$ having a modification with paths in $\mathrm{C}([0,T];\V)\cap\mathrm{L}^2(0,T;\V)\cap\mathrm{L}^{r+1}(0,T;\wi\L^{p(r+1)})$, $\mathbb{P}-$a.s., and  for every $s\in[t,T]$, $\P-\text{a.s.}$, we have
		\begin{align*}
			\Y(s)&=\xi+\int_t^s  \left(-\mu\A\Y(\tau)-\mathcal{B}(\Y(\tau))-\beta\mathcal{C}(\Y(\tau))+\f(\tau,\a(\tau))\right)\d\tau\nonumber\\&\quad+
			\int_t^s \d\mathbf{W}(r), \ \text{ in }\H. 
		\end{align*} 
	\end{definition}
	\begin{remark}
		In the SPDE literature, a \emph{variational solution} is also called a probabilistically strong (analytically weak) solution. Moreover, the definition of strong solution given in Definition \ref{def-var-strong} coincides with the strong solution definition in the PDE sense.   
		\end{remark}
    
   We now recall some existence, uniqueness and continuous dependence results for the variational and strong solutions of the controlled SCBF equations \eqref{stap} and derive some energy estimates.
     \begin{proposition}\label{weLLp}
     	Let $t\in[0,T]$ and let $p\geq2$ be fixed. Let $\nu=(\Omega,\mathscr{F},\{\mathscr{F}_s^t\}_{s\geq t},\P,
     	\mathbf{W})$, be a generalized reference probability space, and let Hypothesis \ref{trQ1}-\ref{fhyp} be satisfied. Let $\xi$ be a $\mathscr{F}_t^t-$measurable $\H-$valued random variable such that $\E\left[\|\xi\|_{\H}^p\right]<+\infty$, and let $\a(\cdot)\in\mathscr{U}_t^\nu$. 
     	
     	(i) There exists a \emph{unique variational solution} $\Y(\cdot)=\Y(\cdot;t,\xi,\a(\cdot))$ of \eqref{stap} with initial condition $\Y(t)=\xi$. Moreover, $\Y(\cdot)$ has continuous trajectories in $\H$ and satisfies the following energy estimates	 for $s\in[t,T]$:
     	 \begin{align}\label{eqn-conv-1}
     		&\E\left[\|\Y(s)\|_{\H}^p\right]+p\mu\E\bigg[\int_t^{s}\|\nabla\Y(\tau)\|_{\H}^2\|\Y(\tau)\|_{\H}^{p-2}\d\tau\bigg]+ p\beta\E\bigg[\int_t^{s}
     		\|\Y(\tau)\|_{\widetilde{\L}^{r+1}}^{r+1}
     		\|\Y(\tau)\|_{\H}^{p-2}\d\tau\bigg]\nonumber\\&\leq \E\left[\|\xi\|_{\H}^p\right]+C(p,r,R,\Q,\alpha)(s-t),
     	\end{align}
     	and 
     	\begin{align}\label{vsee1}
     		&\E\bigg[\sup\limits_{s\in[t,T]}\|\Y(s)\|_{\H}^p\bigg]+\E\bigg[\int_t^T \|\nabla\Y(\tau)\|_{\H}^2 \|\Y(\tau)\|_{\H}^{p-2}\d\tau\bigg]+\E\bigg[\int_t^T \|\Y(\tau)\|_{\wi\L^{r+1}}^{r+1} \|\Y(\tau)\|_{\H}^{p-2}\d\tau\bigg]
     		\nonumber\\&\leq
     		\E\left[\|\xi\|_{\H}^p\right]+C(p,r,R,\Q,\mu,\alpha,\beta,T). 		
     	\end{align}
     	
     	(ii) If $\E\left[\|\nabla\xi\|_{\H}^p\right]<+\infty$, then the {variational solution} $\Y(\cdot)=\Y(\cdot;t,\xi,\a(\cdot))$ is a  \emph{strong solution} with continuous trajectories in $\V$. Moreover, for $s\in[t,T]$, we have following energy estimates:
     	  \begin{align}\label{eqn-conv-2}
     		&\E\left[\|\nabla\Y(s)\|_{\H}^p\right]+\frac{p\mu}{2} \E\bigg[\int_t^{s}\|\A\Y(\tau)\|_{\H}^2\|\nabla\Y(\tau)\|_{\H}^{p-2}\d\tau\bigg]\nonumber\\&\quad+\frac{3p\beta}{4} \E\bigg[\int_t^{s}
     		\||\Y(\tau)|^{\frac{r-1}{2}}\nabla\Y(\tau)\|_{\H}^{2} 
     		\|\nabla\Y(\tau)\|_{\H}^{p-2}\d\tau\bigg]\nonumber\\&\leq \big(\E\|\nabla\xi\|_{\H}^p+C(p,\mathrm{Tr}(\Q_1),R,\alpha)(s-t)\big) e^{p\varrho (s-t)},
     	\end{align}
     	and 
     		\begin{align}\label{ssee1}
     		&\E\left[\sup\limits_{s\in[t,T]}\|\Y(s)\|_{\V}^p\right]+
     		   \E\bigg[\int_t^T \|\A\Y(\tau)\|_{\H}^2 \|\nabla\Y(\tau)\|_{\H}^{p-2}\d\tau\bigg]\nonumber\\&\quad+\E\bigg[\int_t^T \||\Y(\tau)|^{\frac{r-1}{2}}\nabla\Y(\tau)\|_{\H}^2 \|\nabla\Y(\tau)\|_{\H}^{p-2}\d\tau\bigg]
     		\nonumber\\&\leq
     		\E\left[\|\nabla\xi\|_{\H}^p\right]+C(p,r,R,\Q_1,\mu,\beta,T).  		
     	\end{align}

(iii) If $\nu_1:=(\Omega_1,\mathscr{F},\{\mathscr{F}_s^t\}_{s\geq t},\P_1,\mathbf{W}_1)$ is another generalized reference probability space, $\xi_1$ is an $\mathscr{F}_t^{t,\nu_1}-$measurable $\H$ valued random variable such that $\E[\|\xi_1\|_{\H}^p]<\infty$, $\a_1(\cdot)\in\mathscr{U}_t^{\nu_1}$, and 
\begin{align*}
	\mathcal{L}_{\P_1}\big(\xi_1,\a_1(\cdot),\mathbf{W}_1(\cdot)\big)=
	\mathcal{L}_{\P}\big(\xi,\a(\cdot),\mathbf{W}(\cdot)\big),
\end{align*} 
then
\begin{align*}
	\mathcal{L}_{\P_1}\big(\a_1(\cdot),\Y_1(\cdot)\big)=
	\mathcal{L}_{\P}\big(\a(\cdot),\Y(\cdot)\big),
\end{align*}
where $\Y_1(\cdot)=\Y_1(\cdot;\xi_1,\a_1(\cdot))$ is the variational solution of \eqref{stap} in $\nu_1$ with control $\a_1(\cdot)$ and initial condition $\xi_1$. 
     \end{proposition}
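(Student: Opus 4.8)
The plan is as follows. The existence, uniqueness and stated path regularity of the variational solution in (i) and of the strong solution in (ii) would not be reproved from scratch: they are available from \cite[Theorem 3.7]{MTM8} (see also \cite[Theorem 3.7]{KKMTM}), valid precisely in the range of $r$ fixed throughout ($r>3$ in $d\in\{2,3\}$, and $r=3$ with $2\beta\mu\ge1$). The real work is to derive the four energy bounds and the law-transfer property (iii). Every It\^o argument below I would run on the process stopped at $\tau_N:=\inf\{s\ge t:\|\Y(s)\|_{\H}\ge N\}$, respectively at $\inf\{s\ge t:\|\Y(s)\|_{\V}\ge N\}$ for the $\V$-level estimates, so that the stochastic integrals are genuine martingales and the nonlinear terms are integrable, passing to $N\to\infty$ at the end via Fatou's lemma and monotone convergence. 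Generic constants $C$ may change from line to line.

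\emph{Part (i).} First I would apply the It\^o formula for $\|\cdot\|_{\H}^2$ in the Gelfand triple $\V\cap\wi\L^{r+1}\hookrightarrow\H\hookrightarrow\V^*+\wi\L^{(r+1)/r}$ (justified as in \cite{MTM8}), then the scalar It\^o formula to $x\mapsto x^{p/2}$. Using $\langle\mathcal{B}(\Y),\Y\rangle=0$ from \eqref{syymB}, $\langle\mathcal{C}(\Y),\Y\rangle=\|\Y\|_{\wi\L^{r+1}}^{r+1}$, the bound $\|\Q^{1/2}\Y\|_{\H}^2\le\Tr(\Q)\|\Y\|_{\H}^2$, and $|(\f(s,\a(s)),\Y)|\le R\|\Y\|_{\H}$ from Hypothesis \ref{fhyp}, one is led to
\begin{align*}
&\d\|\Y\|_{\H}^p+p\mu\|\nabla\Y\|_{\H}^2\|\Y\|_{\H}^{p-2}\,\d\tau+p\beta\|\Y\|_{\wi\L^{r+1}}^{r+1}\|\Y\|_{\H}^{p-2}\,\d\tau\\
&\qquad\le\Big(C(p,r,R,\Q,\alpha)-\tfrac{p\alpha}{2}\|\Y\|_{\H}^p\Big)\d\tau+p\|\Y\|_{\H}^{p-2}(\Y,\d\mathbf{W}),
\end{align*}
where the strictly dissipative damping $-2\alpha\|\Y\|_{\H}^2$, combined with Young's inequality $\|\Y\|_{\H}^{p-2}\le\delta\|\Y\|_{\H}^p+C_\delta$ (with $\delta$ small in $p,\alpha$), lets us absorb the forcing, the trace, and the second-order It\^o term into a constant; this is exactly the role of the linear damping in making \eqref{eqn-conv-1} free of $T$. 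Integrating on $[t,s]$ and taking expectations would give \eqref{eqn-conv-1}. For \eqref{vsee1} I would take $\sup_{s\in[t,T]}$ before the expectation and dominate the martingale by Burkholder-Davis-Gundy and Young, $p\,\E\sup_{s}|\int_t^s\|\Y\|_{\H}^{p-2}(\Y,\d\mathbf{W})|\le\tfrac12\E\sup_{s}\|\Y(s)\|_{\H}^p+C\Tr(\Q)\,\E\int_t^T\|\Y\|_{\H}^{p-2}\d\tau$, bounding the last integral through \eqref{eqn-conv-1} after one more Young splitting ($\delta$ now depending on $T$).

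\emph{Part (ii).} Under $\E\|\nabla\xi\|_{\H}^p<\infty$ I would apply It\^o to $\|\nabla\Y\|_{\H}^2=(\A\Y,\Y)$ (first and second Fr\'echet derivatives $2\A\y$ and $2\A$) and then to $x\mapsto x^{p/2}$. The second-order term produces $\Tr(\A\Q)=\Tr(\Q_1)<\infty$ by Hypothesis \ref{trQ1}, and the martingale covariance is likewise bounded by $\Tr(\Q_1)\|\nabla\Y\|_{\H}^2$. The drift carries the term $-2(\mathcal{B}(\Y),\A\Y)$, which \emph{does not vanish, in either dimension}, in contrast to the 2D Navier-Stokes setting of \cite{FGSSA,GFAS} where it is identically zero. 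The key is to control it by \eqref{syymB3}, $|(\mathcal{B}(\Y),\A\Y)|\le\tfrac\mu2\|\A\Y\|_{\H}^2+\tfrac\beta4\||\Y|^{(r-1)/2}\nabla\Y\|_{\H}^2+\varrho\|\nabla\Y\|_{\H}^2$ (whose proof needs the absorption term and the restriction on $r$), and to use the torus identity \eqref{torusequ}, $(\mathcal{C}(\Y),\A\Y)\ge\||\Y|^{(r-1)/2}\nabla\Y\|_{\H}^2$: the diffusion dissipation $-2\mu\|\A\Y\|_{\H}^2$ and the absorption dissipation $-2\beta(\mathcal{C}(\Y),\A\Y)$ then swallow the bad part of $(\mathcal{B}(\Y),\A\Y)$, leaving $-\mu\|\A\Y\|_{\H}^2-\tfrac{3\beta}{2}\||\Y|^{(r-1)/2}\nabla\Y\|_{\H}^2$, a surviving $\|\nabla\Y\|_{\H}^2$-term with coefficient of order $\varrho$, the harmless $2(\A\Y,\f)=2(\nabla\Y,\nabla\f)\le\alpha\|\nabla\Y\|_{\H}^2+R^2/\alpha$ (using $\f\in\V$), and trace terms. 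Multiplying by $\tfrac p2\|\nabla\Y\|_{\H}^{p-2}$, absorbing lower-order terms by Young, taking expectations along $\tau_N$, and applying Gronwall's lemma (the residual $\|\nabla\Y\|_{\H}^p$ coefficient being at most $p\varrho$, whence $e^{p\varrho(s-t)}$) yields \eqref{eqn-conv-2}; then \eqref{ssee1} follows as \eqref{vsee1} did, with Burkholder-Davis-Gundy applied to $\int_t^s\|\nabla\Y\|_{\H}^{p-2}(\A\Y,\d\mathbf{W})$ and the $T$-dependence of the constant entering via the Gronwall exponential. I expect this $\V$-level estimate to be the main obstacle: every later result (continuous dependence of the value function, the comparison principle, the existence of viscosity solutions) rests on \eqref{eqn-conv-2}-\eqref{ssee1}, and it is the absorption term, acting through \eqref{syymB3} and \eqref{torusequ}, that makes this estimate available in three dimensions at all.

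\emph{Part (iii).} Since pathwise uniqueness holds for \eqref{stap} (by (i)) and a probabilistically strong solution exists on every generalized reference probability space, the Yamada-Watanabe principle in the SPDE form of \cite[Chapter 1]{GFAS} would provide a Borel-measurable map $\mathcal{G}$, independent of the underlying space, with $\Y=\mathcal{G}(\xi,\a(\cdot),\mathbf{W}(\cdot))$, $\P$-a.s.; concretely, $\mathcal{G}$ is the $\P$-a.s.\ limit of the space-independent Faedo-Galerkin solution functionals, convergence in probability of these approximations in $\mathrm{C}([t,T];\H)\cap\mathrm{L}^2(t,T;\V)\cap\mathrm{L}^{r+1}(t,T;\wi\L^{r+1})$ being itself a consequence of pathwise uniqueness (Gy\"ongy-Krylov). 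Then $\mathcal{L}_{\P_1}(\xi_1,\a_1(\cdot),\mathbf{W}_1(\cdot))=\mathcal{L}_{\P}(\xi,\a(\cdot),\mathbf{W}(\cdot))$ immediately forces $\mathcal{L}_{\P_1}(\a_1(\cdot),\mathcal{G}(\xi_1,\a_1,\mathbf{W}_1))=\mathcal{L}_{\P}(\a(\cdot),\mathcal{G}(\xi,\a,\mathbf{W}))$, that is, $\mathcal{L}_{\P_1}(\a_1(\cdot),\Y_1(\cdot))=\mathcal{L}_{\P}(\a(\cdot),\Y(\cdot))$, which is the asserted identity.
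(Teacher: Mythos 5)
Your proposal follows essentially the same route as the paper: existence/uniqueness quoted from \cite{MTM8,KKMTM}, energy estimates via It\^o's formula on stopped processes (the $\V$-level estimate being equivalent whether you apply It\^o to $\|\nabla\Y\|_{\H}^2$ directly or, as the paper does, to $\|\A^{1/2}\Y\|_{\H}^2$ after acting with $\A^{1/2}$ on the equation), with the convective term absorbed through \eqref{syymB3} and \eqref{torusequ}, the forcing and trace terms handled by Young's inequality against the $\alpha$-damping, Burkholder--Davis--Gundy for the suprema, and Gr\"onwall producing the $e^{p\varrho(s-t)}$ factor. Part (iii) is likewise the paper's argument (pathwise uniqueness $\Rightarrow$ uniqueness in law via Yamada--Watanabe, cited there from \cite{MRBZ}), so the proposal is correct and essentially identical in approach.
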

     
     \begin{proof}
     	\vskip 0.25cm
     	\noindent
     	\emph{\textbf{Proof of (i):}}
     	The existence and uniqueness results of variational  solutions of the system \eqref{stap} is available in \cite[Theorem 3.7]{MTM8} (also see \cite[Theorem 3.7]{KKMTM}). Therefore, here we are providing a proof of the energy estimate \eqref{vsee1} only. Let us consider a function $\Psi_1(\y)=\|\y\|_{\H}^p=(\|\y\|_{\H}^2)^{\frac{p}{2}}, \ \y\in\H$. Then, $\Psi_1\in\mathrm{C}^2(\H)$ and it has Fr\'echet derivatives
     	\begin{align*}
     		\Psi_1'(\y)\z&=p\|\y\|_{\H}^{p-2}(\y,\z), \  \text{ for } \ \z\in\H,\nonumber\\
     		\Psi_1''(\y)(\z_1,\z_2)&=p(p-2)\|\y\|_{\H}^{p-4} \underbrace{(\y,\z_1)(\y,\z_2)}_{=(\y\otimes\y)(\z_1,\z_2)}+p\|\y\|_{\H}^{p-2}(\z_1,\z_2),  \  \text{ for } \ \z_1,\z_2\in\H,
     	\end{align*}
     	with the property 
     	\begin{align*}
     		\mathrm{Tr}(\Q\Psi_1''(\y))=p(p-1)\|\y\|_{\H}^{p-2}
     		\mathrm{Tr}(\Q).
     	\end{align*}
     	Let us define a sequence of stopping times
     	\begin{align}\label{stopt1}
     		\theta_N=\inf\{s\geq t:\|\Y(s)\|_{\H}\geq N\}, \ \text{ for any } \ N\in\mathbb{N}.
     	\end{align}
      On applying It\^o's formula (see \cite[Theorem A.1, pp. 294]{ZBSP}, \cite[Theorem 4.3, pp. 1809]{ZBSP1}) to the function $(\|\cdot\|_{\H}^2)^{\frac{p}{2}}$ and to the process $\Y(\cdot)$,  we find  $\P-\text{a.s.},$
      \begin{align}\label{vse1}
      	&	\|\Y(s\land\theta_N)\|_{\H}^p+p\mu\int_t^{s\land\theta_N}\|\nabla\Y(\tau)\|_{\H}^2\|\Y(\tau)\|_{\H}^{p-2}\d\tau+p\alpha\int_t^{s\land\theta_N}\|\Y(\tau)\|_{\H}^p \d\tau\nonumber\\&\quad+ p\beta\int_t^{s\land\theta_N}
      	\|\Y(\tau)\|_{\widetilde{\L}^{r+1}}^{r+1}
      	\|\Y(\tau)\|_{\H}^{p-2}\d\tau\nonumber\\&= \|\xi\|_{\H}^p+p\int_t^{s\land\theta_N}(\f(\tau),\Y(\tau)) \|\Y(\tau)\|_{\H}^{p-2}\d\tau+\frac{p(p-1)\mathrm{Tr}(\Q)}{2}\int_t^{s\land\theta_N}\|\Y(\tau)\|_{\H}^{p-2}\d\tau\nonumber\\&\quad+M_{s\land\theta_N}^p,
      \end{align}
     	 where $M_{s\land\theta_N}^p=p\int_t^{s\land\theta_N}\|\Y(\tau)\|_{\H}^{p-2}\left(\Y(\tau),\d\W(\tau)\right)$ is a martingale. By using Hypothesis \ref{fhyp} and Young's inequality, we calculate
     	\begin{align}
     	\big|(\f,\Y)\big|\|\Y\|_{\H}^{p-2}&\leq
     	\frac{\alpha}{4}\|\Y\|_{\H}^p+C(p,R,\alpha),\label{fcal1}\\
     	\text{ and } 
     	\frac{(p-1)\mathrm{Tr}(\Q)}{2}\|\Y\|_{\H}^{p-2}&\leq
     	\frac{\alpha}{4}\|\Y\|_{\H}^p+C(p,\mathrm{Tr}(\Q),\alpha).
     	\label{trcal1}
     	\end{align}
     	On plugging \eqref{fcal1}-\eqref{trcal1} into \eqref{vse1}, we obtain for all $s\in[t,T]$, $\P-\text{a.s.},$
     	   	\begin{align}\label{vse2.1}
     	   	&	\|\Y(s\land\theta_N)\|_{\H}^p+p\mu\int_t^{s\land\theta_N}\|\nabla\Y(\tau)\|_{\H}^2\|\Y(\tau)\|_{\H}^{p-2}\d\tau+\frac{p\alpha}{2}\int_t^{s\land\theta_N}\|\Y(\tau)\|_{\H}^p \d\tau\nonumber\\&\quad+ p\beta\int_t^{s\land\theta_N}
     	   	\|\Y(\tau)\|_{\widetilde{\L}^{r+1}}^{r+1}
     	   	\|\Y(\tau)\|_{\H}^{p-2}\d\tau\nonumber\\&\leq \|\xi\|_{\H}^p+M_{s\land\theta_N}^p+C(p,\mathrm{Tr}(\Q),R,\alpha)(s-t).
     	   \end{align}
     	   On taking the expectation in \eqref{vse2.1}, we deduce for all $s\in[t,T]$
     	   \begin{align}\label{vse2.2}
     	   	&\E\left[\|\Y(s\land\theta_N)\|_{\H}^p\right]+p\mu\E\bigg[\int_t^{s\land\theta_N}\|\nabla\Y(\tau)\|_{\H}^2\|\Y(\tau)\|_{\H}^{p-2}\d\tau\bigg]+\frac{p\alpha}{2}\E\bigg[\int_t^{s\land\theta_N}\|\Y(\tau)\|_{\H}^p \d\tau\bigg]\nonumber\\&\quad+ p\beta\E\bigg[\int_t^{s\land\theta_N}
     	   	\|\Y(\tau)\|_{\widetilde{\L}^{r+1}}^{r+1}
     	   	\|\Y(\tau)\|_{\H}^{p-2}\d\tau\bigg]\nonumber\\&\leq \E\left[\|\xi\|_{\H}^p\right]+C(p,\mathrm{Tr}(\Q),R,\alpha)T,
     	   \end{align}
     	   for all $t\in[0,T]$. Note that for the indicator function $\mathds{1}$, we have 
     	   $$\E\left[\mathds{1}_{\{\theta_N<s\}}\right]=\mathbb{P}\Big\{\omega\in\Omega:\theta_N(\omega)<s\Big\},$$
     	   and using \eqref{stopt1}, we obtain 
     	   \begin{align}\label{vse14.11}
     	   	\E\left[\|\Y(s\land\theta_N)\|_{\H}^p\right]&=
     	   	\E\left[\|\Y(\theta_N)\|_{\H}^p\mathds{1}_{\{\theta_N<s\}}\right]+\E\left[\|\Y(s)\|_{\H}^p
     	   	\mathds{1}_{\{\theta_N\geq s\}}\right]\nonumber\\&\geq
     	   	\E\left[\|\Y(\theta_N)\|_{\H}^p\mathds{1}_{\{\theta_N<s\}}\right]
     	   	\geq
     	   	N^p\mathbb{P}\Big\{\omega\in\Omega:\theta_N<s\Big\}.
     	   \end{align}
     	   Then, by the application of Markov's inequality and using \eqref{vse2.2}, we estimate
     	   \begin{align*}
     	   	\P\Big\{\omega\in\Omega:\theta_N<s\Big\}&\leq
     	   	\frac{1}{N^p}\E\left[\|\Y(s\land\theta_N)\|_{\H}^p\right]\leq
     	   	\frac{1}{N^p}
     	   	\big(\E\|\xi\|_{\H}^p+C(p,\mathrm{Tr}(\Q),R,\alpha)T\big).
     	   \end{align*}
     	   Hence, we have
     	   \begin{align*}
     	   	\lim_{N\to\infty}\P\Big\{\omega\in\Omega:\theta_N<s\Big\}=0, \ \textrm{
     	   		for all }\ s\in [t,T],
     	   \end{align*}
     	   and therefore $s\land\theta_N\to s$ as $N\to\infty$, $\P-$a.s.
     	   Taking limit $N\to\infty$ in \eqref{vse2.2} and using the \emph{monotone convergence theorem}, we arrive at  for all $s\in[t,T]$,
     	   \begin{align*}
     	   	&\E\left[\|\Y(s)\|_{\H}^p\right]+p\mu\E\bigg[\int_t^{s}\|\nabla\Y(\tau)\|_{\H}^2\|\Y(\tau)\|_{\H}^{p-2}\d\tau\bigg]+\frac{p\alpha}{2}\E\bigg[\int_t^{s}\|\Y(\tau)\|_{\H}^p \d\tau\bigg]\nonumber\\&\quad+ p\beta\E\bigg[\int_t^{s}
     	   	\|\Y(\tau)\|_{\widetilde{\L}^{r+1}}^{r+1}
     	   	\|\Y(\tau)\|_{\H}^{p-2}\d\tau\bigg]\nonumber\\&\leq \E\left[\|\xi\|_{\H}^p\right]+C(p,\mathrm{Tr}(\Q),R,\alpha)(s-t),
     	   \end{align*}
     	   which completes the proof of \eqref{eqn-conv-1}.

     	  Let us now take supremum from $t$ to $T\land\theta_N$ in \eqref{vse2.2} and then taking the expectation to obtain
     	   \begin{align}\label{vse2.3}
     	   	&\E\bigg[\sup\limits_{s\in[t,T\land\theta_N]}\|\Y(s)\|_{\H}^p\bigg]+p\mu\E\bigg[\int_t^{T\land\theta_N}\|\nabla\Y(\tau)\|_{\H}^2\|\Y(\tau)\|_{\H}^{p-2}\d\tau\bigg]\nonumber\\&\quad+\frac{p\alpha}{2}\E\bigg[\int_t^{T\land\theta_N}\|\Y(\tau)\|_{\H}^p \d\tau\bigg]+ p\beta\E\bigg[\int_t^{T\land\theta_N}
     	   	\|\Y(\tau)\|_{\widetilde{\L}^{r+1}}^{r+1}
     	   	\|\Y(\tau)\|_{\H}^{p-2}\d\tau\bigg]\nonumber\\&\leq \E\left[\|\xi\|_{\H}^p\right]+p\E\bigg[\sup\limits_{s\in[t,T\land\theta_N]}\bigg|
     	   	\int_t^s\|\Y(\tau)\|_{\H}^{p-2}\left(\Y(\tau),\d\W(\tau)\right)\bigg|\bigg]
     	   	+C(p,\mathrm{Tr}(\Q),R,\alpha)T.
     	   \end{align} 
      By the application of Burkholder-Davis-Gundy inequality (see \cite[Theorem 1.1]{CMMR}),  H\"older's and Young's inequalities, we calculate
      \begin{align}\label{vse6}
      	&\E\bigg[\sup\limits_{s\in[t,T\land\theta_N]}\bigg|\int_t^s\|\Y(\tau)\|_{\H}^{p-2}
      	\big(\Y(\tau),\d\W(\tau)\big)\bigg|\bigg]\nonumber\\&\leq C_p
        \E\bigg[\int_t^{T\land\theta_N}\mathrm{Tr}(\Q)\|\Y(\tau)\|_{\H}^{2(p-1)}\d\tau
        \bigg]^{\frac12}\nonumber\\&\leq C_p
        \E\bigg[\sup\limits_{s\in[t,T\land\theta_N]}\|\Y(\tau)\|_{\H}^{p-1}
        \left(\int_t^{T\land\theta_N}\mathrm{Tr}(\Q)\d\tau\right)^{\frac12}
        \bigg]\nonumber\\&\leq
        \frac12\E\bigg[\sup\limits_{s\in[t,T\land\theta_N]}\|\Y(s)\|_{\H}^p
        \bigg]+C_p\big(\mathrm{Tr}(\Q)T\big)^{\frac{p}{2}}.
      \end{align}
      Substituting \eqref{vse6} into \eqref{vse2.3} yields that
     \begin{align}\label{vse7}
     	&\frac12\E\bigg[\sup\limits_{s\in[t,T\land\theta_N]}\|\Y(s)\|_{\H}^p\bigg]+p\mu\E\bigg[\int_t^{T\land\theta_N}\|\nabla\Y(\tau)\|_{\H}^2\|\Y(\tau)\|_{\H}^{p-2}\d\tau\bigg] \nonumber\\&\quad+\frac{p\alpha}{2}\E\bigg[\int_t^{T\land\theta_N}\|\Y(\tau)\|_{\H}^p \d\tau\bigg]+ p\beta\E\bigg[\int_t^{T\land\theta_N}
     	\|\Y(\tau)\|_{\widetilde{\L}^{r+1}}^{r+1}
     	\|\Y(\tau)\|_{\H}^{p-2}\d\tau\bigg]\nonumber\\&\leq \E\left[\|\xi\|_{\H}^p\right]+C(p,\mathrm{Tr}(\Q),R,\alpha,T).
     \end{align}
     Since $T\wedge \theta_N\to T $ as $N\to\infty$, using the monotone convergence theorem, one can finally conclude the proof of \eqref{vsee1}.
     \vskip 0.25cm
     \noindent
     \emph{\textbf{Proof of (ii):}} The existence and uniqueness for strong  solutions of the system \eqref{stap} is established in \cite[Theorem 3.11]{MTM8}. 
     Let us now operate by $\A^{\frac12}$ in \eqref{stap} to obtain the following stochastic differential satisfied the stochastic process $\A^{\frac12}\Y(\cdot)$:
     \begin{align*}
     	\d\A^{\frac12}\Y(s)+\A^{\frac12}[\mu\A\Y(s)+\mathcal{B}(\Y(s))
     	+\alpha\Y(s)+\beta\mathcal{C}(\Y(s))]\d s=\A^{\frac12}\f(s,a(s))\d s+ \A^{\frac12}\d\mathbf{W}(s),
     \end{align*}
      for a.e. $s\in[t,T]$. Let us now consider the function $\Psi_2(\y)=\|\nabla\y\|_{\H}^p=(\|\nabla\y\|_{\H}^2)^{\frac{p}{2}}, \ \y\in\V$. Then, $\Psi_2\in\mathrm{C}^2(\V)$ and it has Fr\'echet derivatives
      \begin{align*}
      	\Psi_2'(\y)\z&=p\|\nabla\y\|_{\H}^{p-2}\langle\A\y,\z\rangle, \  \text{ for } \ \z\in\V,\nonumber\\
      	\Psi_2''(\y)(\z_1,\z_2)&=p(p-2)\|\nabla\y\|_{\H}^{p-4} {\langle\A\y,\z_1\rangle\langle\A\y,\z_2\rangle}
      	+p\|\nabla\y\|_{\H}^{p-2}\langle\A\z_1,\z_2\rangle,  \  \text{ for } \ \z_1,\z_2\in\V,
      \end{align*}
      with the property 
      \begin{align*}
      	\mathrm{Tr}(\Q\Psi_2''(\y))=p(p-1)\|\nabla\y\|_{\H}^{p-2}
      	\mathrm{Tr}(\Q_1).
      \end{align*}
      We define a sequence of  stopping times
      \begin{align*}
      	\wi\theta_N=\inf\{s\geq t:\|\nabla\Y(s)\|_{\H}>N\}, \ \text{ for any } \ N>0.
      \end{align*}
      Similar to the proof of part (i), let us now apply It\^o's formula (see \cite[Theorem A.1, pp. 294]{ZBSP}, \cite[Theorem 4.3, pp. 1809]{ZBSP1}) to the function $(\|\cdot\|_{\H}^2)^{\frac{p}{2}}$ and to the process $\A^{\frac12}\Y(\cdot)$ to find for all $s\in[0,T]$, $\P-\text{a.s.},$
      \begin{align}\label{vse7.1}
      	&\|\nabla\Y(s\land\wi\theta_N)\|_{\H}^p+p\mu
      	\int_t^{s\land\wi\theta_N}\|\A\Y(\tau)\|_{\H}^2\|\nabla\Y(\tau)\|_{\H}^{p-2}\d\tau\nonumber\\&\quad+p\alpha\int_t^{s\land\wi\theta_N}\|\nabla\Y(\tau)\|_{\H}^p \d\tau+p\beta \int_t^{s\land\wi\theta_N}
      	\big(\mathcal{C}(\Y(\tau)),\A\Y(\tau)\big)
      	\|\nabla\Y(\tau)\|_{\H}^{p-2}\d\tau\nonumber\\&= \|\nabla\xi\|_{\H}^p+p\int_t^{s\land\wi\theta_N} \big(\A^{\frac12}\f(\tau),\A^{\frac12}\Y(\tau)\big)
      	\|\nabla\Y(\tau)\|_{\H}^{p-2}\d\tau\nonumber\\&\quad-\int_t^{s\land\wi\theta_N}\big(\mathcal{B}(\Y(\tau)),\A\Y(\tau)\big)\|\nabla\Y(\tau)\|_{\H}^{p-2}\d\tau+
      	\frac{p(p-1)\mathrm{Tr}(\Q_1)}{2}\int_t^{s\land\wi\theta_N}
      	\|\nabla\Y(\tau)\|_{\H}^{p-2}\d\tau\nonumber\\&\quad+M_{s\land\wi\theta_N}^p, 
      \end{align}
      where ${M}_{s\land\wi\theta_N}^p=
      p\int_t^{s\land\wi\theta_N}\|\nabla\Y(\tau)\|_{\H}^{p-2} \left(\A\Y(\tau),\d\W(\tau)\right)$ is a martingale.
       From the Hypothesis \ref{fhyp}, Young's inequality, equality \eqref{torusequ} and the estimate \eqref{syymB3}, we calculate following:
      \begin{align}
      	\big(\A^{\frac12}\f(\tau),\A^{\frac12}
      	\Y(\tau)\big)\|\nabla\Y(\tau)\|_{\H}^{p-2}&\leq
      	R\|\nabla\Y(\tau)\|_{\H}^{p-1}\leq\frac{\alpha}{4}\|\nabla\Y(\tau)\|_{\H}^p+C(p,R,\alpha),\label{vse8}\\
      		\frac{(p-1)\mathrm{Tr}(\Q_1)}{2}\|\nabla\Y(\tau)\|_{\H}^{p-2}&\leq
      		\frac{\alpha}{4}\|\nabla\Y(\tau)\|_{\H}^p+C(p,\mathrm{Tr}(\Q_1),R,\alpha),\label{vse8.1}\\
      		\big(\mathcal{C}(\Y(\tau)),\A\Y(\tau)\big)&\geq
      		\||\Y(\tau)|^{\frac{r-1}{2}}\nabla\Y(\tau)\|_{\H}^{2},\label{vse8.2}
      		\\ |(\mathcal{B}(\Y),\A\Y)|&\leq\frac{\mu}{2}\|\A\Y\|_{\H}^2
      		+\frac{\beta}{4}\||\Y|^{\frac{r-1}{2}}\nabla\Y\|_{\H}^2 +\varrho\|\nabla\Y\|_{\H}^2.\label{vse8.3}
      \end{align}
      Utilizing the inequalities \eqref{vse8}-\eqref{vse8.3} in \eqref{vse7.1}, we obtain for all $s\in[0,T]$, $\P-\text{a.s.},$
        \begin{align}\label{vse11}
        	&\|\nabla\Y(s\land\wi\theta_N)\|_{\H}^p+\frac{p\mu}{2}
        	\int_t^{s\land\wi\theta_N}\|\A\Y(\tau)\|_{\H}^2\|\nabla\Y(\tau)\|_{\H}^{p-2}\d\tau\nonumber\\&\quad+\frac{p\alpha}{2}\int_t^{s\land\wi\theta_N}\|\nabla\Y(\tau)\|_{\H}^p \d\tau+\frac{3p\beta}{4} \int_t^{s\land\wi\theta_N}
        	\||\Y(\tau)|^{\frac{r-1}{2}}\nabla\Y(\tau)\|_{\H}^{2} 
        	\|\nabla\Y(\tau)\|_{\H}^{p-2}\d\tau\nonumber\\&\leq \|\nabla\xi\|_{\H}^p+M_{s\land\wi\theta_N}^p+
        	p\varrho\int_t^{s\land\wi\theta_N}\|\nabla\Y(\tau)\|_{\H}^p\d\tau+C(p,\mathrm{Tr}(\Q_1),R,\alpha)(s-t). 
        \end{align}
        On taking  expectation in \eqref{vse11}, we deduce  for all $s\in[t,T]$
        \begin{align}\label{vse12}
        &\E\left[\|\nabla\Y(s\land\wi\theta_N)\|_{\H}^p\right]+\frac{p\mu}{2} \E\bigg[\int_t^{s\land\wi\theta_N}\|\A\Y(\tau)\|_{\H}^2\|\nabla\Y(\tau)\|_{\H}^{p-2}\d\tau\bigg]\nonumber\\&\quad+\frac{p\alpha}{2}
        \E\bigg[\int_t^{s\land\wi\theta_N}\|\nabla\Y(\tau)\|_{\H}^p \d\tau\bigg]+\frac{3p\beta}{4} \E\bigg[\int_t^{s\land\wi\theta_N}
        \||\Y(\tau)|^{\frac{r-1}{2}}\nabla\Y(\tau)\|_{\H}^{2} 
        \|\nabla\Y(\tau)\|_{\H}^{p-2}\d\tau\bigg]\nonumber\\&\leq \E\left[\|\nabla\xi\|_{\H}^p\right]+p\varrho\E\bigg[\int_t^{s\land\wi\theta_N}\|\nabla\Y(\tau)\|_{\H}^p\d\tau\bigg]+C(p,\mathrm{Tr}(\Q_1),R,\alpha)(s-t),
        \end{align}
        for all $t\in[0,T]$. By the application of Fubini's theorem, we write from \eqref{vse12}
        \begin{align}\label{vse13}
       & \E\left[\|\nabla\Y(s\land\wi\theta_N)\|_{\H}^p\right]\nonumber\\&\leq
        \E\left[\|\nabla\xi\|_{\H}^p\right]+p\varrho\E\bigg[\int_t^{s\land\wi\theta_N}\|\nabla\Y(\tau)\|_{\H}^p\d\tau\bigg]+C(p,\mathrm{Tr}(\Q_1),R,\alpha)(s-t)
        \nonumber\\&=
        \E\left[\|\nabla\xi\|_{\H}^p\right]+p\varrho\int_t^{s}\E\left[\|\nabla\Y(\tau\land\wi\theta_N)\|_{\H}^p\right]\d\tau+C(p,\mathrm{Tr}(\Q_1),R,\alpha)(s-t).
        \end{align}
        On employing Gr\"onwall's inequality, we conclude from \eqref{vse13} that for all $s\in[t,T]$
        \begin{align}\label{vse14}
        	\E\left[\|\nabla\Y(s\land\wi\theta_N)\|_{\H}^p\right]\leq
        	\big(\E\left[\|\nabla\xi\|_{\H}^p\right]+C(p,\mathrm{Tr}(\Q_1),R,\alpha)(s-t)\big) e^{p\varrho (s-t)},
        \end{align}
      for all $t\in[0,T]$. 
On arguing similalry, as we discussed in the proof of the previous part, and then taking the limit as $N\to\infty$, we finally obtain  for all $s\in[t,T]$
      \begin{align*}
      	 &\E\left[\|\nabla\Y(s)\|_{\H}^p\right]+\frac{p\mu}{2} \E\bigg[\int_t^{s}\|\A\Y(\tau)\|_{\H}^2\|\nabla\Y(\tau)\|_{\H}^{p-2}\d\tau\bigg]\nonumber\\&\quad+\frac{p\alpha}{2}
      	\E\bigg[\int_t^{s}\|\nabla\Y(\tau)\|_{\H}^p \d\tau\bigg]+\frac{3p\beta}{4} \E\bigg[\int_t^{s}
      	\||\Y(\tau)|^{\frac{r-1}{2}}\nabla\Y(\tau)\|_{\H}^{2} 
      	\|\nabla\Y(\tau)\|_{\H}^{p-2}\d\tau\bigg]\nonumber\\&\leq \big(\E\|\nabla\xi\|_{\H}^p+C(p,\mathrm{Tr}(\Q_1),R,\alpha)(s-t)\big) e^{p\varrho (s-t)},
      \end{align*}
      which completes the proof of \eqref{eqn-conv-2}.

  Let us now take supremum over $t$ to $T\land\wi\theta_N$ in \eqref{vse12} followed by  expectation, we find 
  \begin{align}\label{vse15}
  &\E\bigg[\sup\limits_{s\in[t,T\land\theta_N]}\|\nabla\Y(s)\|_{\H}^p\bigg]+\frac{p\mu}{2}\E\bigg[\int_t^{T\land\theta_N}\|\A\Y(\tau)\|_{\H}^2\|\nabla\Y(\tau)\|_{\H}^{p-2}\d\tau\bigg]\nonumber\\&\quad+\frac{p\alpha}{2}\E\bigg[\int_t^{T\land\theta_N}
  \|\nabla\Y(\tau)\|_{\H}^p \d\tau\bigg]+ \frac{3p\beta}{4}\E\bigg[\int_t^{T\land\theta_N}
  \||\Y(\tau)|^{\frac{r-1}{2}}\nabla\Y(\tau)\|_{\H}^{2} 
  \|\nabla\Y(\tau)\|_{\H}^{p-2}\d\tau\bigg]\nonumber\\&\leq \E\left[\|\nabla\xi\|_{\H}^p\right]+p\varrho\E\bigg[\int_t^{T\land\wi\theta_N}\|\nabla\Y(\tau)\|_{\H}^p\d\tau\bigg]\nonumber\\&\quad+ p\E\bigg[\sup\limits_{s\in[t,T\land\theta_N]}\bigg|
  \int_t^s\|\nabla\Y(\tau)\|_{\H}^{p-2}\left(\A\Y(\tau),\d\W(\tau)\right)\bigg|\bigg]+C(p,\mathrm{Tr}(\Q_1),R,\alpha,T).
  \end{align}
   Similar to \eqref{vse6}, we calculate
   \begin{align}\label{vse15.1}
   	&\E\bigg[\sup\limits_{s\in[t,T\land\theta_N]}\bigg|\int_t^s\|\nabla\Y(\tau)\|_{\H}^{p-2}
   	\big(\A\Y(\tau),\d\W(\tau)\big)\bigg|\bigg]
   \nonumber\\&	\leq
   	\frac12\E\bigg[\sup\limits_{s\in[t,T\land\theta_N]}\|\nabla\Y(s)\|_{\H}^p
   	\bigg]+C_p\big(\mathrm{Tr}(\Q_1)T\big)^{\frac{p}{2}}.
   \end{align}
      On substituting \eqref{vse15.1} into \eqref{vse15}, we obtain
      \begin{align*}
    	&\frac12\E\bigg[\sup\limits_{s\in[t,T\land\theta_N]}\|\nabla\Y(s)\|_{\H}^p\bigg]+\frac{p\mu}{2}\E\bigg[\int_t^{T\land\theta_N}\|\A\Y(\tau)\|_{\H}^2\|\nabla\Y(\tau)\|_{\H}^{p-2}\d\tau\bigg]\nonumber\\&\quad+\frac{p\alpha}{2}\E\bigg[\int_t^{T\land\theta_N}
      	\|\nabla\Y(\tau)\|_{\H}^p \d\tau\bigg]+ \frac{3p\beta}{4}\E\bigg[\int_t^{T\land\theta_N}
      	\||\Y(\tau)|^{\frac{r-1}{2}}\nabla\Y(\tau)\|_{\H}^{2} 
      	\|\nabla\Y(\tau)\|_{\H}^{p-2}\d\tau\bigg]\nonumber\\&\leq \E\left[\|\nabla\xi\|_{\H}^p\right]+p\varrho\E\bigg[\int_t^{T\land\wi\theta_N}\|\nabla\Y(\tau)\|_{\H}^p\d\tau\bigg]+C(p,\mathrm{Tr}(\Q_1),R,\alpha,T).
      \end{align*}
      By using  Gr\"onwall's inequality, and then passing to the limit $N\to\infty$ together with monotone convergence theorem, we finally obtain for all $s\in[0,T]$
      \begin{align*}
       &\E\bigg[\sup\limits_{s\in[t,T]}\|\nabla\Y(s)\|_{\H}^p\bigg]+
         \E\bigg[\int_t^{T}\|\A\Y(\tau)\|_{\H}^2\|\nabla\Y(\tau)\|_{\H}^{p-2}\d\tau\bigg]+\E\int_t^{T}
         \|\nabla\Y(\tau)\|_{\H}^p \d\tau\nonumber\\&\quad+ \E\bigg[\int_t^{T}
       \||\Y(\tau)|^{\frac{r-1}{2}}\nabla\Y(\tau)\|_{\H}^{2} 
       \|\nabla\Y(\tau)\|_{\H}^{p-2}\d\tau\bigg]\nonumber\\&\leq \big(\E\left[\|\nabla\xi\|_{\H}^p\right]+C(p,\mathrm{Tr}(\Q_1),R,\alpha,T)\big)e^{2p\varrho T},
      \end{align*}
      for all $t\in[0,T]$. 
      \vskip 0.25cm
      \noindent
       \emph{\textbf{Proof of (iii):}} From \cite[Remark 1.10]{MRBZ}, we infer that the existence of a pathwise unique variational solution of the system \eqref{stap}  ensures the weak uniqueness, which completes the proof of (iii). 
     \end{proof}
     
     \begin{proposition}\label{cts-dep-soln}[Continuous dependence of solutions]
     Let $t\in[0,T]$ and $p\geq2$ be fixed. Let $\nu=(\Omega,\mathscr{F},\{\mathscr{F}_s^t\}_{s\geq t},\P,
     \mathbf{W})$ be a generalized reference probability space, and let Hypothesis \ref{trQ1}-\ref{fhyp} be satisfied. Let $\xi$ and $\eta$ be $\mathscr{F}_t^t-$measurable $\H-$valued random variables such that $\E\big[\|\xi\|_{\H}^p+\|\eta\|_{\H}^p\big]<+\infty$, and let $\a(\cdot)\in\mathscr{U}_t^\nu$. Then:
     
     (i) There exists a constant $C$ independent of $t,\xi,\eta,\a(\cdot)$ and $\nu$ such that for all $s\in[t,T]$, $\P-$a.s., we have
     \begin{align}\label{ctsdep0}
     	&\|\Y_1(s)-\Y_2(s)\|_{\H}^2+\int_t^s \|\nabla(\Y_1-\Y_2)(\tau)\|_{\H}^2\d\tau+\int_t^s \|\Y_1(\tau)-\Y_2(\tau)\|_{\wi\L^{r+1}}^{r+1}\d\tau\nonumber\\&\leq
       \|\xi-\eta\|_{\H}^2 e^{C(s-t)},
     \end{align} 
     where $\Y_1(\cdot)=\Y_1(\cdot;t,\xi,\a(\cdot))$ and $\Y_2(\cdot)=\Y_2(\cdot;t,\eta,\a(\cdot))$ are two variational  solutions of \eqref{stap} with initial conditions $\Y_1(t)=\xi$ in $\H$ and $\Y_2(t)=\eta$ in $\H$. 
     
     (ii) If $\|\y\|_{\V}\leq R_1$, where $R_1$ is arbitrary, then there exists a constant $$C=C(p,\mu,\alpha,\beta,T,R,R_1,\mathrm{Tr}(\Q),\mathrm{Tr}(\Q_1))$$ such that for all $s\in[t,T]$, 
     \begin{align}\label{ctsdep0.1}
     	\E\left[\|\Y(s)-\y\|_{\H}^p\right]\leq C(p,\mu,\alpha,\beta,T,R,R_1, \mathrm{Tr}(\Q),\mathrm{Tr}(\Q_1))(s-t), 
     \end{align} 
   where $\Y(\cdot)=\Y(\cdot;t,\y,\a(\cdot))$.
     
     (iii) For every initial condition $\y\in\V,$ there exists a modulus $\omega$, independent of the reference probability spaces $\nu$ and controls $\a(\cdot)\in\mathscr{U}_t^\nu$, such that
     \begin{align}\label{ctsdep0.2}
     	\E\left[\|\Y(s)-\y\|_{\V}^p\right]\leq\omega_{\y}(s-t), \ \text{ for all } \ s\in[t,T],
     \end{align}
     where $\Y(\cdot)=\Y(\cdot;t,\y,\a(\cdot))$.
     \end{proposition}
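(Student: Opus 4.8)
The plan is to treat the three assertions in turn, using throughout the energy estimates of Proposition~\ref{weLLp}, the monotonicity bound \eqref{monoC2}, the bilinear bounds \eqref{syymB1}, \eqref{3.4}, \eqref{syymB3}, and the identity \eqref{torusequ}; since every constant produced by these ingredients depends only on $p,\mu,\alpha,\beta,r,T,R,\mathrm{Tr}(\Q),\mathrm{Tr}(\Q_1)$ and (from (ii) on) on $R_1$, the resulting bounds and the modulus in (iii) are automatically independent of $\nu$ and of $\a(\cdot)\in\mathscr{U}_t^\nu$. For \emph{part (i)}, put $\Z:=\Y_1-\Y_2$; as both solutions are driven by the same $\mathbf{W}$ and forcing, $\Z$ satisfies the \emph{pathwise} equation $\frac{\d\Z}{\d s}+\mu\A\Z+\mathcal{B}(\Y_1)-\mathcal{B}(\Y_2)+\alpha\Z+\beta(\mathcal{C}(\Y_1)-\mathcal{C}(\Y_2))=\boldsymbol{0}$, $\Z(t)=\xi-\eta$. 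Pairing with $\Z$ (with the usual localization by stopping times), bounding $\beta\langle\mathcal{C}(\Y_1)-\mathcal{C}(\Y_2),\Z\rangle$ from below by \eqref{monoC2} and absorbing $|\langle\mathcal{B}(\Y_1)-\mathcal{B}(\Y_2),\Z\rangle|$ by \eqref{3.4} when $r\in(3,\infty)$ (resp.\ by the analogous absorption of \eqref{syymB1} into \eqref{monoC2} when $r=3$, $2\beta\mu\ge1$), one arrives, after cancellations, at $\frac{\d}{\d s}\|\Z(s)\|_{\H}^2+\mu\|\nabla\Z(s)\|_{\H}^2+\frac{\beta}{2^{r-1}}\|\Z(s)\|_{\wi\L^{r+1}}^{r+1}\le C\|\Z(s)\|_{\H}^2$. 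Gr\"onwall's inequality applied to $\|\Z(s)\|_{\H}^2$ gives $\|\Z(s)\|_{\H}^2\le\|\xi-\eta\|_{\H}^2e^{C(s-t)}$, and reinserting this into the time-integrated inequality yields \eqref{ctsdep0}.

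\emph{Part (ii).} Since $\|\y\|_{\V}\le R_1$, the solution $\Y(\cdot)$ is strong by Proposition~\ref{weLLp}(ii), so one may apply It\^o's formula (as in the proof of Proposition~\ref{weLLp}) to $(\|\cdot\|_{\H}^2)^{p/2}$ and to $\Y(\cdot)-\y$, which solves $\d(\Y-\y)=\mathbf{F}\,\d s+\d\mathbf{W}$ with $\mathbf{F}:=-\mu\A\Y-\mathcal{B}(\Y)-\alpha\Y-\beta\mathcal{C}(\Y)+\f$. Splitting $\langle\mathbf{F},\Y-\y\rangle=\langle\mathbf{F},\Y\rangle-\langle\mathbf{F},\y\rangle$, the diagonal part equals $-\mu\|\nabla\Y\|_{\H}^2-\alpha\|\Y\|_{\H}^2-\beta\|\Y\|_{\wi\L^{r+1}}^{r+1}+(\f,\Y)$ (since $\langle\mathcal{B}(\Y),\Y\rangle=0$), with the forcing term controlled by $\frac12\|\Y-\y\|_{\H}^2+C$ through $\|\f\|_{\V}\le R$; the cross terms $\mu(\nabla\Y,\nabla\y)$, $-b(\Y,\y,\Y)$, $\alpha(\Y,\y)$, $\beta\langle\mathcal{C}(\Y),\y\rangle$, $-(\f,\y)$ are estimated, via Young's inequality, a Gagliardo--Nirenberg bound for $b(\Y,\y,\Y)$, the Sobolev embedding $\V\hookrightarrow\wi\L^{r+1}$, and $\|\y\|_{\V}\le R_1$, by $\frac{3\mu}{4}\|\nabla\Y\|_{\H}^2+\frac{\beta}{2}\|\Y\|_{\wi\L^{r+1}}^{r+1}+C+C\|\Y-\y\|_{\H}^2$ (using also $\|\Y\|_{\H}^2\le2\|\Y-\y\|_{\H}^2+2R_1^2$). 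Discarding the surviving non-positive dissipation, $\langle\mathbf{F},\Y-\y\rangle\le C+C\|\Y-\y\|_{\H}^2$; taking expectations (the It\^o martingale having zero mean) and applying Young's inequality to $\|\Y-\y\|_{\H}^{p-2}$ gives $\E\|\Y(s)-\y\|_{\H}^p\le C(s-t)+C\int_t^s\E\|\Y(\tau)-\y\|_{\H}^p\,\d\tau$, whence \eqref{ctsdep0.1} follows by Gr\"onwall.

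\emph{Part (iii).} Here the natural route --- applying It\^o to $\|\Y(s)-\y\|_{\V}^2$ --- fails, because $\y\in\V$ need not lie in $\D(\A)=\V_2$, so $\A\y$ only makes sense in $\V^*$; this is the main obstacle. The plan is: \textbf{(a)} from the $p=2$ energy identity for $\|\Y(s)\|_{\H}^2$ (Proposition~\ref{weLLp}(i)) and the $p=2$ instance of \eqref{vse7.1} --- in which, by \eqref{torusequ}, \eqref{vse8.2} and the bilinear bound used in the proof of Proposition~\ref{weLLp}(ii), the terms $2\mu\int_t^s\|\A\Y\|_{\H}^2$ and $2\beta\int_t^s(\mathcal{C}(\Y),\A\Y)$ become non-negative after absorbing $-2\int_t^s(\mathcal{B}(\Y),\A\Y)$ and may be dropped, while $\E\int_t^s\|\nabla\Y\|_{\H}^2\le C(s-t)$ by \eqref{eqn-conv-2} --- deduce $\E\|\Y(s)\|_{\V}^2\le\|\y\|_{\V}^2+C(s-t)$, hence (with \eqref{ssee1}) a uniform bound $\E\|\Y(s)-\y\|_{\V}^2\le C_0(R_1,T)$; \textbf{(b)} expand $\E\|\Y(s)-\y\|_{\V}^2=\E\|\Y(s)\|_{\V}^2-2\E(\Y(s),\y)_{\H}-2\E(\nabla\Y(s),\nabla\y)_{\H}+\|\y\|_{\V}^2$ and insert (a) together with part (ii) to reduce everything to the single term $\E(\nabla(\Y(s)-\y),\nabla\y)_{\H}$, namely $\E\|\Y(s)-\y\|_{\V}^2\le C\sqrt{s-t}-2\,\E(\nabla(\Y(s)-\y),\nabla\y)_{\H}$; \textbf{(c)} choosing spectral truncations $\y_n\in\D(\A)$ with $\y_n\to\y$ in $\V$, write $(\nabla(\Y(s)-\y),\nabla\y)_{\H}=(\nabla(\Y(s)-\y),\nabla(\y-\y_n))_{\H}+(\Y(s)-\y,\A\y_n)_{\H}$, the first summand having expectation at most $\|\y-\y_n\|_{\V}(\E\|\Y(s)-\y\|_{\V}^2)^{1/2}\le\sqrt{C_0}\,\|\y-\y_n\|_{\V}$, made $<\varepsilon$ uniformly in $s,\nu,\a(\cdot)$ by taking $n$ large, and the second (for fixed $n$) expectation at most $\|\A\y_n\|_{\H}(\E\|\Y(s)-\y\|_{\H}^2)^{1/2}\le\|\A\y_n\|_{\H}\sqrt{C(s-t)}\to0$ as $s\downarrow t$, uniformly in $\nu,\a(\cdot)$ by part (ii); \textbf{(d)} hence $\sup_{\nu,\a(\cdot)}\E\|\Y(s)-\y\|_{\V}^2\to0$ as $s\downarrow t$, so \eqref{ctsdep0.2} holds for $p=2$ with $\upomega_{\y}$ a concave majorant (via Lemma~\ref{lem-modulus}) of the non-decreasing envelope of this supremum, and for $p>2$ one uses the Lyapunov interpolation $\E\|\Y(s)-\y\|_{\V}^p\le(\E\|\Y(s)-\y\|_{\V}^2)^{\lambda p/2}(\E\|\Y(s)-\y\|_{\V}^{2m})^{(1-\lambda)p/(2m)}$ for any $m$ with $2m\ge p$ and $\frac1p=\frac{\lambda}{2}+\frac{1-\lambda}{2m}$, the last factor being uniformly bounded by Proposition~\ref{weLLp}. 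The chief difficulty is step (c): the lack of $\D(\A)$-regularity of the initial datum $\y$ precludes a clean rate $C(s-t)$ for $\E(\nabla(\Y(s)-\y),\nabla\y)_{\H}$ and forces the above $\varepsilon$--$\delta$ truncation argument, whose uniformity over $(\nu,\a(\cdot))$ holds precisely because all the constants appearing in Proposition~\ref{weLLp}, in part (ii) above, and in $C_0$ are structural, depending only on the data and on $\|\y\|_{\V}$.
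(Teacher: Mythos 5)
Your proposal is correct in substance, and it is worth recording where it coincides with and departs from the paper's argument. Part (i) is essentially the paper's proof: the noise cancels, you pair the pathwise equation for $\Z=\Y_1-\Y_2$ with $\Z$, use \eqref{3.4} and \eqref{monoC2}, and apply Gr\"onwall before reinserting (your remark that $r=3$, $2\beta\mu\ge1$ needs a separate absorption is, if anything, more careful than the paper, which simply quotes \eqref{3.4}). Part (ii) keeps the paper's skeleton (It\^o for $\|\cdot\|_{\H}^p$ applied to $\Y-\y$, splitting the drift into diagonal and cross terms) but handles the cross terms differently: the paper estimates $(\mathcal{B}(\Y),\y)$ and $(\mathcal{C}(\Y),\y)$ through the weighted dissipation and the $\wi\L^{3(r+1)}$-norm (see \eqref{ctsdep6}--\eqref{ctsdep7} and Remark \ref{rg3L3r}) and then invokes the strong-solution bound \eqref{ssee1}, whereas you absorb them into the dissipation via Gagliardo--Nirenberg and the embedding $\V\hookrightarrow\wi\L^{r+1}$ and close with Gr\"onwall, which yields the linear rate $C(s-t)$ very transparently. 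Part (iii) is where you genuinely diverge: the paper argues by contradiction, placing the sequence of controls on a common reference probability space (Proposition \ref{weLLp}(iii) together with the cited corollary of \cite{GFAS}), extracting strong $\mathrm{L}^p(\Omega;\H)$ convergence from (ii) and weak $\mathrm{L}^p(\Omega;\V)$ convergence from the energy estimates, matching the norms via \eqref{eqn-conv-2}, and concluding by Radon--Riesz; you instead give a direct quantitative proof by expanding the $\V$-norm, reducing to the cross term $\E(\nabla(\Y(s)-\y),\nabla\y)_{\H}$, trading the missing $\D(\A)$-regularity of $\y$ for the $\H$-rate of (ii) through spectral truncations, and upgrading to $p>2$ by Lyapunov interpolation against the uniform higher moments of Proposition \ref{weLLp}. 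Both routes are valid; yours avoids the change-of-reference-space step altogether and produces the modulus constructively, at the cost of the $\varepsilon$--truncation bookkeeping, while the paper's compactness argument is shorter but non-quantitative.

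One caveat on your part (ii): the estimate $\beta|\langle\mathcal{C}(\Y),\y\rangle|\le\tfrac{\beta}{2}\|\Y\|_{\wi\L^{r+1}}^{r+1}+C(R_1)$ rests on $\|\y\|_{\wi\L^{r+1}}\le C\|\y\|_{\V}$, which in $d=3$ requires $r\le 5$. The proposition (and the paper's proof, via Cauchy--Schwarz in $\H$, the interpolation $\|\Y\|_{\wi\L^{2r}}^{r}\le\|\Y\|_{\wi\L^{r+1}}^{\frac{r+3}{4}}\|\Y\|_{\wi\L^{3(r+1)}}^{\frac{3(r-1)}{4}}$ and \eqref{ssee1}) covers all $r>3$ in $d=3$, so your argument loses the range $d=3$, $r>5$; this does not affect any downstream use (Theorem \ref{extunqvisc} is restricted to $r\in(3,5)$ in $d=3$), but to prove the statement in full generality you should replace the Sobolev step by the paper's interpolation through $\wi\L^{3(r+1)}$.
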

     
     \begin{proof}
     	\vskip 0.25cm
     \noindent
     \emph{\textbf{Proof of (i):}}  Let us set $\Z(\cdot):=\Y_1(\cdot)-\Y_2(\cdot)$. Then $\Z(\cdot)$ satisfies the following system for each fixed $\omega\in\Omega$:
     	\begin{equation}\label{ctsdep}
     	\left\{
     	\begin{aligned}
    	\frac{\d\Z(s)}{\d t}&=-\mu\A\Z(s)-\alpha(\Y_1(\tau)-\Y_2(\tau))- \big(\mathcal{B}(\Y_1(s))-\mathcal{B}(\Y_2(s))\big)\\&
    	\quad-\beta\big(\mathcal{C}(\Y_1(s))-\mathcal{C}(\Y_2(s))\big), 
    	 \  \text{ in } \ (t,T]\times\H, \\
     	\Z(t)&=\xi-\eta\in\H.
     	\end{aligned}
     	\right.
     	\end{equation}
      By using \eqref{3.4} and \eqref{monoC2}, we find
     	\begin{align}
     &	|\langle\mathcal{B}(\Y_1)-\mathcal{B}(\Y_2),\Y_1-\Y_2\rangle|\nonumber\\&\quad\leq
     	\frac{\mu }{2}\|\nabla(\Y_1-\Y_2)\|_{\H}^2 +\frac{\beta}{4}\||\Y_2|^{\frac{r-1}{2}}(\Y_1-\Y_2)\|_{\H}^2 +\varrho\|\Y_1-\Y_2\|_{\H}^2,\label{ctsdep1}\\
     &	\langle\mathcal{C}(\Y_1)-\mathcal{C}(\Y_2),\Y_1-\Y_2\rangle\nonumber\\&\quad\geq \frac{1}{2}\||\Y_2|^{\frac{r-1}{2}}(\Y_1-\Y_2)\|_{\H}^2+
     	\frac{1}{4}\||\Y_1|^{\frac{r-1}{2}}(\Y_1-\Y_2)\|_{\H}^2.\label{ctsdep11}
     \end{align}
    On taking the inner product with $\Z(\cdot)$ in \eqref{ctsdep} and utilizing the estimates \eqref{ctsdep1} and \eqref{ctsdep11}, and using \eqref{monoC2}, we conclude   for a.e. $s\in[t,T],$ $ \P-\text{a.s.},$
    \begin{align}\label{ctsdep2}
    	\frac12\frac{\d}{\d s}\|\Z(s)\|_{\H}^2+\frac{\mu }{2} \|\nabla\Z(s)\|_{\H}^2+\alpha\|\Z(s)\|_{\H}^2+
    	\frac{\beta}{2^r}\|\Z(s)\|_{\wi\L^{r+1}}^{r+1} \leq\varrho\|\Z(s)\|_{\H}^2.
    \end{align}
    Then it follows from  Gronwall's inequality that   for all $s\in[t,T]$, $ \P-\text{a.s.},$
      \begin{align}\label{ctsdep3}
      	\|\Z(s)\|_{\H}^2\leq\|\xi-\eta\|_{\H}^2 e^{2\varrho(s-t)}.
      \end{align}
    Substituting \eqref{ctsdep3}  into \eqref{ctsdep2}, we arrive at \eqref{ctsdep0}.
     	
     	\vskip 0.25cm
     \noindent
     \emph{\textbf{Proof of (ii):}}  Let us take $\Z(\cdot):=\Y(\cdot)-\y$. Then, we rewrite   from \eqref{stap}, for $s\in[t,T]$, $\P-$a.s., 
     	\begin{align*}
     	\Z(s)=\int_t^s\big[-\mu\A\Y(\tau)-\alpha\Y(\tau)-\mathcal{B}(\Y(\tau))-\beta\mathcal{C}(\Y(\tau))+\f(\tau,\a(\tau))\big]\d\tau+\int_t^s \d\W(\tau).
     	\end{align*}
     	On applying the infinite-dimensional It\^o formula to the function $\|\cdot\|_{\H}^p$ and to the process $\Z(\cdot)$ and then taking expectation, we get
     	\begin{align}\label{ctsdep4}
     		&\E\left[\|\Z(s)\|_{\H}^p\right]\nonumber\\&=p\E\bigg[\int_t^s \big(-\mu\A\Y(\tau)-\alpha\Y(\tau)- \mathcal{B}(\Y(\tau))-\beta\mathcal{C}(\Y(\tau))+\f(\tau,\a(\tau)),\Z(\tau)\big)\|\Z(\tau)\|_{\H}^{p-2}\d\tau\bigg]
     		\nonumber\\&\quad+
     		\frac{p(p-1)\mathrm{Tr}(\Q)}{2}\E\bigg[\int_t^s \|\Z(\tau)\|^{p-2}\d\tau\bigg].
     	\end{align}
        By using the Cauchy Schwarz inequality  and Young's inequality, \eqref{ctsdep4} reduces to
        \begin{align}\label{ctsdep5}
        \E\left[\|\Z(s)\|_{\H}^p\right]\leq&-\frac{p\mu}{2}\E\bigg[\int_t^s \|\nabla\Y(\tau)\|_{\H}^2\|\Z(\tau)\|_{\H}^{p-2}\d\tau\bigg]
        -\frac{p\alpha}{2}\E\bigg[\int_t^s \|\Y(\tau)\|_{\H}^2\|\Z(\tau)\|_{\H}^{p-2}\d\tau\bigg]
        \nonumber\\&+p\E\bigg[\int_t^s
        \big(\mathcal{B}(\Y(\tau)),\y\big)\|\Z(\tau)\|_{\H}^{p-2}\d\tau\bigg]-
        p\beta\E\bigg[\int_t^s
        \|\Y(\tau)\|_{\wi\L^{r+1}}^{r+1}\|\Z(\tau)\|_{\H}^{p-2}\d\tau\bigg]
        \nonumber\\&+p\beta\E\bigg[\int_t^s
        \big(\mathcal{C}(\Y(\tau)),\y\big)\|\Z(\tau)\|_{\H}^{p-2}\d\tau\bigg]
        \nonumber\\&+C(p,R,R_1,\mathrm{Tr}(\Q),\mu)
         \E\bigg[\int_t^s\left(\|\Z(\tau)\|_{\H}^{p-1}+ \|\Z(\tau)\|_{\H}^{p-2}\right)\d\tau\bigg].
        \end{align}
     	Let us now estimate the terms $\big(\mathcal{B}(\Y(\tau)),\y\big)$ by using the  Cauchy Schwarz, H\"older's, and Young's inequalities as
     	\begin{align}\label{ctsdep6}
     	\big|\big(\mathcal{B}(\Y(\tau)),\y\big)\big|&\leq
     	\|\mathcal{B}(\Y(\tau))\|_{\H}^2+\frac14\|\y\|_{\H}^2
     	\nonumber\\&\leq
     	\||\Y(\tau)|^{\frac{r-1}{2}}\nabla\Y(\tau)\|_{\H}^{\frac{4}{r-1}}
     	\|\nabla\Y(\tau)\|_{\H}^{\frac{2(r-3)}{r-1}}+\frac14\|\y\|_{\H}^2
     	\nonumber\\&\leq
     	\frac{\mu}{4}\|\nabla\Y(\tau)\|_{\H}^2+
     	\varrho_1\||\Y(\tau)|^{\frac{r-1}{2}}\nabla\Y(\tau)\|_{\H}^2
     	+\frac14\|\y\|_{\H}^2,
     	\end{align}
     	where $\varrho_1=\left(\frac{4(r-3)}{\mu(r-1)}\right)^{\frac{r-3}{2}}
     	\frac{2}{r-1}$. Similarly, we estimate $\big(\mathcal{C}(\Y(\tau)),\y\big)$  as
     	\begin{align}\label{ctsdep7}
     	\big|\big(\mathcal{C}(\Y(\tau)),\y\big)\big|
     	&\leq\|\Y(\tau)\|_{\wi\L^{2r}}^r\|\y\|_{\H}\nonumber\\&\leq
     	\|\Y(\tau)\|_{\wi\L^{r+1}}^{\frac{r+3}{4}}\|\Y(\tau)\|_{\wi\L^{3(r+1)}}^{\frac{3(r-1)}{4}}\|\y\|_{\H}\nonumber\\&\leq
     	\frac12\|\Y(\tau)\|_{\wi\L^{r+1}}^{r+1}+C_1(r)\|\Y(\tau)\|_{\wi\L^{3(r+1)}}^{r+1}+C_2(r)\|\y\|_{\H}^{r+1}.
     	\end{align}
     	Plugging \eqref{ctsdep6}-\eqref{ctsdep7} into \eqref{ctsdep5} yields that
     	\begin{align*}
     	 \E\left[\|\Z(s)\|_{\H}^p\right]\leq&-\frac{p\mu}{4}\E\bigg[\int_t^s \|\nabla\Y(\tau)\|_{\H}^2\|\Z(\tau)\|_{\H}^{p-2}\d\tau\bigg]
     	-\frac{p\alpha}{2}\E\bigg[\int_t^s \|\Y(\tau)\|_{\H}^2\|\Z(\tau)\|_{\H}^{p-2}\d\tau\bigg]
     	\nonumber\\&+p\varrho_1\E\bigg[\int_t^s
     	\||\Y(\tau)|^{\frac{r-1}{2}}\nabla\Y(\tau)\|_{\H}^2\|\Z(\tau)\|_{\H}^{p-2}\d\tau\bigg]\nonumber\\&-
     	\frac{p\beta}{2}\E\bigg[\int_t^s
     	\|\Y(\tau)\|_{\wi\L^{r+1}}^{r+1}\|\Z(\tau)\|_{\H}^{p-2}\d\tau\bigg]
     	\nonumber\\&+p\beta C_1(r)\E\bigg[\int_t^s
     	\|\Y(\tau)\|_{\wi\L^{3(r+1)}}^{r+1}\|\Z(\tau)\|_{\H}^{p-2}\d\tau\bigg]
     	\nonumber\\&+
         C(p,R,R_1,\mathrm{Tr}(\Q),r,\mu,\beta)
     	\E\bigg[\int_t^s\left(\|\Z(\tau)\|_{\H}^{p-1}+ \|\Z(\tau)\|_{\H}^{p-2}\right)\d\tau\bigg].
     \end{align*}
   By making use of Remark \ref{rg3L3r} and the uniform energy estimates \eqref{vsee1} and \eqref{ssee1}, we finally conclude \eqref{ctsdep0.1}.
   
	\vskip 0.25cm
\noindent
\emph{\textbf{Proof of (iii):}} To prove \eqref{ctsdep0.2}, we need to show that
       \begin{align}\label{ctsdep8}
       	\sup\limits_{s\in[t,t+\eps], \ \a(\cdot)\in\mathscr{U}_t^{\nu}}
       	\E\left[\|\Y(s;t,\y,\a(\cdot))-\y\|_{\V}^p\right]\to0 \ \text{ as } \ \eps\to0.
       \end{align}
       Suppose \eqref{ctsdep8} does not hold true. Then, there exist sequences $s_n$ and $\a_n(\cdot)\in\mathscr{U}_t^\nu$ such that
       \begin{align}\label{contra1}
       	s_n\to t \ \text{ and }  \ \E\left[\|\Y_n(s_n)-\y\|_{\V}^p\right]\geq\eps
       \end{align}
       for all $n\geq1$ where $\Y_n(s_n)=\Y(s_n;t,\y,\a_n(\cdot))$. In view of Proposition \ref{weLLp} (part (iii)) and \cite[Corollary 2.21, pp. 108]{GFAS}, we can assume that $\a_n(\cdot)$ are defined on the same reference probability space. However, it follows from \eqref{ctsdep0.1} and \eqref{ssee1}, we have the following, respectively, strong and weak convergences (along a subsequence):
       \begin{align*}
       	\Y_n(s_n)\to\y \ \text{ in } \ \mathrm{L}^p(\Omega;\H)  \  \text{ and } \
       	\Y_n(s_n)\rightharpoonup\y \ \text{ in } \ \mathrm{L}^p(\Omega;\V).
       \end{align*}
       The weak sequential convergence in $\mathrm{L}^p(\Omega;\V)$ implies that
       \begin{align*}
       	\|\y\|_{\V}^p\leq\liminf_{n\to\infty}\E\left[\|\Y_n(s_n)\|_{\V}^p\right],
       \end{align*}
       while the uniform energy estimate in \eqref{eqn-conv-2} provides
       \begin{align*}
       		\|\y\|_{\V}^p\geq\limsup_{n\to\infty}\E\left[\|\Y_n(s_n)\|_{\V}^p
       		\right].
       \end{align*}
       These inequalities together yields that $\|\y\|_{\V}^p=\lim\limits_{n\to\infty}\E\left[\|\Y_n(s_n)\|_{\V}^p\right]$ and hence the Radon-Riesz property (\cite[Proposition 3.32, pp. 78]{HB}) assures $\Y_n(s_n)\to\y$ in $\mathrm{L}^p(\Omega;\V)$, which contradicts \eqref{contra1}, and the proof of \eqref{ctsdep0.2} is completed. 
     \end{proof}

     \section{Value function and the comparison principle} \label{valueSC}\setcounter{equation}{0}
     In this section, we study the continuity properties of the value function of the stochastic optimal control problem, and we prove the comparison principle. First, we discuss the dynamic programming principle and formulate the stochastic optimal control problem.
      \subsection{The dynamic programming principle}\label{stoptcon}
     The dynamic programming principle is one of the fundamental results of the stochastic optimal control problems. Since we work with an infinite-dimensional system, its proof and formulation are very technical. Let us first give a stochastic setup and the main assumptions needed for the dynamic programming principle.
     \begin{definition}\label{refprobsp}\cite[Definition 2.7, Chapter 2, pp. 97-98]{GFAS}
     A reference probability space is a generalized reference probability space $\nu:=(\Omega,\mathscr{F},\{\mathscr{F}_s^t\}_{s\geq t},\P,\mathbf{W})$, where 
     \begin{itemize}
     	\item $\mathbf{W}(t)=0,$ $\P-$a.s.,
     	\item $\mathscr{F}_s^t=\sigma(\mathscr{F}_s^{t,0},\mathscr{N})$,  where $\mathscr{F}_s^{t,0}=\sigma\big(\mathbf{W}(q):t\leq q\leq s\big)$ is the filtration generated by $\mathbf{W}(\cdot)$ and $\mathscr{N}$ is the collection of the $\P$-null sets in $\mathscr{F}$.
     \end{itemize} 
         \end{definition} 
     We now consider our stochastic control problems \eqref{stap}-\eqref{costF} where the generalized reference probability space is replaced by the reference probability space. Therefore we restricting the set of admissible controls. The set of all admissible controls is defined by
     \begin{align*}
     	\mathscr{U}_t:=\bigcup_{\nu}\mathscr{U}_t^{\nu},
     \end{align*}
where the union is taken over all reference probability spaces $\nu$ (see Definition \ref{refprobsp}). We say that the control $\a(\cdot)$ is an admissible control if there exists a reference probability space $\nu:=(\Omega^{\nu},\mathscr{F},\{\mathscr{F}_s^{t,\nu}\}_{s\geq t},\P^{\nu},\mathbf{W}^{\nu})$, such that $\a(\cdot):[t,T]\times\Omega^{\nu}\to\U$ is $\mathscr{F}_s^{t,\nu}-$progressively measurable. Here the superscript $\nu$ indicates the dependence on the reference probability space $\nu$.
Using the reference probability spaces allows us to represent the control processes as functions of Wiener processes. It will enable one to pass from one reference probability space to another; hence, there are restrictions on the reference probability space. With this setup, we have the following definition of the value function.

     \begin{definition}
     	The value function $\mathcal{V}(\cdot)$ of the stochastic optimal control problem \eqref{stap} and \eqref{costF}, with initial time $t$, is defined as  
     	\begin{align}\label{valueF}
     		\mathcal{V}(t,\y):=\inf\limits_{\a(\cdot)\in\mathscr{U}_t} J(t,\y;\a(\cdot)),
     	\end{align}
     	where $J$ is defined in \eqref{costF} for $\a(\cdot)\in\mathscr{U}_t$. 
     \end{definition}

    We assume that the running cost $\ell$ is independent of $t$. This is done to minimize the non-essential technical difficulty which might obscure the main points of the proof. 
    
     \begin{remark}
     	
     	1.) If we use the reference probability space, then the formulation specified above for DPP (weak formulation) and the strong formulation discussed in Section  \ref{stgform} are equivalent in the sense that they have the same value function (see \cite[Theorem 2.22, Chapter 2]{GFAS}). 
     
     	2.) The approach for formulating the stochastic optimal control problem considered in this work is closely related to \cite[Chapter 2 and 4]{JYXYZ} (see also \cite[Chapter 2]{GFAS}). However, plenty of literature is available regarding the formulations of stochastic optimal control problems, with various notions of control processes, in both finite and infinite dimensions. For instance, see \cite{VSB,WHF,WHF1,NVK}. 
     \end{remark}

     We assume the following assumptions on the cost functions $\ell$ and $g$, and the forcing functions $\f$.
     \begin{hypothesis}\label{valueH} 
     	 The functions $\ell:\V\times\U\to\R$ and $g:\V\to\R$ are continuous, and there exist $k\geq0$, and for every $r>0$, a modulus $\sigma_r$ such that 
     	\begin{align}
     		|\ell(\y,\a)|, |g(\y)|&\leq C(1+\|\y\|_{\V}^k), \  \ \text{ for all } \ \y\in\V, \ \a\in\U,\label{vh1}\\
     		|\ell(\y,\a)-\ell(\x,\a)|&\leq\sigma_r(\|\y-\x\|_{\V})\  \ \text{ if } \ \|\y\|_{\V},\|\x\|_{\V}\leq r, \ \a\in\U,\label{vh2}\\
     		|g(\y)-g(\x)|&\leq\sigma_r(\|\y-\x\|_{\H}) \  \ \text{ if } \ \|\y\|_{\V},\|\x\|_{\V}\leq r.\label{vh3}
     	\end{align}
     \end{hypothesis}
     The example given in the Subsection \ref{stcoexm} satisfies all the conditions of the above Hypothesis \ref{valueH}. The following proposition refers the continuity properties of the value function \eqref{valueF} and the dynamic programming principle. The proof of this proposition is standard, and one can refer \cite[Section 3.13, Chapter 3]{GFAS} for a detailed and complete explanation.
     \begin{proposition}\label{valueP}
     	Let us suppose that  Hypothesis \ref{fhyp} and \ref{valueH} are satisfied. Then, we have the following results: 
     	
     	(i) \textbf{(Continuous dependence of cost functional)} For every $r>0,$ there exists a modulus $\omega_r$ such that for every $t\in[0,T]$ and $\a(\cdot)\in\mathscr{U}_t$
     	\begin{align}\label{conJ}
     		|J(t,\y;\a(\cdot))-J(t,\x;\a(\cdot))|\leq\omega_r(\|\y-\x\|_{\H}), \ \text{ if } \ \|\y\|_{\V},\|\x\|_{\V}\leq r.
     	\end{align}
     	
     	(ii) \textbf{(Dynamic programming principle)} For every 
     	$0\leq t\leq\eta\leq T$ and $\y\in\V$, the value function $\mathcal{V}(\cdot)$ satisfies the following identity:
     	\begin{align}\label{vdpp}
     		\mathcal{V}(t,\y)=\inf\limits_{\a(\cdot)\in\mathscr{U}_t}
     		\E\left\{\int_t^\eta \ell(\Y(s;t,\y,\a(\cdot)),\a(s))\d s+ \mathcal{V}(\eta,\Y(\eta;t,\y,\a(\cdot)))\right\}.
     	\end{align}
     	
     	(iii) \textbf{(Locally Lipschitz property of the value function)} For every $r>0,$ there exists a modulus $\omega_r$ such that
     	\begin{align}\label{conV}
     		|\mathcal{V}(t_1,\y)-\mathcal{V}(t_2,\x)|\leq\omega_r(|t_1-t_2|+
     		\|\y-\x\|_{\H}),
     	\end{align}
     	for all $t_1,t_2\in[0,T]$ and $\|\y\|_{\V},\|\x\|_{\V}\leq r$. Moreover, there exists  $C\geq0$ and $k\geq 0$ such that
     	\begin{align}\label{bddV}
     		|\mathcal{V}(t,\y)|\leq C(1+\|\y\|_{\V}^k),
     	\end{align}
     	for all $t\in[0,T]$ and $\y\in\V$.
     \end{proposition}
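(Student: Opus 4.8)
The plan is to prove the three assertions in the order (i)\,$\to$\,(ii)\,$\to$\,(iii): the dynamic programming principle rests on the continuous dependence of the cost functional, and the space–time regularity of $\mathcal{V}$ in turn uses the DPP. The recurring technical device throughout will be a truncation argument: split the probability space according to whether the relevant solutions remain in a ball of radius $m$ in $\V$, apply Hypothesis \ref{valueH} on the ``good'' event and the polynomial growth \eqref{vh1} together with Markov's inequality on the tail, and finally pass the expectation through the modulus by invoking Lemma \ref{lem-modulus} (so the modulus may be taken concave) and Jensen's inequality.

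For part (i), fix $r>0$ and $\|\y\|_{\V},\|\x\|_{\V}\le r$, and let $\Y_1=\Y(\cdot;t,\y,\a(\cdot))$, $\Y_2=\Y(\cdot;t,\x,\a(\cdot))$ be the variational solutions driven by the same $\a(\cdot)$. Proposition \ref{cts-dep-soln}(i) gives the pathwise bounds $\sup_{s\in[t,T]}\|\Y_1(s)-\Y_2(s)\|_{\H}^2\le\|\y-\x\|_{\H}^2e^{C(T-t)}$ and $\int_t^T\|\nabla(\Y_1-\Y_2)(s)\|_{\H}^2\d s\le\|\y-\x\|_{\H}^2e^{C(T-t)}$, whence $\|\Y_1-\Y_2\|_{\mathrm{L}^1(t,T;\V)}\le C\|\y-\x\|_{\H}$ pathwise. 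I split $\Omega$ into $A_m:=\{\sup_{[t,T]}(\|\Y_1\|_{\V}\vee\|\Y_2\|_{\V})\le m\}$ and its complement. On $A_m$, \eqref{vh2}--\eqref{vh3} bound $|\ell(\Y_1(s),\a(s))-\ell(\Y_2(s),\a(s))|$ and $|g(\Y_1(T))-g(\Y_2(T))|$ by $\sigma_m(\|\Y_1(s)-\Y_2(s)\|_{\V})$ and $\sigma_m(\|\Y_1(T)-\Y_2(T)\|_{\H})$, respectively; on $A_m^c$ the growth \eqref{vh1}, the uniform energy bounds \eqref{vsee1}, \eqref{ssee1} and Markov's inequality make the contribution smaller than a prescribed $\epsilon$, uniformly in $\a(\cdot)$ and $\nu$, for $m$ large. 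Taking expectations and using Jensen with a concave modulus on the $A_m$ part yields $|J(t,\y;\a(\cdot))-J(t,\x;\a(\cdot))|\le\epsilon+\sigma_m(C\|\y-\x\|_{\H})+C(t_2-t_1)$-type terms that collapse to \eqref{conJ}.

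Part (ii) is the standard infinite-dimensional DPP, for which I would follow the template of \cite[Section 3.13, Chapter 3]{GFAS}. The inequality ``$\ge$'' follows by conditioning $J(t,\y;\a(\cdot))$ on $\mathscr{F}_\eta^t$, using the flow property $\Y(s;t,\y,\a(\cdot))=\Y(s;\eta,\Y(\eta;t,\y,\a(\cdot)),\a(\cdot))$ for $s\ge\eta$ (a consequence of pathwise uniqueness, Proposition \ref{weLLp}) and the definition \eqref{valueF} of $\mathcal{V}$. The inequality ``$\le$'' follows by concatenating $\a(\cdot)|_{[t,\eta]}$ with $\epsilon$-optimal controls for the problems started at $(\eta,\Y(\eta;t,\y,\a(\cdot)))$; this requires a measurable selection to produce an admissible control on $[t,T]$ and the law-invariance of Proposition \ref{weLLp}(iii) to pass between reference probability spaces, with part (i) absorbing the resulting error. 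I expect this to be the most delicate step technically, though routine given the framework already in place.

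For part (iii), the growth bound \eqref{bddV} is immediate: $|\mathcal{V}(t,\y)|\le\sup_{\a(\cdot)}\E\big[\int_t^T|\ell(\Y(s),\a(s))|\d s+|g(\Y(T))|\big]\le C(1+\|\y\|_{\V}^k)$ by \eqref{vh1} and the energy estimates \eqref{eqn-conv-1}, \eqref{eqn-conv-2} with $p\ge k$. The spatial modulus in \eqref{conV} follows from the elementary inequality $|\inf_{\a(\cdot)}J(t,\y;\a(\cdot))-\inf_{\a(\cdot)}J(t,\x;\a(\cdot))|\le\sup_{\a(\cdot)}|J(t,\y;\a(\cdot))-J(t,\x;\a(\cdot))|$ combined with part (i). For the time regularity, take $t_1<t_2$, apply \eqref{vdpp} with $\eta=t_2$ to $\mathcal{V}(t_1,\y)$ and estimate
\begin{align*}
|\mathcal{V}(t_1,\y)-\mathcal{V}(t_2,\y)|\le\sup_{\a(\cdot)}\E\int_{t_1}^{t_2}|\ell(\Y(s),\a(s))|\d s+\sup_{\a(\cdot)}\E\big|\mathcal{V}(t_2,\Y(t_2;t_1,\y,\a(\cdot)))-\mathcal{V}(t_2,\y)\big|;
\end{align*}
the first term is $\le C(r)(t_2-t_1)$ by \eqref{vh1} and \eqref{eqn-conv-1}--\eqref{eqn-conv-2}, while the second is handled by one more truncation, using the spatial modulus of $\mathcal{V}(t_2,\cdot)$ just established on $\{\|\Y(t_2;t_1,\y,\a(\cdot))\|_{\V}\le m\}$ together with \eqref{ctsdep0.1} (so $\E\|\Y(t_2;t_1,\y,\a(\cdot))-\y\|_{\H}^p\le C(t_2-t_1)$), and the growth \eqref{bddV} with \eqref{ssee1} and Markov's inequality on the complement. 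Combining this with the spatial estimate and subadditivity of moduli gives \eqref{conV}.
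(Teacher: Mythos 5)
Your proposal is correct and follows essentially the route the paper itself intends: the paper gives no proof here but declares it standard and defers to \cite[Section 3.13, Chapter 3]{GFAS}, and your sketch — Proposition \ref{cts-dep-soln} plus the truncation/Markov/concave-modulus-with-Jensen argument for \eqref{conJ}, the GFAS machinery for \eqref{vdpp}, and DPP combined with part (i), \eqref{ctsdep0.1} and the moment estimates for \eqref{conV}--\eqref{bddV} — is exactly that standard argument (and mirrors the computation the paper performs in \eqref{ss1}). The only minor remark is that in the weak (reference probability space) formulation the ``$\le$'' half of the DPP is usually closed not by an abstract measurable selection but by partitioning the state space into small Borel sets and reusing one $\epsilon$-optimal control per piece, which is precisely where the uniform continuity from part (i) enters, as in \cite{GFAS}.
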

   
      \begin{remark}
      	Other methods for proving the DPP are available in the literature (see \cite{VSB, WHF, NVK}).
      	As discussed in \cite{FGSSA}, one can work with a \emph{canonical reference probability space} for the controlled SCBF equations. That is, for $ 0 \leq t \leq T$, one can take $\Omega_t = \{\omega\in\C([t,T ];\H): \omega(t)= 0\},$ so that the Wiener process $\mathbf{W}$ can be defined on $\Omega_t$ by $\mathbf{W}(\tau)(\omega)= \omega(\tau)$. Let $\mathscr{F}_{t,s}$ be the $\sigma$-algebra generated by paths of $\mathbf{W}$ up to time $s$ in $\Omega_t$, and let $\mathbb{P}_t$ be the Wiener	measure on $\Omega_t,$ (\cite{gdp,HHK}). Therefore, $(\Omega_t,\mathscr{F}_{t,T},\{\mathscr{F}_{t,s}\}_{t\geq s},\mathbb{P}_t)$ is the \emph{canonical sample} space for the Wiener process $\mathbf{W}$. With the above setting, one can define  $\a(\cdot): [t,T ] \times\Omega_t\to\U$  is an \emph{admissible control} on $[t,T ]$ if $\a(\cdot)$	is an $\mathscr{F}_{t,s}$-progressively-measurable process. In this case, one can take $\mathscr{U}_t$ as  the set of all admissible controls $\a(\cdot)$ on $[t,T]$. 
      \end{remark}
     
	 \subsection{The HJB equation}\label{compPR}
     One of the main aim of this work is to study the value function \eqref{valueF} in details via dynamic programming approach and its characterization as a \emph{`solution'} of the following HJB equation associated with \eqref{stap}:
	 \begin{equation}\label{HJBE2}
	 	\left\{
	 	\begin{aligned}
	 		u_t+\frac12\mathrm{Tr}(\Q\D^2u)-&(\mu\A\y+\mathcal{B}(\y)+\alpha\y+
	 		\beta\mathcal{C}(\y),\D u)\\+\inf\limits_{\a\in\U} \left\{(\f(t,\a),\D u) +\ell(t,\y,\a)\right\}&=0, \ \text{ for } (t,\y)\in(0,T)\times\H,\\
	 		u(T,\y)&=g(\y), \ \text{ for } \ \y\in\H.
	 	\end{aligned}
	 	\right.
	 \end{equation}
	Here the Hamiltonian function $F$ is defined by
	 \begin{align}\label{hamfunc}
	 	F(t,\y,\p):=\inf\limits_{\a\in\U} \left\{(\f(t,\a),\p)+\ell(t,\y,\a)\right\}.
	 \end{align}
	 In this work, we study the following class of infinite-dimensional HJB equation with more general Hamiltonian $F$ satisfying certain assumptions mentioned below (see Hypothesis \ref{hypF14}):
	  \begin{equation}\label{HJBE}
	 	\left\{
	 	\begin{aligned}
	 		u_t+\frac12\mathrm{Tr}(\Q\D^2u)-&(\mu\A\y+\mathcal{B}(\y)+\alpha\y+
	 		\beta\mathcal{C}(\y),\D u)\\+F(t,\y,\D u)&=0, \ \text{ for } (t,\y)\in(0,T)\times\H,\\
	 		u(T,\y)&=g(\y), \ \text{ for } \ \y\in\H.
	 	\end{aligned}
	 	\right.
	 \end{equation}
	We prove the comparison principle for the general class of HJB equation \eqref{HJBE} (see Theorem \ref{comparison}). However, the proof of the existence of viscosity solution requires the stochastic optimal control techniques, and therefore, it is limited to the equation \eqref{HJBE2} (see Theorem \ref{extunqvisc}).
	  \subsection{Viscosity solution}
	  In general, the value function is not smooth enough to satisfy the HJB equation in the classical sense. However, under certain assumptions on the Hamiltonian (Hypothesis \ref{valueH}) and cost functional (including running and terminal cost; Hypothesis \ref{hypF14}), we show that the value function \eqref{valueF} is the unique viscosity solution of the HJB equation \eqref{HJBE2}, with some appropriate boundary conditions.
	  
	From Proposition \ref{valueP}, since we only have continuity properties of the value function on the space $[0, T]\times\V$, it is more appropriate to restrict the definition of viscosity solution to this space. The setup in this space might be better from the perspective of the HJB equation, however, because of our associated control problem \eqref{HJBE},  it is essential to keep $\H$ as our reference space. To achieve this, one has to choose the test function more appropriately. As mentioned in \cite[Subsection 3.13.3, Chapter 3, pp. 344]{GFAS} (also see \cite{FGSSA}), we are going to use the following approach:
	\begin{itemize}
	\item[(i)] By using a special radial function of ${\V}-$norm as a test function, we first restrict the points where maxima or minima occur in the definition of the viscosity sub/super solution to be in 
	$(0,T)\times\V$.
	\item[(ii)] Then, to make sense of all terms appearing in the HJB equation \eqref{HJBE}, we require the points where maxima or minima occur to be in $(0, T)\times\V_2$.
	\end{itemize} 
	  Using the above requirements and the properties of the CBF equations, we show that the value function \eqref{valueF} is a viscosity solution of \eqref{HJBE}.  Let us first define the test function as follows:

	\begin{definition}\label{testD}
		A function $\uppsi:(0,T)\times\H\to\R$ is called a \emph{test function} for \eqref{HJBE} if
		\begin{align}\label{tf}
		\uppsi(t,\y)=\upvarphi(t,\y)+\mathfrak{h}(t)(1+\|\y\|_{\V}^2)^m,
		\end{align} 
		 where
		\begin{itemize}
			\item $\upvarphi\in\mathrm{C}^{1,2}((0,T)\times\H)$ and is such that $\upvarphi_t,\D\upvarphi$ and $\D^2\upvarphi$ are uniformly continuous on $[\eps,T-\eps]\times\H$ for every $\eps>0$;
			\item $\mathfrak{h}\in\mathrm{C}^1(0,T)$ and $\mathfrak{h}(t)>0$ for every $t\in(0,T)$.
		\end{itemize}
	\end{definition}
	
	\begin{remark}
		
	Note that the function $$h(t,\y):=\mathfrak{h}(t)(1+\|\y\|_{\V}^2)^m$$ is not Fr\'echet differentiable in $\H$ (see \cite[Appendix A]{GFAS}). Therefore, the terms involving $\D h$ and $\D^2 h$, in particular $(\mu\A\y+ \mathcal{B}(\y)+\alpha\y+\beta\mathcal{C}(\y),\D h(t,\y))$ and $\mathrm{Tr}(\Q\D^2h(t,\y))$ have to be understood in a proper way. Following \cite[pp. 345]{GFAS}(see also \cite{FGSSA}), with an abuse  of notation, we define
		\begin{align*}
			\D h(t,\y):= -\mathfrak{h}(t)\left(2m(1+\|\y\|_{\V}^2)^{m-1}\A\y\right)
		\end{align*}
		and we write $\D\psi:=\D\varphi+\D h$. Now, if $(t,\y)\in(0,T)\times\V_2$, then $\D\psi(t,\y)$ makes sense and so does the term $(\mu\A\y+ \mathcal{B}(\y)+\alpha\y+\beta\mathcal{C}(\y),\D \psi(t,\y))$. We interpret the term $\mathrm{Tr}(\Q\D^2\psi(t,\y))$, without defining $\D^2h(t,\y)$, as 
		\begin{align*}
			\mathrm{Tr}(\Q\D^2\psi(t,\y)):=&\mathrm{Tr}(\Q\D^2\varphi(t,\y))+
			\mathfrak{h}(t)\bigg(2m(1+\|\y\|_{\V}^2)^{m-1}\mathrm{Tr}(\Q_1)\\&+4m(m-1)(1+\|\y\|_{\V}^2)^{m-2}\|\Q^{\frac12}\A\y\|_{\H}^2\bigg).
		\end{align*}
		It will be seen in the proof of comparison principle, the above interpretations will appear as direct consequences of It\^o' formula applied to $h$. 
	\end{remark}
       
	   We now define the viscosity solution, which has been adopt from \cite{FGSSA} (also see \cite[Definition 3.133, pp. 345]{GFAS}). Note that the definition of viscosity solution given below is for the general HJB equation \eqref{HJBE} where the Hamiltonian $F$ need not be necessarily of the form \eqref{hamfunc}. We assume that the Hamiltonian $F:[0, T]\times\V\times\H\to\R$ is any function.
   	\begin{definition}\label{viscsoLndef}
		A weakly sequentially upper-semicontinuous (respectively, lower-semicontinuous) function $u:(0,T]\times\V\to\R$ is called \emph{a viscosity subsolution} (respectively, \emph{supersolution}) of \eqref{HJBE} if $u(T,\wi\y)\leq h(\wi\y)$ (respectively, $u(T,\wi\y)\geq h(\wi\y)$) for all $\wi\y\in\V$ and if, for every test function $\uppsi$, whenever $u-\uppsi$ has global maximum (respectively, $u+\uppsi$ has a global minimum) at a point $(t,\y)\in(0,T)\times\V$ then $\y\in\V_2$ and 
		\begin{align}\label{eqn-subso}
		&\uppsi_t(t,\y)+\frac12\mathrm{Tr}(\Q\D^2\uppsi(t,\y))\nonumber\\&\quad-
	(\mu\A\y+ \mathcal{B}(\y)+\alpha\y+\beta\mathcal{C}(\y),\D\uppsi(t,\y))
			+F(t,\y,\D\uppsi(t,\y))\geq0
		\end{align}
		(respectively, 
		 \begin{align}\label{eqn-superso}
		-&\uppsi_t(t,\y)-\frac12\mathrm{Tr}(\Q\D^2\uppsi(t,\y))\nonumber\\&+
		(\mu\A\y+\mathcal{B}(\y)+\alpha\y+\beta\mathcal{C}(\y),\D\uppsi(t,\y))+F(t,\y,-\D\uppsi(t,\y))\leq0.)
			   	\end{align}
		
		A \emph{viscosity solution} of \eqref{HJBE} is a function which is both viscosity subsolution and viscosity supersolution.
	\end{definition}
	
	\subsection{Comparison principle}\label{COMPvisc}
		 This subsection proves the comparison principle for the equation \eqref{HJBE}. The comparison principle ensures that under certain conditions, a viscosity subsolution is always less than or equal to a viscosity supersolution. It is usually the more difficult part of the theory. As a consequence, this gives the uniqueness of the viscosity solutions.
	\begin{hypothesis}\label{hypF14}
		Let $F:[0,T]\times\V\times\H\to\R$ and there exists a modulus of continuity $\omega$ and moduli $\omega_r$ such that for every $r>0$ we have 
		\begin{align}
			|F(t,\y,\p)-F(t,\x,\p)|&\leq\omega_r(\|\y-\x\|_{\V})+\omega(\|\y-\x\|_{\V}\|\p\|_{\H}), \ \text{ if } \ \|\y\|_{\V}\leq r,\|\x\|_{\V}\leq r,\label{F1}\\
			|F(t,\y,\p)-F(t,\y,\q)|&\leq\omega((1+\|\y\|_{\V})\|\p-\q\|_{\H}),\label{F2}\\
			|F(t,\y,\p)-F(s,\y,\p)|&\leq\omega_r(|t-s|),\ \text{ if } \ \|\y\|_{\V}\leq r,\|\y\|_{\V}\leq r, \|\p\|_{\V}\leq r,\label{F3}\\
			|g(\y)-g(\x)|&\leq\omega_r(\|\y-\x\|_{\H}), \ \text{ if } \ \|\y\|_{\V}\leq r,\|\x\|_{\V}\leq r.\label{F4}
		\end{align}
	\end{hypothesis}
	
Let us now state the comparison principle, which is essential to prove the uniqueness of the viscosity solutions.
	\begin{theorem}\label{comparison}
		Assume that Hypothesis \ref{hypF14} holds. Let $u:(0,T]\times\V\to\R$ be a viscosity subsolution of \eqref{HJBE} and $v:(0,T]\times\V\to\R$ be a viscosity supersolution of \eqref{HJBE}. Let 
		\begin{align}\label{bdd}
			u(t,\y), -v(t,\y), |g(\y)|\leq C(1+\|\y\|_{\V}^k), 
		\end{align}
		for some $k>0$. Then, for $r$ in Table \ref{Table1}, we have 
		$$u\leq v\ \text{ on }\ (0,T]\times\V.$$
	\end{theorem}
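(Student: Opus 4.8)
I would prove this by contradiction using the doubling-of-variables technique, adapted to the unbounded drift $\mu\A\y+\mathcal{B}(\y)+\alpha\y+\beta\mathcal{C}(\y)$ and to the second-order (trace) term through the special test functions of Definition \ref{testD}. Suppose $\sup_{(0,T]\times\V}(u-v)=:2\sigma>0$. The first step is to penalize: fix $m$ with $2m$ larger than the growth exponent $k$ in \eqref{bdd}, and for small $\delta,\lambda>0$ subtract from $u(t,\x)-v(s,\y)$ the quantity $\delta(1+\|\x\|_{\V}^2)^m+\delta(1+\|\y\|_{\V}^2)^m+\lambda/t+\lambda/s$. The $\delta$-terms play a double role: they force the relevant functionals to tend to $-\infty$ at infinity in $\V$ so that maxima are attained (using that $u,-v$ are weakly sequentially upper-semicontinuous, $\|\cdot\|_{\V}$ is weakly sequentially lower-semicontinuous, and $\V\hookrightarrow\H$ is compact), and, being precisely the radial part allowed in \eqref{tf}, they make the localized test functions below admissible; the $\lambda/t,\lambda/s$ terms, which blow up as $t,s\downarrow0$, keep the maxima away from the artificial boundary $t=0$. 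For $\delta,\lambda$ small the penalized supremum stays $\ge\sigma$.

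\textbf{Doubling and localization in $\V_2$.} For $\eps,\gamma>0$ I would consider
\[
\Phi(t,s,\x,\y)=u(t,\x)-v(s,\y)-\tfrac{1}{2\eps}\|\x-\y\|_{\H}^2-\tfrac{1}{2\gamma}|t-s|^2-\tfrac{\lambda}{t}-\tfrac{\lambda}{s}-\delta(1+\|\x\|_{\V}^2)^m-\delta(1+\|\y\|_{\V}^2)^m .
\]
By the above, $\Phi$ attains a maximum at some $(\bar{t},\bar{s},\bar{\x},\bar{\y})$; the penalizations give $\|\bar{\x}\|_{\V},\|\bar{\y}\|_{\V}\le C_\delta$ and $\bar{t},\bar{s}\ge c_\lambda>0$, while the standard doubling lemmas give $\|\bar{\x}-\bar{\y}\|_{\H},|\bar{t}-\bar{s}|\to0$ and $\tfrac{1}{2\eps}\|\bar{\x}-\bar{\y}\|_{\H}^2\to0$ as $\eps,\gamma\to0$; using $u(T,\cdot)\le g\le v(T,\cdot)$ and \eqref{F4} one rules out $\bar{t}=T$ or $\bar{s}=T$. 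Freezing $(\bar{s},\bar{\y})$, resp.\ $(\bar{t},\bar{\x})$, the function $u-\uppsi_1$ has a global maximum, resp.\ $v+\uppsi_2$ a global minimum, at $(\bar{t},\bar{\x})$, resp.\ $(\bar{s},\bar{\y})$, where $\uppsi_1,\uppsi_2$ are admissible test functions whose $\mathrm{C}^{1,2}$-part is the quadratic in $\x-\y$ with the time terms and whose radial part is the $\delta$-term. Hence $\bar{\x},\bar{\y}\in\V_2$, and the subsolution inequality \eqref{eqn-subso} holds at $(\bar{t},\bar{\x})$ and the supersolution inequality \eqref{eqn-superso} at $(\bar{s},\bar{\y})$, with $\D\uppsi_1=\tfrac{1}{\eps}(\bar{\x}-\bar{\y})+2m\delta(1+\|\bar{\x}\|_{\V}^2)^{m-1}(\A+\I)\bar{\x}$ and $-\D\uppsi_2=\tfrac{1}{\eps}(\bar{\x}-\bar{\y})-2m\delta(1+\|\bar{\y}\|_{\V}^2)^{m-1}(\A+\I)\bar{\y}$ (recall \eqref{frAA} and the Remark following Definition \ref{testD}).

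\textbf{Combining the inequalities.} Adding \eqref{eqn-subso} and \eqref{eqn-superso}, the time-derivative terms sum to $-\lambda/\bar{t}^{2}-\lambda/\bar{s}^{2}\le-2\lambda/T^2<0$. The second-order terms I would treat as in \cite{FGSSA,GFAS}: because the noise is additive ($\Q$ is the same in both copies), the $\mathrm{Tr}(\Q\,\cdot)$-contributions coming from the doubling term cancel, leaving only the $\mathrm{Tr}(\Q_1)$-type terms produced by the $\delta$-radial parts and a remainder vanishing in the limit. The drift paired against the doubling part $\tfrac{1}{\eps}(\bar{\x}-\bar{\y})$ gives $-\tfrac{1}{\eps}(\mathcal{A}\bar{\x}-\mathcal{A}\bar{\y},\bar{\x}-\bar{\y})$ with $\mathcal{A}\z:=\mu\A\z+\mathcal{B}(\z)+\alpha\z+\beta\mathcal{C}(\z)$; here the Stokes and damping parts contribute $-\tfrac{\mu}{\eps}\|\nabla(\bar{\x}-\bar{\y})\|_{\H}^2-\tfrac{\alpha}{\eps}\|\bar{\x}-\bar{\y}\|_{\H}^2\le0$, the $\mathcal{C}$-part is bounded below using \eqref{monoC2}, and the bilinear part is estimated by \eqref{3.4} (for $r=3$, \eqref{syymB1} with $2\beta\mu\ge1$) in such a way that its ``bad'' term is absorbed by half of the $\mathcal{C}$-monotonicity term, leaving at most $2\varrho\cdot\tfrac{1}{2\eps}\|\bar{\x}-\bar{\y}\|_{\H}^2\to0$ (resp.\ a nonpositive quantity when $r=3$). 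The Hamiltonian difference $F(\bar{t},\bar{\x},\D\uppsi_1)-F(\bar{s},\bar{\y},-\D\uppsi_2)$ I would split: the parts where the gradient arguments differ only by the $\delta$-correction are estimated by \eqref{F2}, and those where $(\bar{\x},\bar{t})$ and $(\bar{\y},\bar{s})$ differ with the common gradient $\tfrac{1}{\eps}(\bar{\x}-\bar{\y})$ by \eqref{F1} and \eqref{F3}, using the $\V_2$-bound on $\bar{\x},\bar{\y}$ coming from the drift term (see below) and an interpolation inequality to make $\|\bar{\x}-\bar{\y}\|_{\V}$ small. Letting $\eps,\gamma\to0$ and then $\delta,\lambda\to0$, the combined inequality becomes $0\le-2\lambda/T^2+o(1)<0$, a contradiction; hence $u\le v$ on $(0,T]\times\V$.

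\textbf{The main obstacle.} The crux is the control of the two terms $-2m\delta(1+\|\bar{\x}\|_{\V}^2)^{m-1}(\mathcal{A}\bar{\x},(\A+\I)\bar{\x})$ and its $\bar{\y}$-analogue, which arise precisely because, in the absence of the zero-average condition, the Fr\'echet derivative of $\|\cdot\|_{\V}^2$ is $2(\A+\I)\cdot$ and not $2\A\cdot$. Expanding $(\mathcal{A}\bar{\x},(\A+\I)\bar{\x})$: the Stokes part produces the good quantity $\mu\|\A\bar{\x}\|_{\H}^2$, the $\mathcal{C}$-part produces $\beta\||\bar{\x}|^{(r-1)/2}\nabla\bar{\x}\|_{\H}^2$ via the torus identity \eqref{torusequ}, and the damping part gives $\alpha(\|\nabla\bar{\x}\|_{\H}^2+\|\bar{\x}\|_{\H}^2)\ge0$ — this is where the linear damping $\alpha\y$ is genuinely needed. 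The dangerous term $(\mathcal{B}(\bar{\x}),\A\bar{\x})$ — for which the Navier--Stokes identity $(\mathcal{B}(\y),\A\y)=0$ used in \cite{FGSSA} fails in $3$D — I would control by \eqref{syymB3} of Remark \ref{BLrem}, whose ``bad'' terms $\tfrac{\mu}{2}\|\A\bar{\x}\|_{\H}^2$, $\tfrac{\beta}{4}\||\bar{\x}|^{(r-1)/2}\nabla\bar{\x}\|_{\H}^2$ and $\varrho\|\nabla\bar{\x}\|_{\H}^2$ are absorbed by the good Stokes and $\mathcal{C}$ contributions together with $\|\bar{\x}\|_{\V}\le C_\delta$; the leftover good term $-c\,\delta\mu\|\A\bar{\x}\|_{\H}^2$ is then spent, through \eqref{F2}, to absorb the Hamiltonian correction, after which the residual $\delta$-terms are lower-order. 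This absorption dictates the admissible range of $r$: it works for all $r\in(3,\infty)$ in $d\in\{2,3\}$ (the constant $\varrho$ of \eqref{eqn-varrho} being finite), whereas for $r=3$ the splitting \eqref{syymB2} is unavailable and one must absorb the term $\tfrac{1}{2\mu}\||\bar{\x}|\nabla\bar{\x}\|_{\H}^2$ from \eqref{syymB1} directly by $\tfrac{\beta}{2}\||\bar{\x}|\nabla\bar{\x}\|_{\H}^2$, forcing $2\beta\mu\ge1$ — exactly the conditions of Table \ref{Table1}.
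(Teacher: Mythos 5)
Your overall doubling strategy matches the paper's, and you correctly identify the structural points specific to CBF (the derivative of $\|\cdot\|_{\V}^2$ being $2(\A+\I)$, the role of the damping $\alpha\y$, the use of \eqref{syymB3} and \eqref{torusequ} to dominate $(\mathcal{B}(\cdot),\A\cdot)$ in 3D, and how the splitting forces $2\beta\mu\geq1$ when $r=3$). However, there is a genuine gap in your treatment of the second-order term. You freeze $(\bar{s},\bar{\y})$ (resp. $(\bar{t},\bar{\x})$) and apply the sub/supersolution definitions directly at the doubled maximum point, claiming that ``because $\Q$ is the same in both copies, the $\mathrm{Tr}(\Q\,\cdot)$-contributions coming from the doubling term cancel.'' They do not: with direct freezing, $\D^2_{\x}\bigl[\tfrac{1}{2\eps}\|\x-\bar{\y}\|_{\H}^2\bigr]=\tfrac{1}{\eps}\I$ on the subsolution side and $\D^2_{\y}\bigl[\tfrac{1}{2\eps}\|\bar{\x}-\y\|_{\H}^2\bigr]=\tfrac{1}{\eps}\I$ on the supersolution side, and when you subtract the two viscosity inequalities these contributions \emph{add} to $+\tfrac{1}{\eps}\mathrm{Tr}(\Q)$, which blows up as $\eps\to0$ and sits on the wrong side of the inequality, so no contradiction can be extracted. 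This is precisely why second-order comparison cannot be done by freezing variables: one needs the theorem-on-sums (Crandall--Ishii) mechanism, which in infinite dimensions is implemented in the paper by the finite-dimensional reduction (Steps IV--V: projections $\mathbf{P}_N,\mathbf{Q}_N$, the modified functions $\wi u,\wi v$, and the smooth variational principle producing test functions $\varphi_n,\psi_n$ and matrices $X_N\leq Y_N$ satisfying \eqref{xnyn}), so that the combined second-order contribution is $\tfrac12\mathrm{Tr}(\Q(X_N-Y_N))+2\mathrm{Tr}(\Q\mathbf{Q}_N)\leq 2\mathrm{Tr}(\Q\mathbf{Q}_N)\to0$ as $N\to\infty$. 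Moreover, the viscosity inequalities are then applied at the approximate points $(t_n,\y_n)$, $(s_n,\x_n)$, and passing $n\to\infty$ is itself delicate (only a limsup argument works for the terms $\|(\A+\I)\y_n\|_{\H}^2$ and $(\mathcal{C}(\y_n),(\A+\I)\y_n)$; cf. Appendix \ref{expoca}). None of this machinery appears in your proposal, and it cannot be bypassed.

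A secondary but real issue: your radial penalization $\delta(1+\|\cdot\|_{\V}^2)^m$ is time-independent, whereas the paper uses $\delta e^{K_\upgamma(T-t)}(1+\|\cdot\|_{\V}^2)^m$ and chooses $K_\upgamma$ large. The time derivative of that factor produces the negative term $-\delta K_\upgamma e^{K_\upgamma(T-t)}(1+\|\y_n\|_{\V}^2)^m$, which is what absorbs the leftovers of size $O\bigl(\delta(1+\|\y_n\|_{\V}^2)^m\bigr)$ coming from the trace of the radial part ($\mathrm{Tr}(\Q)+\mathrm{Tr}(\Q_1)$ terms), from $2m\varrho_1\|\nabla\y_n\|_{\H}^2(1+\|\y_n\|_{\V}^2)^{m-1}$, and from the Cauchy--Schwarz residue of the Hamiltonian correction via \eqref{F2}. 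Since Step II only gives $\delta(1+\|\bar{\y}\|_{\V}^2)^m\leq C$ (bounded, not small), these leftovers do not vanish as $\eps,\eta\to0$ for fixed $\delta$, and your argument as written has no mechanism to dispose of them; spending the good term $\delta\|(\A+\I)\bar{\x}\|_{\H}^2$ handles only the part linear in $\|(\A+\I)\bar{\x}\|_{\H}$, not the remaining $O(\delta(1+\|\bar{\x}\|_{\V})^{2m})$ piece.
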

	 \begin{proof}
	We observe that the weak sequential upper-semicontinuity of $u$ and weak sequential lower-semicontinuity of $v$ (since they are viscosity sub- and supersolution of \eqref{HJBE}, respectively) imply that 
			\begin{equation}\label{posneg}
				\left\{
				\begin{aligned}
					\lim\limits_{t\uparrow T} \ (u(t,\y)-g(\y))^+=0,\\
					\lim\limits_{t\uparrow T} \ (v(t,\y)-g(\y))^-=0,
				\end{aligned}
				\right.
			\end{equation}
			uniformly on bounded subsets of $\V$, where for any real-valued function $f$, $f^+=\max\{f,0\}$ and $f^-=\max\{-f,0\}$. For $\upgamma>0$, we define
		\begin{align*}
			u_{\upgamma}(t,\y):=u(t,\y)-\frac{\upgamma}{t}\  \text{ and } \ v_{\upgamma}(t,\y):=v(t,\y)+\frac{\upgamma}{t}. 
		\end{align*}
	
		Since $u$ is a viscosity subsolution of \eqref{HJBE}, therefore $u_{\upgamma}$ is a viscosity subsolution of 
		\begin{equation}\label{newsub}
		\left\{
		\begin{aligned}
			&(u_\upgamma)_t+\frac12\mathrm{Tr}(\Q\D^2 u_{\upgamma})-
		(\mu\A\y+\alpha\y+\mathcal{B}(\y)+\beta\mathcal{C}(\y),\D u_{\upgamma})
		+F(t,\y,\D u_{\upgamma})\geq\frac{\upgamma}{T^2},\\
		&u_\upgamma(T,\y)=g(\y)-\frac{\upgamma}{T}.
		\end{aligned}
		\right.
		\end{equation}
		Similalry, $v_{\upgamma}$ is a viscosity supersolution of 
			\begin{equation}\label{newsup}
			\left\{
			\begin{aligned}
				&(v_\upgamma)_t+\frac12\mathrm{Tr}(\Q\D^2 u_{\upgamma})-
				(\mu\A\y+\alpha\y+\mathcal{B}(\y)+\beta\mathcal{C}(\y),\D v_{\upgamma})
				+F(t,\y,\D u_{\upgamma})\leq-\frac{\upgamma}{T^2},\\
				&v_\upgamma(T,\y)=g(\y)+\frac{\upgamma}{T}.
			\end{aligned}
			\right.
		\end{equation}
	If we can prove that $u_{\upgamma}\leq v_{\upgamma}$, then we can obtain $u\leq v$ by letting $\upgamma\to0$. Let us assume that $u_{\upgamma}\not\leq v_{\upgamma}$ on $(0,T]\times\V$. Then,  there is a $\kappa>0$ such that for sufficiently small $\upgamma>0$, we have (\cite[Theorem 3.50]{GFAS})
	\begin{align*}
		0<\upgamma<\mathfrak{m}:=\lim\limits_{R\to+\infty}\lim\limits_{q\to0}\lim\limits_{\upnu\to0}\sup\bigg\{& u_{\upgamma}(t,\y)-v_{\upgamma}(s,\x): \|\y-\x\|_{\H}<q, \ \|\y\|_{\H},\|\x\|_{\H}\leq R, \\& |t-s|<\upnu, \ \kappa<t,s\leq T\bigg\}.
	\end{align*}
	We also define
	\begin{align*}
		\mathfrak{m}_{\delta}:=\lim\limits_{q\to0}\lim\limits_{\upnu\to0}\sup\bigg\{&u_{\upgamma}(t,\y)-v_{\upgamma}(s,\x)-\delta e^{K_\upgamma(T-t)} (1+\|\y\|_{\V}^2)^m-\delta e^{K_\upgamma(T-s)} (1+\|\x\|_{\V}^2)^m:\\& \|\y-\x\|_{\H}<q, \ \y,\x\in\H, \ \ |t-s|<\upnu, \ \kappa<t,s\leq T\bigg\},\\
		\mathfrak{m}_{\delta,\eps}:=\lim\limits_{\upnu\to0}\sup\bigg\{&u_{\upgamma}(t,\y)-v_{\upgamma}(s,\x)-\delta e^{K_\upgamma(T-t)} (1+\|\y\|_{\V}^2)^m-\delta e^{K_\upgamma(T-s)} (1+\|\x\|_{\V}^2)^m\\&-\frac{\|\y-\x\|_{\H}^2}{2\eps}: \y,\x\in\H, \ \ |t-s|<\upnu, \ \kappa<t,s\leq T\bigg\},\\
		\mathfrak{m}_{\delta,\eps,\eta}:=\sup\bigg\{&u_{\upgamma}(t,\y)-v_{\upgamma}(s,\x)-\delta e^{K_\upgamma(T-t)} (1+\|\y\|_{\V}^2)^m-\delta e^{K_\upgamma(T-s)} (1+\|\x\|_{\V}^2)^m\\&-\frac{\|\y-\x\|_{\H}^2}{2\eps}-\frac{(t-s)^2}{2\eta}: \ \y,\x\in\H, \ \kappa<t,s\leq T\bigg\}.
	\end{align*}
	From the above definitions, we have the following convergences:
	\begin{align}\label{comconv}
		\mathfrak{m}\leq\lim\limits_{\delta\to0}\mathfrak{m}_\delta, \ \  \mathfrak{m}_\delta=
		\lim\limits_{\eps\to0}\mathfrak{m}_{\eps,\delta}, \ \ \text{ and } \ 
		\mathfrak{m}_{\delta,\eps}=\lim\limits_{\eta\to0}\mathfrak{m}_{\delta,\eps,\eta}.
	\end{align}
		For $\eps,\delta,\eta>0$, we define the function $\Phi$ on $(0,T]\times\H$ by
		\begin{equation*}
			\Phi(t,s,\y,\x)=\left\{
			\begin{aligned}
				&u_{\upgamma}(t,\y)-v_{\upgamma}(s,\x)-
				\frac{\|\y-\x\|_{\H}^2}{2\eps}-\delta e^{K_\upgamma(T-t)} (1+\|\y\|_{\V}^2)^m\\&-\delta e^{K_\upgamma(T-s)} (1+\|\x\|_{\V}^2)^m-\frac{(t-s)^2}{2\eta}, \  &&\text{ if } \ \y,\x\in\V\\
				&-\infty, \  &&\text{ if } \ \y,\x\notin\V.
			\end{aligned}
			\right.
		\end{equation*}
		Clearly, $\Phi\to-\infty$ as $\max\{\|\y\|_{\V},\|\x\|_{\V}\}\to+\infty$.
	
		\vskip 0.2cm
		\noindent	\textbf{Step-I:} \emph{$\Phi$ is weakly sequentially upper-semicontinuous on $(0,T]\times(0,T]\times\H\times\H$.}  
		Since norm is known to be a weakly sequentially lower-semicontinuous function, the functions $\y\mapsto(1+\|\y\|_{\V}^2)^m$, $\x\mapsto(1+\|\x\|_{\V}^2)^m$, and $(\y,\x)\mapsto\|\y-\x\|_{\H}^2$ are in $\H$ and $\H\times\H$, respectively, are weakly sequentially lower-semicontinuous. Moreover, $u_{\upgamma}$ is a weakly sequentially upper-semicontinuous function in $(0,T)\times\V$ as $u$ is the viscosity subsolution of \eqref{HJBE}. We will now show that
			$u_{\upgamma}(t,\y)-\delta e^{K_\upgamma(T-t)} (1+\|\y\|_{\V}^2)^m $ \text{is weakly sequentially upper-semicontinuous on} $ (0,T)\times\H.$
	
		Suppose this is not true. Then, there exists a sequence $(t_n)_{n\geq1}$ in $(0,T)$ with $t_n\to t\in(0,T)$ and a sequence $(\y_n)_{n\geq1}$ in $\H$ with $\y_n\rightharpoonup\y\in\H$ such that
		\begin{align}\label{contra}
			\limsup\limits_{n\to\infty}\left(u_{\upgamma}(t_n,\y_n)-\delta e^{K_\upgamma(T-t_n)} (1+\|\y_n\|_{\V}^2)^m\right)>
			u_{\upgamma}(t,\y)-\delta e^{K_\upgamma(T-t)} (1+\|\y\|_{\V}^2)^m.
		\end{align}
		Now, if $\liminf\limits_{n\to\infty}\|\y_n\|_{\V}=+\infty$, then \eqref{contra} is impossible because of the assumption \eqref{bdd} on $u$. Therefore, $\liminf\limits_{n\to\infty}\|\y_n\|_{\V}<+\infty$ and by the properties of limit inferior, there exists a subsequence (still denoted by $(t_n,\y_n)$) such that $\limsup\limits_{n\to\infty}\|\y_n\|_{\V}<+\infty$. By an application of the Banach-Alaoglu theorem, we then have $\y_n\rightharpoonup\y$ in $\V$ (along a subsequence), which implies $\|\y\|_{\V}\leq\liminf\limits_{n\to\infty}\|\y_n\|_{\V}$ and $$\liminf_{n\to\infty}\delta e^{K_\upgamma(T-t_n)} (1+\|\y_n\|_{\V}^2)^m\geq \delta e^{K_\upgamma(T-t)} (1+\|\y\|_{\V}^2)^m. $$
		Therefore, from \eqref{contra}, we further have
		\begin{align*}
	\limsup\limits_{n\to\infty}u_{\upgamma}(t_n,\y_n)>u_{\upgamma}(t,\y),
		\end{align*}
		 which yields a contradiction to the fact that $u_{\upgamma}$ is weakly sequentially upper-semicontinuous.  Similarly, one can show that $v_{\upgamma}(s,\x)-\delta e^{K_\upgamma(T-s)} (1+\|\x\|_{\V}^2)^m$ weakly sequentially lower-semicontinuous on $(0,T)\times\H$. Finally, we conclude our required claim. Therefore by the definition of viscosity solution, $\Phi$ has global maximum over $(0,T]\times(0,T]\times\H\times\H$ at some point $(\bar{t},\bar{s},\bar{\y},\bar{\x})\in(0,T]\times(0,T]\times\V\times\V$.
		
		 \vskip 0.2cm
	 \noindent	 \textbf{Step-II:} \emph{ $\|\bar{\y}\|_{\V},\|\bar{\x}\|_{\V}$ are bounded independently of $\eps$, for a fixed $\delta>0$.}  
		 Indeed, for any $\y\in\V$, we have 
		 \begin{align*}
		  \Phi(\bar{t},\bar{s},\y,\y)\leq\Phi(\bar{t},\bar{s},\bar{\y},\bar{\x}).
		 \end{align*}
		 Using the definition of $\Phi$ and \eqref{bdd}, we obtain
		 \begin{align*}
		 &\delta e^{K_\upgamma(T-\bar{t})} (1+\|\bar{\y}\|_{\V}^2)^m+\delta e^{K_\upgamma(T-\bar{s})} (1+\|\bar{\x}\|_{\V}^2)^m
		 \nonumber\\&\leq
		 u_{\upgamma}(\bar{t},\bar{\y})-u_{\upgamma}(\bar{t},\y)+
		 v_{\upgamma}(\bar{s},\y)-v_{\upgamma}(\bar{s},\bar{\x})
		 \nonumber\\&\quad+
		 [\delta e^{K_\upgamma(T-\bar{t})} +\delta e^{K_\upgamma(T-\bar{s})}](1+\|\y\|_{\V}^2)^m
		 \nonumber\\&\leq
		 C(1+\|\bar{\y}\|_{\V}^k)+C(1+\|\bar{\x}\|_{\V}^k)+C(1+\|\y\|_{\V}^k)
		 \nonumber\\&\quad+
		  [\delta e^{K_\upgamma(T-\bar{t})} +\delta e^{K_\upgamma(T-\bar{s})}](1+\|\y\|_{\V}^2)^m,
		 \end{align*}
		 for all $\y\in\V$ and for all $0<\bar{t},\bar{s}\leq T$. In particular for $\y=\boldsymbol{0}$ and using the fact that $1\leq e^{K_{\upgamma}(T-t)}\leq e^{K_{\upgamma}T}$ for any $t\in[0,T]$, and then using the Young's inequality, we find that
		 \begin{align*}
		 \delta(1+\|\bar{\y}\|_{\V}^2)^m+\delta(1+\|\bar{\x}\|_{\V}^2)^m
		 \leq C,
		 \end{align*}
		 provided $2m\geq k+1$. Thus for a fixed $\delta$, we conclude that $\bar{\x}$ and $\bar{\y}$ are bounded indepdently of $\eps$ in $\V$. By using \eqref{comconv}, we have the following:
		\begin{align}\label{copm1}
			\lim\limits_{\eta\to0}\frac{(\bar{t}-\bar{s})^2}{2\eta}&=0 \ \text{ for fixed } \ \delta>0, \eps>0,
		\end{align}
		and
		\begin{align}\label{copm2}
			\lim\limits_{\eps\to0}\limsup\limits_{\eta\to0} \frac{\|\bar{\y}-\bar{\x}\|_{\H}^2}{2\eps}&=0 \ \text{ for fixed } \ \delta>0.
		\end{align}
		Interested readers can refer \cite[Chapter 3, pp. 207-209]{GFAS} for a detailed explanation of \eqref{copm1}-\eqref{copm2} (in the context of parabolic equations). We can assume this maximum point to be strict (for instance, see \cite[Lemma 3.37, Chapter 3]{GFAS}). Moreover, by the definition of viscosity solution $\bar{\y},\bar{\x}\in\V_2$. 
       
         \vskip 0.2cm
         \noindent  \textbf{Step-III:} \emph{ For small $\upgamma$ and $\delta$, we have $\bar{t},\bar{s}<T$ if $\eta$ and $\eps$ are sufficiently small.}  
		Note that $\Phi(\bar{t},\bar{s},\bar{\x},\bar{\y})>0$ for small $\upgamma,\delta>0$. If either of $\bar{s}$ or $\bar{t}$ equal to $T$, then from \eqref{F4}, \eqref{posneg}, \eqref{copm1} and \eqref{copm2}, we have
		\begin{align}\label{tseT}
		\Phi(\bar{t},\bar{s},\bar{\x},\bar{\y})\leq-
		\frac{2\upgamma}{T}+\omega_r(\|\bar{\y}-\bar{\x}\|_{\H})-
		\delta(1+\|\bar{\y}\|_{\V}^2)^m-\delta(1+\|\bar{\x}\|_{\V}^2)^m.
		\end{align}
         For fixed $\upgamma>0$, we choose a constant  $C_{\upgamma}$ such that $\omega_r(z)\leq\frac{\upgamma}{T}+C_{\upgamma}z$. 
         Then, from \eqref{tseT}, we have
         \begin{align}\label{tseT1}
         \Phi(\bar{t},\bar{s},\bar{\x},\bar{\y})\leq-
         \frac{\upgamma}{T}+
         (C_{\upgamma}-\delta)(1+\|\bar{\y}\|_{\V}^2)^m+(C_{\upgamma}-\delta)(1+\|\bar{\x}\|_{\V}^2)^m.
         \end{align}
         Now, if the constant $C_{\upgamma}$ is such that $C_{\upgamma}<\delta$, then the right hand side of the inequality \eqref{tseT1} becomes less than $0$ while $ \Phi(\bar{t},\bar{s},\bar{\x},\bar{\y})>0$, which is not possible. Hence, we must have $\bar{t},\bar{s}<T$, for sufficiently small $\eta$ and $\eps$.

		\vskip 0.2cm
		\noindent	\textbf{Step-IV:} \emph{Reduction to finite-dimensional space.} 
		Let $\H_1\subset\H_2\subset\ldots$ be finite-dimensional subspaces of $\H$ generated by the eigenfunctions of $\A$ such that $\overline{\bigcup_{N=1}^\infty}\H_N=\H$. Given $N\in\N$, $N>1$, denote by $\mathbf{P}_N,$ the orthogonal projection onto $\H_N$. It is clear that $\mathbf{P}_N$  is a bounded linear operator on $\H$ and so is $\mathbf{Q}_N:=\I-\mathbf{P}_N$. Let us denote  $\H_N^{\perp}=\mathbf{Q}_N\H$. We then have an orthogonal decomposition $\H=\H_N\times\H_N^{\perp}$. For $\y\in\H$, we will denote by $\y_N:=\mathbf{P}_N\y,$ an element of $\H_N$ and by $\y_N^{\perp}:=\mathbf{Q}_N\y,$ an element of $\H_N^{\perp}$. Then, we write $\y=(\mathbf{P}_N\y,\mathbf{Q}_N\y)=(\y_N,\y_N^{\perp})$. 
		
		Let us now fix $N\in\N$. By using the properties of $\mathbf{P}_N$ and $\mathbf{Q}_N$, we have following straightforward identities:
		\begin{align}
			\|\y-\x\|_{\H}^2&=\|\mathbf{P}_N(\y-\x)\|_{\H}^2
			+\|\mathbf{Q}_N(\y-\x)\|_{\H}^2,\label{fnPQ1}\\
			\|\mathbf{Q}_N(\y-\x)\|_{\H}^2
			&\leq2(\mathbf{Q}_N(\bar{\y}-\bar{\x}),\y-\x)-
			\|\mathbf{Q}_N(\bar{\y}-\bar{\x})\|_{\H}^2+2\|\mathbf{Q}_N(\y-\bar{\y})\|_{\H}^2\nonumber\\&\quad
			+2\|\mathbf{Q}_N(\x-\bar{\x})\|_{\H}^2\nonumber,
		\end{align}
		with the equality in the second inequality if $\y=\bar{\y}$ and $\x=\bar{\x}$.
		Let us now define 
		\begin{equation*}
			\wi u(t,\y):=
			\left\{
			\begin{aligned}
				&u_{\upgamma}(t,\y)-\frac{(\y,\mathbf{Q}_N(\bar{\y}-\bar{\x}))}{\eps}+\frac{\|\mathbf{Q}_N(\bar{\y}-\bar{\x})\|_{\H}^2}{2\eps}-
				\frac{\|\mathbf{Q}_N(\y-\bar{\y})\|_{\H}^2}{\eps}\nonumber\\&-
				\delta e^{K_\upgamma(T-t)} (1+\|\y\|_{\V}^2)^m, \ \text{ when } \ \y,\x\in\V,\\
				&-\infty, \ \text{ when } \ \y,\x\notin\V,
			\end{aligned}
			\right.
		\end{equation*}
		and 
		\begin{equation*}
			\wi v(s,\x):=
			\left\{
			\begin{aligned}
				&v_{\upgamma}(s,\x)-\frac{(\x,\mathbf{Q}_N(\bar{\y}-\bar{\x}))}{\eps}+
				\frac{\|\mathbf{Q}_N(\x-\bar{\x})\|_{\H}^2}{\eps}\nonumber\\&+
				\delta e^{K_\upgamma(T-s)} (1+\|\x\|_{\V}^2)^m, \ \text{ when } \ \y,\x\in\V,\\
				&+\infty, \ \text{ when } \ \y,\x\notin\V.
			\end{aligned}
			\right.
		\end{equation*}
	We emphasize that such extended $\wi\u$ and $\wi\v$ are weakly sequentially upper-semicontinuous and lower-semicontinuous on $(0,T]\times\H$, respectively. It follows that the function
		\begin{align*}
			\wi\Phi(t,s,\y,\x):=\wi u(t,\y)-\wi v(s,\x)-\frac{\|\mathbf{P}_N(\y-\x) \|_{\H}^2}{2\eps}-\frac{(t-s)^2}{2\eta}.
		\end{align*}
		always satisfies $\wi\Phi\leq\Phi$ whenever $\y,\x\in\V$ and it attains a strict global maximum over $(0,T]\times(0,T]\times\H\times\H$ at $(\bar{t},\bar{s},\bar{\y},\bar{\x})$ where $\wi\Phi(\bar{t},\bar{s},\bar{\y},\bar{\x}) =\Phi(\bar{t},\bar{s},\bar{\y},\bar{\x})$.

		\vskip 0.2cm
		\noindent	\textbf{Step-V:} \emph{Finite-dimensional maximum principle.} 
On dividing both sides by $\|\y\|_{\H}$ in the definition of $	\wi{u}(t,\y)$, we obtain
		\begin{align*}
			\limsup\limits_{\|\y\|_{\H}\to\infty}\sup\limits_{t\in(0,T)}\frac{\wi u(t,\y)}{\|\y\|_{\H}}<0.
		\end{align*}
		In a similar way, we have 
		\begin{align*}
			\limsup\limits_{\|\x\|_{\H}\to\infty}\sup\limits_{t\in(0,T)}\frac{-\wi v(s,\x)}{\|\x\|_{\H}}<0.
		\end{align*}
		Therefore, $\wi u$ and $-\wi v$ satisfy the assumptions of \cite[Corollary 3.29, Chapter 3, pp. 194]{GFAS}. As a consequence, there exist functions $\varphi_n,\psi_n\in\mathrm{C}^{1,2}((0,T)\times\H)$ for $n=1,2,\ldots$ such that  
		$$\varphi_n, (\varphi_n)_t, \D\varphi_n, \D^2\varphi_n,\psi_n, (\psi_n)_t, \D\psi_n, \D^2\psi_n$$ are bounded and uniformly continuous, and such that
		\begin{align*}
			\wi u(t,\y)-\varphi_n(t,\y) \ \text{ has a global maximum at some point } \ (t_n,\y_n)\in(0,T)\times\V,
		\end{align*}
		and 
		\begin{align*}
			\wi v(s,\x)-\psi_n(s,\x) \ \text{ has a global minimum at some point } \ (s_n,\x_n)\in(0,T)\times\V.
		\end{align*}
		Moreover, the following convergences holds as $n\to\infty$:
		\begin{equation}\label{conv1}
			\left\{
			\begin{aligned}
				t_n&\to\bar{t} \ \text{ in } \ \R, \ &\y_n&\to\bar{\y} \ \text{ in } \ \H,\\
				\wi u(t_n,\y_n)&\to\wi u(\bar{t},\bar{\y}) \ \text{ in } \ \R, \  &(\varphi_n)_t(t_n,\y_n)&\to\frac{\bar{t}-\bar{s}}{\eta} \ \text{ in } \ \R,\\ \D\varphi_n(t_n,\y_n)&\to\frac{1}{\eps}(\bar{\y}_N-\bar{\x}_N) \ \text{ in } \ \V, \ &\D^2\varphi_n(t_n,\y_n)&\to X_N \ \text{ in } \ \mathscr{L}(\H),
			\end{aligned}
			\right.
		\end{equation}
		and 
		\begin{equation}\label{conv2}
			\left\{
			\begin{aligned}
				s_n&\to\bar{s} \ \text{ in } \ \R, \ &\x_n&\to\bar{\x} \ \text{ in } \ \H,\\
				\wi v(s_n,\x_n)&\to\wi v(\bar{s},\bar{\x}) \ \text{ in } \ \R, \  &(\varphi_n)_t(s_n,\x_n)&\to\frac{\bar{t}-\bar{s}}{\eta} \ \text{ in } \ \R,\\ \D\psi_n(t_n,\x_n)&\to\frac{1}{\eps}(\bar{\y}_N-\bar{\x}_N) \  \text{ in } \ \V, \ &\D^2\psi_n(t_n,\x_n)&\to Y_N \ \text{ in } \ \mathscr{L}(\H),
			\end{aligned}
			\right.
		\end{equation}
		where $X_N=\mathbf{P}_N X_N\mathbf{P}_N$, $Y_N=\mathbf{P}_N Y_N\mathbf{P}_N$ are bounded independently of $n$ and satisfy 
		\begin{align}\label{xnyn}
			-\frac{3}{\eps}\begin{pmatrix} \mathbf{P}_N \ \ \ 0\\0 \  \  \ \mathbf{P}_N \end{pmatrix}\leq\begin{pmatrix} X_N \ \ \ 0 \\0  \  \ \ Y_N\end{pmatrix}\leq\frac{3}{\eps}
			\begin{pmatrix} \mathbf{P}_N \ \ \ -\mathbf{P}_N \\-\mathbf{P}_N  \ \ \  \mathbf{P}_N\end{pmatrix}.
		\end{align}
		It is clear from \eqref{xnyn} that $X_N,Y_N$ satisfy $X_N\leq Y_N$, that is, $X_N\xi\cdot\xi\leq Y_N\xi\cdot\xi$ for $\xi\in\R^N$, where `$\cdot$' indicates the Euclidean product on $\R^N$. Furthermore, by the definition of $\wi u$ and the fact that $\wi u-\varphi_n$ has a global maximum at $(t_n,\y_n)\in(0,T)\times\V$, we must have $\|\y_n\|_{\V}\leq C$, for some constant $C$ independent of $n$. By a similar reasoning, we also have $\|\x_n\|_{\V}\leq C$, for some constant $C$ independent of $n$. Therefore, by an application of the Banach-Alaoglu theorem, we have the following weak convergences (along a subsequence still denoted by the same symbol) as $n\to\infty$:
			\begin{align*}
				\y_n\rightharpoonup\wi\y \ \text{ and } \ \x_n\rightharpoonup\wi\x, \ \text{ in } \ \V.
			\end{align*}
By the uniqueness of weak limits, we further have $\wi\x=\bar{\x}$ and $\wi\y=\bar{\y}$.
	Note that weak convergent sequences are bounded. 	The above weak convergence together with \eqref{conv1}-\eqref{conv2}, and the fact that  $(\bar{t},\bar{s},\bar{\y},\bar{\x})$ is a maximum point of $\Phi$ imply that 
		\begin{align*}
			\|\y_n\|_{\V}\to\|\bar{\y}\|_{\V},  \ \|\x_n\|_{\V}\to\|\bar{\x}\|_{\V},  \ \text{ as } \ n\to\infty. 
		\end{align*}
		which in turn gives the following strong convergence as $n\to\infty$ (Radon-Riesz property):
		\begin{align}\label{eqn-v-norm-conv}
			\y_n\to\bar{\y}\ \text{ and } \ \x_n\to\bar{\x} \ \text{ in } \ \V. 
		\end{align}
	
		\vskip 0.2cm
		\noindent	\textbf{Step-VI:} \emph{Applying definition of viscosity solution.} Let us define a test function 
		\begin{align*}
			\varphi(t,\y):=&\varphi_n(t,\y)+\frac{(\y,\mathbf{Q}_N(\bar{\y}-\bar{\x}))}{\eps}+\frac{\|\mathbf{Q}_N(\y-\bar{\y})\|_{\H}^2}{\eps}-\frac{\|\mathbf{Q}_N(\bar{\y}-\bar{\x})\|_{\H}^2}{2\eps} \nonumber\\&+
			\delta e^{K_\upgamma(T-t)} (1+\|\y\|_{\V}^2)^m.
		\end{align*}
		Since $u_\upgamma$ is a viscosity subsolution of \eqref{newsub} over $(0,T)\times\V$, then by the definition of viscosity subsolution with the test function $\varphi$, we have $\y_n\in\V_2$ (even $\x_n\in\V_2$ also, in the case of viscosity supersolution $v_\upgamma$), and it satisfies    
		{\small {	\begin{align}\label{supsoLdef}
&-\delta K_\upgamma e^{K_\upgamma(T-t_n)} (1+\|\y_n\|_{\V}^2)^m+
		(\varphi_n)_t(t_n,\y_n)+\frac12\mathrm{Tr}(\Q\D^2\varphi_n(t_n,\y_n))
		\nonumber\\&\quad+
		\frac{1}{2\eps}\mathrm{Tr}\bigg(\Q\D^2(\y_n,\mathbf{Q}_N(\bar{\y}-\bar{\x}))+\Q\D^2\|\mathbf{Q}_N(\y_n-\bar{\y})\|_{\H}^2-\frac12\Q\D^2\|\mathbf{Q}_N(\bar{\y}-\bar{\x})\|_{\H}^2\bigg)
		\nonumber\\&\quad+
		 \frac{\delta}{2}e^{K_\upgamma(T-t_n)}
         \bigg(2m(1+\|\y_n\|_{\V}^2)^{m-1}\big(\mathrm{Tr}(\Q)+\mathrm{Tr}(\Q_1)\big)+4m(m-1)(1+\|\y_n\|_{\V}^2)^{m-2}\|\Q^{\frac12}(\A+\I)\y_n\|_{\H}^2\bigg)\nonumber\\&\quad-
         \bigg(\mu\A\y_n,\D\varphi_n(t_n,\y_n)+\frac{1}{\eps}\mathbf{Q}_N(\bar{\y}-\bar{\x})+\frac{2}{\eps}\mathbf{Q}_N(\y_n-\bar{\y})
         +2m\delta e^{K_\upgamma(T-t_n)} (1+\|\y_n\|_{\V}^2)^{m-1}(\A+\I)\y_n\bigg)
          \nonumber\\&\quad-
           \bigg(\alpha\y_n,\D\varphi_n(t_n,\y_n)+\frac{1}{\eps}\mathbf{Q}_N(\bar{\y}-\bar{\x})+\frac{2}{\eps}\mathbf{Q}_N(\y_n-\bar{\y})
          +2m\delta e^{K_\upgamma(T-t_n)} (1+\|\y_n\|_{\V}^2)^{m-1}(\A+\I)\y_n\bigg)
          \nonumber\\&\quad-
         \bigg(\mathcal{B}(\y_n),\D\varphi_n(t_n,\y_n)+\frac{1}{\eps}\mathbf{Q}_N(\bar{\y}-\bar{\x})+\frac{2}{\eps}\mathbf{Q}_N(\y_n-\bar{\y})
         +2m\delta e^{K_\upgamma(T-t_n)} (1+\|\y_n\|_{\V}^2)^{m-1}(\A+\I)\y_n\bigg)
         \nonumber\\&\quad-
         \beta\bigg(\mathcal{C}(\y_n),\D\varphi_n(t_n,\y_n)+\frac{1}{\eps}\mathbf{Q}_N(\bar{\y}-\bar{\x})+\frac{2}{\eps}\mathbf{Q}_N(\y_n-\bar{\y})+2m\delta
         e^{K_\upgamma(T-t_n)} (1+\|\y_n\|_{\V}^2)^{m-1}(\A+\I)\y_n\bigg)
         \nonumber\\&\quad+
         F\bigg(t_n,\y_n,\D\varphi_n(t_n,\y_n)+\frac{1}{\eps}\mathbf{Q}_N(\bar{\y}-\bar{\x})+\frac{2}{\eps}\mathbf{Q}_N(\y_n-\bar{\y})
         +2m\delta
         e^{K_\upgamma(T-t_n)} (1+\|\y_n\|_{\V}^2)^{m-1}(\A+\I)\y_n\bigg)
         \nonumber\\&\geq\frac{\upgamma}{T^2}.
		\end{align}}}
		The following \emph{derivatives} are immediate 
		\begin{align}\label{derQ}
		\D^2(\y_n,\mathbf{Q}_N(\bar{\y}-\bar{\x}))=0, \  \D^2\|\mathbf{Q}_N(\y_n-\bar{\y})\|_{\H}^2=2\mathbf{Q}_N \ \text{ and } \ \D^2\|\mathbf{Q}_N(\bar{\y}-\bar{\x})\|_{\H}^2=0.
		\end{align}
		On utilizing \eqref{derQ} in \eqref{supsoLdef} and rearranging the terms, we obtain 
     	\begin{align}\label{viscdef1}
		&-\delta K_\upgamma e^{K_\upgamma(T-t_n)} (1+\|\y_n\|_{\V}^2)^m+(\varphi_n)_t(t_n,\y_n)+ \frac12\mathrm{Tr}(\Q\D^2\varphi_n(t_n,\y_n)+2\Q\mathbf{Q}_N)
		\nonumber\\&\quad+
		\frac{\delta}{2}e^{K_\upgamma(T-t_n)}
		\bigg(2m(1+\|\y_n\|_{\V}^2)^{m-1}\big(\mathrm{Tr}(\Q)+\mathrm{Tr}(\Q_1)\big)
		\nonumber\\&\qquad+4m(m-1)(1+\|\y_n\|_{\V}^2)^{m-2}\|\Q^{\frac12}(\A+\I)\y_n\|_{\H}^2\bigg)
		\nonumber\\&\quad-
		\bigg(\big((\mu\A+\alpha\I)\y_n,\mathfrak{J}_n\big)+2m\delta e^{K_\upgamma(T-t_n)} (1+\|\y_n\|_{\V}^2)^{m-1}((\mu\A+\alpha\I)\y_n,(\A+\I)\y_n)\bigg)
		\nonumber\\&\quad-
		\bigg(\big(\mathcal{B}(\y_n),\mathfrak{J}_n\big)
		+2m\delta e^{K_\upgamma(T-t_n)} (1+\|\y_n\|_{\V}^2)^{m-1}\big(\mathcal{B}(\y_n),(\A+\I)\y_n\big)\bigg)
		\nonumber\\&\quad-
		\beta\bigg(\big(\mathcal{C}(\y_n),\mathfrak{J}_n\big)
	   +2m\delta e^{K_\upgamma(T-t_n)} (1+\|\y_n\|_{\V}^2)^{m-1}\big(\mathcal{C}(\y_n),(\A+\I)\y_n\big)\bigg)
		\nonumber\\&\quad+
		F\bigg(t_n,\y_n,\mathfrak{J}_n
	   +2m\delta e^{K_\upgamma(T-t_n)} (1+\|\y_n\|_{\V}^2)^{m-1}(\A+\I)\y_n\bigg)
		\nonumber\\&\geq\frac{\upgamma}{T^2}.
		\end{align}
     where $\mathfrak{J}_n:=\D\varphi_n(t_n,\y_n)+ \frac{1}{\eps}\mathbf{Q}_N(\bar{\y}-\bar{\x})+ \frac{2}{\eps}\mathbf{Q}_N(\y_n-\bar{\y}).$
	   From assumption \eqref{F2} of Hypothesis \ref{hypF14}, we calculate
	{\small 
   {\begin{align}\label{F2cal}
    &\bigg|F\big(t_n,\y_n,\mathfrak{J}_n+2m\delta       e^{K_\upgamma(T-t_n)}(1+\|\y_n\|_{\V}^2)^{m-1}(\A+\I)\y_n\big)
   -F\big(t_n,\y_n,\mathfrak{J}_n\big)
	\bigg|\nonumber\\&\leq
	\omega\left((1+\|\y_n\|_{\V})\times2m\delta e^{K_\upgamma(T-t_n)}
	(1+\|\y_n\|_{\V}^2)^{m-1}\|(\A+\I)\y_n\|_{\H}\right)\nonumber\\&\leq
	 \frac{\upgamma}{2T^2}+2 C_\upgamma(1+\|\y_n\|_{\V})m\delta e^{K_\upgamma(T-t_n)}(1+\|\y_n\|_{\V}^2)^{m-1}\|(\A+\I)\y_n\|_{\H} .
		\end{align}}}
		Let us calculate 
		\begin{align}\label{trq1}
		\|\Q^{\frac12}(\A+\I)\y_n\|_{\H}^2\leq2
		\big(\|\Q^{\frac12}\A\y_n\|_{\H}^2+\|\Q^{\frac12}\y_n\|_{\H}^2\big)
		&=
		2\big(\|\Q_1^{\frac12}\A^{\frac12}\y_n\|_{\H}^2+\|\Q^{\frac12}\y_n\|_{\H}^2\big)\nonumber\\&\leq2
		\big(\mathrm{Tr}(\Q_1)+\mathrm{Tr}(\Q)\big)\|\y_n\|_{\V}^2.
		\end{align}
		Substituting \eqref{F2cal} and \eqref{trq1} in \eqref{viscdef1}, we obtain 
	\begin{align}\label{viscdef111}
		&-\delta K_\upgamma e^{K_\upgamma(T-t_n)} (1+\|\y_n\|_{\V}^2)^m+(\varphi_n)_t(t_n,\y_n)+ \frac12\mathrm{Tr}(\Q\D^2\varphi_n(t_n,\y_n)+2\Q\mathbf{Q}_N)
		\nonumber\\&\quad+
		m(4m-3)\delta e^{K_\upgamma(T-t_n)}
		 \big(\mathrm{Tr}(\Q)+\mathrm{Tr}(\Q_1)\big)
		 (1+\|\y_n\|_{\V}^2)^{m-1}
		\nonumber\\&\quad-
		\bigg(\big((\mu\A+\alpha\I)\y_n,\mathfrak{J}_n\big)+\big(\mathcal{B}(\y_n),\mathfrak{J}_n\big)+\beta\big(\mathcal{C}(\y_n),\mathfrak{J}_n\big)\bigg)
		\nonumber\\&\quad-
		2m\delta e^{K_\upgamma(T-t_n)}(1+\|\y_n\|_{\V}^2)^{m-1} 
		\underbrace{\big((\mu\A+\alpha\I)\y_n+\mathcal{B}(\y_n)+\beta\mathcal{C}(\y_n),(\A+\I)\y_n\big)}_{I_1}
		\nonumber\\&\quad+
		F\big(t_n,\y_n,\mathfrak{J}_n\big)
		\nonumber\\&\quad+
		\underbrace{2 C_\upgamma(1+\|\y_n\|_{\V})m\delta e^{K_\upgamma(T-t_n)}(1+\|\y_n\|_{\V}^2)^{m-1}
		\|(\A+\I)\y_n\|_{\H}}_{I_2}
		\nonumber\\&\geq\frac{\upgamma}{2T^2}.
	\end{align}
	
	Let us now calculate $I_1$. For this, we consider the following two cases:
	\vskip 2mm
	\noindent
	\textbf{Case-I: When $r>3$ in $d\in\{2,3\}$.}
	From \eqref{syymB3} and \eqref{torusequ}, we infer
		\begin{align}\label{rg3c}
			&-\min\{\mu,\alpha\}\|(\A+\I)\y_n\|_{\H}^2-
			2(\mathcal{B}(\y_n),(\A+\I)\y_n)- 2\beta(\mathcal{C}(\y_n),(\A+\I)\y_n)\nonumber\\&
			\leq-\frac{\min\{\mu,\alpha\}}{2}\|(\A+\I)\y_n\|_{\H}^2-
			\beta\||\y_n|^{\frac{r-1}{2}}\nabla\y_n\|_{\H}^2+ 2\varrho_1\|\nabla\y_n\|_{\H}^2-
			2\beta\|\y_n\|_{\wi\L^{r+1}}^{r+1},
		\end{align}
		where $\varrho_1=\frac{r-3}{\min\{\mu,\alpha\}(r-1)}\left[\frac{4}{\beta\min\{\mu,\alpha\} (r-1)}\right]^{\frac{2}{r-3}}$.
   \vskip 2mm
   \noindent
   \textbf{Case-II: When $r=3$ with $2\beta\mu\geq1$ in $d\in\{2,3\}$.}
   		Let $0<\theta<1.$
   		By using the Cauchy-Schwarz and Young's inequalities, we calculate
   		\begin{align*}
   				|(\mathcal{B}(\y_n),(\A+\I)\y_n)|&\leq\||\y_n|\nabla\y_n\|_{\H}
   				\|(\A+\I)\y_n\|_{\H}
   				\nonumber\\&\leq\frac{\theta\mu}{2}\|(\A+\I)\y_n\|_{\H}^2+
   				\frac{1}{2\theta\mu}\||\y_n|\nabla\y_n\|_{\H}^2.
   			\end{align*}
   		For $r=3$, we write \eqref{toreq} as
   		\begin{align*}
   		(\mathcal{C}(\y_n),(\A+\I)\y_n)=\||\y_n|\nabla\y_n\|_{\H}^{2}+ \frac{1}{2}\|\nabla|\y_n|^2\|_{\H}^{2}+\|\y_n\|_{\wi\L^{r+1}}^{r+1}.
   			\end{align*}
   		From \eqref{syymB3} and \eqref{torusequ}, we infer
   		\begin{align}\label{re3c}
   				&-\min\{\mu,\alpha\}\|(\A+\I)\y_n\|_{\H}^2-
   				2(\mathcal{B}(\y_n),(\A+\I)\y_n)- 2\beta(\mathcal{C}(\y_n),(\A+\I)\y_n)\nonumber\\&\leq -\min\{\mu,\alpha\}(1-\theta)\|(\A+\I)\y_n\|_{\H}^2-
   				\left(2\beta-\frac{1}{\theta\mu}\right)\||\y_n|\nabla\y_n\|_{\H}^2-
   				2\beta\|\y_n\|_{\wi\L^{r+1}}^{r+1}.
   			\end{align}
   				By using the Cauchy-Schwarz inequality, we estimate $I_2$ as
   			\begin{align}\label{trq2}
   				&2 m\delta e^{K_\upgamma(T-t_n)}(1+\|\y_n\|_{\V}^2)^{m-1} 
   				\underbrace{C_\upgamma(1+\|\y_n\|_{\V})\|(\A+\I)\y_n\|_{\H}}_{\text{Cauchy-Schwarz}}
   				\nonumber\\&\leq 
   				\frac{2C_\upgamma^2 m\delta}{\min\{\mu,\alpha\}} e^{K_\upgamma(T-t_n)}(1+\|\y_n\|_{\V})^m+\min\{\mu,\alpha\} m\delta e^{K_\upgamma(T-t_n)} \|(\A+\I)\y_n\|_{\H}^2(1+\|\y_n\|_{\V}^2)^{m-1}.
   			\end{align}
   			
   Let us now choose $$K_\upgamma=1+2\left(\frac{2 C_\upgamma^2 m}{\min\{\mu,\alpha\}}+m(4m-3) \big(\mathrm{Tr}(\Q)+\mathrm{Tr}(\Q_1)\big)+\mathfrak{a}_1
   	\right),$$  where
   	\begin{equation*}
   		\mathfrak{a}_1:=
   		\left\{
   		\begin{aligned}
   		2m\varrho_1, \ &\text{when} \ r>3, \ \text{ for all } \ \mu,\beta>0,\\
   		0,  \ &\text{when} \ r=3, \ \text{ with } \ 2\beta\mu\geq1.
   		\end{aligned}
   		\right.
   	\end{equation*}
   On utilizing \eqref{rg3c}-\eqref{trq2} in \eqref{viscdef111}, we obtain
   \begin{align}\label{viscdef3}
   	&- \frac{\delta}{2}K_\upgamma e^{K_\upgamma(T-t_n)} (1+\|\y_n\|_{\V}^2)^m+
   	(\varphi_n)_t(t_n,\y_n)+\frac12\mathrm{Tr}(\Q\D^2\varphi_n(t_n,\y_n)+2\Q\mathbf{Q}_N)
   	\nonumber\\&\quad-
   	\big((\mu\A+\alpha\I)\y_n,\mathfrak{J}_n\big)-
   	\big(\mathcal{B}(\y_n),\mathfrak{J}_n\big)-
   	\beta\big(\mathcal{C}(\y_n),\mathfrak{J}_n\big)+
   	F\big(t_n,\y_n,\mathfrak{J}_n\big)
   	\nonumber\\&\geq\frac{\upgamma}{2T^2}+\mathfrak{a}_3 m\delta e^{K_\upgamma(T-t_n)} (1+\|\y_n\|_{\V}^2)^{m-1}\||\y_n|^{\frac{r-1}{2}}\nabla\y_n\|_{\H}^2
   	\nonumber\\&\quad+
   		2\beta m\delta e^{K_\upgamma(T-t_n)}(1+\|\y_n\|_{\V}^2)^{m-1} \|\y_n\|_{\wi\L^{r+1}}^{r+1}
   	\nonumber\\&\quad+
   	\mathfrak{a}_2\min\{\mu,\alpha\}m\delta e^{K_\upgamma(T-t_n)} (1+\|\y_n\|_{\V}^2)^{m-1}\|(\A+\I)\y_n\|_{\H}^2,
   \end{align}
   where
   \begin{equation}\label{e3cbf1}
   	\mathfrak{a}_2:=
   	\left\{
   	\begin{aligned}
   		\frac12, \ &\text{when} \ r>3, \ \text{ for all } \ \mu,\beta>0,\\
   		1-\theta,  \ &\text{when} \ r=3, \ \text{ with } \ 2\beta\mu\geq1.
   	\end{aligned}
   	\right.
   \end{equation}
   and 
   \begin{equation}\label{e3cbf}
   	\mathfrak{a}_3:=
   	\left\{
   	\begin{aligned}
   		\beta, \ &\text{when} \ r>3, \ \text{ for all } \ \mu,\beta>0,\\
   		2\beta-\frac{1}{\theta\mu},  \ &\text{when} \ r=3, \ \text{ with } \ 2\beta\mu\geq1.
   	\end{aligned}
   	\right.
   \end{equation}
On taking  limit supremum as $n\to\infty$ in \eqref{viscdef3} and utilizing \eqref{eqn-v-norm-conv} (the detailed justification can be found in the Appendix \ref{expoca}), we deduce
	\begin{align}\label{viscdef4}
		&- \frac{\delta}{2}K_\upgamma e^{K_\upgamma(T-\bar{t})} (1+\|\bar{\y}\|_{\V}^2)^m+\frac{\bar{t}-\bar{s}}{\eta}
	     +\frac12\mathrm{Tr}(\Q X_N+2\Q\mathbf{Q}_N)\nonumber\\&
		\quad -	\bigg((\mu\A+\alpha\I)\bar{\y},\frac{\bar{\y}-\bar{\x}}{\eps}\bigg) 
		-\bigg(\mathcal{B}(\bar{\y}),\frac{\bar{\y}-\bar{\x}}{\eps}
		\bigg)\nonumber\\&\quad-\beta
		\bigg(\mathcal{C}(\bar{\y}),\frac{\bar{\y}-\bar{\x}}{\eps}
		\bigg)+
		F\bigg(\bar{t},\bar{\y},\frac{\bar{\y}-\bar{\x}}{\eps}\bigg)
		\nonumber\\&\geq\frac{\upgamma}{2T^2}.
	\end{align}
	On employing a similar procedure as above for the supersolution $v_\upgamma$, we arrive at
	\begin{align}\label{viscdef5}
		&\frac{\delta}{2}K_\upgamma e^{K_\upgamma(T-\bar{s})} (1+\|\bar{\x}\|_{\V}^2)^m+\frac{\bar{t}-\bar{s}}{\eta}
		+\frac12\mathrm{Tr}(\Q Y_N-2\Q\mathbf{Q}_N)\nonumber\\&\quad-
		\bigg((\mu\A+\alpha\I)\bar{\x},\frac{\bar{\y}-\bar{\x}}{\eps}
		\bigg)-
		\bigg(\mathcal{B}(\bar{\x}),\frac{\bar{\y}-\bar{\x}}{\eps}\bigg)
		\nonumber\\&\quad
		-\beta\bigg(\mathcal{C}(\bar{\x}),\frac{\bar{\y}-\bar{\x}}{\eps}
		\bigg)+
		F\bigg(\bar{s},\bar{\x},\frac{\bar{\y}-\bar{\x}}{\eps}\bigg)
		\nonumber\\&\leq-\frac{\upgamma}{2T^2}.
	\end{align}
	On combining \eqref{viscdef4}-\eqref{viscdef5} and using the fact that $X_N\leq Y_N$, and then passing $N\to\infty$
	\begin{align}\label{viscdef6}
		&\frac{\delta}{2} (1+\|\bar{\y}\|_{\V}^2)^m+\frac{\delta}{2} (1+\|\bar{\x}\|_{\V}^2)^m
		+\frac{\min\{\mu,\alpha\}}{\eps}\|\bar{\y}-\bar{\x}\|_{\V}^2 
		\nonumber\\&\quad+
		\bigg(\mathcal{B}(\bar{\y})-\mathcal{B}(\bar{\x}),\frac{\bar{\y}-\bar{\x}}{\eps}\bigg)+\beta
		\bigg(\mathcal{C}(\bar{\y})-\mathcal{C}(\bar{\x}),\frac{\bar{\y}-\bar{\x}}{\eps}\bigg)\nonumber\\&\quad
		+F\bigg(\bar{t},\bar{\y},\frac{\bar{\y}-\bar{\x}}{\eps}\bigg)-
		F\bigg(\bar{s},\bar{\x},\frac{\bar{\y}-\bar{\x}}{\eps}\bigg)
		\nonumber\\&\leq-\frac{\upgamma}{T^2}
	\end{align}
	
	\vskip 0.2cm
\noindent	\textbf{Step-VII:} \emph{Final conclusion by getting a contradiction.}
	For fixed $\upgamma$ and $\delta$, we have $\|\bar{\y}\|_{\V}, \|\bar{\x}\|_{\V}\leq R_\delta$ for some $R_\delta>0$. Let $D_{\upgamma,\delta}$ be a constant such that
	\begin{align*}
	\omega_{R_\delta}(s)\leq\frac{\upgamma}{4T^2}+D_{\upgamma,\delta}s.
	\end{align*}
	Now, we calculate
	\begin{align}\label{Fhm}
	&\bigg|F\bigg(\bar{t},\bar{\y},\frac{\bar{\y}-\bar{\x}}{\eps}\bigg)-
	F\bigg(\bar{s},\bar{\x},\frac{\bar{\y}-\bar{\x}}{\eps}\bigg)\bigg|
	\nonumber\\&\leq 
	\bigg|F\bigg(\bar{t},\bar{\y},\frac{\bar{\y}-\bar{\x}}{\eps}\bigg)-
	F\bigg(\bar{s},\bar{\y},\frac{\bar{\y}-\bar{\x}}{\eps}\bigg)\bigg|
	+
	\bigg|F\bigg(\bar{s},\bar{\y},\frac{\bar{\y}-\bar{\x}}{\eps}\bigg)-
	F\bigg(\bar{s},\bar{\x},\frac{\bar{\y}-\bar{\x}}{\eps}\bigg)\bigg|
	\nonumber\\&\leq
	\omega_{R_{\delta,\eps}}(|\bar{t}-\bar{s}|)
	+\omega_{R_{\delta}}(\|\bar{\y}-\bar{\x}\|_{\V})
	+\omega\left(\|\bar{\y}-\bar{\x}\|_{\V}\frac{\|\bar{\y}-\bar{\x}\|_{\H}}
	{\eps}\right)\nonumber\\&\leq
	\omega_{R_{\delta,\eps}}(|\bar{t}-\bar{s}|)+\frac{3\upgamma}{4T^2}+D_{\upgamma,\delta}\|\bar{\y}-\bar{\x}\|_{\V}+C_\upgamma
	\|\bar{\y}-\bar{\x}\|_{\V}\frac{\|\bar{\y}-\bar{\x}\|_{\H}}{\eps}
	\nonumber\\&\leq\omega_{R_{\delta,\eps}}(|\bar{t}-\bar{s}|)+\frac{3\upgamma}{4T^2}+D_{\upgamma,\delta}
	\|\bar{\y}-\bar{\x}\|_{\V}+\frac{C_\upgamma^2} {\min\{\mu,\alpha\}\eps}\|\bar{\y}-\bar{\x}\|_{\H}^2
	+\frac{\min\{\mu,\alpha\}}{4\eps}\|\bar{\y}-\bar{\x}\|_{\V}^2.
	\end{align}
	 From \eqref{3.4} and \eqref{monoC2}, we calculate
	\begin{align}\label{rg3c1}
	&\bigg(\mathcal{B}(\bar{\y})-\mathcal{B}(\bar{\x}),\frac{\bar{\y}-\bar{\x}}{\eps}\bigg)+\beta
	\bigg(\mathcal{C}(\bar{\y})-\mathcal{C}(\bar{\x}),\frac{\bar{\y}-\bar{\x}}{\eps}\bigg)\nonumber\\&\geq-
	\frac{\min\{\mu,\alpha\}}{4\eps}\|\nabla(\bar{\y}-\bar{\x})\|_{\H}^2 -\frac{\varrho}{\eps}\|\bar{\y}-\bar{\x}\|_{\H}^2,
	\end{align}
where $\varrho$ is defined in \eqref{eqn-varrho}. 	Combining \eqref{Fhm} and \eqref{rg3c1}, using in \eqref{viscdef6} and then using the fact that $\bar{\y}$ and $\bar{\x}$ are in $\V$, and by an application of  a Taylor's formula (\cite[Theorem 7.9.1]{PGC}), we conclude that
	\begin{align}\label{viscdef7}
	&\frac{\delta}{2} (1+\|\bar{\y}\|_{\V}^2)^m+\frac{\delta}{2} (1+\|\bar{\x}\|_{\V}^2)^m
	+\frac{\min\{\mu,\alpha\}}{2\eps}\|\bar{\y}-\bar{\x}\|_{\V}^2
	\nonumber\\&\leq-\frac{\upgamma}{4T^2}+
	D_{\upgamma,\delta}\|\bar{\y}-\bar{\x}\|_{\V}+\omega_{R_{\delta,\eps}}(|\bar{t}-\bar{s}|)+\left(\frac{C_\upgamma^2}{\min\{\mu,\alpha\}\eps}+\frac{\min\{\mu,\alpha\}}{4\eps}+\frac{\varrho}{\eps}\right)\|\bar{\y}-\bar{\x}\|_{\H}^2
	\nonumber\\&\quad
	+C(K_\upgamma,\delta,\varrho,T) \|\bar{\y}-\bar{\x}\|_{\V}.
   \end{align}
   On simplifying further and using \eqref{copm1}-\eqref{copm2}, we finally have
  \begin{align*}
  	\frac{\min\{\mu,\alpha\}}{2\eps}\|\bar{\y}-\bar{\x}\|_{\V}^2-
  	\left(D_{\upgamma,\delta}+C(K_\upgamma,\delta,\varrho,T)\right)
  	\|\bar{\y}-\bar{\x}\|_{\V}
  \leq-\frac{\upgamma}{4T^2}+
  	\upsigma_2(\eta,\eps;\delta,\upgamma),
  \end{align*}
	where, for fixed $\upgamma,\delta$, we have $\limsup\limits_{\eps\to0}
	\limsup\limits_{\eta\to0}\upsigma_2(\eta,\eps;\delta,\upgamma)=0$.
Now, on taking the infimum, we obtain
	\begin{align}\label{contd1}
&	\inf\limits_{\|\bar{\y}-\bar{\x}\|_{\V}>0}  
     \left\{\frac{\min\{\mu,\alpha\}}{2\eps}
	\|\bar{\y}-\bar{\x}\|_{\V}^2-
	\left(D_{\upgamma,\delta}+C(K_\upgamma,\delta,\varrho,T)\right)
	\|\bar{\y}-\bar{\x}\|_{\V}\right\}\nonumber\\&\leq-\frac{\upgamma}{4T^2}+
	\upsigma_2(\eta,\eps;\delta,\upgamma).
	\end{align} 
	Now on taking $\limsup\limits_{\eps\to0}\limsup\limits_{\eta\to0}$ in \eqref{contd1} and using the fact that $$\lim\limits_{\eps\to0}\inf\limits_{r>0}\left(\frac{\mu r^2}{4\eps}-\left(D_{\upgamma,\delta}+C(K_\upgamma,\delta,\varrho,T)\right)r\right)=0,$$
	we obtain
	\begin{align*}
	 0\leq-\frac{\upgamma}{4T^2} \ \text{ or } \ \upgamma<0,
	\end{align*}
	which gives a contradiction to the assumption $u\not\leq v$ and hence $u\leq v$ on $(0,T]\times\V$.
		\end{proof}


	\section{Existence and uniqueness of viscosity solutions}\label{extunqvisc1}\setcounter{equation}{0}
	In this section, we go back to the HJB equation \eqref{HJBE} with the Hamiltonian function $F$ defined in \eqref{hamfunc} and show that the value function of the associated stochastic optimal control problem is its viscosity solution.
	
	\begin{theorem}\label{extunqvisc}
  Let us suppose that Hypothesis \ref{valueH} is satisfied. In addition, let $\f:[0,T]\times\U\to\V$ be such that $\f(\cdot,\a)$ is uniformly continuous, uniformly for $\a\in\U$. Then, for the values of $r$ given in Table \ref{Table2}, the value function $\mathcal{V}$ defined in \eqref{valueF} is the \emph{unique viscosity solution} of the HJB equation \eqref{HJBE2} within the class of viscosity solutions $u$ satisfying
  \begin{align*}
  	|u(t,\y)|\leq C(1+\|\y\|_{\V}^k), \  \ (t,\y)\in(0,T)\times\V,
  \end{align*} 
  for some $k\geq0$.
	\end{theorem}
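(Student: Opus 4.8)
The plan is to establish uniqueness and the identification $\mathcal{V} = $ (viscosity solution) in two logically separate stages. Uniqueness is essentially free: by Proposition \ref{valueP}(iii) the value function $\mathcal{V}$ satisfies the polynomial growth bound $|\mathcal{V}(t,\y)| \le C(1+\|\y\|_{\V}^k)$ and is locally uniformly continuous on $[0,T]\times\V$; the Hamiltonian $F$ in \eqref{hamfunc} is checked to satisfy Hypothesis \ref{hypF14} using Hypothesis \ref{fhyp} (bound on $\f$), the uniform continuity of $\f(\cdot,\a)$, and Hypothesis \ref{valueH} on $\ell$ and $g$ — indeed \eqref{F1}--\eqref{F3} follow from taking suprema/infima over $\a\in\U$ in the estimates \eqref{vh1}--\eqref{vh3}, and \eqref{F4} is \eqref{vh3}. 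Once this is in place, the comparison principle Theorem \ref{comparison} (valid for $r$ in Table \ref{Table1}, hence in particular for the $r$ in Table \ref{Table2}) immediately gives that any two viscosity solutions in the stated growth class coincide. So the heart of the theorem is \emph{existence}: showing $\mathcal{V}$ is both a viscosity subsolution and a viscosity supersolution of \eqref{HJBE2}.

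For the subsolution (resp. supersolution) property I would argue by contradiction in the standard dynamic-programming way. Fix a test function $\uppsi = \upvarphi + \mathfrak{h}(t)(1+\|\y\|_{\V}^2)^m$ as in Definition \ref{testD}, and suppose $\mathcal{V} - \uppsi$ has a global maximum at $(t_0,\y)\in(0,T)\times\V$. The first task is to upgrade $\y\in\V$ to $\y\in\V_2 = \D(\A+\I)$: this is exactly where the radial $\V$-norm term in $\uppsi$ and the structure of the CBF equation (the damping $\alpha\y$ and the absorption term giving control of $\|(\A+\I)\y\|_{\H}$ via Proposition \ref{weLLp}) are used — one applies Itô's formula to $\mathfrak{h}(\cdot)(1+\|\Y_\eps(\cdot)\|_{\V}^2)^m$ along the controlled trajectory $\Y_\eps$ starting from $\y$ (with a near-optimal or frozen control), uses the DPP \eqref{vdpp} over $[t_0,t_0+\eps]$, divides by $\eps$, and lets $\eps\to0$; the continuous-dependence estimates \eqref{ctsdep0.1}--\eqref{ctsdep0.2} guarantee $\Y_\eps(t_0+\cdot)\to\y$ strongly in $\mathrm{L}^p(\Omega;\V)$ and weakly in the relevant sense, and the bound produced forces $\y\in\V_2$ together with the subsolution inequality \eqref{eqn-subso}. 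Note $\D\uppsi(t_0,\y) = \D\upvarphi - \mathfrak{h}(t_0)\,2m(1+\|\y\|_{\V}^2)^{m-1}(\A+\I)\y$ per \eqref{frAA}, which is the correct Fréchet derivative on the torus without zero-average, and this is where $(\A+\I)\y$ rather than $\A\y$ enters.

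The main obstacle — and the reason for the dimensional/exponent restriction in Table \ref{Table2} — is controlling, as $\eps\to0$, the two terms displayed in \eqref{extunqdff}, namely
\begin{align*}
\frac{1}{\eps}\E\int_{t_0}^{t_0+\eps}\mathfrak{h}(s)\big(\mathcal{B}(\Y_\eps(s)),(\A+\I)\Y_\eps(s)\big)\d s
\quad\text{and}\quad
\frac{1}{\eps}\E\int_{t_0}^{t_0+\eps}\mathfrak{h}(s)\big(\mathcal{C}(\Y_\eps(s)),(\A+\I)\Y_\eps(s)\big)\d s,
\end{align*}
which arise in the Itô expansion because $(\mathcal{B}(\y),\A\y)\ne 0$ in 3D. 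For the bilinear term I would use the estimate \eqref{syymB3} together with the uniform strong-solution bounds \eqref{ssee1} and the continuous-dependence estimates to show the average converges to $(\mathcal{B}(\y),(\A+\I)\y)$; this works in both $d=2,3$. For the absorption term one applies Hölder's inequality in $\omega$ and then in time: since $\mathcal{C}(\Y_\eps) = |\Y_\eps|^{r-1}\Y_\eps$ and $(\mathcal{C}(\y),\A\y)$ involves $\||\y|^{(r-1)/2}\nabla\y\|_{\H}^2$ and $\|\nabla|\y|^{(r+1)/2}\|_{\H}^2$ via \eqref{torusequ}, the term is estimated against $\|\Y_\eps\|_{\wi\L^{p(r+1)}}^{r+1}$-type quantities; tracking the exponents one is forced into an integrability exponent of the form $\tfrac{4(r+1)}{5-r}$, which is finite precisely when $r<5$ in $d=3$, while in $d=2$ the Sobolev embedding $\V\hookrightarrow\wi\L^{r+1}$ removes the restriction for all $r>3$ (the case $r=3$, $2\beta\mu\ge1$ is handled separately as in the comparison proof). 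Combining the convergence of all these terms yields in the limit the inequality \eqref{eqn-subso} at $(t_0,\y)$ with $\y\in\V_2$. The supersolution case is symmetric (one uses a control that is $\eps$-suboptimal and the reverse inequality in the DPP), giving \eqref{eqn-superso}. Hence $\mathcal{V}$ is a viscosity solution, and by the comparison principle it is the unique one in the prescribed growth class, completing the proof.
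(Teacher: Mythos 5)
Your proposal follows essentially the same route as the paper: uniqueness is reduced to the comparison principle after checking that the Hamiltonian \eqref{hamfunc} satisfies Hypothesis \ref{hypF14}, and existence is proved via the dynamic programming principle \eqref{vdpp}, It\^o's formula applied to $\varphi$ and $\delta(\cdot)(1+\|\Y_\eps(\cdot)\|_{\V}^2)^m$ along an $\eps$-optimal trajectory, the energy and continuous-dependence estimates \eqref{ssee1}, \eqref{ctsdep0.1}--\eqref{ctsdep0.2} to first force the extremal point into $\V_2$ and then pass to the limit, with the terms in \eqref{extunqdff} handled exactly as in the paper (including the exponent $\tfrac{4(r+1)}{5-r}$ that yields the restriction $r<5$ in $d=3$ and the use of $(\A+\I)\y$ from \eqref{frAA}). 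Apart from the inessential phrase ``by contradiction'' (the argument you then describe, like the paper's, is direct), this matches the paper's proof.
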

	
	\begin{proof}
		We first notice that under the assumptions of Hypothesis \ref{valueH}, the Hamiltonian $F$ given in \eqref{hamfunc} satisfies all the conditions of Hypothesis \ref{hypF14}. Moreover, by Proposition \ref{valueP}, the value function $\mathcal{V}$ satisfies \eqref{vdpp}-\eqref{bddV}. In particular, we infer from \eqref{bddV} that  $\mathcal{V}$ is weakly sequentially continuous on $(0,T]\times\V$. Further, we only need to show that $\mathcal{V}$ is a \emph{viscosity solution} of the problem \eqref{HJBE}, since the uniqueness is a direct consequence of the comparison principle (see Theorem \ref{comparison}). The outline of the proof is as follows:
		\begin{itemize}
			\item We first show that the points of minima (or maxima) in the definition of viscosity supersolution (or subsolution) are in $\V_2$. 
			\item We then use the dynamic programming principle and
			carefully apply various estimates for solutions of the state equation \eqref{stap} to pass to the limit and obtain the inequalities as in  Definition \ref{viscsoLndef}.
		\end{itemize}
		We will only show that the value function $\mathcal{V}$ is a viscosity supersolution. The proof that $\mathcal{V}$ is a viscosity subsolution uses the same arguments. To prove this, let $\psi(t,\y)=\varphi(t,\y)+\delta(t)(1+\|\y\|_{\V}^2)^m$ be a test function (see Definition \ref{testD}) and let $\mathcal{V}+\psi$ has a global minimum at $(t_0,\y_0)\in(0,T)\times\V$.

	\vskip 1mm
	\noindent
	\textbf{To prove $\y_0\in\V_2$.}  By dynamic programming principle \eqref{vdpp}, for every $\eps>0,$ there exists $\a_\eps(\cdot)\in\mathscr{U}_{t_0}$ such that
	\begin{align}\label{vdp1}
		\mathcal{V}(t_0,\y_0)+\eps^2>\E\left\{\int_{t_0}^{t_0+\eps}
		\ell(\Y_\eps(s),\a_\eps(s))\d s+ \mathcal{V}(t_0+\eps,\Y_\eps(t_0+\eps))\right\},
	\end{align}
	where $\Y_\eps(\cdot)=\Y(\cdot;t_0,\y_0,\a_\eps(\cdot))$. We can assume that $\a_\eps(s)$ is $\mathscr{F}_s^{t_0,0}-$predictable and thus by \cite[Corollary 2.21, Chapter 2, pp. 108]{GFAS} and Proposition \ref{weLLp}--part (iii), one can assume that all $\a_\eps(\cdot)$ are defined on the same reference probability space $\nu$, that is, $\a_\eps(\cdot)\in\mathscr{U}_{t_0}^{\nu}$. Since  $(\mathcal{V}+\psi)(t_0,\y_0)\leq(\mathcal{V}+\psi)(t,\y)$ for every $(t,\y)\in(0,T)\times\V$, that is,
	\begin{align}\label{vdp2}
		\mathcal{V}(t,\y)-\mathcal{V}(t_0,\y_0)\geq-\varphi(t,\y)
		+\varphi(t_0,\y_0)-\delta(t)(1+\|\y\|_{\V}^2)^m+
		\delta(t_0)(1+\|\y_0\|_{\V}^2)^m,
	\end{align}
	for every $(t,\y)\in(0,T)\times\V$. Then, from \eqref{vdp1} and \eqref{vdp2}, we have
	\begin{align}\label{vdp3}
	&\eps^2-\E\int_{t_0}^{t_0+\eps}\ell(\Y_\eps(s),\a_\eps(s))\d s \nonumber\\&\geq \E\left[\mathcal{V}(t_0+\eps,\Y_\eps(t_0+\eps))-\mathcal{V}(t_0,\y_0)\right]\nonumber\\&\geq
	\E\big[-\varphi(t_0+\eps,\Y_\eps(t_0+\eps))+\varphi(t_0,\y_0)
	\nonumber\\&\qquad- \delta(t_0+\eps)(1+\|\Y_\eps(t_0+\eps)\|_{\V}^2)^m+
	\delta(t_0)(1+\|\y_0\|_{\V}^2)^m\big].
	\end{align}
	Let us set $\lambda=\inf\limits_{t\in[t_0,t_0+\eps_0]}\delta(t)$ for some $\eps_0>0$ and take $\eps<\eps_0$. On applying It\^o's formula to the processes $\varphi(\cdot,\Y_\eps(\cdot))$ and $\delta(\cdot)(1+\|\Y_\eps(\cdot))\|_{\V}^2)^m$ and then dividing both sides by $\eps$, we obtain
	\begin{align}\label{vdp3.1}
	&\eps-\frac{1}{\eps}\E\int_{t_0}^{t_0+\eps}\ell(\Y_\eps(s),\a_\eps(s))\d s
	\nonumber\\&\geq
	-\frac{1}{\eps}\E\biggl[\int_{t_0}^{t_0+\eps}\bigg(\varphi_t(s,\Y_\eps(s))-\big((\mu\A+\alpha\I)\Y_\eps(s)+\mathcal{B}(\Y_\eps(s))+\beta\mathcal{C}(\Y_\eps(s)),\D\varphi(s,\Y_\eps(s))\big)
	\nonumber\\&\qquad+
	\big(\f(s,\a_\eps(s)),\D\varphi(s,\Y_\eps(s))\big)+\frac12\mathrm{Tr}(\Q\D^2\varphi(s,\Y_\eps(s)))\bigg)\d s\biggr]
	\nonumber\\&\qquad
	-\frac{1}{\eps}\E\bigg[\int_{t_0}^{t_0+\eps}\bigg(\delta'(s)(1+\|\Y_\eps(s))\|_{\V}^2)^m+m\delta(s)\big(\mathrm{Tr}(\Q_1)+\mathrm{Tr}(\Q)\big)(1+\|\Y_\eps(s))\|_{\V}^2)^{m-1}\bigg)\d s\bigg]
	\nonumber\\&\qquad+ \frac{2m}{\eps}\E\bigg[\int_{t_0}^{t_0+\eps}\bigg(\delta(s)\bigg(\big((\mu\A+\alpha\I)\Y_\eps(s),(\A+\I)\Y_\eps(s)\big)+
	(\mathcal{B}(\Y_\eps(s)),(\A+\I)\Y_\eps(s))
	\nonumber\\&\qquad+
		\beta(\mathcal{C}(\Y_\eps(s)),(\A+\I)\Y_\eps(s))-\big(\f(s,\a_\eps(s)),
	(\A+\I)\Y_\eps(s)\big)\big)\bigg)(1+\|\Y_\eps(s))\|_{\V}^2)^{m-1}
	\bigg)\d s\bigg]
	\nonumber\\&\qquad-
	\frac{2m(m-1)}{\eps}\E\bigg[\int_{t_0}^{t_0+\eps}\delta(s)
	\|\Q^{\frac12}(\A+\I)\Y_\eps(s)\|_{\H}^2
	(1+\|\Y_\eps(s)\|_{\V}^2)^{m-2}\d s\bigg].
\end{align}
     By the definition of $\lambda$ and from the equality \eqref{torusequ}, it then follows that
	\begin{align}\label{vdp4}
		&\frac{2m\lambda}{\eps}\E\bigg[\int_{t_0}^{t_0+\eps} \bigg(\beta(\mathcal{C}(\Y_{\eps}),(\A+\I)\Y_{\eps})+\min\{\mu,\alpha\}\|(\A+\I)\Y_\eps(s)\|_{\H}^2\bigg)\nonumber\\&\qquad\times
		(1+\|\Y_\eps(s)\|_{\V}^2)^{m-1}\d s\bigg]
		\nonumber\\&\leq\eps+
		\frac{1}{\eps}\E\biggl[\int_{t_0}^{t_0+\eps}\bigg(-\ell(\Y_\eps(s),\a_\eps(s))+\varphi_t(s,\Y_\eps(s))+(\f(s,\a_\eps(s)),\D\varphi(s,\Y_\eps(s)))\nonumber\\&\qquad-\big((\mu\A+\alpha\I)\Y_\eps(s)+\mathcal{B}(\Y_\eps(s))+\beta\mathcal{C}(\Y_\eps(s)),\D\varphi(s,\Y_\eps(s))\big)+\frac12\mathrm{Tr}(\Q\D^2\varphi(s,\Y_\eps(s)))\bigg)\d s \biggr] \nonumber\\&\qquad+
		\frac{1}{\eps}\E\bigg[\int_{t_0}^{t_0+\eps}\bigg(\delta'(s)(1+\|\Y_\eps(s))\|_{\V}^2)^m+m\delta(s)\big(\mathrm{Tr}(\Q_1)+\mathrm{Tr}(\Q)\big)(1+\|\Y_\eps(s))\|_{\V}^2)^{m-1}\bigg)\d s\bigg]
		\nonumber\\&\qquad-
		\frac{2m}{\eps}\E\biggl[\int_{t_0}^{t_0+\eps}
		\delta(s)\big((\mathcal{B}(\Y_\eps(s)),(\A+\I)\Y_\eps(s))(1+\|\Y_\eps(s))\|_{\V}^2)^{m-1}\d s\bigg]
		\nonumber\\&\qquad+\frac{2m}{\eps}
		\E\biggl[\int_{t_0}^{t_0+\eps}\delta(s)
		(\f(s,\a_\eps(s)),(\A+\I)\Y_\eps(s))\big)(1+\|\Y_\eps(s))\|_{\V}^2)^{m-1}\d s\bigg]
		\nonumber\\&\qquad+
		\frac{4m(m-1)}{\eps}\E\bigg[\int_{t_0}^{t_0+\eps}\delta(s)\big(\mathrm{Tr}(\Q_1)+\mathrm{Tr}(\Q)\big)
		(1+\|\Y_\eps(s)\|_{\V}^2)^{m-1}\d s\bigg].
	\end{align}
We now estimate all the terms in the right hand side of \eqref{vdp4} separately. Note that by the assumptions on $\varphi$, we deduce that
\begin{align}\label{unsol1}
	\|\D\varphi(\cdot,\y_\eps)\|_{\H}\leq C(1+\|\y_\eps\|_{\H}), \ \ 
	\|\varphi_t(\cdot,\y_\eps)\|_{\H}\leq C(1+\|\y_\eps\|_{\H}).
\end{align} 
		Using  Hypothesis \ref{valueH} and \eqref{unsol1}, we estimate following:
		\begin{align}
			|\ell(\Y_\eps,\a_\eps)|&\leq C\big(1+\|\Y_\eps\|_{\V}^k\big),\label{vdp6}\\
			|\mathrm{Tr}(\Q\D^2\varphi(\cdot,\Y_\eps))|, \  
			|(\f(\cdot,\a_\eps),\D\varphi(\cdot,\Y_\eps))|&\leq 
			C\big(1+\|\Y_\eps\|_{\H}\big),\label{vdp6.1}\\
			|(\f(\cdot,\a_\eps),(\A+\I)\Y_\eps)|(1+\|\Y_\eps\|_{\V}^2)^{m-1}
			&\leq C\big(1+\|\Y_\eps\|_{\V}^2\big)^m,\label{vdp6.2}\\
			|(\f(\cdot,\a_\eps),\D\varphi(\cdot,\Y_\eps))|&\leq C\big(1+\|\Y_\eps\|_{\H}\big).\label{vdp555}
		\end{align}
		We now consider following two cases for further calculations. 
		\vskip 2mm
		\noindent
		\textbf{Case-I: When $r>3$ in $d\in\{2,3\}.$}
		By using the Cauchy Schwarz and Young's inequalities, \eqref{unsol1} and calculations similar to \eqref{syymB3}, we write
		\begin{align}
		|((\mu\A+\alpha\I)\Y_\eps,\D\varphi(\cdot,\Y_\eps))|&\leq\max\{\mu,
		\alpha\}|((\A+\I)\Y_\eps,\D\varphi(\cdot,\Y_\eps))|\nonumber\\&\leq \frac{\min\{\mu,\alpha\}\lambda}{2}\|(\A+\I)\Y_\eps\|_{\H}^2+C(1+\|\Y_\eps\|_{\H}^2),\label{vdp5}\\
		|(\mathcal{B}(\Y_\eps),(\A+\I)\Y_\eps)|
		&\leq\frac{\min\{\mu,\alpha\}}{2}
		\|(\A+\I)\Y_\eps\|_{\H}^2+\frac{\beta}{4}\||\Y_\eps|^{\frac{r-1}{2}}\nabla\Y_\eps\|_{\H}^2+\varrho^*\|\nabla\Y_\eps\|_{\H}^2,\label{vdp5.1}\\
		 |(\mathcal{B}(\Y_\eps),\D\varphi(\cdot,\Y_\eps))|&\leq C(1+\|\Y_\eps\|_{\H}^2)
		+\frac{\beta\lambda}{4}\||\Y_\eps|^{\frac{r-1}{2}}\nabla\Y_\eps\|_{\H}^2+\varrho^{**}\|\nabla\Y_\eps\|_{\H}^2,\label{vdp5.2}
		\end{align}
		where 
		$\varrho^{**}:=\frac{r-3}{2\lambda(r-1)}\left[\frac{4}{\beta\lambda (r-1)}\right]^{\frac{2}{r-3}}$ and  $\varrho^*:=\frac{r-3}{2\min\{\mu,\alpha\}(r-1)}\left[\frac{4}{\beta
			\min\{\mu,\alpha\} (r-1)}\right]^{\frac{2}{r-3}}$. 
			By using a calculation similar to \eqref{ctsdep7}, \eqref{unsol1} and Remark \ref{rg3L3r}, we find
			\begin{align}\label{vdp55}
				|(\mathcal{C}(\Y_\eps),\D\varphi(\cdot,\Y_\eps))|&\leq	\|\Y_\eps\|_{\wi\L^{r+1}}^{\frac{r+3}{4}}\|\Y\|_{\wi\L^{3(r+1)}}^{\frac{3(r-1)}{4}}\|\D\varphi(\cdot,\Y_\eps)\|_{\H}
				\nonumber\\&\leq C
				\|\Y_\eps\|_{\wi\L^{r+1}}^{\frac{r+3}{4}}\left(\||\Y_\eps|^{\frac{r-1}{2}}\nabla\Y_\eps\|_{\H}^2+\|\Y_\eps\|_{\wi\L^{r+1}}^{r+1}\right)^{\frac{3(r-1)}{4(r+1)}}\|\D\varphi(\cdot,\Y_\eps)\|_{\H}
				\nonumber\\&\leq C
				\left(\||\Y_\eps|^{\frac{r-1}{2}}\nabla\Y_\eps\|_{\H}^2+\|\Y_\eps\|_{\wi\L^{r+1}}^{r+1}\right)^{\frac{r}{r+1}}\|\D\varphi(\cdot,\Y_\eps)\|_{\H}\nonumber\\&\leq C
				\left(\||\Y_\eps|^{\frac{r-1}{2}}\nabla\Y_\eps\|_{\H}^{\frac{2r}{r+1}}+\|\Y_\eps\|_{\wi\L^{r+1}}^r\right)(1+\|\Y_\eps\|_{\H})
				\nonumber\\&\leq
				\frac{\lambda}{4}\||\Y_\eps|^{\frac{r-1}{2}}
				\nabla\Y_\eps\|_{\H}^2+
				C(1+\|\Y_\eps\|_{\H})^{r+1}+\lambda\|\Y_\eps\|_{\wi\L^{r+1}}^{r+1}.
			\end{align}
			Also, from the equality \eqref{toreq}, we write
			\begin{align}\label{vdp5.3}
				(\mathcal{C}(\Y_{\eps}),(\A+\I)\Y_{\eps})\geq
				\||\Y_\eps(s)|^{\frac{r-1}{2}}\nabla\Y_\eps(s)\|_{\H}^2+
			     \|\Y_\eps(s)\|_{\wi\L^{r+1}}^{r+1}.
			\end{align}
			\vskip 2mm
		\noindent
		\textbf{Case-II: When $r=3$ with $2\beta\mu\geq1$ in $d\in\{2,3\}.$}
		From calculations similar to \eqref{syymB3}, we obtain for $0<\theta<1$
			\begin{align}
			|((\mu\A+\alpha\I)\Y_\eps,\D\varphi(\cdot,\Y_\eps))|&\leq \frac{3\min\{\mu,\alpha\}\lambda}{2}\|(\A+\I)\Y_\eps\|_{\H}^2+C(1+\|\Y_\eps\|_{\H}^2),\label{vdp5.555}\\
			|(\mathcal{B}(\Y_\eps),(\A+\I)\Y_\eps)|&\leq\||\Y_\eps|\nabla\Y_\eps\|_{\H}
			\|(\A+\I)\Y_\eps\|_{\H}
			\nonumber\\&\leq\frac{\theta\mu}{2}\|(\A+\I)\Y_\eps\|_{\H}^2+
			\frac{1}{2\theta\mu}\||\Y_\eps|\nabla\Y_\eps\|_{\H}^2,
			\label{vdp5.4}\\
			 |(\mathcal{B}(\Y_\eps),\D\varphi(\cdot,\Y_\eps))|&\leq C(1+\|\Y_\eps\|_{\H}^2)
			+\frac{\beta\lambda}{2}\||\Y_\eps|\nabla\Y_\eps\|_{\H}^2.
			\label{vdp5.8}
		\end{align}
		Similar to \eqref{vdp55}, we obtain the following estimate:
		\begin{align*}
		|(\mathcal{C}(\Y_\eps),\D\varphi(\cdot,\Y_\eps))|\leq
		\frac{\lambda}{2}\||\Y_\eps|\nabla\Y_\eps\|_{\H}^2+
		C(1+\|\Y_\eps\|_{\H})^{r+1}+\lambda\|\Y_\eps\|_{\wi\L^{r+1}}^{r+1}.
		\end{align*}
		For $r=3$, we write \eqref{toreq} as
		\begin{align}\label{vdp5.5}
			(\mathcal{C}(\Y_\eps),(\A+\I)\Y_\eps)\geq\||\Y_\eps|\nabla \Y_{\eps}\|_{\H}^{2} +\|\Y_\eps\|_{\wi\L^{r+1}}^{r+1}.
		\end{align}

	Plugging back the estimates \eqref{vdp6}-\eqref{vdp5.5} into \eqref{vdp4},
	we finally obtain
	\begin{align}\label{diffculty3}
		&\frac{m\lambda}{\eps}\E\bigg[\int_{t_0}^{t_0+\eps} \bigg(\mathfrak{a}_3\||\Y_\eps(s)|^{\frac{r-1}{2}}\nabla\Y_\eps(s)\|_{\H}^2+
		2\beta\|\Y_\eps(s)\|_{\wi\L^{r+1}}^{r+1}+
		\frac{\mathfrak{a}_2}{2}\min\{\mu,\alpha\}\|(\A+\I)\Y_\eps(s)\|_{\H}^2\bigg)
		\nonumber\\&\qquad\times
		(1+\|\Y_\eps(s)\|_{\V}^2)^{m-1}\d s\bigg]
		\nonumber\\&\leq\eps+\frac{1}{\eps} \E\biggl[\int_{t_0}^{t_0+\eps}\bigg(\big(1+\|\Y_\eps(s)\|_{\V}^k\big)+\big(1+\|\Y_\eps(s)\|_{\H}\big)+ \big(1+\|\Y_\eps(s)\|_{\H}^2\big)+\varrho^{**}\|\nabla\Y_\eps(s)\|_{\H}^2\nonumber\\&\qquad+C(1+\|\Y_\eps(s)\|_{\H}^{r+1})\bigg)\d s \biggr] \nonumber\\&\qquad+
		\frac{1}{\eps}\E\biggl[\int_{t_0}^{t_0+\eps}\bigg(\delta'(s)(1+\|\Y_\eps(s)\|_{\V}^2)^m+\varrho^*\delta(s)(1+\|\Y_\eps(s)\|_{\V}^2)^{m}\nonumber\\&\qquad
		+\delta(s)(1+\|\Y_\eps(s)\|_{\V}^2)^m+
		C(1+\|\Y_\eps(s)\|_{\V}^2)^{m-1}\nonumber\\&\qquad
		+(4m^2-3m)\delta(s)\big(\mathrm{Tr}(\Q_1)+\mathrm{Tr}(\Q)\big)(1+\|\Y_\eps(s)\|_{\V}^2)^{m-1}
		\bigg)\d s\biggr],
	\end{align}
	where $\mathfrak{a}_2$ and $\mathfrak{a}_3$ are defined in \eqref{e3cbf1}-\eqref{e3cbf}.
   Hence, by using the uniform energy estimates \eqref{vsee1}-\eqref{ssee1}, we finally obtain the uniform bound
   \begin{align}\label{vdp7}
   	\frac{\lambda}{\eps}\E\bigg[\int_{t_0}^{t_0+\eps}& \bigg(\mathfrak{a}_3
   	\||\Y_\eps(s)|^{\frac{r-1}{2}}\nabla\Y_\eps(s)\|_{\H}^2+
   	2\beta\|\Y_\eps(s)\|_{\wi\L^{r+1}}^{r+1}+
   	\frac{\mathfrak{a}_2}{2}\min\{\mu,\alpha\}\|(\A+\I)\Y_\eps(s)\|_{\H}^2\bigg)\nonumber\\&\qquad\times
   	(1+\|\Y_\eps(s)\|_{\V}^2)^{m-1}\d s\bigg]
   	\leq C,
   \end{align}
	for some constant $C$ independent of $\eps$. Therefore there exists a sequence $\eps_n\to0$ and $t_n\in(t_0,t_0+\eps_n)$ such that
	\begin{align}\label{vdp7.1}
		\E\big[\|\Y_{\eps_n}(t_n)\|_{\V_2}^2\big]\leq C, 
	\end{align}  
	and thus by an application of the Banach-Alaoglu theorem, there exists subsequences, still denoted by $\eps_n$, $t_n$, such that
	\begin{align}\label{weakL}
		\Y_{\eps_n}(t_n)\rightharpoonup\bar{\y} \ \text{ in } \ \mathrm{L}^2(\Omega;\V_2),
	\end{align}
    for some $\bar{y}\in\mathrm{L}^2(\Omega;\V_2)$, which also implies the weak convergence in $\mathrm{L}^2(\Omega;\H)$. However, from \eqref{ctsdep0.1}, we have strong convergence 
    \begin{align}\label{strgL}
    	\Y_{\eps_n}(t_n)\to\y_0 \ \text{ in } \ \mathrm{L}^2(\Omega;\H).
    \end{align}
	Thus by the uniqueness of the weak limit, \eqref{weakL} and \eqref{strgL} together yields 
	\begin{align*}
		\y_0=\bar{\y}\in\V_2.
	\end{align*}
	
	\vskip 0.2mm
	\noindent
	\textbf{To prove the supersolution inequality:}  We now prove the supersolution inequality. For this, we need to `\emph{pass to the limit}' as $\eps\to0$ in \eqref{vdp4}, at least along a subsequence. This operation is rather standard for the terms which uses the convergence in the norms of $\H$ and $\V$, because then we can use the estimates \eqref{vsee1}-\eqref{ssee1}. 
	
	To explain this, let us first discuss the convergence of the cost term appearing in the right hand side of \eqref{vdp4}. Assume that $\|\y_0\|_{\V}\leq r$. Let us define $B_r:=\{\omega\in\Omega\big|\|\Y_\eps(s)\|_{\V}\leq r\}$. Then from conditions \eqref{vh1}-\eqref{vh2} of the Hypothesis \ref{valueH}, H\"older's, Jensen's (see Lemma \ref{lem-modulus}) and Markov's inequalities, \eqref{ssee1} and \eqref{ctsdep0.2}, we calculate
	\begin{align}\label{ss1} 
	&\bigg|\frac{1}{\eps}\E\int_{t_0}^{t_0+\eps}\big[\ell(\Y_\eps(s),\a_\eps(s))-\ell(\y_0,\a_\eps(s))\big]\d s \bigg|\nonumber\\&=
	\bigg|\frac{1}{\eps}\E\int_{t_0}^{t_0+\eps}\big[\ell(\Y_\eps(s),\a_\eps(s))-\ell(\y_0,\a_\eps(s))\big]\mathds{1}_{B_r}\d s
	\nonumber\\&\qquad+
	\frac{1}{\eps}\E\int_{t_0}^{t_0+\eps}\big[\ell(\Y_\eps(s),\a_\eps(s))-\ell(\y_0,\a_\eps(s))\big]\mathds{1}_{\Omega\setminus B_r}\d s\bigg|
	\nonumber\\&\leq
	\frac{1}{\eps}\E\int_{t_0}^{t_0+\eps}\sigma_r\big(\|\Y_\eps(s)-\y_0\|_{\V}\big)\d s+
	\frac{1}{\eps}\E\int_{t_0}^{t_0+\eps}C\big(1+\|\Y_\eps(s)\|_{\V}^k+\|\y_0\|_{\V}^k\big)\mathds{1}_{\Omega\setminus B_r}\d s
	\nonumber\\&\leq
	\sigma_r\bigg(\frac{1}{\eps}\E\int_{t_0}^{t_0+\eps}\|\Y_\eps(s)-\y_0\|_{\V}\d s\bigg)+
	\frac{1}{\eps}\E\int_{t_0}^{t_0+\eps}C\big(1+\|\Y_\eps(s)\|_{\V}^k+\|\y_0\|_{\V}^k\big)\mathds{1}_{\Omega\setminus B_r}\d s \nonumber\\&\leq
	\sigma_r\bigg(\frac{1}{\eps}\int_{t_0}^{t_0+\eps}\big(\E\|\Y_\eps(s)-\y_0\|_{\V}^2\big)^{\frac12}\d s\bigg)\nonumber\\&\qquad+
	\frac{1}{\eps}\int_{t_0}^{t_0+\eps}C\bigg(1+\big(\E\|\Y_\eps(s)\|_{\V}^{2k}\big)^\frac12+\big(\E\|\y_0\|_{\V}^{2k}\big)^{\frac12}\bigg)\P\big(\Omega\setminus B_r\big)\d s\nonumber\\&\leq	\sigma_r\bigg(\frac{1}{\eps}\int_{t_0}^{t_0+\eps}\sqrt{\omega_{\y_0}
	(\eps)}\d s\bigg)+
	C\big(1+\|\y_0\|_{\V}^k\big)\frac{1+\|\y_0\|_{\V}}{r},
	\end{align}
where right hand side is going to zero by letting $\eps\to0$ and then $r\to\infty$. 
Similalry, by using the continuity of $\varphi_t$ (see Definition \ref{testD}), we obtain
\begin{align*}
	\bigg|\frac{1}{\eps}\E\int_{t_0}^{t_0+\eps}\big(\varphi_t(s,\Y_\eps(s))-
	\varphi_t(t_0,\y_0)\big)\d s\bigg|\leq\frac{1}{\eps}\int_{t_0}^{t_0+\eps}
	\E[\omega_1(\eps+\|\Y_\eps(s)-\y_0\|_{\H})]\d s,
\end{align*}
where $\omega_1$ is some local modulus of continuity (see Subsection \ref{defunfmod}) for definition. Let us now discuss the limit as $\eps\to0$ in the following terms:
\begin{align}
&\frac{1}{\eps}\E\int_{t_0}^{t_0+\eps}((\mu\A+\alpha\I)\Y_\eps(s),\D\varphi(s,\Y_\eps(s)))\d s, \label{pas1}\\	&\frac{1}{\eps}\E\int_{t_0}^{t_0+\eps}(\mathcal{B}(\Y_\eps(s)),\D\varphi(s,\Y_\eps(s)))\d s, \label{pas2}\\
&\frac{1}{\eps}\E\int_{t_0}^{t_0+\eps}(\beta\mathcal{C}(\Y_\eps(s)),\D\varphi(s,\Y_\eps(s)))\d s,  \label{pas3}\\ 
&\frac{1}{\eps}\E\int_{t_0}^{t_0+\eps}\big(\f(s,\a_\eps(s)),
\D\varphi(s,\Y_\eps(s)\big)\d s, \label{pas4}\\
&\frac{1}{\eps}\E\int_{t_0}^{t_0+\eps}\delta(s)\big(\f(s,\a_\eps(s)), (\A+\I)\Y_\eps(s)\big)\big)
(1+\|\Y_\eps(s))\|_{\V}^2)^{m-1}\d s, \label{pas5}\\
&\frac{1}{\eps}\E\int_{t_0}^{t_0+\eps}\delta(s)\|(\A+\I)\Y_\eps(s)\|_{\H}^2(1+\|\Y_\eps(s))\|_{\V}^2)^{m-1}\d s, \label{pas6}\\
 &\frac{1}{\eps}\E\int_{t_0}^{t_0+\eps}\delta(s)\big((\mu\A+\alpha\I)\Y_\eps(s),(\A+\I)\Y_\eps(s)\big)(1+\|\Y_\eps(s))\|_{\V}^2)^{m-1}\d s,
\label{pas8}\\
&\frac{1}{\eps}\E\int_{t_0}^{t_0+\eps}\delta(s)(\mathcal{B}(\Y_\eps(s)),(\A+\I)\Y_\eps(s))(1+\|\Y_\eps(s))\|_{\V}^2)^{m-1}\d s, \label{pas9}\\
&\frac{1}{\eps}\E\int_{t_0}^{t_0+\eps}\delta(s)(\mathcal{C}(\Y_\eps(s)),(\A+\I)\Y_\eps(s))(1+\|\Y_\eps(s))\|_{\V}^2)^{m-1}\d s. \label{pas10}
\end{align}
For the rest of the proof, we will use various moduli, denoted by $\upsigma(\cdot)$.
\vskip 0.2mm
\noindent
\textbf{Passing limit into \eqref{pas4}.} It is given that $\f:[0,T]\times\U\to\V$ is uniformly continuous, uniformly for $\a\in\U$, therefore by the definition of modulus of continuity, we write
\begin{align}\label{modf}
	\|\f(s,\a_\eps(s))-\f(t,\a_\eps(s))\|_{\V}\leq\omega_{\f}(|s-t|), \ \text{ for all } \ s,t\in[t_0,t_0+\eps],
\end{align} 
for some modulus of continuity $\omega_{\f}$. Similarly, since $\D\varphi$ is uniformly continuous on $[t_0,t_0+\eps]\times\H$ for every $\eps>0$ (see Definition \ref{testD}), we write
\begin{align}\label{modphi}
	\|\D\varphi(s,\Y_\eps(s)-\D\varphi(t_0,\y_0)\|_{\H}\leq
	\omega_2(\eps+\|\Y_\eps(s)-\y_0\|_{\H}), \ \text{ for all } \ s,t\in[t_0,t_0+\eps],
\end{align}
where $\omega_2$ is some local modulus of continuity. From \eqref{modf}-\eqref{modphi}, we estimate \eqref{pas4} as
\begin{align*}
	&\bigg|\frac{1}{\eps_n}\E\int_{t_0}^{t_0+\eps_n}\big[\big(\f(s,\a_{\eps_n}(s)),
	\D\varphi(s,\Y_{\eps_n}(s)\big)-\big(\f(t_0,\a_{\eps_n}(s)),
	\D\varphi(t_0,\y_0)\big)\big]\d s \bigg|\nonumber\\&\leq
	\bigg|\frac{1}{\eps_n}\E\int_{t_0}^{t_0+{\eps_n}}
	\big[\big(\f(s,\a_{\eps_n}(s)),
	\D\varphi(s,\Y_{\eps_n}(s)-\D\varphi(t_0,\y_0)\big)\big]\d s\bigg|
	\nonumber\\&\qquad+
	\bigg|\frac{1}{\eps_n}\E\int_{t_0}^{t_0+{\eps_n}}
	\big[\big(\f(s,\a_{\eps_n}(s))
	-\f(t_0,\a_{\eps_n}(s)),\D\varphi(t_0,\y_0)\big)\big]\d s\bigg|
	\nonumber\\&\leq
	\frac{C}{\eps_n}\E\int_{t_0}^{t_0+\eps_n}\|\f(s,\a_{\eps_n}(s))\|_{\V}
	\|\D\varphi(s,\Y_{\eps_n}(s)-\D\varphi(t_0,\y_0)\|_{\H}\d s
	\nonumber\\&\quad+
	\frac{C}{\eps_n}\E\int_{t_0}^{t_0+{\eps_n}}\|\f(s,\a_{\eps_n}(s))-\f(t_0,\a_{\eps_n}(s)\|_{\V}\|\D\varphi(t_0,\y_0)\|_{\H}\d s
	\nonumber\\&\leq
	\frac{C}{\eps_n}\E\int_{t_0}^{t_0+\eps_n}
	\omega_2(\eps_n+\|\Y_{\eps_n}(s)-\y_0\|_{\H})\d s+
	\frac{C}{\eps_n}\E\int_{t_0}^{t_0+{\eps_n}}\omega_{\f}(\eps_n)\d s
	\nonumber\\&\leq\upsigma(\eps_n).
\end{align*}

	\vskip 0.2mm
	\noindent
	\textbf{Passing limit into \eqref{pas6} and \eqref{pas8}.} 
	From \eqref{vdp7} and H\"older's inequality, we first notice that
	\begin{align}\label{vdp8}
	&\E\left\|\frac{1}{\eps}\int_{t_0}^{t_0+\eps}\sqrt{\delta(s)}
	(\A+\I)\Y_\eps(s)(1+\|\Y_\eps(s))\|_{\V}^2)^{\frac{m-1}{2}}\d s \right\|_{\H}^2\nonumber\\&\leq\E
	\frac{1}{\eps}\int_{t_0}^{t_0+\eps}
	\delta(s)\|(\A+\I)\Y_\eps(s)\|_{\H}^2(1+\|\Y_\eps(s))\|_{\V}^2)^{m-1}\d s
	\leq C.
	\end{align}
	Therefore, an application of the Banach-Alaoglu theorem guarantees the existence of a sequence $\eps_n\to0$ and $\wi\y\in\mathrm{L}^2(\Omega;\H)$ such that 
	\begin{align}\label{wknm}
	\wi\y_n:=\frac{1}{\eps_n}\int_{t_0}^{t_0+\eps_n}\sqrt{\delta(s)}(\A+\I)\Y_{\eps_n}(s)(1+\|\Y_{\eps_n}(s))\|_{\V}^2)^{\frac{m-1}{2}}
	\d s\rightharpoonup\wi\y \  \text{ in } \ \mathrm{L}^2(\Omega;\H),
	\end{align}
	as $n\to\infty$. 
	Arguing similarly as in \eqref{ss1}, one can prove that 
	\begin{align}\label{wknmdiff}
	(\A+\I)^{-1}\wi\y_n&=\frac{1}{\eps_n}\int_{t_0}^{t_0+\eps_n}\sqrt{\delta(s)}\Y_{\eps_n}(s)(1+\|\Y_{\eps_n}(s))\|_{\V}^2)^{\frac{m-1}{2}}
	\d s\\&\to\sqrt{\delta(t_0)} \y_0\left(1+\|\y_0\|_{\V}^2\right)^{\frac{m-1}{2}}
	\end{align}
  in $\mathrm{L}^2(\Omega;\H)$ as $n\to\infty$. Therefore, by the uniqueness of the weak limit, it follows that  $$\wi\y=\sqrt{\delta(t_0)}(\A+\I) \y_0\left(1+\|\y_0\|_{\V}^2\right)^{\frac{m-1}{2}}.$$
  Moreover, in view of \eqref{ssee1}, \eqref{ctsdep0.1}-\eqref{ctsdep0.2}, one can also verify the following limits:
  \begin{align}
  	\frac{1}{\eps_n}\int_{t_0}^{t_0+\eps_n}\delta(s)\|\Y_{\eps_n}(s)\|_{\H}^2(1+\|\Y_{\eps_n}(s))\|_{\V}^2)^{m-1}\d s&\to\delta(t_0) \|\y_0\|_{\H}^2\left(1+\|\y_0\|_{\V}^2\right)^{m-1}
  	\label{wknm1.1}
  	\end{align}
  and 
  	\begin{align}
  	\frac{1}{\eps_n}\int_{t_0}^{t_0+\eps_n} \delta(s)\|\nabla\Y_{\eps_n}(s)\|_{\H}^2(1+\|\Y_{\eps_n}(s))\|_{\V}^2)^{m-1}\d s&\to\delta(t_0) \|\nabla\y_0\|_{\H}^2 \left(1+\|\y_0\|_{\V}^2\right)^{m-1},\label{wknm1.2}
  \end{align} 
  in $\mathrm{L}^2(\Omega;\H)$ as $n\to\infty$. From \eqref{vdp8} and \eqref{wknm}, using Jensen's inequality and  the weak lower semicontinuity property of norm, we obtain
 \begin{align}\label{wknm1}
 	&\liminf_{n\to\infty}\E\frac{1}{\eps_n}\int_{t_0}^{t_0+\eps_n}\delta(s) \|(\A+\I)\Y_{\eps_n}(s)\|_{\H}^2(1+\|\Y_{\eps_n}(s))\|_{\V}^2)^{m-1}
 	\d s\nonumber\\&\geq\liminf_{n\to\infty} \E\left\|\frac{1}{\eps_n}\int_{t_0}^{t_0+{\eps_n}}
 	\sqrt{\delta(s)}(\A+\I)\Y_{\eps_n}(s)(1+\|\Y_{\eps_n}(s))\|_{\V}^2)^{\frac{m-1}{2}}\d s \right\|_{\H}^2\nonumber\\&\geq
 	\delta(t_0)\|(\A+\I)\y_0\|_{\H}^2(1+\|\y_0\|_{\V}^2)^{m-1}.
 \end{align}
This will take care of the term \eqref{pas6}. A similar argument as we performed  above yields 
 \begin{align}\label{wknm2}
 	\frac{1}{\eps_n}\int_{t_0}^{t_0+\eps_n}(\A+\I)\Y_{\eps_n}(s)\d s\rightharpoonup(\A+\I)\y_0 \  \text{ in } \ \mathrm{L}^2(\Omega;\H)
 	\  \text{ as } \ n\to\infty.
 \end{align}
Moreover, since $\|\A\Y_{\eps_n}\|_{\H}^2=\|(\A+\I)\Y_{\eps_n}\|_{\H}^2-\|\Y_{\eps_n}\|_{\H}^2-2\|\nabla\Y_{\eps_n}\|_{\H}^2$, therefore by using \eqref{wknm1.1}-\eqref{wknm1}, we have the following lower bound:
\begin{align}\label{wknm2.2}
	&\liminf_{n\to\infty}\E\frac{1}{\eps_n}\int_{t_0}^{t_0+\eps_n}\delta(s) \|\A\Y_{\eps_n}(s)\|_{\H}^2(1+\|\Y_{\eps_n}(s))\|_{\V}^2)^{m-1}
	\d s\nonumber\\&\geq
	\delta(t_0)\|\A\y_0\|_{\H}^2(1+\|\y_0\|_{\V}^2)^{m-1}.
\end{align}
Finally, in view of \eqref{wknm1.1}-\eqref{wknm1.2} and \eqref{wknm2.2}, the following limit is immediate:
\begin{align}\label{wknmpas8}
	&\liminf_{n\to\infty}\E\frac{1}{\eps_n}\int_{t_0}^{t_0+\eps_n}\delta(s)\big((\mu\A+\alpha\I)\Y_\eps(s),(\A+\I)\Y_\eps(s)\big)
	(1+\|\Y_\eps(s))\|_{\V}^2)^{m-1}\d s
	 \nonumber\\&\geq
	\delta(t_0)\big((\mu\A+\alpha\I)\y_0,(\A+\I)\y_0\big)
	(1+\|\y_0\|_{\V}^2)^{m-1}.
\end{align}


\vskip 0.2mm
\noindent
\textbf{Passing limit into the linear term \eqref{pas1}.} 
Let us denote by $\omega_{\varphi}$, a modulus of continuity of $\D\varphi$. Then by using H\"older's inequality, \eqref{vdp7}, \eqref{ctsdep0.1} and \eqref{wknm2}, we calculate
\begin{align}\label{wknm3}
&\bigg|\frac{1}{\eps_n}\E\int_{t_0}^{t_0+\eps_n}\big((\mu\A+\alpha\I)
\Y_{\eps_n}(s),\D\varphi(s,\Y_{\eps_n}(s))\big)\d s-
\big((\mu\A+\alpha\I)\y_0,\D\varphi(t_0,\y_0)\big)\bigg|
\nonumber\\&\leq
\bigg|\frac{1}{\eps_n}\E\int_{t_0}^{t_0+\eps_n}\big((\mu\A+\alpha\I)
\Y_{\eps_n}(s),\D\varphi(s,\Y_{\eps_n}(s))-\D\varphi(t_0,\y_0)\big)\d s \bigg|\nonumber\\&\quad+
\bigg|\frac{1}{\eps_n}\E\int_{t_0}^{t_0+\eps_n}\big((\mu\A+\alpha\I)
\big(\Y_{\eps_n}(s)-\y_0\big),\D\varphi(t_0,\y_0)\big)\d s \bigg|\nonumber\\&\leq
\frac{1}{\eps_n}\int_{t_0}^{t_0+\eps_n}\big(\E\|(\mu\A+\alpha\I)
\Y_{\eps_n}(s)\|_{\H}^2\big)^{\frac12}\bigg(\E\big(\omega_\varphi\big(\eps_n+\|\Y_{\eps_n}(s)-\y_0\|_{\H}\big)\big)^2\bigg)^{\frac12}\d s
\nonumber\\&\quad+
\bigg|\E\bigg(\frac{1}{\eps_n}\int_{t_0}^{t_0+\eps_n}
(\mu\A+\alpha\I)\big(\Y_{\eps_n}(s)-\y_0\big)\d s,\D\varphi(t_0,\y_0)
\bigg)\bigg|\nonumber\\&\leq
\left(\frac{1}{\eps_n}\int_{t_0}^{t_0+\eps_n}\E\|(\mu\A+\alpha\I)
\Y_{\eps_n}(s)\|_{\H}^2\d s\right)^{\frac12} \left(\frac{1}{\eps_n}\int_{t_0}^{t_0+\eps_n}\E\big(\omega_\varphi\big(\eps_n+\|\Y_{\eps_n}(s)-\y_0\|_{\H}\big)\big)^2\d s\right)^{\frac12}
\nonumber\\&\quad+
\bigg|\E\bigg(\frac{1}{\eps_n}\int_{t_0}^{t_0+\eps_n}
(\mu\A+\alpha\I)\big(\Y_{\eps_n}(s)-\y_0\big)\d s,\D\varphi(t_0,\y_0)
\bigg)\bigg|\nonumber\\&\leq\upsigma(\eps_n).
\end{align}

\vskip 0.2mm
\noindent
\textbf{Passing limit into the term \eqref{pas5}.} From the Hypothesis \ref{fhyp} and \eqref{ctsdep0.2} together with the definition of modulus of continuity, one can conclude that 
{\small
\begin{align*} 
	&\bigg|\frac{1}{\eps_n}\E\int_{t_0}^{t_0+\eps_n}\delta(s)\big(\f(s,\a_{\eps_n}(s)), (\A+\I)\Y_{\eps_n}(s)\big)
	(1+\|\Y_{\eps_n}(s))\|_{\V}^2)^{m-1}\d s
	\nonumber\\&\quad-
	\frac{1}{\eps_n}\E\int_{t_0}^{t_0+\eps_n}\delta(t_0)\big(\f(s,\a_{\eps_n}(s)), (\A+\I)\y_0\big)\big)
	(1+\|\y_0\|_{\V}^2)^{m-1}\d s\bigg|\nonumber\\&\leq
	\frac{1}{\eps_n}\E\int_{t_0}^{t_0+\eps_n}\delta(s)\bigg|\big((\A+\I)^{\frac12}\f(s,\a_{\eps_n}(s)),(\A+\I)^{\frac12}\Y_{\eps_n}(s)-(\A+\I)^{\frac12}\y_0\big)\bigg|(1+\|\Y_{\eps_n}(s))\|_{\V}^2)^{m-1}\d s
	\nonumber\\&\quad+
	\frac{1}{\eps_n}\E\int_{t_0}^{t_0+\eps_n}\delta(s)\big|\big((\A+\I)^{\frac12}\f(s,\a_{\eps_n}(s)),(\A+\I)^{\frac12}\y_0\big)\big|\bigg|(1+\|\Y_{\eps_n}(s))\|_{\V}^2)^{m-1}-(1+\|\y_0\|_{\V}^2)^{m-1}\bigg|\d s
	\nonumber\\&\quad+
	\frac{1}{\eps_n}\E\int_{t_0}^{t_0+\eps_n}\big|\delta(s)-\delta(t_0)\big|
	\big|\big((\A+\I)^{\frac12}\f(s,\a_{\eps_n}(s)),(\A+\I)^{\frac12}\y_0\big)\big|(1+\|\y_0\|_{\V}^2)^{m-1}\d s
	\nonumber\\&\leq
	C\left(\frac{1}{\eps_n}\int_{t_0}^{t_0+\eps_n}\E\|\Y_{\eps_n}(s)-\y_0\|_{\V}^2\d s\right)^{\frac12} \left(\int_{t_0}^{t_0+\eps_n}\E((1+\|\Y_{\eps_n}(s))\|_{\V}^2)^{2(m-1)})
	\d s\right)^{\frac12}
	\nonumber\\&\quad+
	\frac{C}{\eps_n}\E\int_{t_0}^{t_0+\eps_n}\bigg|(1+\|\Y_{\eps_n}(s))\|_{\V}^2)^{m-1}-(1+\|\y_0\|_{\V}^2)^{m-1}\bigg|\d s
	\nonumber\\&\quad+
	\frac{C}{\eps_n}\int_{t_0}^{t_0+\eps_n}\big|\delta(s)-\delta(t_0)\big|
	\d s
	\nonumber\\&\leq\upsigma(\eps_n).
\end{align*}}

\vskip 0.2mm
\noindent
\textbf{Passing limit into the Navier-Stokes nonlinearity \eqref{pas2}.} 
 By using \eqref{syymB}, we write
\begin{align}\label{wknm5}
	&\bigg|\frac{1}{\eps_n}\E\int_{t_0}^{t_0+\eps_n}(\mathcal{B}(\Y_{\eps_n}(s)),\D\varphi(s,\Y_{\eps_n}(s)))\d s-
	(\mathcal{B}(\y_0),\D\varphi(t_0,\y_0))\bigg|
	\nonumber\\&\leq
	\underbrace{\bigg|\frac{1}{\eps_n}\E\int_{t_0}^{t_0+\eps_n}
	(\mathcal{B}(\Y_{\eps_n}(s)),\D\varphi(s,\Y_{\eps_n}(s))-\D\varphi(t_0,\y_0))\d s \bigg|}_{:=J_1}
	\nonumber\\&\quad+
	\underbrace{\bigg|\frac{1}{\eps_n}\E\int_{t_0}^{t_0+\eps_n}
      (\mathcal{B}(\Y_{\eps_n}(s)-\y_0,\Y_{\eps_n}(s)),\D\varphi(t_0,\y_0))\d s \bigg|}_{:=J_2}
	\nonumber\\&\quad+
	\underbrace{\bigg|\frac{1}{\eps_n}\E\int_{t_0}^{t_0+\eps_n}(\mathcal{B}(\y_0,\Y_{\eps_n}(s)-\y_0),\D\varphi(t_0,\y_0))\d s \bigg|}_{:=J_3}.
\end{align}	
By using Agmon's and H\"older's inequalities, and energy estimates \eqref{vsee1}-\eqref{ssee1}, we calculate
\begin{align}\label{J1B}
	J_1&\leq\frac{1}{\eps_n}\E\int_{t_0}^{t_0+\eps_n}
     \|\mathcal{B}(\Y_{\eps_n}(s))\|_{\H}\left(\omega_\varphi\big(\eps_n+\|\Y_{\eps_n}(s)-\y_0\|_{\H}\big)\right)\d s
     \nonumber\\&\leq
      \frac{1}{\eps_n}\E\int_{t_0}^{t_0+\eps_n}
     \|\Y_{\eps_n}(s)\|_{\H}^{1-\frac{d}{4}}\|(\A+\I)\Y_{\eps_n}(s)\|_{\H}^{\frac{d}{4}}\|\nabla\Y_{\eps_n}(s)\|_{\H}
     \left(\omega_\varphi\big(\eps_n+\|\Y_{\eps_n}(s)-\y_0\|_{\H}\big)
     \right)\d s     
     \nonumber\\&\leq\frac{1}{\eps_n}\int_{t_0}^{t_0+\eps_n}
     \left(\E\|\Y_{\eps_n}(s)\|_{\H}^2\right)^{\frac{4-d}{8}}
     \left(\E\|(\A+\I)\Y_{\eps_n}(s)\|_{\H}^{2}\right)^{\frac{d}{8}}
     \left(\E\|\nabla\Y_{\eps_n}(s)\|_{\H}^4\right)^{\frac14}\nonumber\\&\qquad\times
     \left(\E\big(\omega_\varphi\big(\eps_n+\|\Y_{\eps_n}(s)-\y_0\|_{\H}
     \big)\big)^{4}\right)^{\frac14}\d s
     \nonumber\\&\leq
     \left(\frac{1}{\eps_n}\int_{t_0}^{t_0+\eps_n}
     \E\|\Y_{\eps_n}(s)\|_{\H}^2\right)^{\frac{4-d}{8}}\left(\frac{1}{\eps_n}\int_{t_0}^{t_0+\eps_n}\E\|(\A+\I)\Y_{\eps_n}(s)\|_{\H}^{2}\d s
     \right)^{\frac{d}{8}}\nonumber\\&\qquad\times
     \left(\frac{1}{\eps_n}\int_{t_0}^{t_0+\eps_n}
     \E\|\nabla\Y_{\eps_n}(s)\|_{\H}^{2}\d s\right)^{\frac{1}{4}} \left(\frac{1}{\eps_n}\int_{t_0}^{t_0+\eps_n}
     \E\big(\omega_\varphi\big(\eps_n+\|\Y_{\eps_n}(s)-\y_0\|_{\H}
     \big)\big)^4\d s\right)^{\frac{1}{4}}
     \nonumber\\&\leq C
     \left(\frac{1}{\eps_n}\int_{t_0}^{t_0+\eps_n}\E\|(\A+\I)\Y_{\eps_n}(s)\|_{\H}^{2}\d s\right)^{\frac{d}{8}}
     \left(\frac{1}{\eps_n}\int_{t_0}^{t_0+\eps_n} \E\big(\omega_\varphi\big(\eps_n+\|\Y_{\eps_n}(s)-\y_0\|_{\H}
     \big)\big)^{4}\d s\right)^{\frac{1}{4}} \nonumber\\&\leq\upsigma(\eps_n).
\end{align}
Similalry, we calculate
\begin{align}\label{J2B}
	J_2&\leq\frac{1}{\eps_n}\E\int_{t_0}^{t_0+\eps_n}
	\|\mathcal{B}(\Y_{\eps_n}(s)-\y_0,\Y_{\eps_n}(s))\|_{\H}\left(\omega_\varphi\big(\eps_n+\|\Y_{\eps_n}(s)-\y_0\|_{\H}\big)\right)\d s
	\nonumber\\&\leq
	 \frac{1}{\eps_n}\E\int_{t_0}^{t_0+\eps_n}
	\|\Y_{\eps_n}(s)-\y_0\|_{\H}^{1-\frac{d}{4}}\|(\A+\I)(\Y_{\eps_n}(s)-\y_0)\|_{\H}^{\frac{d}{4}}\|\nabla\Y_{\eps_n}(s)\|_{\H}
	\nonumber\\&\qquad\times
	\left(\omega_\varphi\big(\eps_n+\|\Y_{\eps_n}(s)-\y_0\|_{\H}\big)
	\right)\d s \nonumber\\&\leq
	\frac{C}{\eps_n}\E\int_{t_0}^{t_0+\eps_n}
	\|\Y_{\eps_n}(s)-\y_0\|_{\H}^{1-\frac{d}{4}}\left(\|(\A+\I)\Y_{\eps_n}(s)\|_{\H}^{\frac{d}{4}}+\|(\A+\I)\y_0\|_{\H}^{\frac{d}{4}}\right)\|\nabla\Y_{\eps_n}(s)\|_{\H}
	\nonumber\\&\qquad\times
	\left(\omega_\varphi\big(\eps_n+\|\Y_{\eps_n}(s)-\y_0\|_{\H}\big)
	\right)\d s
	\nonumber\\&\leq
	 \left(\frac{1}{\eps_n}\int_{t_0}^{t_0+\eps_n}
	\E\|\Y_{\eps_n}(s)-\y_0\|_{\H}^2 \d s\right)^{\frac{4-d}{8}} \bigg[\left(\frac{1}{\eps_n}\int_{t_0}^{t_0+\eps_n}\E\|(\A+\I)\Y_{\eps_n}(s)\|_{\H}^{2}\d s
	\right)^{\frac{d}{8}}\nonumber\\&\quad+\left(\frac{1}{\eps_n}\int_{t_0}^{t_0+\eps_n}\E\|(\A+\I)\y_0\|_{\H}^{2}\d s\right)^{\frac{d}{8}}\bigg] \left(\frac{1}{\eps_n}\int_{t_0}^{t_0+\eps_n}
	\E\|\nabla\Y_{\eps_n}(s)\|_{\H}^{2}\d s\right)^{\frac{1}{4}}\nonumber\\&\qquad\times
	 \left(\frac{1}{\eps_n}\int_{t_0}^{t_0+\eps_n}
	\E\big(\omega_\varphi\big(\eps_n+\|\Y_{\eps_n}(s)-\y_0\|_{\H}
	\big)\big)^4\d s\right)^{\frac{1}{4}}
	\nonumber\\&\leq\upsigma(\eps_n).
\end{align}
Also, we estimate the final term in \eqref{wknm5} as
\begin{align}\label{J3B}
	J_3&\leq\frac{1}{\eps_n}\E\int_{t_0}^{t_0+\eps_n}
	\|\mathcal{B}(\y_0,\Y_{\eps_n}(s)-\y_0)\|_{\H}\left(\omega_\varphi\big(\eps_n+\|\Y_{\eps_n}(s)-\y_0\|_{\H}\big)\right)\d s
	\nonumber\\&\leq
	 \frac{1}{\eps_n}\E\int_{t_0}^{t_0+\eps_n}
	\|\y_0\|_{\H}^{1-\frac{d}{4}}\|(\A+\I)\y_0\|_{\H}^{\frac{d}{4}}
	\|\nabla(\Y_{\eps_n}(s)-\y_0)\|_{\H}
	\left(\omega_\varphi\big(\eps_n+\|\Y_{\eps_n}(s)-\y_0\|_{\H}\big)
	\right)\d s \nonumber\\&\leq 
	\frac{1}{\eps_n}\int_{t_0}^{t_0+\eps_n}
	\left(\E\big[\|\y_0\|_{\H}^{4-d}\|(\A+\I)\y_0\|_{\H}^{d}\big]\right)^{\frac14}
	\left(\E\|\nabla(\Y_{\eps_n}(s)-\y_0)\|_{\H}^2\right)^{\frac12}
\nonumber	\\&\qquad\times
	\left(\E\big(\omega_\varphi\big(\eps_n+\|\Y_{\eps_n}(s)-\y_0\|_{\H}
	\big)\big)^{4}\right)^{\frac14}\d s
	\nonumber\\&\leq
	\|\y_0\|_{\H}^{4-d}\|(\A+\I)\y_0\|_{\H}^{d}
	\left(\frac{1}{\eps_n}\int_{t_0}^{t_0+\eps_n}
	\E\|\nabla(\Y_{\eps_n}(s)-\y_0)\|_{\H}^{2}\d s\right)^{\frac{1}{2}} \nonumber\\&\qquad\times
	\left(\frac{1}{\eps_n}\int_{t_0}^{t_0+\eps_n}
	\E\big(\omega_\varphi\big(\eps_n+\|\Y_{\eps_n}(s)-\y_0\|_{\H}
    \big)\big)^4\d s\right)^{\frac{1}{4}}
    \nonumber\\&\leq
    \|\y_0\|_{\H}^{4-d}\|(\A+\I)\y_0\|_{\H}^{d}
    \left(\frac{1}{\eps_n}\int_{t_0}^{t_0+\eps_n}
    \omega_{\y_0}(\eps_n)\d s\right)^{\frac{1}{2}} \nonumber\\&\qquad\times
    \left(\frac{1}{\eps_n}\int_{t_0}^{t_0+\eps_n}
    	\E\big(\omega_\varphi\big(\eps_n+\|\Y_{\eps_n}(s)-\y_0\|_{\H}
    	\big)\big)^4\d s\right)^{\frac{1}{4}}
    \nonumber\\&\leq\upsigma(\eps_n).
\end{align}
Similar calculations can be performed for \eqref{pas9} (see Appendix \ref{wknBA1} for a detailed explanation) to obtain 
\begin{align*}
\bigg|\frac{1}{\eps_n}\E\int_{t_0}^{t_0+\eps_n}&\delta(s)
\big(\mathcal{B}(\Y_\eps(s)),\A\Y_\eps(s)\big)
(1+\|\Y_\eps(s))\|_{\V}^2)^{m-1} \d s
\nonumber\\&-
\delta(t_0)\big(\mathcal{B}(\y_0),\A\y_0\big)(1+\|\y_0\|_{\V}^2)^{m-1}
\bigg|\leq\upsigma(\eps_n).
\end{align*}

\vskip 0.2mm
\noindent
\textbf{Passing limit into Forchheimer nonlinearity \eqref{pas3}.} 
We now deal with the term \eqref{pas3}. For this, we write 
\begin{align}\label{wknm4}
	&\bigg|\frac{1}{\eps_n}\E\int_{t_0}^{t_0+\eps_n}(\mathcal{C}(\Y_{\eps_n}(s)),\D\varphi(s,\Y_{\eps_n}(s)))\d s-
	(\mathcal{C}(\y_0),\D\varphi(t_0,\y_0))\bigg|
	\nonumber\\&\leq
	\bigg|\frac{1}{\eps_n}\E\int_{t_0}^{t_0+\eps_n}(\mathcal{C}
	(\Y_{\eps_n}(s)),\D\varphi(s,\Y_{\eps_n}(s))-\D\varphi(t_0,\y_0)) 
	\d s\bigg|\nonumber\\&\quad+
	\bigg|\frac{1}{\eps_n}\E\int_{t_0}^{t_0+\eps_n}(\mathcal{C}
	(\Y_{\eps_n}(s))-\mathcal{C}(\y_0),\D\varphi(t_0,\y_0))\d s \bigg|.
\end{align}	
From the calculation \eqref{vdp55}, H\"older's inequality 
and energy estimates \eqref{ssee1}, we calculate
	\begin{align}\label{wknm4.1}
		&\bigg|\frac{1}{\eps_n}\E\int_{t_0}^{t_0+\eps_n}(\mathcal{C}
		(\Y_{\eps_n}(s)),\D\varphi(s,\Y_{\eps_n}(s))-\D\varphi(t_0,\y_0)) 
		\d s\bigg|\nonumber\\&\leq C
		\bigg|\frac{1}{\eps_n}\E\int_{t_0}^{t_0+\eps_n}
		\left(\||\Y_{\eps_n}(s)|^{\frac{r-1}{2}}\nabla\Y_{\eps_n}(s)\|_{\H}^2+\|\Y_{\eps_n}\|_{\wi\L^{r+1}}^{r+1}\right)^{\frac{r}{r+1}}\omega_\varphi\big(\eps_n+\|\Y_{\eps_n}(s)-\y_0\|_{\H}\big)\d s\bigg|\nonumber\\&\leq
		C\bigg|\frac{1}{\eps_n}\E\int_{t_0}^{t_0+\eps_n} \left(\||\Y_{\eps_n}(s)|^{\frac{r-1}{2}}\nabla\Y_{\eps_n}(s)\|_{\H}^{\frac{2r}{r+1}}+
		\|\Y_{\eps_n}(s)\|_{\wi\L^{r+1}}^r\right)\omega_\varphi\big(\eps_n+\|\Y_{\eps_n}(s)-\y_0\|_{\H}\big)\d s\bigg|\nonumber\\&\leq
		\frac{C}{\eps_n}\int_{t_0}^{t_0+\eps_n} \big(\E\||\Y_{\eps_n}(s)|^{\frac{r-1}{2}}
		\nabla\Y_{\eps_n}(s)\|_{\H}^2\big)^{\frac{r}{r+1}}
		\left(\E\big(\omega_\varphi\big(\eps_n+\|\Y_{\eps_n}(s)-\y_0\|_{\H}\big)\big)^{r+1}\right)^{\frac{1}{r+1}}\d s\nonumber\\&+
		\frac{C}{\eps_n}\int_{t_0}^{t_0+\eps_n} \big(\E\|\Y_{\eps_n}(s)\|_{\wi\L^{r+1}}^{r+1}\big)^{\frac{r}{r+1}}
		\left(\E\big(\omega_\varphi\big(\eps_n+\|\Y_{\eps_n}(s)-\y_0\|_{\H}\big)\big)^{r+1}\right)^{\frac{1}{r+1}}\d s\nonumber\\&\leq C
		\left(\frac{1}{\eps_n}\int_{t_0}^{t_0+\eps_n}
		\E\||\Y_{\eps_n}(s)|^{\frac{r-1}{2}}\nabla\Y_{\eps_n}(s)\|_{\H}^2\d s \right)^{\frac{r}{r+1}}
		\nonumber\\&\qquad\times\left(\frac{1}{\eps_n}\int_{t_0}^{t_0+\eps_n}
		\E\big(\omega_\varphi\big(\eps_n+\|\Y_{\eps_n}(s)-\y_0\|_{\H}\big)
		\big)^{r+1}\d s\right)^{\frac{1}{r+1}}\nonumber\\&+
		C\left(\frac{1}{\eps_n}\int_{t_0}^{t_0+\eps_n}
		\E\|\Y_{\eps_n}(s)\|_{\wi\L^{r+1}}^{r+1}\d s\right)^{\frac{r}{r+1}} \left(\frac{1}{\eps_n}\int_{t_0}^{t_0+\eps_n}
		\E\big(\omega_\varphi\big(\eps_n+\|\Y_{\eps_n}(s)-\y_0\|_{\H}\big)
		\big)^2\d s\right)^{\frac{1}{2}}.
 	\end{align}
Similarly, by using  Taylor's formula, H\"older's and interpolation inequalities, Sobolev embedding $\V\hookrightarrow\wi\L^6$, and \eqref{ctsdep0.2}, we estimate
	\begin{align}\label{wknm4.2}
	&\bigg|\frac{1}{\eps_n}\E\int_{t_0}^{t_0+\eps_n}(\mathcal{C}
	(\Y_{\eps_n}(s))-\mathcal{C}(\y_0),\D\varphi(t_0,\y_0))\d s \bigg|\nonumber\\&\leq
	\frac{1}{\eps_n}\E\int_{t_0}^{t_0+\eps_n}\||\Y_{\eps_n}(s)|^{r-1}\Y_{\eps_n}(s)-|\y_0|^{r-1}\y_0\|_{\H}\|\D\varphi(t_0,\y_0)\|_{\H}\d s \nonumber\\&\leq
	\frac{1}{\eps_n}\E\int_{t_0}^{t_0+\eps_n}
	\||\Y_{\eps_n}(s)|+|\y_0|\|_{\wi\L^{3(r+1)}}^{r-1}\|\Y_{\eps_n}(s)-\y_0\|_{\wi\L^{\frac{6(r+1)}{r+5}}}\|\D\varphi(t_0,\y_0)\|_{\H}\d s
	\nonumber\\&\leq
	\frac{C}{\eps_n}\int_{t_0}^{t_0+\eps_n}
	\left(\E\||\Y_{\eps_n}(s)|+|\y_0|\|_{\wi\L^{3(r+1)}}^{r+1}\right)^{\frac{r-1}{r+1}}\left(\E\|\Y_{\eps_n}(s)-\y_0\|_{\V}^{\frac{r+1}{2}}
	\right)^{\frac{2}{r+1}}\d s
	\nonumber\\&\leq
	\frac{C}{\eps_n}\int_{t_0}^{t_0+\eps_n}
	\left(\E\||\Y_{\eps_n}(s)|+|\y_0|\|_{\wi\L^{3(r+1)}}^{r+1}\right)^{\frac{r-1}{r+1}}\left(\E\|\Y_{\eps_n}(s)-\y_0\|_{\V}^{2}\right)^{\frac{1}{r+1}}
	\left(\E\|\Y_{\eps_n}(s)-\y_0\|_{\V}^{r-1}\right)^{\frac{1}{r+1}}\d s
	\nonumber\\&\leq\left(\sup\limits_{s\in[t_0,t_0+\eps_n]} \left(\E\|\Y_{\eps_n}(s)-\y_0\|_{\V}^{r-1}\right)^{\frac{1}{r+1}}
	\right)\left(\frac{1}{\eps_n}\int_{t_0}^{t_0+\eps_n}
	\E\||\Y_{\eps_n}(s)|+|\y_0|\|_{\wi\L^{3(r+1)}}^{r+1}\d s \right)^ {\frac{r-1}{r+1}}
	\nonumber\\&\quad\times\left(\frac{1}{\eps_n}\int_{t_0}^{t_0+\eps_n}
	\E\|\Y_{\eps_n}(s)-\y_0\|_{\V}^2\d s \right)^{\frac12} 
	\nonumber\\&\leq C
	\left(\frac{1}{\eps_n} \int_{t_0}^{t_0+\eps_n}
	\sqrt{\omega_{\y_0}(\eps_n)}\d s\right)
	\nonumber\\&\leq\upsigma(\eps_n),
	\end{align}
	where in the last line of the above inequality, we have used \eqref{ssee1} and \eqref{ctsdep0.2}. In a similar way, one can establish that (see Appendix \ref{wknCA1} for a detailed explanation) 
	\begin{align*}
		\bigg|\frac{1}{\eps_n}\E\int_{t_0}^{t_0+\eps_n}&\delta(s)
		\big(\mathcal{C}(\Y_\eps(s)),(\A+\I)\Y_\eps(s)\big)
		(1+\|\Y_\eps(s))\|_{\V}^2)^{m-1} \d s
		\nonumber\\&-
		\delta(t_0)\big(\mathcal{C}(\y_0),(\A+\I)\y_0\big)
		(1+\|\y_0\|_{\V}^2)^{m-1}\bigg|
		\leq\upsigma(\eps_n),
	\end{align*}
	for the values of $r$ given  in Table \ref{Table2}. Finally, we combine all the above convergences \eqref{ss1}-\eqref{pas10} and establish the supersolution inequality.
		 
\vskip 0.2mm
	\noindent
	\textbf{Passing limit into \eqref{vdp3.1}: Supersolution inequality.} For the sake of convenience, we rewrite \eqref{vdp3.1}, along a subsequence as follows:
{\small	\begin{align}\label{vdp3.11}
	&\eps_n-\frac{1}{\eps_n}\E\int_{t_0}^{t_0+\eps_n}\ell(\Y_{\eps_n}(s),\a_{\eps_n}(s))\d s
	\nonumber\\&\geq
	-\frac{1}{\eps_n}\E\bigg[\int_{t_0}^{t_0+\eps_n}\bigg(\varphi_t(s,\Y_{\eps_n}(s))-\big((\mu\A+\alpha\I)\Y_{\eps_n}(s)+\mathcal{B}(\Y_{\eps_n}(s))+\beta\mathcal{C}(\Y_{\eps_n}(s)),\D\varphi(s,\Y_{\eps_n}(s))\big)\bigg)\d s\bigg]
	\nonumber\\&\quad-
	\E\bigg[\int_{t_0}^{t_0+\eps_n}\bigg(\big(\f(s,\a_{\eps_n}(s)),\D\varphi(s,\Y_{\eps_n}(s))\big)+\frac12\mathrm{Tr}(\Q\D^2\varphi(s,\Y_{\eps_n}(s)))\bigg)\d s\bigg]
	\nonumber\\&\quad-
	\frac{2m}{\eps_n}\E\bigg[\int_{t_0}^{t_0+\eps_n}\delta(s)
	\big(\f(s,\a_{\eps_n}(s)),(\A+\I)\Y_{\eps_n}(s)\big)\big)(1+\|\Y_{\eps_n}(s))\|_{\V}^2)^{m-1}\d s\bigg]
	\nonumber\\&\quad
	-\frac{1}{\eps_n}\E\bigg[\int_{t_0}^{t_0+\eps_n}\bigg(\delta'(s)(1+\|\Y_{\eps_n}(s))\|_{\V}^2)^m+m\delta(s)\big(\mathrm{Tr}(\Q_1)+\mathrm{Tr}(\Q)\big)(1+\|\Y_{\eps_n}(s))\|_{\V}^2)^{m-1}\bigg)\d s\bigg]
	\nonumber\\&\quad-
	\frac{2m(m-1)}{\eps_n}\E\bigg[\int_{t_0}^{t_0+\eps_n}\delta(s)
	\|\Q^{\frac12}(\A+\I)\Y_{\eps_n}(s)\|_{\H}^2
	(1+\|\Y_{\eps_n}(s)\|_{\V}^2)^{m-2}\d s\bigg]
	\nonumber\\&\quad+\frac{2m}{\eps_n}\E\bigg[\int_{t_0}^{t_0+\eps_n}\delta(s)\big((\mu\A+\alpha\I)\Y_{\eps_n}(s),(\A+\I)\Y_{\eps_n}(s)\big)
	(1+\|\Y_{\eps_n}(s))\|_{\V}^2)^{m-1}\d s\bigg]
	\nonumber\\&\quad+
	\frac{2m}{\eps_n}\E\bigg[\int_{t_0}^{t_0+\eps_n}\delta(s)(\mathcal{B}(\Y_{\eps_n}(s)),(\A+\I)\Y_{\eps_n}(s))
	(1+\|\Y_{\eps_n}(s))\|_{\V}^2)^{m-1}
	\d s\bigg]
	\nonumber\\&\quad+
	\frac{2m\beta}{\eps_n}\E\bigg[\int_{t_0}^{t_0+\eps_n}\delta(s)(\mathcal{C}(\Y_{\eps_n}(s)),(\A+\I)\Y_{\eps_n}(s))(1+\|\Y_{\eps_n}(s))\|_{\V}^2)^{m-1}\d s \bigg].
\end{align}}
	On utilizing all convergences \eqref{ss1}-\eqref{pas10}, rearranging the terms and using the definition of test function $\psi(\cdot,\cdot)=\varphi(\cdot,\cdot)+
	\delta(\cdot)(1+\|\cdot\|_{\V}^2)^m$, we finally arrive at (along a subsequence) the following inequality:

	\begin{align*}
		&-\psi_t(t_0,\y_0)+\big((\mu\A+\alpha\I)\y_0,\D\psi(t_0,\y_0)\big)+
		\big(\mathcal{B}(\y_0),\D\psi(t_0,\y_0)\big)+\big(\mathcal{C}(\y_0),\D\psi(t_0,\y_0)\big)
		\nonumber\\&\quad+\frac{1}{\eps_n}\E
		\bigg[\int_{t_0}^{t_0+\eps_n}\big[(\f(t_0,\a_{\eps_n}(s)),-\D\psi(t_0,\y_0))+\ell(\y_0,\a_{\eps_n}(s))\big]\d s\bigg] \nonumber\\&\quad-\frac12\mathrm{Tr}(\Q\D^2\psi(t_0,\y_0))
		\leq\upsigma(\eps_n).
	\end{align*}
    Finally, on taking infimum over $\a\in\U$ inside the integral and then passing $n\to\infty$, we get the supersolution inequality \eqref{eqn-superso}. With this, we have shown that the value function \eqref{valueF} is the viscosity solution of the HJB equation \eqref{HJBE}, and this completes the proof.
		\end{proof}

	\begin{appendix}\renewcommand{\thesection}{\Alph{section}}
		\numberwithin{equation}{section}
		\section{Some useful estimates}\label{useca}
		The aim of this appendix is to justify the convergences given in \eqref{viscdef4}, \eqref{pas9} and \eqref{pas10}. 
		\subsection{Explanation of \eqref{viscdef4} in Theorem \ref{comparison}}\label{expoca} 
 		Let us define 
		 \begin{align*}
		 \mathfrak{J}_n:=\D\varphi_n(t_n,\y_n)+ \frac{1}{\eps}\mathbf{Q}_N(\bar{\y}-\bar{\x})+ \frac{2}{\eps}\mathbf{Q}_N(\y_n-\bar{\y}).
		 \end{align*}
		 Then \eqref{viscdef3} can be written as 
		 \begin{align}\label{viscdef3.1}
			&-\frac{\delta}{2}K_\upgamma e^{K_\upgamma(T-t_n)} (1+\|\y_n\|_{\V}^2)^m+
			\underbrace{(\varphi_n)_t(t_n,\y_n)+\frac12\mathrm{Tr}(\Q\D^2\varphi_n(t_n,\y_n)+2\Q\mathbf{Q}_N)}_{\text{ Term I}} 
			\nonumber\\&\quad-
			\underbrace{\min\{\mu,\alpha\}\big((\A+\I)\y_n,\mathfrak{J}_n\big)}_{\text{Term II}}- \underbrace{\big(\mathcal{B}(\y_n), \mathfrak{J}_n\big)}_{\text{Term III}}-
			\underbrace{\beta\big(\mathcal{C}(\y_n), \mathfrak{J}_n\big)}_{\text{Term IV}}+
			\underbrace{F\big(t_n,\y_n, \mathfrak{J}_n\big)}_{\text{Term V}}
			\nonumber\\&\geq\frac{\upgamma}{2T^2}+
			\underbrace{\beta m\delta e^{K_\upgamma(T-t_n)} (1+\|\y_n\|_{\V}^2)^{m-1}\||\y_n|^{\frac{r-1}{2}}\nabla\y_n\|_{\H}^2}_{\text{Term VI}}
			\nonumber\\&\quad+\underbrace{\frac{\min\{\mu,\alpha\} m\delta}{2} e^{K_\upgamma(T-t_n)} (1+\|\y_n\|_{\V}^2)^{m-1}\|(\A+\I)\y_n\|_{\H}^2}_{\text{ Term VII}}
			\nonumber\\&\quad+
			\underbrace{2\beta m\delta e^{K_\upgamma(T-t_n)}(1+\|\y_n\|_{\V}^2)^{m-1} \|\y_n\|_{\wi\L^{r+1}}^{r+1}}_{\text{Term VIII}}.
		\end{align}

		Since convergent and weakly convergent sequences are bounded, therefore by utilizing \eqref{fnPQ1}, \eqref{conv1}-\eqref{conv2}, and from Step-II of Theorem \ref{comparison}, we conclude the following bound:
		\begin{align}\label{ot1}
		 \|\mathfrak{J}_n\|_{\H}&\leq\bigg\|\D\varphi_n(t_n,\y_n)+\frac{1}{\eps}\mathbf{Q}_N(\bar{\y}-\bar{\x})+\frac{2}{\eps}\mathbf{Q}_N(\y_n-\bar{\y})\bigg\|_{\H}
		\nonumber\\&=
		\bigg\|\D\varphi_n(t_n,\y_n)-\frac{1}{\eps}\mathbf{P}_N(\bar{\y}-\bar{\x})+\frac{1}{\eps}(\bar{\y}-\bar{\x})+\frac{2}{\eps}\mathbf{Q}_N(\y_n-\bar{\y})\bigg\|_{\H}\nonumber\\&\leq 
		\bigg\|\D\varphi_n(t_n,\y_n)-\frac{1}{\eps}\mathbf{P}_N(\bar{\y}-\bar{\x})\bigg\|_{\H}+\bigg\|\frac{1}{\eps}(\bar{\y}-\bar{\x})+\frac{2}{\eps}\mathbf{Q}_N(\y_n-\bar{\y})\bigg\|_{\H}
		\nonumber\\&\leq C(\eps,\|\bar{\y}\|_{\V},\|\bar{\x}\|_{\V}).
		\end{align} 
		By using the Cauchy-Schwarz and Young's inequalities, we estimate the following (see calculations \eqref{vdp5} and \eqref{vdp55}):
		\begin{align}
			|(\mathcal{C}(\y_n),\mathfrak{J}_n)|&\leq C\left(\||\y_n|^{\frac{r-1}{2}}\nabla\y_n\|_{\H}^{\frac{2r}{r+1}}+\|\y_n\|_{\wi\L^{r+1}}^r\right)
			\|\mathfrak{J}_n\|_{\H}\nonumber\\&\leq
			\frac{ m\delta}{4}\||\y_n|^{\frac{r-1}{2}}\nabla\y_n\|_{\H}^2+
			C\|\mathfrak{J}_n\|_{\H}^{r+1}+m\delta\|\y_n\|_{\wi\L^{r+1}}^{r+1},
			\label{appdac1}\\
			|(\mathcal{B}(\y_n),\mathfrak{J}_n)|&\leq \frac12\|\mathfrak{J}_n\|_{\H}^2
			+\frac{\beta m\delta}{4}\||\y_n|^{\frac{r-1}{2}}\nabla\y_n\|_{\H}^2
			+\varrho^{**}\|\nabla\y_n\|_{\H}^2,\label{appdac2}\\
			\text{ and } |((\A+\I)\y_n,\mathfrak{J}_n)|&\leq \frac{m\delta}{4}\|(\A+\I)\y_n\|_{\H}^2+C_2\|\mathfrak{J}_n\|_{\H}^2\label{appdac3},
		\end{align}\
 where $\varrho^{**}=\frac{r-3}{m\delta(r-1)}\left[\frac{4}{\beta m\delta (r-1)}\right]^{\frac{2}{r-3}}$. On combining \eqref{ot1},\eqref{appdac1}-\eqref{appdac3} in \eqref{viscdef3.1}, we deduce 
 \begin{align}\label{appdac4}
 		&- \frac{\delta}{2}K_\upgamma e^{K_\upgamma(T-t_n)} (1+\|\y_n\|_{\V}^2)^m+
 	(\varphi_n)_t(t_n,\y_n)+\frac12\mathrm{Tr}(\Q\D^2\varphi_n(t_n,\y_n))+\mathrm{Tr}(\Q\mathbf{Q}_N)\nonumber\\&
 	\quad+\varrho^{**}\|\y_n\|_{\V}^2+
 	F\big(t_n,\y_n, \mathfrak{J}_n\big)
 	+C\|\mathfrak{J}_n\|_{\H}^2+
 	C\|\mathfrak{J}_n\|_{\H}^{r+1}
 	\nonumber\\&\geq\frac{\upgamma}{2T^2}+\frac{\beta m\delta}{2} (1+\|\y_n\|_{\V}^2)^{m-1}\||\y_n|^{\frac{r-1}{2}}\nabla\y_n\|_{\H}^2
 	\nonumber\\&\quad+\frac{\min\{\mu,\alpha\} m\delta}{4} (1+\|\y_n\|_{\V}^2)^{m-1}\|(\A+\I)\y_n\|_{\H}^2\nonumber\\&\quad+\beta m\delta e^{K_\upgamma(T-t_n)}(1+\|\y_n\|_{\V}^2)^{m-1} \|\y_n\|_{\wi\L^{r+1}}^{r+1}.
 \end{align}
  We now estimate the bound for $F\big(t_n,\y_n, \mathfrak{J}_n\big)$. Let us assume that $\|\y_n\|_{\V}\leq r$, $\|\bar{\y}\|_{\V}\leq r$ and $\|\bar{\x}\|_{\V}\leq r$, where $r>0$ is arbitrary. Then, from assumptions \eqref{F1}-\eqref{F3} of Hypothesis \ref{hypF14}, \eqref{conv1}-\eqref{conv2} and \eqref{eqn-v-norm-conv}, we calculate
 \begin{align}\label{F5}
& \bigg|F\bigg(t_n,\y_n,\D\varphi_n(t_n,\y_n)+\frac{1}{\eps}\mathbf{Q}_N(\bar{\y}-\bar{\x})+\frac{2}{\eps}\mathbf{Q}_N(\y_n-\bar{\y})\bigg)\bigg|
 \nonumber\\&\leq
 \bigg|F\bigg(t_n,\y_n,\D\varphi_n(t_n,\y_n)+\frac{1}{\eps}\mathbf{Q}_N(\bar{\y}-\bar{\x})+\frac{2}{\eps}\mathbf{Q}_N(\y_n-\bar{\y})\bigg)
 \nonumber\\&\quad-
 \underbrace{F\bigg(t_n,\y_n,\D\varphi_n(t_n,\y_n)+\frac{1}{\eps}\mathbf{Q}_N(\bar{\y}-\bar{\x})\bigg)\bigg|}_{\text{ using } \ \eqref{F2}}
 \nonumber\\&\quad+\underbrace{\bigg|
 	F\bigg(t_n,\y_n,\D\varphi_n(t_n,\y_n)+\frac{1}{\eps}\mathbf{Q}_N(\bar{\y}-\bar{\x})\bigg)- F\big(t_n,\y_n,\D\varphi_n(t_n,\y_n)\big)\bigg|}_{\text{using} \ \eqref{F2}}
 \nonumber\\&\quad+
 \underbrace{ \bigg|F\big(t_n,\y_n,\D\varphi_n(t_n,\y_n)\big)-F\bigg(t_n,\y_n,\frac{1}{\eps}(\bar{\y}-\bar{\x})\bigg)\bigg|}_{\text{using} \ \eqref{F2}}
  \nonumber\\&\quad+
  \underbrace{\bigg|F\bigg(t_n,\y_n,\frac{1}{\eps}(\bar{\y}-\bar{\x})\bigg)-
  	F\bigg(t_n,\bar{\y},\frac{1}{\eps}(\bar{\y}-\bar{\x})\bigg)\bigg|}_{\text{ using} \ \eqref{F1}}
   \nonumber\\&\quad+
  \underbrace{\bigg|F\bigg(t_n,\bar{\y},\frac{1}{\eps}(\bar{\y}-\bar{\x})\bigg)-F\bigg(\bar{t},\bar{\y},\frac{1}{\eps}(\bar{\y}-\bar{\x})\bigg)\bigg|}_{\text{ using } \ \eqref{F3}}
   \nonumber\\&\quad+
   \bigg|F\bigg(\bar{t},\bar{\y},\frac{1}{\eps}(\bar{\y}-\bar{\x})\bigg)\bigg|
   \nonumber\\&\leq
   \omega\left(\frac{2}{\eps}(1+\|\y_n\|_{\V})\|\mathbf{Q}_N(\y_n-\bar{\y})\|_{\H}\right)+ \omega\left(\frac{1}{\eps}(1+\|\y_n\|_{\V})\|\mathbf{Q}_N(\bar{\y}-\bar{\x})\|_{\H}\right)
   \nonumber\\&\quad+
   \omega\left(\frac{1}{\eps}(1+\|\y_n\|_{\V})\left\|\D\varphi_n(t_n,\y_n)
   -\frac{1}{\eps}(\bar{\y}-\bar{\x})\right\|_{\H}\right)
   \nonumber\\&\quad+
   \omega_r(\|\y_n-\bar{\y}\|_{\V})+\omega\left(\frac{1}{\eps}\|\y_n-\bar{\y}\|_{\V}\|\bar{\y}-\bar{\x}\|_{\H}\right)+\omega_r(|t_n-\bar{t}|)
    \nonumber\\&\quad+
   \bigg|F\bigg(\bar{t},\bar{\y},\frac{1}{\eps}(\bar{\y}-\bar{\x})\bigg)\bigg|.
 \end{align}
Substituting \eqref{F5} into \eqref{appdac4}, we deduce the following estimate:
 \begin{align}\label{appdac5}
	&\frac{\upgamma}{2T^2}+\frac{\beta m\delta}{2} (1+\|\y_n\|_{\V}^2)^{m-1}\||\y_n|^{\frac{r-1}{2}}\nabla\y_n\|_{\H}^2
	+\beta m\delta e^{K_\upgamma(T-t_n)}(1+\|\y_n\|_{\V}^2)^{m-1} \|\y_n\|_{\wi\L^{r+1}}^{r+1}
	\nonumber\\&\quad+\frac{\min\{\mu,\alpha\} m\delta}{4} (1+\|\y_n\|_{\V}^2)^{m-1}\|(\A+\I)\y_n\|_{\H}^2
	\nonumber\\&\leq
	- \frac{\delta}{2}K_\upgamma e^{K_\upgamma(T-t_n)} (1+\|\y_n\|_{\V}^2)^m+
		(\varphi_n)_t(t_n,\y_n)+\frac12\mathrm{Tr}(\Q\D^2\varphi_n(t_n,\y_n))+\mathrm{Tr}(\Q\mathbf{Q}_N)\nonumber\\&
	\quad+\varrho^{**}\|\y_n\|_{\V}^2+C\|\mathfrak{J}_n\|_{\H}^2+
	C\|\mathfrak{J}_n\|_{\H}^{r+1}
	\nonumber\\&\quad+
	 \omega\left(\frac{2}{\eps}(1+\|\y_n\|_{\V})\|\mathbf{Q}_N(\y_n-\bar{\y})\|_{\H}\right)+ \omega\left(\frac{1}{\eps}(1+\|\y_n\|_{\V})\|\mathbf{Q}_N(\bar{\y}-\bar{\x})\|_{\H}\right)
	\nonumber\\&\quad+
	\omega\left(\frac{1}{\eps}(1+\|\y_n\|_{\V})\left\|\D\varphi_n(t_n,\y_n)
	-\frac{1}{\eps}(\bar{\y}-\bar{\x})\right\|_{\H}\right)
	\nonumber\\&\quad+
	\omega_r(\|\y_n-\bar{\y}\|_{\V})+\omega\left(\frac{1}{\eps}\|\y_n-\bar{\y}\|_{\V}\|\bar{\y}-\bar{\x}\|_{\H}\right)+\omega_r(|t_n-\bar{t}|)
	\nonumber\\&\quad+
	\bigg|F\bigg(\bar{t},\bar{\y},\frac{1}{\eps}(\bar{\y}-\bar{\x})\bigg)\bigg|.
\end{align}
 Finally, using the fact that convergent sequences are bounded (see \eqref{conv1}-\eqref{conv2}), we conclude that the left hand side of \eqref{appdac5} is bounded independent of $n$.
 Therefore, by an application of the Banach-Alaoglu theorem and then using the uniqueness of weak limits, we have the following weak convergence as $n\to\infty$:
 \begin{align}\label{weakconv1}
 	\A\y_n\rightharpoonup\A\bar{\y} \  \text{ in } \ \D(\A).
 \end{align}
 Also, from \eqref{conv1} and \eqref{eqn-v-norm-conv}, we conclude the following convergence in $\V$:
 \begin{align}\label{conv3}
 	 \mathfrak{J}_n&:=\D\varphi_n(t_n,\y_n)+ \frac{1}{\eps}\mathbf{Q}_N(\bar{\y}-\bar{\x})+ \frac{2}{\eps}\mathbf{Q}_N(\y_n-\bar{\y})\nonumber\\&\to
 	 \frac{1}{\eps}(\bar{\y}_N-\bar{\x}_N) +\frac{1}{\eps}\mathbf{Q}_N(\bar{\y}-\bar{\x})=\frac{1}{\eps}(\bar{\y}-\bar{\x}) \ \text{ as } \ n\to\infty.
 \end{align}
  Now we justify \eqref{viscdef3.1} as `$n\to\infty$'. For this, we separately discuss the limit of each of the terms in \eqref{viscdef3.1}.
 \vskip 0.2mm
 \noindent
 \textbf{Passing limit in Term II of \eqref{viscdef3.1}.} From \eqref{weakconv1}, \eqref{conv3} and the fact that $\|\A\y_n\|_{\H}$ is bounded independent of $n$, we estimate
 \begin{align}\label{Aconv}
 	&\bigg|(\A\y_n,\mathfrak{J}_n)-\bigg(\A\bar{\y}, \frac{\bar{\y}-\bar{\x}}{\eps}\bigg)\bigg|\nonumber\\&\leq
 	\bigg|\bigg(\A\y_n,\mathfrak{J}_n-\frac{\bar{\y}-\bar{\x}}{\eps}\bigg)\bigg|+\bigg|\bigg(\A\y_n-\A\bar{\y}, \frac{\bar{\y}-\bar{\x}}{\eps}\bigg)\bigg|
 	\nonumber\\&\leq
     \|\A\y_n\|_{\H}\left\|\mathfrak{J}_n-\frac{\bar{\y}-\bar{\x}}{\eps}\right\|_{\V}+
 	\bigg|\bigg(\A\y_n-\A\bar{\y}, \frac{\bar{\y}-\bar{\x}}{\eps}\bigg)\bigg|
 	\nonumber\\&\to0, \  \text{ as } \ n\to\infty.
 \end{align}
		
 \vskip 0.2mm
\noindent
\textbf{Passing limit in Term III of \eqref{viscdef3.1}.} 
By using H\"older's and Agmon's inequalities, \eqref{eqn-v-norm-conv} and, \eqref{conv3}, we estimate
\begin{align}\label{Bconv2}
	\bigg|\bigg(\mathcal{B}(\y_n),\mathfrak{J}_n-\frac{\bar{\y}-\bar{\x}}{\eps}\bigg)\bigg|&\leq C\|\nabla\y_n\|_{\H}\|\y_n\|_{\H}^{1-\frac{d}{4}}\|\y_n\|_{\H_{\mathrm{p}}^2}^{\frac{d}{4}}\left\|\mathfrak{J}_n-\frac{\bar{\y}-\bar{\x}}{\eps}\right\|_{\V}
	\nonumber\\&\to0, \  \text{ as } \ n\to\infty.
\end{align} 
 where we have used the fact that $\|\A\y_n\|_{\H}$ is bounded independent of $n$. Further, by using H\"older's inequality, Sobolev embedding and \eqref{eqn-v-norm-conv}, we compute
 \begin{align}\label{Bconv3}
 	&\bigg|\bigg(\mathcal{B}(\y_n)-\mathcal{B}(\bar{\y}), \frac{\bar{\y}-\bar{\x}}{\eps}\bigg)\bigg|
 	\nonumber\\&\leq\bigg|\bigg(\mathcal{B}(\bar{\y},\y_n-\bar{\y}), \frac{\bar{\y}-\bar{\x}}{\eps}\bigg)\bigg|+\bigg|\bigg(\mathcal{B}(\y_n-\bar{\y},\y_n), \frac{\bar{\y}-\bar{\x}}{\eps}\bigg)\bigg|
 	\nonumber\\&\leq
 	\frac{1}{\eps}\|\bar{\y}\|_{\wi\L^4}\|\nabla(\y_n-\bar{\y})\|_{\H}\|\bar{\y}-\bar{\x}\|_{\wi\L^4}+
 	\frac{1}{\eps}\|\y_n-\bar{\y}\|_{\wi\L^4}\|\nabla\bar{\y}\|_{\H}\|\bar{\y}-\bar{\x}\|_{\wi\L^4}
 	\nonumber\\&\to0, \  \text{ as } \ n\to\infty.
 \end{align}
 Combining \eqref{Bconv2} and \eqref{Bconv3} yields the following convergence:
 \begin{align}\label{Bconv1}
 	&\bigg|(\mathcal{B}(\y_n),\mathfrak{J}_n)-\bigg(\mathcal{B}(\bar{\y}), \frac{\bar{\y}-\bar{\x}}{\eps}\bigg)\bigg|\nonumber\\&\leq
 	\bigg|\bigg(\mathcal{B}(\y_n),\mathfrak{J}_n-\frac{\bar{\y}-\bar{\x}}{\eps}\bigg)\bigg|+\bigg|\bigg(\mathcal{B}(\y_n)-\mathcal{B}(\bar{\y}), \frac{\bar{\y}-\bar{\x}}{\eps}\bigg)\bigg|
 	\nonumber\\&\to0, \  \text{ as } \ n\to\infty.
 \end{align}
 
  \vskip 0.2mm
 \noindent
 \textbf{Passing limit in Term IV of \eqref{viscdef3.1}.} 
 Since, we have $\y_n\to\bar{\y}$ as $n\to\infty$ in $\wi\L^{r+1}$, therefore from \cite[Theorem 4.9, pp. 94]{HB}, we conclude the following a.e. convergence (along a subsequence) as $n\to\infty$:
 \begin{align*}
 	\y_n(\xi)\to\bar{\y}(\xi) \  \text{ for a.e. } \ \xi\in\Omega,
 \end{align*}
 and therefore $|\y_n(\xi)|^{r-1}\y_n(\xi)\to|\bar{\y}(\xi)|^{r-1}\bar{\y}(\xi)$ as $n\to\infty$, for a.e. $\xi\in\Omega$. 
 Further,  for the values of $r$  given in Table \ref{Table1}, by the application of Taylor's formula \cite[Theorem 7.9.1]{PGC}, H\"older's and interpolation inequalities, and Sobolev embedding $\V\hookrightarrow\wi\L^6$, we calculate the following:
 \begin{align}\label{Cconv4}
  \bigg|\bigg(\mathcal{C}(\y_n)-\mathcal{C}(\bar{\y}), \frac{\bar{\y}-\bar{\x}}{\eps}\bigg)\bigg|&\leq\frac{C}{\eps}
  \||\y_n|^{r-1}+|\bar{\y}|^{r-1}\|_{\wi\L^3}\|\y_n-\bar{\y}\|_{\wi\L^6}
  \|\bar{\y}-\bar{\x}\|_{\H}
  \nonumber\\&\leq\frac{C}{\eps}
  \big(\|\y_n\|_{\wi\L^{3(r-1)}}^{r-1}+\|\bar{\y}\|_{\wi\L^{3(r-1)}}^{r-1}\big)\|\y_n-\bar{\y}\|_{\wi\L^6}\|\bar{\y}-\bar{\x}\|_{\H}
  \nonumber\\&\leq \frac{C}{\eps}
  \big(\|\y_n\|_{\wi\L^{r+1}}\|\y_n\|_{\wi\L^{3(r+1)}}^{r-2}+
  \|\bar{\y}\|_{\wi\L^{r+1}}\|\bar{\y}\|_{\wi\L^{3(r+1)}}^{r-2}\big)
  \nonumber\\&\quad\times\|\y_n-\bar{\y}\|_{\V}\|\bar{\y}-\bar{\x}\|_{\H}
   \nonumber\\&\to0, \  \text{ as } \ n\to\infty.
 \end{align}
 For the values of $r$ given in Table \ref{Table1}, we conclude by using interpolation inequality and \eqref{appdac5} that
	\begin{align}\label{Cconv2}
		\|\mathcal{C}(\y_n)\|_{\H}\leq\|\y_n\|_{\widetilde{\L}^{r+1}}^{\frac{r+3}{4}}\|\y_n\|_{\widetilde{\L}^{3(r+1)}}^{\frac{3(r-1)}{4}}\leq C,
	\end{align}
	where the constant $C$ is independent of $n$. 
Therefore, from \eqref{Cconv2} and \eqref{conv3}, we calculate
		\begin{align}\label{Cconv3}
			\bigg|\bigg(\mathcal{C}(\y_n),\mathfrak{J}_n-\frac{\bar{\y}-\bar{\x}}{\eps}\bigg)\bigg|&\leq\|\mathcal{C}(\y_n)\|_{\H}\left\|\mathfrak{J}_n-\frac{\bar{\y}-\bar{\x}}{\eps}\right\|_{\H}
			\nonumber\\&\leq\|\y_n\|_{\widetilde{\L}^{r+1}}^{\frac{r+3}{4}}\|\y_n\|_{\widetilde{\L}^{3(r+1)}}^{\frac{3(r-1)}{4}}\left\|\mathfrak{J}_n-\frac{\bar{\y}-\bar{\x}}{\eps}\right\|_{\V}
			\nonumber\\&\to0, \  \text{ as } \ n\to\infty. 
		\end{align}
   Therefore, utilizing \eqref{Cconv4} and \eqref{Cconv3}, we finally conclude
   \begin{align}\label{Cconv1}
   	&\bigg|(\mathcal{C}(\y_n),\mathfrak{J}_n)-\bigg(\mathcal{C}(\bar{\y}), \frac{\bar{\y}-\bar{\x}}{\eps}\bigg)\bigg|\nonumber\\&\leq
   	\bigg|\bigg(\mathcal{C}(\y_n),\mathfrak{J}_n-\frac{\bar{\y}-\bar{\x}}{\eps}\bigg)\bigg|+\bigg|\bigg(\mathcal{C}(\y_n)-\mathcal{C}(\bar{\y}), \frac{\bar{\y}-\bar{\x}}{\eps}\bigg)\bigg|
   	\nonumber\\&\to0 \  \text{ as } \ n\to\infty.
   \end{align}
We cannot justify the terms VI and VII of \eqref{viscdef3.1} as $n\to\infty$, since it requires strong convergence. Therefore, we cannot pass the limit as $n\to\infty$ in \eqref{viscdef3.1}. So, on taking $\limsup$ as $n\to\infty$ in \eqref{viscdef3.1}, together with \eqref{conv3}, \eqref{Aconv}, \eqref{Bconv1} and \eqref{Cconv1}, we finally deduce \eqref{viscdef4}.

	\subsection{Explanation of \eqref{pas9} in Theorem \ref{extunqvisc}}\label{wknBA1}
	Let us now estimate the limit of the term \eqref{pas9} as $n\to\infty$. We write 
		\begin{align}\label{wknBA}
			&\bigg|\frac{1}{\eps_n}\E\int_{t_0}^{t_0+\eps_n}\delta(s)
			\big(\mathcal{B}(\Y_{\eps_n}(s)),\A\Y_{\eps_n}(s)\big)(1+\|\Y_{\eps_n}(s))\|_{\V}^2)^{m-1} 
			\d s\nonumber\\&\quad-
			\delta(t_0)\big(\mathcal{B}(\y_0),\A\y_0\big)(1+\|\y_0\|_{\V}^2)^{m-1}\bigg|\nonumber\\&\leq 
			\frac{1}{\eps_n}\E\int_{t_0}^{t_0+\eps_n}\delta(s)\bigg|\big(\mathcal{B}(\Y_{\eps_n}(s)),\A\Y_{\eps_n}(s)\big)-\big(\mathcal{B}(\y_0),\A\y_0\big)\bigg|(1+\|\Y_{\eps_n}(s))\|_{\V}^2)^{m-1}\d s
			\nonumber\\&\quad+
			\frac{1}{\eps_n}\E\int_{t_0}^{t_0+\eps_n}\delta(s)\big|\big(\mathcal{B}(\y_0),\A\y_0\big)\big|\bigg|(1+\|\Y_{\eps_n}(s))\|_{\V}^2)^{m-1}-(1+\|\y_0\|_{\V}^2)^{m-1}\bigg|\d s
			\nonumber\\&\quad+
			\frac{1}{\eps_n}\E\int_{t_0}^{t_0+\eps_n}\big|\delta(s)-\delta(t_0)\big|
			\big|\big(\mathcal{B}(\y_0),\A\y_0\big)\big|(1+\|\y_0\|_{\V}^2)^{m-1}\d s
			\nonumber\\&\leq
			\underbrace{\frac{1}{\eps_n}\E\int_{t_0}^{t_0+\eps_n}\delta(s)\bigg|\big(\mathcal{B}(\Y_{\eps_n}(s))-\mathcal{B}(\y_0),\A\Y_{\eps_n}(s)\big)\bigg|(1+\|\Y_{\eps_n}(s))\|_{\V}^2)^{m-1}\d s}_{\text{$J_4$}}
			\nonumber\\&\quad+
			\underbrace{	 \frac{1}{\eps_n}\E\int_{t_0}^{t_0+\eps_n}\delta(s)\bigg|\big(\mathcal{B}(\y_0),\A\Y_{\eps_n}(s)-\A\y_0\big)\bigg|(1+\|\Y_{\eps_n}(s))\|_{\V}^2)^{m-1}\d s}_{\text{$J_5$}}
			\nonumber\\&\quad+
			\underbrace{\frac{1}{\eps_n}\E\int_{t_0}^{t_0+\eps_n}\delta(s)\big|\big(\mathcal{B}(\y_0),\A\y_0\big)\big|\bigg|(1+\|\Y_{\eps_n}(s))\|_{\V}^2)^{m-1}-(1+\|\y_0\|_{\V}^2)^{m-1}\bigg|\d s}_{\text{$J_6$}}
			\nonumber\\&\quad+
			\underbrace{ \frac{1}{\eps_n}\E\int_{t_0}^{t_0+\eps_n}|\delta(s)-\delta(t_0)|
				\big|\big(\mathcal{B}(\y_0),\A\y_0\big)\big|(1+\|\y_0\|_{\V}^2)^{m-1}\d s}_{\text{$J_7$}}.
		\end{align}
		By using Taylor's formula and H\"older's inequality, we estimate $J_4$ as
		\begin{align*}
			J_4&\leq\frac{1}{\eps_n}\E\int_{t_0}^{t_0+\eps_n}\delta(s)\bigg|\big(\mathcal{B}(\Y_{\eps_n}(s))-\mathcal{B}(\y_0),\A\Y_{\eps_n}(s)\big)\bigg|(1+\|\Y_{\eps_n}(s))\|_{\V}^2)^{m-1}\d s
			\nonumber\\&\leq
			\frac{1}{\eps_n}\E\int_{t_0}^{t_0+\eps_n}\delta(s)\bigg|\big(\mathcal{B}(\Y_{\eps_n}(s)-\y_0,\Y_{\eps_n}(s)),\A\Y_{\eps_n}(s)\big)\bigg|(1+\|\Y_{\eps_n}(s))\|_{\V}^2)^{m-1}\d s
			\nonumber\\&\quad+
			\frac{1}{\eps_n}\E\int_{t_0}^{t_0+\eps_n}\delta(s)\bigg|\big(\mathcal{B}(\y_0,\Y_{\eps_n}(s)-\y_0),\A\Y_{\eps_n}(s)\big)
			\bigg|(1+\|\Y_{\eps_n}(s))\|_{\V}^2)^{m-1}\d s
			\nonumber\\&\leq
			\frac{1}{\eps_n}\E\int_{t_0}^{t_0+\eps_n}\delta(s)\|\mathcal{B}(\Y_{\eps_n}(s)-\y_0,\Y_{\eps_n}(s))\|_{\H}\|\A\Y_{\eps_n}(s)\|_{\H}(1+\|\Y_{\eps_n}(s))\|_{\V}^2)^{m-1}\d s
			\nonumber\\&\quad+
			\frac{1}{\eps_n}\E\int_{t_0}^{t_0+\eps_n}\delta(s)\|\mathcal{B}(\y_0,\Y_{\eps_n}(s)-\y_0)\|_{\H}\|\A\Y_{\eps_n}(s)\|_{\H}(1+\|\Y_{\eps_n}(s))\|_{\V}^2)^{m-1}\d s
			\nonumber\\&\leq
			\frac{1}{\eps_n}\E\int_{t_0}^{t_0+\eps_n}\delta(s)
			\|\Y_{\eps_n}(s)-\y_0\|_{\H}^{1-\frac{d}{4}}\|(\A+\I)(\Y_{\eps_n}(s)-\y_0)\|_{\H}^{\frac{d}{4}}\|\nabla\Y_{\eps_n}(s)\|_{\H}
			\|\A\Y_{\eps_n}(s)\|_{\H}\nonumber\\&\qquad\qquad\times(1+\|\Y_{\eps_n}(s))\|_{\V}^2)^{m-1}\d s
			\nonumber\\&\quad+
			\frac{1}{\eps_n}\E\int_{t_0}^{t_0+\eps_n}\delta(s)\|\y_0\|_{\H}^{1-\frac{d}{4}}\|(\A+\I)\y_0\|_{\H}^{\frac{d}{4}}
			\|\nabla(\Y_{\eps_n}(s)-\y_0)\|_{\H}\|\A\Y_{\eps_n}(s)\|_{\H}\nonumber\\&\qquad\qquad\times(1+\|\Y_{\eps_n}(s))\|_{\V}^2)^{m-1}\d s
			\nonumber\\&\leq
			\frac{C}{\eps_n}\E\int_{t_0}^{t_0+\eps_n}
			\|\Y_{\eps_n}(s)-\y_0\|_{\H}^{1-\frac{d}{4}}\left(\|(\A+\I)\Y_{\eps_n}(s)\|_{\H}^{\frac{d}{4}+1}+\|(\A+\I)\y_0\|_{\H}^{\frac{d}{4}}\right)
			\nonumber\\&\qquad\qquad\times(1+\|\Y_{\eps_n}(s))\|_{\V}^2)^{m}\d s
			\nonumber\\&\quad+
			\frac{C}{\eps_n}\E\int_{t_0}^{t_0+\eps_n}\|\y_0\|_{\H}^{1-\frac{d}{4}}\|(\A+\I)\y_0\|_{\H}^{\frac{d}{4}}
			\|\nabla(\Y_{\eps_n}(s)-\y_0)\|_{\H}\|\A\Y_{\eps_n}(s)\|_{\H}\nonumber\\&\qquad\qquad\times(1+\|\Y_{\eps_n}(s))\|_{\V}^2)^{m-1}\d s
			\nonumber\\&\leq
			C\left(\frac{1}{\eps_n}\int_{t_0}^{t_0+\eps_n}
			\E\|\Y_{\eps_n}(s)-\y_0\|_{\H}^4 \d s\right)^{\frac{4-d}{16}}  \left(\frac{1}{\eps_n}\int_{t_0}^{t_0+\eps_n}\E\|(\A+\I)\Y_{\eps_n}(s)\|_{\H}^{2}\d s
			\right)^{\frac{4+d}{8}}\nonumber\\&\qquad\times
			\left(\frac{1}{\eps_n}\int_{t_0}^{t_0+\eps_n}\E
			(1+\|\Y_{\eps_n}(s))\|_{\V}^2)^{\frac{16m}{4-d}}\d s\right)^{\frac{4-d}{16}} \nonumber\\&\quad+
			C\left(\frac{1}{\eps_n}\int_{t_0}^{t_0+\eps_n}
			\E\|\Y_{\eps_n}(s)-\y_0\|_{\H}^4 \d s\right)^{\frac{4-d}{16}}  \left(\frac{1}{\eps_n}\int_{t_0}^{t_0+\eps_n}\E
			(1+\|\Y_{\eps_n}(s))\|_{\V}^2)^{\frac{8m}{4+d}}\d s\right)^{\frac{4+d}{8}}
			\nonumber\\&\quad+
			C\left(\frac{1}{\eps_n}\int_{t_0}^{t_0+\eps_n}
			\E\|\Y_{\eps_n}(s)-\y_0\|_{\V}^4 \d s\right)^{\frac{1}{4}}  \left(\frac{1}{\eps_n}\int_{t_0}^{t_0+\eps_n}\E\|\A\Y_{\eps_n}(s)\|_{\H}^{2}\d s\right)^{\frac{1}{2}}\nonumber\\&\qquad\times
			\left(\frac{1}{\eps_n}\int_{t_0}^{t_0+\eps_n}\E
			(1+\|\Y_{\eps_n}(s))\|_{\V}^2)^{4m}\d s\right)^{\frac{1}{4}}
			\nonumber\\&\leq\upsigma(\eps_n),
		\end{align*}
		where the last inequality follows from \eqref{ssee1}, \eqref{ctsdep0.1}-\eqref{ctsdep0.2} and \eqref{vdp7}. By using H\"older's inequality, weak convergence \eqref{wknm2} and \eqref{ssee1}, we estimate $J_5$ as 
		\begin{align*}
			J_5&\leq\frac{C}{\eps_n}\E\int_{t_0}^{t_0+\eps_n}|(\mathcal{B}(\y_0),\A\Y_{\eps_n}(s)-\A\y_0)|(1+\|\Y_{\eps_n}(s))\|_{\V}^2)^{m-1}\d s
			\nonumber\\&\leq
			C\left(\frac{1}{\eps_n}\int_{t_0}^{t_0+\eps_n}
			\E|(\mathcal{B}(\y_0),\A\Y_{\eps_n}(s)-\A\y_0)|^2\d s\right)^{\frac12}
			\left(\frac{1}{\eps_n}\int_{t_0}^{t_0+\eps_n}
		\E	(1+\|\Y_{\eps_n}(s))\|_{\V}^2)^{2m}\d s\right)^{\frac{1}{2}}
			\nonumber\\&\leq\upsigma(\eps_n).
		\end{align*}
		The convergence of the terms $J_6$ and $J_7$ in \eqref{wknBA} as $n\to\infty$ can be proved similarly and it follows from \eqref{ctsdep0.2} and the property of the modulus of continuity. With this, the right hand side of \eqref{wknBA} approaches to $0$ as $n\to\infty$.
		
		\subsection{Explanation of \eqref{pas10} in Theorem \ref{extunqvisc}}\label{wknCA1}
	      Let us now estimate the limit of the term \eqref{pas10} as $n\to\infty$. We write 
			\begin{align}\label{wknCA}
				&\bigg|\frac{1}{\eps_n}\E\int_{t_0}^{t_0+\eps_n}\delta(s)\big(\mathcal{C}(\Y_{\eps_n}(s)),(\A+\I)\Y_{\eps_n}(s)\big)(1+\|\Y_{\eps_n}(s))\|_{\V}^2)^{m-1} 
				\d s\nonumber\\&\quad-
				\delta(t_0)\big(\mathcal{C}(\y_0),(\A+\I)\y_0\big)(1+\|\y_0\|_{\V}^2)^{m-1}\bigg|\nonumber\\&\leq 
				\frac{1}{\eps_n}\E\int_{t_0}^{t_0+\eps_n}\delta(s)\bigg|\big(\mathcal{C}(\Y_{\eps_n}(s)),(\A+\I)\Y_{\eps_n}(s)\big)-\big(\mathcal{C}(\y_0),(\A+\I)\y_0\big)\bigg|(1+\|\Y_{\eps_n}(s))\|_{\V}^2)^{m-1}\d s
				\nonumber\\&\quad+
				\frac{1}{\eps_n}\E\int_{t_0}^{t_0+\eps_n}\delta(s)\big|\big(\mathcal{C}(\y_0),(\A+\I)\y_0\big)\big|\bigg|(1+\|\Y_{\eps_n}(s))\|_{\V}^2)^{m-1}-(1+\|\y_0\|_{\V}^2)^{m-1}\bigg|\d s
				\nonumber\\&\quad+
				\frac{1}{\eps_n}\E\int_{t_0}^{t_0+\eps_n}\big|\delta(s)-\delta(t_0)\big|
				\big|\big(\mathcal{C}(\y_0),(\A+\I)\y_0\big)\big|(1+\|\y_0\|_{\V}^2)^{m-1}\d s
				\nonumber\\&\leq
				\underbrace{\frac{1}{\eps_n}\E\int_{t_0}^{t_0+\eps_n}\delta(s)\bigg|\big(\mathcal{C}(\Y_{\eps_n}(s))-\mathcal{C}(\y_0),(\A+\I)\Y_{\eps_n}(s)\big)\bigg|(1+\|\Y_{\eps_n}(s))\|_{\V}^2)^{m-1}\d s}_{\text{$\mathcal{J}_1$}}
				\nonumber\\&\quad+
				\underbrace{	 \frac{1}{\eps_n}\E\int_{t_0}^{t_0+\eps_n}\delta(s)\bigg|\big(\mathcal{C}(\y_0),(\A+\I)\Y_{\eps_n}(s)-(\A+\I)\y_0\big)\bigg|(1+\|\Y_{\eps_n}(s))\|_{\V}^2)^{m-1}\d s}_{\text{$\mathcal{J}_2$}}
				\nonumber\\&\quad+  
				\underbrace{\frac{1}{\eps_n}\E\int_{t_0}^{t_0+\eps_n}\delta(s)\big|\big(\mathcal{C}(\y_0),(\A+\I)\y_0\big)\big|\bigg|(1+\|\Y_{\eps_n}(s))\|_{\V}^2)^{m-1}-(1+\|\y_0\|_{\V}^2)^{m-1}\bigg|\d s}_{\text{$\mathcal{J}_3$}}
				\nonumber\\&\quad+
				\underbrace{ \frac{1}{\eps_n}\E\int_{t_0}^{t_0+\eps_n}|\delta(s)-\delta(t_0)|
					\big|\big(\mathcal{C}(\y_0),(\A+\I)\y_0\big)\big|(1+\|\y_0\|_{\V}^2)^{m-1}\d s}_{\text{$\mathcal{J}_4$}}.
			\end{align}
			By using Taylor's formula, H\"older's and interpolation inequalities, we estimate $\mathcal{J}_1$ as
			\begin{align*}
			\mathcal{J}_1&\leq\frac{1}{\eps_n}\E\int_{t_0}^{t_0+\eps_n}\delta(s)\bigg|\big(\mathcal{C}(\Y_{\eps_n}(s))-\mathcal{C}(\y_0),(\A+\I)\Y_{\eps_n}(s)\big)\bigg|(1+\|\Y_{\eps_n}(s))\|_{\V}^2)^{m-1}\d s
			\nonumber\\&\leq
			\frac{C}{\eps_n}\E\int_{t_0}^{t_0+\eps_n}
			\||\Y_{\eps_n}(s)|^{r-1}+|\y_0|^{r-1}\|_{\wi\L^3}
			\|\Y_{\eps_n}(s)-\y_0\|_{\wi\L^6}
			\|(\A+\I)\Y_{\eps_n}(s)\|_{\H}
			\nonumber\\&\qquad\times(1+\|\Y_{\eps_n}(s))\|_{\V}^2)^{m-1}\d s
			\nonumber\\&\leq
			\frac{C}{\eps_n}\E\int_{t_0}^{t_0+\eps_n}
			\left(\|\Y_{\eps_n}(s)\|_{\wi\L^{3(r-1)}}^{r-1}+\|\y_0\|_{\wi\L^{3(r-1)}}^{r-1}\right)\|\Y_{\eps_n}(s)-\y_0\|_{\wi\L^6}
			\|(\A+\I)\Y_{\eps_n}(s)\|_{\H}
			\nonumber\\&\qquad\times(1+\|\Y_{\eps_n}(s))\|_{\V}^2)^{m-1}\d s. \end{align*}
			
			We now consider the following cases to estimate the most difficult expression $\mathcal{J}_1$, because of which we are getting restrictions on $r$.
			\vskip 2mm
			\noindent
			\textbf{Case-I: For $d=2$ and $r\in(3,\infty)$ .}
			By using the Sobolev embedding $\V\hookrightarrow\wi\L^{r+1}$, for any $r\in(3,\infty)$, H\"older's inequality, \eqref{ssee1} and \eqref{ctsdep0.2}, we estimate $J_4$ as follows:
			{\small	\begin{align*}
				\mathcal{J}_1&\leq\frac{1}{\eps_n}\int_{t_0}^{t_0+\eps_n}\delta(s)\bigg|\big(\mathcal{C}(\Y_{\eps_n}(s))-\mathcal{C}(\y_0),(\A+\I)\Y_{\eps_n}(s)\big)\bigg|(1+\|\Y_{\eps_n}(s))\|_{\V}^2)^{m-1}\d s
				\nonumber\\&\leq
				\frac{C}{\eps_n}\int_{t_0}^{t_0+\eps_n}
				\left(\|\Y_{\eps_n}(s)\|_{\V}^{r-1}+\|\y_0\|_{\V}^{r-1}\right)\|\Y_{\eps_n}(s)-\y_0\|_{\V}
				\|(\A+\I)\Y_{\eps_n}(s)\|_{\H}(1+\|\Y_{\eps_n}(s))\|_{\V}^2)^{m-1}\d s
				\nonumber\\&\leq C
				\left(\int_{t_0}^{t_0+\eps_n}\|\Y_{\eps_n}(s)\|_{\V}^{4(r-1)}
				\d s \right)^{\frac14}
				\left(\int_{t_0}^{t_0+\eps_n}
				\|(\A+\I)\Y_{\eps_n}(s)\|_{\H}^{2}(1+\|\Y_{\eps_n}(s)\|_{\V}^2)^{2(m-1)}\d s\right)^{\frac12}
				\nonumber\\&\quad\times
				\left(\frac{1}{\eps_n}\int_{t_0}^{t_0+\eps_n}
				\|\Y_{\eps_n}(s)-\y_0\|_{\V}^4\d s \right)^{\frac14}
				\nonumber\\&\quad+C
				\left(\int_{t_0}^{t_0+\eps_n}
				\|(\A+\I)\Y_{\eps_n}(s)\|_{\H}^{2}(1+\|\Y_{\eps_n}(s)\|_{\V}^2)^{2(m-1)}\d s\right)^{\frac12}
				\nonumber\\&\quad\times
				\left(\frac{1}{\eps_n}\int_{t_0}^{t_0+\eps_n}
				\|\Y_{\eps_n}(s)-\y_0\|_{\V}^2\d s \right)^{\frac12}
				\nonumber\\&\leq\upsigma(\eps_n).
				\end{align*}}
			\vskip 2mm
			\noindent
			\textbf{Case-II: For $d=3$ and $r\in(3,5)$.} Similar calculations as  above yield 
			{\small		\begin{align*}
				&\mathcal{J}_1\leq\frac{C}{\eps_n}\E\int_{t_0}^{t_0+\eps_n}
				\|\Y_{\eps_n}(s)\|_{\wi\L^{3(r-1)}}^{r-1}\|\Y_{\eps_n}(s)-\y_0\|_{\V}
				\|(\A+\I)\Y_{\eps_n}(s)\|_{\H}(1+\|\Y_{\eps_n}(s))\|_{\V}^2)^{m-1}\d s
				\nonumber\\&\quad+
				\frac{C}{\eps_n}\E\int_{t_0}^{t_0+\eps_n}
				\|\Y_{\eps_n}(s)-\y_0\|_{\V}\|(\A+\I)\Y_{\eps_n}(s)\|_{\H}
				(1+\|\Y_{\eps_n}(s))\|_{\V}^2)^{m-1}\d s
				\nonumber\\&\leq
				\frac{C}{\eps_n}\E\int_{t_0}^{t_0+\eps_n}
				\|\Y_{\eps_n}(s)\|_{\wi\L^{r+1}}\|\Y_{\eps_n}(s)\|_{\wi\L^{3(r+1)}}^{r-2}\|\Y_{\eps_n}(s)-\y_0\|_{\V}
				\|(\A+\I)\Y_{\eps_n}(s)\|_{\H}(1+\|\Y_{\eps_n}(s))\|_{\V}^2)^{m-1}\d s
				\nonumber\\&\quad+
				\frac{C}{\eps_n}\E\int_{t_0}^{t_0+\eps_n}
				\|\Y_{\eps_n}(s)-\y_0\|_{\V}\|(\A+\I)\Y_{\eps_n}(s)\|_{\H}
				(1+\|\Y_{\eps_n}(s))\|_{\V}^2)^{m-1}\d s
				\nonumber\\&\leq C
				\left(\int_{t_0}^{t_0+\eps_n}\E \|\Y_{\eps_n}(s)\|_{\V}^{\frac{4(r+1)}{5-r}}\d s \right)^{\frac{5-r}{4(r+1)}}
				\left(\int_{t_0}^{t_0+\eps_n}
				\E\left(\|(\A+\I)\Y_{\eps_n}(s)\|_{\H}^{2}(1+\|\Y_{\eps_n}(s))\|_{\V}^2)^{2(m-1)}\right)\d s\right)^{\frac12}
				\nonumber\\&\quad\times
				\left(\int_{t_0}^{t_0+\eps_n}\|\Y_{\eps_n}(s)\|_{\wi\L^{3(r+1)}}^{r+1}\d s\right)^{\frac{r-2}{r+1}}
				\left(\frac{1}{\eps_n}\int_{t_0}^{t_0+\eps_n}\E
				\|\Y_{\eps_n}(s)-\y_0\|_{\V}^{\frac{4(r+1)}{5-r}}\d s \right)^{\frac{5-r}{4(r+1)}}
				\nonumber\\&\quad+
				\left(\frac{1}{\eps_n}\int_{t_0}^{t_0+\eps_n}\E
				\|\Y_{\eps_n}(s)-\y_0\|_{\V}^2\d s \right)^{\frac12}
				\left(\int_{t_0}^{t_0+\eps_n}
				\E\left(\|(\A+\I)\Y_{\eps_n}(s)\|_{\H}^{2}(1+\|\Y_{\eps_n}(s))\|_{\V}^2)^{2(m-1)}\right)\d s\right)^{\frac12}
				\nonumber\\&\leq\upsigma(\eps_n),
			\end{align*}}
			where in the last step we have used Sobolev embedding $\V\hookrightarrow\wi\L^{r+1}$ for any $r\in(3,5)$, uniform energy estimate \eqref{ssee1}, \eqref{ctsdep0.1} and \eqref{vdp7}. 
		
	   The convergence of other terms in \eqref{wknCA} can be shown similarly as we have demonstrated for the convective terms in \eqref{wknBA}.

	\section{Infinite horizon problem}
In applications, there are optimal control problems where one needs to study the case of infinite horizon (that is, $T=+\infty$). The primary motivation for studying the infinite horizon problem arises from economics, where the long-term performance is much more relevant than short-term gains. One can refer to Ramsey \cite{FR} for one of the earliest economic optimization problems where the dynamical system is observed over an unbounded time interval.

In this section, we briefly discuss the concept of viscosity solution for the HJB equation in the context of the infinite horizon optimal control problem and its properties. Unlike the finite time horizon case, where the HJB equation is time-dependent (or parabolic), the infinite horizon case gives rise to the stationary (or elliptic) HJB equation. 

We consider an infinite horizon problem described by the state equation \eqref{stap} with a cost functional of the form 
 \begin{align}\label{costF1}
	J(t,\y;\a(\cdot))=
	\E\left\{\int_t^{+\infty}e^{-\lambda(s-t)}[\ell(\Y(s;t,\y,\a(\cdot)),\a(s))] \d s\right\},
\end{align}
where $\ell:\H\times\U\to\R$ is the running cost which is independent of $t$ and $\lambda>0$ is a discount factor. We assume that the reference probability space and solutions (or state) are defined on $[t,+\infty)$. The optimal control problem is to find an admissible control $\a(\cdot)\in\mathscr{U}_t^{\nu}$ which minimizes the cost functional \eqref{costF1}.

We define the value function $\mathcal{V}:\H\to\mathbb{R}$ corresponding to the cost functional \eqref{costF1}, as
	\begin{align}\label{valueINF}
		\mathcal{V}(t,\y):=\inf\limits_{\a(\cdot)\in\mathscr{U}_t^{\nu}} J(t,\y;\a(\cdot)).
	\end{align}
We are interested in the viscosity solution of the following infinite-dimensional second order stationary HJB equation related to the stochastic optimal control problem \eqref{stap} and \eqref{costF1}
 	\begin{equation}\label{HJBE1}
	\left\{
	\begin{aligned}
	\lambda u-\frac12\mathrm{Tr}(\Q\D^2u)+ &(\mu\A\y+\mathcal{B}(\y)+\alpha\y+
		\beta\mathcal{C}(\y),\D u)\\+F(\y,\D u)&=0, \ \text{ in } \ \H.
	\end{aligned}
	\right.
\end{equation}
where $F$ is the Hamiltonian function, which is defined by
\begin{align}\label{hamfunc1}
	F(\y,\p):=\inf\limits_{\a\in\U} \left\{(\f(\a),\p)+\ell(\y,\a)\right\}.
\end{align}

Let us now we provide the definition of test function and viscosity solution for the stationary HJB equation \eqref{HJBE1}. 
\begin{definition}
	A function $\uppsi:\H\to\R$ is said to be the \emph{test function} for \eqref{HJBE1} if $\uppsi(\y)=\upvarphi(\y)+\mathfrak{h}(1+\|\y\|_{\V}^2)^m$, where
	\begin{itemize}
	\item $\upvarphi\in\mathrm{C}^{2}(\H)$ and is such that $\D\upvarphi$ and $\D^2\upvarphi$ are uniformly continuous on $\H$;
	\item $\mathfrak{h}>0$.
\end{itemize}
\end{definition}
In the definition given below, we assume that $F:\V\times\H\to\R$.
\begin{definition}
	A weakly sequentially upper-semicontinuous (respectively, lower-semicontinuous) function $u:\V\to\R$ is called \emph{a viscosity subsolution} (respectively, \emph{supersolution}) of \eqref{HJBE1} if whenever $u-\uppsi$ has global maximum (respectively, $u+\uppsi$ has a global minimum) at a point $\y\in\V$ for every test function $\uppsi$, then $\y\in\V_2$ and 
	\begin{align*}
		&\lambda u-\frac12\mathrm{Tr}(\Q\D^2\uppsi(t,\y))\\&\quad+
		(\mu\A\y+\mathcal{B}(\y)+\alpha\y+\beta\mathcal{C}(\y) ,\D\uppsi(\y))+F(\y,\D\uppsi(\y))\leq0
	\end{align*}
	(respectively, 
	\begin{align*}
		&\lambda u+\frac12\mathrm{Tr}(\Q\D^2\uppsi(t,\y))\\&-
		(\mu\A\y+\mathcal{B}(\y)+\alpha\y+\beta\mathcal{C}(\y),\D\uppsi(t,\y))+F(\y,-\D\uppsi(\y))\geq0.)
	\end{align*}
	
	A \emph{viscosity solution} of \eqref{HJBE1} is a function which is both viscosity subsolution and viscosity supersolution.
\end{definition}

We now state the comparison principle for the case of stationary HJB equation \eqref{HJBE1}, which can be proved in a similar way as we have proved the comparison principle for the time dependent case (see Theorem \ref{comparison}).  
\begin{theorem}\label{compsta}
	Let Hypothesis \ref{hypF14} holds for the Hamiltonian $F:\V\times\H\to\R$ given in \eqref{hamfunc1}. Let $u:\V\to\R$ and $v:\V\to\R$ be respectively, viscosity subsolution and supersolution of \eqref{HJBE1}. We further assume that 	
	\begin{align*}
		u(\y), -v(\y)\leq C(1+\|\y\|_{\V}^k), 
	\end{align*}
	for some constant $C>0$ and $k\geq0$. Then, for $r$ in Table \ref{Table1}, we have $$u\leq v\ \text{ on } \ \V.$$ 
\end{theorem}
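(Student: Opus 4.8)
The plan is to reduce the stationary comparison principle (Theorem \ref{compsta}) to a variant of the argument already developed for the parabolic case in Theorem \ref{comparison}, with the simplifications afforded by the absence of the time variable. First I would record the structural difference: equation \eqref{HJBE1} has no $u_t$ term and instead carries the zeroth-order term $\lambda u$ with $\lambda>0$; this coercive term will play the role that the device $u_\upgamma = u - \upgamma/t$ (and the strict inequality $\upgamma/T^2$) played in the parabolic proof, so the perturbation in $t$ is no longer needed. Assuming $u\not\leq v$ on $\V$, there exists $\kappa>0$ with $\mathfrak{m}:=\sup_{\y,\x\in\H,\ \|\y-\x\|_{\H}<q,\ \|\y\|_{\H},\|\x\|_{\H}\le R}\big(u(\y)-v(\x)\big)>0$ in the appropriate iterated-limit sense (following \cite[Theorem 3.50]{GFAS} as in the parabolic case).

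Next I would set up the same doubling-of-variables and penalization scheme, but without the time doubling: define, for $\eps,\delta>0$,
\begin{align*}
\Phi(\y,\x)=u(\y)-v(\x)-\frac{\|\y-\x\|_{\H}^2}{2\eps}-\delta(1+\|\y\|_{\V}^2)^m-\delta(1+\|\x\|_{\V}^2)^m
\end{align*}
on $\V\times\V$ (and $-\infty$ off $\V\times\V$), with $2m\ge k+1$ to guarantee coercivity. As in Step-I and Step-II of Theorem \ref{comparison}, weak sequential upper-semicontinuity of $u$ and lower-semicontinuity of $v$, together with weak lower-semicontinuity of the $\V$-norm, give a maximizer $(\bar{\y},\bar{\x})\in\V_2\times\V_2$, with $\|\bar{\y}\|_{\V},\|\bar{\x}\|_{\V}$ bounded independently of $\eps$ for fixed $\delta$, and $\|\bar{\y}-\bar{\x}\|_{\H}^2/\eps\to0$ along the iterated limit. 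I would then run the finite-dimensional reduction exactly as in Step-IV and Step-V: project onto $\H_N$ generated by eigenfunctions of $\A$, introduce the modified functions $\wi u,\wi v$ absorbing the $\mathbf{Q}_N$-parts of the penalizations, invoke \cite[Corollary 3.29, Chapter 3]{GFAS} to get smooth functions $\varphi_n,\psi_n$ with maxima/minima at $(\y_n)$, $(\x_n)$, the matrix inequality \eqref{xnyn} yielding $X_N\le Y_N$, and the Radon-Riesz argument giving $\y_n\to\bar{\y}$, $\x_n\to\bar{\x}$ in $\V$.

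Then I would apply the definition of viscosity sub/supersolution with the test functions built from $\varphi_n,\psi_n$ plus the radial $\V$-term, obtaining the analogue of \eqref{viscdef1}. The crucial algebraic input is unchanged: the terms $\big((\mu\A+\alpha\I)\y_n+\mathcal{B}(\y_n)+\beta\mathcal{C}(\y_n),(\A+\I)\y_n\big)$ are controlled via \eqref{syymB3} and \eqref{torusequ}, splitting into Case-I ($r>3$, any $\mu,\beta>0$) and Case-II ($r=3$, $2\beta\mu\ge1$) exactly as in \eqref{rg3c}-\eqref{re3c}; this is precisely why the theorem is stated for $r$ as in Table \ref{Table1}. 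Passing $n\to\infty$ (using the convergences justified in Appendix \ref{expoca}, in particular \eqref{Aconv}, \eqref{Bconv1}, \eqref{Cconv1}) and then subtracting the supersolution inequality from the subsolution inequality, the $\lambda$-terms combine to $\lambda\big(u(\bar{\y})-v(\bar{\x})\big)$, which is bounded below by $\lambda\mathfrak{m}_{\delta,\eps}>0$ for the near-maximizing configuration, while all other terms are handled as in Step-VI and Step-VII: the monotonicity estimates \eqref{3.4} and \eqref{monoC2} absorb $\big(\mathcal{B}(\bar{\y})-\mathcal{B}(\bar{\x})+\beta(\mathcal{C}(\bar{\y})-\mathcal{C}(\bar{\x})),(\bar{\y}-\bar{\x})/\eps\big)$, and Hypothesis \ref{hypF14} (with \eqref{F1}-\eqref{F2}, no longer needing \eqref{F3}) handles the Hamiltonian difference, leaving an inequality of the form $\lambda\mathfrak{m}_{\delta,\eps}+\tfrac{\delta}{2}[(1+\|\bar{\y}\|_{\V}^2)^m+(1+\|\bar{\x}\|_{\V}^2)^m]\le \upsigma(\eta,\eps;\delta)$ whose right side vanishes in the iterated limit, contradicting $\mathfrak{m}>0$. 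Hence $u\le v$ on $\V$.

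The main obstacle I anticipate is not conceptual but bookkeeping: one must verify that, with $\lambda u$ replacing the $\upgamma/t$-regularization, the supersolution/subsolution inequalities still yield a strictly positive lower bound after passing to the limit — i.e. that $\lambda\big(u(\bar{\y})-v(\bar{\x})\big)$ can be bounded below by a positive constant uniformly along the penalization limits. This requires knowing that $(\bar{\y},\bar{\x})$ is a near-maximizer of $u(\y)-v(\x)$ in the relevant sense, which follows from the standard lemmas (\cite[Lemma 3.37, Chapter 3]{GFAS} and the $\mathfrak{m}_\delta,\mathfrak{m}_{\delta,\eps}$ chain) but must be set up carefully so that the positivity survives. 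A secondary point is re-checking that the choice of the constant $K$ (here there is no exponential factor $e^{K_\upgamma(T-t)}$, so the argument actually simplifies: one only needs $2m\ge k+1$ and $\delta$ small) is compatible with the $I_2$-type absorption in \eqref{trq2}. All the genuinely hard analytic estimates — the $(\mathcal{B}(\y),\A\y)$ bound, the torus identity, and the weak-convergence justifications — are already in place from Sections \ref{Sec-2}, \ref{engdef} and Appendix \ref{useca}, so the proof is essentially a transcription of Theorem \ref{comparison} with the time variable removed.
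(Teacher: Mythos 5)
Your overall route is exactly the one the paper itself indicates (the paper offers no detailed proof of Theorem \ref{compsta}, only the remark that it is "similar" to Theorem \ref{comparison}), and most of the transcription — doubling without the time variable, the radial penalization with $2m\ge k+1$, Steps I--V, the Case-I/Case-II use of \eqref{syymB3} and \eqref{torusequ}, and the final contradiction via the monotonicity estimates and Hypothesis \ref{hypF14} — is sound. However, there is a genuine gap precisely at the point you dismiss as a simplification. In the parabolic proof the test function carries the factor $e^{K_\upgamma(T-t)}$, and its time derivative produces the term $-\delta K_\upgamma e^{K_\upgamma(T-t)}(1+\|\y_n\|_{\V}^2)^m$ in \eqref{viscdef111}; the whole point of the choice of $K_\upgamma$ is that this term absorbs the $\delta$-weighted errors of \emph{full order} $(1+\|\y_n\|_{\V}^2)^m$, namely the $2m\varrho_1\delta(1+\|\y_n\|_{\V}^2)^{m-1}\|\nabla\y_n\|_{\H}^2$ term coming from \eqref{rg3c} when $r>3$ and the leftover $\tfrac{2C_\upgamma^2 m\delta}{\min\{\mu,\alpha\}}(1+\|\y_n\|_{\V}^2)^{m}$-type term produced by the Hamiltonian perturbation \eqref{F2cal}--\eqref{trq2}. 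In the stationary equation there is no time derivative, so no such negative term is generated, and these full-order errors must be absorbed elsewhere. The lower-order errors (the trace terms, anything of order $(1+\|\y_n\|_{\V}^2)^{m-1}$) are harmless, since the Step-II bound $\delta(1+\|\bar{\y}\|_{\V}^2)^m\le C$ gives $\delta(1+\|\bar{\y}\|_{\V}^2)^{m-1}\le C\delta^{1/m}\to0$ as $\delta\to0$; but the full-order terms are only \emph{bounded} uniformly, not vanishing, so they survive the iterated limits $\eta\to0$, $\eps\to0$, $\delta\to0$.

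Concretely, after subtracting the two viscosity inequalities the only candidates to absorb these terms are (i) the retained dissipation $\tfrac{\min\{\mu,\alpha\}}{2}m\delta(1+\|\y_n\|_{\V}^2)^{m-1}\|(\A+\I)\y_n\|_{\H}^2$, which dominates a full-order term with coefficient at most $\min\{\mu,\alpha\}m\delta/2$ — not enough when $\varrho_1$ (which blows up for small $\beta\min\{\mu,\alpha\}$) or the constant $C_\epsilon$ from the modulus $\omega$ in \eqref{F2} is large — and (ii) the zeroth-order contribution $\lambda\delta\big[(1+\|\bar{\y}\|_{\V}^2)^m+(1+\|\bar{\x}\|_{\V}^2)^m\big]$ hidden in $\lambda\big(u(\bar{\y})-v(\bar{\x})\big)$, which works only if $\lambda\ge\lambda_0$ for a constant $\lambda_0$ depending on $m,\mu,\alpha,\beta,r,\mathrm{Tr}(\Q),\mathrm{Tr}(\Q_1)$ and $\omega$. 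As written, your argument therefore yields the comparison principle for sufficiently large discount $\lambda$ (or under a smallness/compatibility condition on the data), not for an arbitrary $\lambda>0$ as the statement asserts. To close the gap you must either exhibit a replacement for the $K_\upgamma$-absorption mechanism (e.g.\ a different radial weight whose contribution through the drift term dominates these errors with constants independent of $\lambda$), or make the restriction $\lambda>\lambda_0$ explicit in the statement; the claim that "one only needs $2m\ge k+1$ and $\delta$ small" does not by itself dispose of these terms.
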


Finally, we state a result which refers to the existence and uniqueness of viscosity solutions and can be proved in a similar way as we have proved Theorem \ref{extunqvisc} (for the time-dependent case).
\begin{theorem}
Let us assume that the running cost $\ell(\cdot)$ satisfies assumptions \eqref{vh1}-\eqref{vh2} of  Hypothesis \ref{valueH}.  Moreover, we assume that $\f:\U\to\V$ is bounded and continuous. Then, for the values of $r$ given in Table \ref{Table2}, the value function $\mathcal{V}$ defined in \eqref{valueINF} is the unique viscosity solution of the HJB equation \eqref{HJBE1} within the class of viscosity solutions $\u$ satisfying
\begin{align*}
	|u(\y)|\leq C(1+\|\y\|_{\V}^k), \  \ \y\in\V,
\end{align*} 
for some $k\geq0$.
\end{theorem}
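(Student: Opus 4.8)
The plan is to follow the same two-stage strategy used for the finite-horizon case in Theorem \ref{extunqvisc}, adapting it to the stationary (elliptic) setting. Uniqueness is immediate: by Theorem \ref{compsta}, any two viscosity solutions in the prescribed growth class satisfy $u\leq v$ and $v\leq u$, hence $u=v$. So the entire work lies in showing that the value function $\mathcal{V}$ defined in \eqref{valueINF} is a viscosity solution of \eqref{HJBE1}. First I would record the basic properties of $\mathcal{V}$: under \eqref{vh1}--\eqref{vh2} on $\ell$ and the boundedness and continuity of $\f$, one gets the growth bound $|\mathcal{V}(\y)|\leq C(1+\|\y\|_{\V}^k)$ (using the energy estimates \eqref{vsee1}--\eqref{ssee1} and the discount factor $\lambda>0$ to make the infinite time-integral converge), weak sequential continuity of $\mathcal{V}$ on $\V$, and, crucially, the \emph{stationary dynamic programming principle}
\begin{align*}
	\mathcal{V}(\y)=\inf\limits_{\a(\cdot)\in\mathscr{U}_t^{\nu}}
	\E\left\{\int_t^{t+\eps}e^{-\lambda(s-t)}\ell(\Y(s),\a(s))\d s+ e^{-\lambda\eps}\mathcal{V}(\Y(t+\eps))\right\},
\end{align*}
which holds for every $\eps>0$ (note $\mathcal{V}$ is time-independent here because $\ell$ is autonomous and the horizon is infinite). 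This DPP is the analogue of \eqref{vdpp} and is proved by the same measurable-selection arguments referenced in Proposition \ref{valueP}.

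Next, as in Theorem \ref{extunqvisc}, I would verify the two requirements built into the definition of viscosity solution. Fix a test function $\uppsi(\y)=\upvarphi(\y)+\mathfrak{h}(1+\|\y\|_{\V}^2)^m$ and suppose $\mathcal{V}+\uppsi$ has a global minimum at $\y_0\in\V$ (the supersolution case; the subsolution case is symmetric). \textbf{Step 1: $\y_0\in\V_2$.} Using the DPP over $[t,t+\eps]$ together with $(\mathcal{V}+\uppsi)(\y_0)\leq(\mathcal{V}+\uppsi)(\y)$, apply It\^o's formula to $\upvarphi(\Y_\eps(\cdot))$ and to $\mathfrak{h}(1+\|\Y_\eps(\cdot)\|_{\V}^2)^m$; the extra terms compared to the finite-horizon computation \eqref{vdp3.1} are only the $\lambda$-discount terms $-\lambda\mathcal{V}$, which are harmless because of the growth bound on $\mathcal{V}$. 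Dividing by $\eps$ and using \eqref{torusequ}, the equality \eqref{toreq}, the monotonicity estimates \eqref{syymB3}, \eqref{monoC2} and the bound \eqref{vse8.2}, one isolates (exactly as in \eqref{vdp4}--\eqref{vdp7}) a uniform bound
\begin{align*}
	\frac{1}{\eps}\E\int_{t}^{t+\eps}\Big(\||\Y_\eps(s)|^{\frac{r-1}{2}}\nabla\Y_\eps(s)\|_{\H}^2+\|\Y_\eps(s)\|_{\wi\L^{r+1}}^{r+1}+\|(\A+\I)\Y_\eps(s)\|_{\H}^2\Big)(1+\|\Y_\eps(s)\|_{\V}^2)^{m-1}\d s\leq C,
\end{align*}
independent of $\eps$, where the $\alpha\y$ damping term again supplies the bound on $\|(\A+\I)\y\|_{\H}^2$. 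This yields a sequence $\eps_n\to 0$, $t_n\in(t,t+\eps_n)$ with $\E[\|\Y_{\eps_n}(t_n)\|_{\V_2}^2]\leq C$; combining the weak convergence in $\mathrm{L}^2(\Omega;\V_2)$ with the strong convergence $\Y_{\eps_n}(t_n)\to\y_0$ in $\mathrm{L}^2(\Omega;\H)$ (from \eqref{ctsdep0.1}) and uniqueness of weak limits gives $\y_0\in\V_2$.

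\textbf{Step 2: the supersolution inequality.} Pass to the limit $\eps_n\to0$ in the (rescaled) DPP identity, term by term, exactly along the lines of the passages \eqref{ss1}--\eqref{pas10} in Theorem \ref{extunqvisc}: the cost term uses \eqref{vh1}--\eqref{vh2}, continuity of $\upvarphi_t$ (here absent since $\uppsi$ is time-independent) is replaced by continuity of $\D\upvarphi$ and $\D^2\upvarphi$; the linear, Navier--Stokes and Forchheimer terms are handled by Agmon's inequality \eqref{agmon}, H\"older, the energy estimates \eqref{vsee1}--\eqref{ssee1}, \eqref{ctsdep0.1}--\eqref{ctsdep0.2}, and the Appendix \ref{wknBA1}--\ref{wknCA1} computations; the second-order-derivative-of-$\|\cdot\|_\V^2$ terms use the weak convergences \eqref{wknm}--\eqref{wknm2.2}; the forcing term $(\f(\a_\eps(s)),\cdot)$ uses boundedness and continuity of $\f$. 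The only genuinely new ingredient is the discount term, which contributes $\lambda\mathcal{V}(\y_0)$ in the limit (using $e^{-\lambda\eps_n}\to1$ and $\frac1{\eps_n}(1-e^{-\lambda\eps_n})\to\lambda$, plus weak continuity of $\mathcal{V}$), giving precisely the $\lambda u$ term of \eqref{HJBE1}. Taking the infimum over $\a\in\U$ inside the integral yields the Hamiltonian $F(\y_0,-\D\uppsi(\y_0))$ and hence the supersolution inequality. The subsolution inequality follows by the mirror-image argument.

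\textbf{Main obstacle.} As in the finite-horizon theorem, the hard part is passing to the limit in the Forchheimer nonlinearity paired with $(\A+\I)\Y_\eps$, i.e.\ the analogue of \eqref{pas10}: the H\"older exponent $\frac{4(r+1)}{5-r}$ appearing in the Appendix \ref{wknCA1} estimate is exactly what forces $r\in(3,5)$ in three dimensions, while in two dimensions the embedding $\V\hookrightarrow\wi\L^{r+1}$ for all $r>3$ removes the restriction — this is precisely why the admissible range of $r$ is that of Table \ref{Table2}. A secondary technical point, needing care but no new idea, is checking that all the infinite-time integrals and energy estimates remain finite and uniform after inserting the factor $e^{-\lambda(s-t)}$ with $\lambda>0$; since we only ever work on a short window $[t,t+\eps_n]$ in Step 2, this factor is uniformly bounded between $e^{-\lambda\eps_0}$ and $1$ and causes no difficulty.
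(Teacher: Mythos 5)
Your proposal is correct and coincides with the paper's intended argument: the paper itself gives no separate proof of this stationary result, stating only that it "can be proved in a similar way" as Theorem \ref{extunqvisc}, which is exactly what you carry out — uniqueness from the stationary comparison principle (Theorem \ref{compsta}), and existence via the discounted dynamic programming principle, It\^o's formula, the uniform bound forcing $\y_0\in\V_2$, and the term-by-term limit passages of \eqref{ss1}--\eqref{pas10} (with the Appendix \ref{wknCA1} exponent $\tfrac{4(r+1)}{5-r}$ giving the Table \ref{Table2} restriction), the only new ingredient being the discount factor producing the $\lambda u$ term. No substantive deviation from the paper's route.
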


\medskip\noindent
\textbf{Acknowledgments:} The first author would like to thank Ministry of Education, Government of India - MHRD for financial assistance. Support for M. T. Mohan's research received from the National Board of Higher Mathematics (NBHM), Department of Atomic Energy, Government of India (Project No. 02011/13/2025/NBHM(R.P)/R\&D II/1137).

	\end{appendix}

\end{document}